\documentclass[a4paper,11pt]{amsart}
\usepackage{amssymb}
\usepackage{amsthm}
\usepackage[utf8]{inputenc}
\usepackage[T1]{fontenc}
\usepackage{lmodern}
\usepackage{graphicx}
\usepackage[a4paper]{geometry}
\usepackage{url}
\usepackage{float}
\usepackage[Rejne]{fncychap}
\usepackage{amsmath}
\usepackage[colorlinks=true,urlcolor=blue,linkcolor=blue,citecolor=red]{hyperref}
\newcommand{\R}{\mathbb{R}}
\newcommand{\hy}{\mathbb{H}}

\newcommand{\pr}{\mathbb{P}}

\newcommand{\eu}{\widetilde{Ein}_{1,n-1}}
\newcommand{\eeu}{\widetilde{Ein}_{1,n}}
\newcommand{\G}{\Gamma}
\setcounter{secnumdepth}{4}
\makeatletter

\makeatother

\title[Anosov representations and conformally flat spacetimes]{ANOSOV REPRESENTATIONS AS HOLONOMIES OF GLOBALLY HYPERBOLIC SPATIALLY COMPACT CONFORMALLY FLAT SPACETIMES}

\author{Rym SMAI} 

\date{}

\address{LMA, Avignon University, Campus Jean-Henri Fabre, 301, rue Baruch de Spinoza, BP 21239, F-84 916 Avignon Cedex 9, France.}

\email{rym.smai@univ-avignon.fr}

\begin{document} 

\maketitle

\begin{abstract}
Anosov representations were introduced by F. Labourie \cite{labourie} for fundamental groups of closed negatively curved surfaces, and generalized by O. Guichard and A. Wienhard \cite{GW} to representations of arbitrary Gromov hyperbolic groups into real semisimple Lie groups. In this paper, we focus on Anosov representations into the identity component $O_0(2,n)$ of $O(2,n)$ for $n \geq 2$. Our main result is that any Anosov representation with negative limit set as defined in \cite{AnoCCH} is the holonomy group of a spatially compact, globally hyperbolic maximal (abbrev. CGHM) conformally flat spacetime. The proof of the spatial compactness needs a particular care. The key idea is to notice that for any spacetime $M$, the space of lightlike geodesics of $M$ is homeomorphic to the unit tangent bundle of a Cauchy hypersurface of $M$. For this purpose, we introduce \emph{the space of causal geodesics} containing timelike and lightlike geodesics of anti-de Sitter space and lightlike geodesics of its conformal boundary: the Einstein spacetime.  The spatial compactness is a consequence of the following theorem : Any Anosov representation acts properly discontinuously by isometries on the set of causal geodesics avoiding the limit set; besides, this action is cocompact. It is stated in a general setting by O. Guichard, F. Kassel and A. Wienhard in \cite{guichard2015tameness}. We can see this last result as a Lorentzian analogue of the action of convex cocompact Kleinian group on the complementary of the limit set in $\overline{\hy^n}$. Lastly, we show that the conformally flat spacetime in our main result is the union of two conformal copies of a strongly causal AdS-spacetime with boundary which contains - when the limit set is not a topological $(n-1)$-sphere - a globally hyperbolic region having the properties of a \emph{black hole} as defined in \cite{Baados1993GeometryOT}, \cite{Baados1992BlackHI}, \cite{Barbot2005CausalPO}.
\end{abstract} 

\theoremstyle{plain}

\newtheorem{theorem}{Theorem}[section]

\newtheorem{proposition}{Proposition}[section]

\newtheorem{corollary}{Corollary}[section]

\newtheorem{lemma}{Lemma}[section]

\theoremstyle{definition}

\newtheorem{definition}{Definition}[section]

\theoremstyle{remark}

\newtheorem{tech lemma}{Fact}[section]

\newtheorem{criterion}{Criterion}

\newtheorem{remark}{Remark}[section]

\newtheorem{example}{Example}[section]

\section{\textbf{Introduction}}

In the whole paper, we fix an integer $n \geq 2$. Let $O(2,n)$ be the orthogonal group of a non-degenerate symmetric bilinear form $<.,.>$ of signature $(2,n)$ on $\mathbb{R}^{n+2}$ and let $O_0(2,n)$ be its identity component. We study the link between Anosov representations of hyperbolic groups into $O_0(2,n)$ and conformally flat spacetimes. Roughly speaking, we show how to construct a spatially compact globally hyperbolic maximal conformally flat spacetime starting from an Anosov representation. Except in a particular case, we prove that this spacetime contains a region which can be interpreted as a black hole.

\subsection{Conformally flat spacetimes}
 
A Lorentzian manifold is conformally flat if each point has a neighborhood that can be mapped into the flat Minkowski space $\R^{1,n}$ by a conformal isometry. In a Lorentzian manifold $M$, since the signature of the metric is $(1,n)$, we distinguish three type of vectors : \emph{spacelike}, \emph{lightlike} and \emph{timelike}, which correspond to vectors of \emph{positive}, \emph{null} and \emph{negative} norm respectively. A curve in $M$ is spacelike, lightlike or timelike according to the type of the tangent vectors. Non-spacelike curves are said to be \emph{causal}. A Lorentzian manifold is called spacetime if it is oriented an admits a time orientation, i.e., an orientation of every causal curve. In a spacetime, the \emph{lightlike cone} of a point $p$ is the union of all the lightlike curves going through $p$. We denote it by $\mathcal{L}(p)$. A spacetime is \emph{strongly causal} if every point admits a neighborhood $U$ such that any causal curve with extremities in $U$ is contained in $U$. The \emph{causality} of spacetimes is developed in Section \ref{1.1}. In this paper, we deal with two conformally flat spacetimes models : \emph{anti-de Sitter space} and \emph{Einstein universe}. Detailed descriptions of these spaces are given in Section \ref{1.2} and Section \ref{1.3} respectively.

The anti-de Sitter space $AdS_{1,n}$ is the Lorentzian analogue of the hyperbolic space $\hy^{n+1}$. It is a spacetime, of constant sectional curvature $-1$. The group $O_0(2,n)$ is precisely the group of orientation and time-orientation preserving isometries of $AdS_{1,n}$ (see Section \ref{1.4}).

The anti-de Sitter space $AdS_{1,n}$ admits a conformal boundary called the Einstein universe and denoted by $Ein_{1,n-1}$, which is the Lorentzian analogue of the conformal boundary $\mathbb{S}^n$ of the hyperbolic space $\mathbb{H}^n$. The Einstein universe is a conformal Lorentzian spacetime; it means that $Ein_{1,n}$ is an oriented manifold equipped with a conformal class of Lorentzian metrics. Since the type of vectors is invariant under conformal changes of metrics, we also have a causality notion in $Ein_{1,n}$. A subset $\Lambda$ of the Einstein space $Ein_{1,n-1}$ is called \emph{acausal} if any pair of distinct points in $\Lambda$ are the extremities of a spacelike geodesic in $AdS_{1,n}$. The group $O(2,n)$ acts by conformal isometries on $Ein_{1,n-1}$. In particular, the isometries of $O_0(2,n)$ are orientation and time-orientation preserving. Since $Ein_{1,n-1}$ is conformally flat, it follows that $(O_0(2,n), Ein_{1,n-1})$-spacetimes are conformally flat. By a Lorentzian version of Liouville's theorem (see Theorem \ref{Liouville}), when $n \geq 3$, the group $O(2,n)$ coincides with the group of conformal isometries of $Ein_{1,n-1}$. Besides, the elements of $O_0(2,n)$ are exactly those which are orientation and time-orientation preserving (see Section~\ref{1.4}). Therefore, conformally flat spacetimes of dimension $n \geq 3$ are exactly those which are locally modeled on the Einstein spacetime, i.e. equipped with a $(O_0(2,n),Ein_{1,n-1})$-structure (see Section \ref{1.5}). Similarly to the Riemannian case where $\hy^n$ embeds conformally in the conformal sphere $\mathbb{S}^n$, the anti-de Sitter space $AdS_{1,n}$ embeds conformally in $Ein_{1,n}$. Actually, we can say more: $Ein_{1,n}$ is the union of two conformal copies of $AdS_{1,n}$ glued along their conformal boundary $Ein_{1,n-1}$.\\

\paragraph{\textit{Spatially compact globally hyperbolic spacetimes.}} In a spacetime $M$, a causal curve $\gamma$ is said to be inextensible if there is no curve in $M$ that extend $\gamma$. A spacetime $M$ is \emph{globally hyperbolic} (abbrev. GH) if there is a Riemannian hypersurface $S$ in $M$ such that every inextensible causal curve intersects $S$ in a unique point. In this case, $S$ is called a \emph{Cauchy-hypersurface} of $M$. Besides, if $S$ is compact, the spacetime $M$ is said to be \emph{spatially compact} (abbrev. CGH). The CGH spacetimes are studied in Section \ref{1.1}.\\

\paragraph{\textit{Maximality.}} Let $M$ and $N$ two globally hyperbolic conformally flat spacetimes. A \emph{Cauchy-embedding} of $M$ into $N$ is a conformal embedding which send every Cauchy hypersurface of $M$ on a Cauchy hypersurface of $N$. A globally hyperbolic conformally flat spacetime $M$ is said to be \emph{maximal} if every Cauchy embedding into any other spacetime is onto (see Section \ref{1.5}).

\subsection{Anosov representations}

Anosov representations were introduced by F. Labourie \cite{labourie} for fundamental groups of closed negatively curved surfaces and extended by O. Guichard and A. Wienhard to arbitrary Gromov hyperbolic groups \cite{GW}. In fact, it turns out that any finitely generated group admitting an Anosov representation is Gromov hyperbolic. This is a consequence of a general result obtained by M. Kapovitch, B. Leeb and J. Porti on quasi-isometric embeddings of metric spaces satisfying a certain condition (see \cite{Kapovich2018}, Theorem $1.4$). Later, it has been proved again in a more elementary way, by J. Bochi, R. Potri and A. Sambarino (see \cite{Sambarino}, sections $3$ and $4$). Throughout this paper, we adopt the definition used in \cite{guichard2015tameness} (taken from \cite{Gueritaud2017Anosov}) which assume that the represented group is Gromov hyperbolic (from now on, we simply say hyperbolic). Besides, we focus only on Anosov representations into $O_0(2,n)$. Lastly, let us point out that Anosov representations can be thought, in many respects, as a generalization to a higher rank setting, of convex cocompact representations into rank one simple groups (see \cite{GW}, \cite{AnoCCH}).\\

Roughly speaking, $P_1$-Anosov representations of hyperbolic groups into $O_0(2,n)$ are group representations preserving some dynamical properties. The definition involves, on one hand, the dynamic under the action of a hyperbolic group on its Gromov boundary; and on the other hand, the dynamic under the action of $O_0(2,n)$ on the Einstein universe $Ein_{1,n-1}$. A hyperbolic group $\Gamma$ is a \emph{convergence group} for its action on its Gromov boundary $\partial \Gamma$ i.e. for any infinite sequence of distinct elements $\gamma_n$ in $\Gamma$, there exist a subsequence $\{\gamma_{n_k}\}_k$, an \emph{attracting} point $\xi_+$ and a \emph{repelling} one $\xi_-$ in $\partial \Gamma$ such that the maps $\{\gamma_{n_k}\}_k$ converge uniformly on compact subsets of $\partial \Gamma \backslash \{ \xi_- \}$ to the constant map sending $\partial \Gamma \backslash \{ \xi_- \}$ to $\xi_+$. This dynamical property of hyperbolic groups is well studied in \cite{Kapovich2002Boundaries}. In the Einstein universe, we observe an almost similar dynamic for the action of sequences of $O_0(2,n)$ satisfying the property of being \emph{$P_1$-divergent} (see Section \ref{2.1} for the definition). For every $P_1$-divergent sequence of elements $g_n$ in $O_0(2,n)$, there exist a subsequence $\{g_{n_k}\}_k$, an \emph{attracting} point $x_+$ and a \emph{repelling} one $x_-$ in $Ein_{1,n-1}$ such that the maps $\{g_{n_k}\}_k$ converge uniformly on compact subsets of the complementary in $Ein_{1,n-1}$ of the lightlike cone $\mathcal{L}(x_-)$ of $x_-$ to the constant map sending $Ein_{1,n-1} \backslash \mathcal{L}(x_-)$ to $x_+$. The limit set of a subgroup $G$ of $O_0(2,n)$ is the set of all attracting and repelling points of $P_1$-divergent sequences of $G$. This dynamic is described in details in Section \ref{2.1}. Now, we can give a precise definition of $P_1$-Anosov representations. Let $\Gamma$ be a hyperbolic group. We denote by $\partial \Gamma$ the Gromov boundary of $\Gamma$.

A representation $\rho : \G \longrightarrow O_0(2,n)$ is $P_1$-Anosov if
\begin{enumerate}
\item every sequence of pairwise distinct elements in $\rho(\G)$ is $P_1$-divergent;
\item there exist a continuous, $\rho$-equivariant boundary map $\xi : \partial \G \longrightarrow Ein_{1,n-1}$ which is
\begin{enumerate}
\item \emph{transverse}, meaning that every pair of distinct points $(\eta, \eta')$ in $\partial \G$ is sent to the pair $(\xi(\eta),\xi(\eta'))$ in $Ein_{1,n-1} \times Ein_{1,n-1}$ such that $\xi(\eta)$ is not contained in the lightlike cone of $\xi(\eta')$;
\item \emph{dynamics-preserving}, meaning that if $\eta$ is the attracting fixed point of some element $\gamma \in \G$ in $\partial \G$, then $\xi(\eta)$ is an attracting fixed point of $\rho(\gamma)$ in $Ein_{1,n-1}$. 
\end{enumerate}
\end{enumerate} 

In this case, the limit set of $\rho(\Gamma)$ coincides with $\xi(\partial \Gamma)$ (see Section \ref{2.2}). 

A subset $\Lambda$ of $Ein_{1,n-1}$ is \emph{negative} if all inner products $<.,.>_{2,n}$ of distinct points of $\Lambda$ are negative. It turns out that negative subsets of $Ein_{1,n-1}$ are the acausal ones (see Section \ref{1.3}).

Now, we can state our main result :

\begin{theorem}\label{main result}
Let $n \geq 2$. A $P_1$-Anosov representation of hyperbolic group into $O_0(2,n)$ with negative limit set is the holonomy group of a spatially compact globally hyperbolic maximal conformally flat spacetime.
\end{theorem}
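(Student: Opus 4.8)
The plan is to realise $\rho$ as the holonomy of the quotient, by $\rho(\G)$, of the \emph{invisible domain} of the limit set $\Lambda := \xi(\partial\G)$. Recall that the universal cover $\eeu$ of the Einstein universe is the double of the universal cover of $AdS_{1,n}$ glued along the copy of $\eu$ that carries $\Lambda$, and that $\rho(\G)$ acts on $\eeu$ preserving $\Lambda$. I would set $\Omega(\Lambda)$ to be the set of points of $\eeu$ that are not causally related to any point of $\Lambda$, i.e. the complement of $\bigcup_{x\in\Lambda}\bigl(J^{+}(x)\cup J^{-}(x)\bigr)$ where $J^{\pm}$ denote causal future and past; concretely this is the union of the invisible domains of $\Lambda$ in the two copies of $AdS_{1,n}$ adjacent to that slice, together with the open subset $\eu \setminus \bigcup_{x\in\Lambda}\mathcal{L}(x)$ of the common conformal boundary. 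Since $\Lambda$ is negative and hence acausal (Section~\ref{1.3}), $\Omega(\Lambda)$ is a nonempty, open, connected, simply connected, conformally flat, $\rho(\G)$-invariant spacetime, and --- this is where acausality is crucial --- it is globally hyperbolic: it admits a $\rho(\G)$-invariant spacelike Cauchy hypersurface $\widetilde S$.

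Next, I would prove that $\rho(\G)$ acts properly discontinuously and freely on $\Omega(\Lambda)$. Proper discontinuity is the dynamical core of the first half of the argument: by condition (1) in the definition of a $P_1$-Anosov representation every sequence of distinct elements of $\rho(\G)$ is $P_1$-divergent, hence --- by the convergence dynamics on the Einstein universe recalled in the introduction --- it converges, uniformly on compact subsets of $\eu \setminus \mathcal{L}(x_-)$, to a constant attracting point $x_{+}\in\Lambda$; transporting this contraction to the two $AdS$ copies shows that no compact subset of $\Omega(\Lambda)$ meets infinitely many of its $\rho(\G)$-translates. Freeness follows from discreteness and faithfulness of $\rho$ together with the fact that a nontrivial element of $\rho(\G)$ cannot fix a point of the globally hyperbolic domain $\Omega(\Lambda)$. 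Consequently $M := \rho(\G)\backslash\Omega(\Lambda)$ is a conformally flat spacetime with developing map the inclusion $\Omega(\Lambda)\hookrightarrow\eeu$ and holonomy $\rho$, and $\widetilde S$ descends to a Cauchy hypersurface $S$ of $M$, so $M$ is globally hyperbolic.

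The main obstacle is to show that $S$ is compact. Following the strategy announced in the abstract, the key point is that in any globally hyperbolic spacetime the space of unparametrised future-oriented lightlike geodesics is homeomorphic to the unit tangent bundle $T^{1}S$ of a Cauchy hypersurface $S$: a lightlike geodesic meets $S$ in a single point and transversally, and one sends it to the unit vector obtained by normalising the component, tangent to $S$, of its velocity there. Applied to $M$, this identifies $T^{1}S$ with the quotient by $\rho(\G)$ of the space of lightlike geodesics of $\Omega(\Lambda)$. A lightlike geodesic of $\Omega(\Lambda)$ is a photon of one of the two $AdS$ copies or a photon of $\eu$, and in each case it extends to a geodesic of the ambient space avoiding $\Lambda$; assembling these photons together with the timelike geodesics of $AdS_{1,n}$ into the \emph{space of causal geodesics} introduced in this paper, the space of lightlike geodesics of $\Omega(\Lambda)$ is precisely the set of causal geodesics avoiding the limit set. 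By the tameness theorem of Guichard, Kassel and Wienhard \cite{guichard2015tameness}, $\rho(\G)$ acts properly discontinuously and \emph{cocompactly} on that set; hence $T^{1}S$ is compact, and since $n\geq 2$ the bundle projection $T^{1}S\to S$ has nonempty compact fibres $\mathbb{S}^{n-1}$ and exhibits $S$ as a continuous image of the compact space $T^{1}S$. Therefore $S$ is compact and $M$ is spatially compact. The delicate points are the exact dictionary between ``lying in $\Omega(\Lambda)$'' and ``avoiding $\Lambda$'' for causal geodesics, the compatibility of the natural topologies on the two descriptions of this space, and carefully accounting for the boundary stratum $\eu\setminus\bigcup_{x}\mathcal{L}(x)$ on both sides.

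Finally, for maximality I would use that $\Omega(\Lambda)$ is the \emph{maximal} $\rho(\G)$-invariant globally hyperbolic conformally flat domain whose set of ideal points is $\Lambda$. Given a Cauchy embedding $M\hookrightarrow N$ into a globally hyperbolic conformally flat spacetime $N$, lifting it through developing maps yields a $\rho$-equivariant Cauchy embedding $\Omega(\Lambda)\hookrightarrow\widetilde N\subset\eeu$ onto a globally hyperbolic domain; since a Cauchy extension creates no new ideal points, that domain still has $\Lambda$ as ideal set, so maximality of the invisible domain forces the embedding to be onto. Hence $M$ is a spatially compact, globally hyperbolic, maximal conformally flat spacetime with holonomy $\rho$, which is the assertion of the theorem.
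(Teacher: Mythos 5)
Your architecture is the paper's: quotient the invisible domain $\Omega(\Lambda)\subset\eeu$, get properness and freeness from the convergence dynamics of $P_1$-divergent sequences, and get spatial compactness by identifying $T^1S$ with the quotient of the space of lightlike geodesics of $\Omega(\Lambda)$ and invoking the Guichard--Kassel--Wienhard cocompactness theorem. But several steps are asserted rather than proved. For global hyperbolicity of the quotient you posit a $\rho(\G)$-invariant spacelike Cauchy hypersurface $\widetilde S$ and let it descend; the existence of an \emph{invariant} Cauchy hypersurface is not automatic and you give no construction. The paper avoids this: it shows $\Omega(\Lambda)$ is causally convex in the globally hyperbolic $\eeu$, hence GH, and then proves GH of the quotient directly --- causality because every orbit is acausal (a fixed-point/attractor argument), and compactness of causal diamonds because only finitely many translates $J^+(p)\cap J^-(g.q)$ are nonempty. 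Your justification of freeness (``a nontrivial element cannot fix a point of the globally hyperbolic domain'') is circular; the actual argument is that a fixed point of $g$ would be the attracting point of $\{g^i\}_i$ and hence lie in $\Lambda$. Two smaller slips of the same kind: the trace of $\Omega(\Lambda)$ on $\eu$ is $\eu\setminus(J^+(\Lambda)\cup J^-(\Lambda))$, not $\eu\setminus\bigcup_{x\in\Lambda}\mathcal{L}(x)$; and the space of lightlike geodesics of $\Omega(\Lambda)$ is not all of $U$ (which contains every timelike geodesic of $AdS_{1,n}$) but the lightlike stratum $\partial\mathcal{C}\cap U$. The latter is harmless since cocompactness on $U$ passes to the closed invariant subset $\partial\mathcal{C}\cap U$, but you still owe the dictionary you flag as ``delicate'': every lightlike causal geodesic avoiding $\Lambda$ actually meets $\Omega(\Lambda)$ (because $J^{\pm}(\Lambda)$ cannot contain an inextensible causal curve), and meets it in a connected set (causal convexity).

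The genuine gap is maximality. You appeal to ``maximality of the invisible domain \ldots whose set of ideal points is $\Lambda$'' and to the principle that ``a Cauchy extension creates no new ideal points''; neither is defined or proved, and the first is essentially the statement to be established. You also assume the developing map of the extension $N$ realizes $\widetilde N$ as a domain of $\eeu$, which is not given. The paper's proof supplies exactly the missing content: it works locally near a boundary point of the image, where the developing map of $N$ is injective, and uses the geometric fact that every point of $\partial\Omega(\Lambda)$ lies on a lightlike geodesic issued from a point of $\Lambda$; that geodesic is a causal curve of the enlarged domain which never enters $\Omega(\Lambda)$, hence misses the Cauchy hypersurface $S\subset\Omega(\Lambda)$, contradicting the Cauchy-embedding hypothesis. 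Without an argument of this type (or an explicit proof that $\Omega(\Lambda)$ admits no Cauchy extension inside $\eeu$ together with a reduction to that case), the maximality claim is unsupported.
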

				
Given a $P_1$-Anosov representation with negative limit set, we construct an invariant open domain $\Omega$ of $Ein_{1,n}$ on which the image of the representation acts properly discontinuously. We prove that the quotient manifold $M$ is a conformally flat CGHM spacetime. Since $Ein_{1,n}$ is the union of two copies of $AdS_{1,n}$ glued along their conformal boundary $Ein_{1,n-1}$, the open domain $\Omega \subset \eeu$ meets each of these copies in an $AdS$-regular domain $E_i$ ($i=1,2$). The quotient manifolds $M_i = \rho(\Gamma) \backslash E_i$ are strongly causal spacetimes which embed in $M$. When the limit set is not a topological $(n-1)$-sphere, we observe in $M_i$ a phenomenon which can be interpreted as \emph{a black hole} (see \cite{Baados1993GeometryOT}, \cite{Baados1992BlackHI}, \cite{Barbot2005CausalPO}) and that we describe in Section \ref{4.3}.\\

Lastly, we introduce in Section \ref{4.1} \emph{the space of causal geodesics} which is a central element in the proof of spatial compactness in Theorem \ref{main result}.

\subsection{The space of causal geodesics}

In Section \ref{1.2} and in Section \ref{1.3}, we describe geodesics of anti-de Sitter space and Einstein universe respectively. Both are obtained by intersection with $2$-planes of $\R^{n+2}$. Timelike geodesics of $AdS_{1,n}$ correspond to $2$-planes where the restriction of $<.,.>$ is negative definite. Therefore, the space $\mathcal{T}^{2n}$ of timelike geodesics of $AdS_{1,n}$ admits a realization as the open subset
\[X = \{P \in Gr_{2}(\R^{n+2})\ :\ <x,x>\ < 0,\ \forall x \in P \}\]
in the Grassmannian space $Gr_{2}(\R^{n+2})$.
Its closure
\[\bar{X} = \{P \in Gr_{2}(\R^{n+2})\ :\ <x,x>\ \leq 0,\ \forall x \in P\}\]
contains besides negative-definite $2$-planes,
\begin{itemize}
\item \emph{isotropic $2$-planes}, i.e. $2$-planes such that the restriction of $<.,.>$ is degenerate, corresponding to \emph{lightlike geodesics of $AdS_{1,n}$};
\item\emph{totally isotropic $2$-planes} corresponding to \emph{lightlike geodesics of $Ein_{1,n-1}$}.
\end{itemize}
The space $\bar{X}$ is identified with the space containing timelike geodesics of $AdS_{1,n}$, lightlike geodesics of $AdS_{1,n}$ and lightlike geodesics of $Ein_{1,n-1}$, called \emph{the space of causal geodesics}. We denote it by $\mathcal{C}$. In section \ref{3}, we prove the following fact.

\begin{proposition}
The topological boundary of $\bar{X}$ in $Gr_{2}(\R^{2,n})$ is a topological manifold homeomorphic to the space $\mathcal{P}(Ein_{1,n})$ of lightlike geodesics of $Ein_{1,n}$.
\end{proposition}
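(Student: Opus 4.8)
The plan is to identify both spaces concretely as homogeneous-type objects and exhibit an explicit homeomorphism, then check the manifold claim separately. First I would describe the topological boundary $\partial \bar{X}$ in $Gr_2(\R^{2,n})$: a $2$-plane $P$ lies in $\bar{X}$ iff $<.,.>$ is negative semidefinite on $P$, and in $\partial\bar{X}$ iff moreover the restriction is degenerate (not negative definite). Such a degenerate negative-semidefinite $2$-plane $P$ has a $1$-dimensional radical $\ell = P \cap P^{\perp}$, which is an isotropic line, i.e. a point of $Ein_{1,n}$ (recall $Ein_{1,n}$ is the projectivized null cone of $<.,.>_{2,n}$ in $\R^{n+2}$, since we are in $\R^{n+2}$ with signature $(2,n)$ which is the signature attached to $Ein_{1,n}$). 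So there is a natural map $\partial \bar X \to Ein_{1,n}$, $P \mapsto \ell$. A lightlike geodesic of $Ein_{1,n}$ through the point $\ell$ is exactly a totally isotropic $2$-plane containing $\ell$ (this is the description of lightlike geodesics of $Ein_{1,n}$ recalled in Section~\ref{1.3}, one dimension up), so the fiber over $\ell$ of $P \mapsto \ell$ is parametrized by the choice of a second direction in $P$ modulo $\ell$; I would check that this fiber is itself in bijection with the set of lightlike geodesics through $\ell$, so that the whole construction assembles into a bijection $\partial \bar X \to \mathcal{P}(Ein_{1,n})$.

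More precisely, the clean way is to go through the (affine) null cone. Let $\mathcal{N}$ be the null cone of $<.,.>_{2,n}$ minus the origin. To a nonzero isotropic vector $v$ one can attach the $2$-planes $P = \mathrm{span}(v,w)$ that are negative semidefinite; degeneracy forces $w \in v^{\perp}$, and then $P$ is totally isotropic iff $<w,w> = 0$, while $P$ is merely isotropic (a lightlike geodesic of $AdS$) iff $<w,w> < 0$. The boundary $\partial\bar X$ is the union of the totally isotropic $2$-planes (which is what $\mathcal{P}(Ein_{1,n})$ should be) together with the isotropic-but-not-totally-isotropic ones; I would need to verify that the latter are not in the topological closure's \emph{boundary} in the relevant sense, or rather — reading the statement carefully — that the proposition's "$\mathcal{P}(Ein_{1,n})$" is exactly matched. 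The key computation is to linearize: pick a point $P_0 \in \partial\bar X$, choose a complement, and write nearby $2$-planes as graphs of linear maps; the condition "negative semidefinite" becomes a quadratic inequality, and "degenerate" an equality (vanishing of a determinant or of the relevant eigenvalue), so locally $\partial \bar X$ is the zero set of a submersion onto $\R$, hence a topological (indeed smooth) hypersurface in $Gr_2(\R^{2,n})$, of dimension $\dim Gr_2(\R^{2,n}) - 1 = 2n - 1$. Independently, $\mathcal{P}(Ein_{1,n})$ is the space of lightlike geodesics of an $(n{+}1)$-dimensional conformal Lorentzian manifold, which has dimension $(n{+}1) + (n{-}1) = 2n$... so I must be careful: actually the space of lightlike geodesics of $Ein_{1,n}$ is the unit tangent bundle of a Cauchy hypersurface modulo reparametrization, giving $2 \cdot n - 1 = 2n-1$; I would pin down this count via the homogeneous space description $O(2,n+1)/Q$ restricted appropriately, matching $2n-1$ on both sides.

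The concrete homeomorphism I would build is: $\Phi : \mathcal{P}(Ein_{1,n}) \to \partial\bar X$ sending a lightlike geodesic $\gamma$ of $Ein_{1,n}$ — which is a totally isotropic $2$-plane $P_\gamma \subset \R^{n+2}$ — to $P_\gamma$ itself, viewed as a point of $Gr_2(\R^{2,n})$. Surjectivity onto the set of totally isotropic planes is immediate; the point is to argue this image \emph{is} all of $\partial\bar X$, i.e. that every degenerate negative-semidefinite $2$-plane is in fact totally isotropic. That is false in general (the isotropic planes giving lightlike $AdS$-geodesics are degenerate and negative \emph{semi}definite but not totally isotropic), so the resolution must be that these isotropic planes lie in the \emph{interior} of $\bar X$ relative to... no — they lie on $\partial\bar X$ too. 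Hence I expect the correct statement/proof to reinterpret things: $\mathcal{P}(Ein_{1,n})$ should be understood so that it \emph{contains} the lightlike geodesics of $AdS_{1,n}$ as limits (a lightlike geodesic of $AdS$ has two endpoints on $Ein_{1,n-1}$ and "is" a degenerate configuration), and the homeomorphism is with the full boundary. Concretely I would: (1) show $\partial\bar X = \{P : P \text{ degenerate negative semidefinite}\}$ is a single $O(2,n)$-orbit closure decomposing into the open stratum of isotropic planes and the closed stratum of totally isotropic planes; (2) build the map to $\mathcal{P}(Ein_{1,n})$ by $P \mapsto$ (the lightlike geodesic of $Ein_{1,n}$ obtained from the radical $\ell = P \cap P^\perp$ together with the limit direction), checking continuity and bijectivity at the stratum interface by an explicit local coordinate computation; (3) conclude it is a homeomorphism since both spaces are compact Hausdorff (closed subsets of a Grassmannian / of a compact homogeneous space). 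The main obstacle, and the place to spend real care, is exactly step (2): matching the two strata of $\partial\bar X$ with the geometry of $\mathcal{P}(Ein_{1,n})$ and proving continuity of the gluing across the locus where the $AdS$-lightlike geodesics degenerate to totally-isotropic planes — this needs the explicit local parametrization of $Gr_2(\R^{2,n})$ near a totally isotropic plane and a careful bookkeeping of how the radical line varies.
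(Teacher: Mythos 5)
There is a genuine gap, and it starts with a misreading of which Einstein universe appears in the statement. In this proposition $Ein_{1,n}$ is \emph{not} the projectivized null cone of $\R^{2,n}$ (that space is $Ein_{1,n-1}$); it is the Einstein universe one dimension up, the projectivized null cone of $\R^{2,n+1}$, i.e.\ the conformal compactification of $AdS_{1,n}$ into which $\overline{\mathtt{AdS}}_{1,n}$ sits with boundary $\mathtt{Ein}_{1,n-1}$. Its photon space is already identified with $T^1\mathbb{S}^n$ in Proposition \ref{photons Ein}, so it is an input to the statement, not an object you are free to ``reinterpret'' as a stratified completion containing the lightlike $AdS$-geodesics by fiat. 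The paper's route is: first, $\partial\bar X$ consists of the isotropic (lightlike geodesics of $\mathtt{AdS}_{1,n}$) and totally isotropic (lightlike geodesics of $\mathtt{Ein}_{1,n-1}$) $2$-planes, i.e.\ $\partial\bar X\cong\mathcal{P}(\overline{\mathtt{AdS}}_{1,n})$; second, the coordinate projection $p:\mathtt{Ein}_{1,n}\to\overline{\mathtt{AdS}}_{1,n}$, $[u:v:x_1:\cdots:x_{n+1}]\mapsto[u:v:x_1:\cdots:x_n]$, is a degree-$2$ covering ramified over $\partial\mathtt{AdS}_{1,n}$, and it carries a photon of $\mathtt{Ein}_{1,n}$ either to a lightlike geodesic of $\mathtt{AdS}_{1,n}$ (if the totally isotropic plane is not contained in $\R^{2,n}$) or to a photon of $\mathtt{Ein}_{1,n-1}$ (if it is); the induced map $\bar p:\mathcal{P}(\mathtt{Ein}_{1,n})\to\mathcal{P}(\overline{\mathtt{AdS}}_{1,n})$ is then promoted to a homeomorphism by compactness. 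This ``go one dimension up and project back down'' mechanism is exactly the idea your proposal is missing: without it you correctly observe that the totally isotropic planes of $\R^{2,n}$ cannot exhaust $\partial\bar X$, but you have no candidate bijection, only the (unimplemented) suggestion to change the meaning of $\mathcal{P}(Ein_{1,n})$. (If you do set up the paper's map, you should still scrutinize injectivity over the open stratum: over an isotropic, non--totally isotropic $P'=\mathrm{span}(v,w)\subset\R^{2,n}$ with radical $\R v$ and $q_{2,n}(w)=-1$, the totally isotropic lifts to $\R^{2,n+1}$ are $\mathrm{span}(v,w\pm e_{n+2})$ — so the interface between the two strata genuinely requires the ``particular care'' you anticipate, and is where all the real work lies.)

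The second concrete failure is your local manifold argument. Writing $2$-planes near a totally isotropic $P_0$ as graphs of linear maps $A:P_0\to Q_0\oplus W$ (with $Q_0$ a dual totally isotropic plane and $W$ positive definite), the induced form on the graph is the symmetric $2\times2$ matrix $M(A)=2S+A_W^*A_W$, and $\partial\bar X$ is locally $\{\det M=0,\ \mathrm{tr}\,M\le 0\}$. This is a quadric half-cone in $\mathrm{Sym}_2(\R)\cong\R^3$ (times a Euclidean factor), with its cone point exactly along the totally isotropic stratum $\{M=0\}$: the determinant is \emph{not} a submersion there, and $\partial\bar X$ is not a smooth hypersurface at those points — it is only a topological manifold, which is all the proposition claims and which your ``zero set of a submersion, hence smooth'' step would wrongly upgrade. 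The half-cone is of course homeomorphic to $\R^2$, so the topological conclusion survives, but the argument you propose for it does not; the paper instead gets the manifold structure for free by transporting it from $\mathcal{P}(\mathtt{Ein}_{1,n})\cong T^1\mathbb{S}^n$ through the map $\bar p$.
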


The group $O_0(2,n)$ acts transitively on $X$. The isotropy group of an element $P$ in $X$ is equal to $SO(2) \times SO(n)$ which is a maximal compact subgroup of $O_0(2,n)$. Therefore, the manifold $X$ is diffeomorphic to the Riemannian symmetric space  $O_0(2,n) / SO(2) \times SO(n)$ and thus, so do $\mathcal{T}^{2n}$. Any discrete subgroup $G$ of $O(2,n)$ acts properly discontinuously by isometries on the Riemannian symmetric space $\mathcal{T}^{2n}$ of $O_0(2,n)$. The quotient space $G \backslash \mathcal{T}^{2n}$ is a locally symmetric space which is non-compact except if $G$ is a uniform lattice in $O(2,n)$. In \cite{guichard2015tameness}, Section $4$, O. Guichard, F. Kassel and A. Wienhard give a compactification of $G \backslash \mathcal{T}^{2n}$ when $G$ is the image of an Anosov representation and prove the following result.

\begin{theorem}\label{GKW0}
Let $\Gamma$ be a hyperbolic group and $\rho : \Gamma \longrightarrow O_0(2,n)$ a $P_1$-Anosov representation with boundary map $\xi : \partial \Gamma \longrightarrow Ein_{1,n-1}$. Let
\begin{align*}
U &= \{\varphi \in \mathcal{C}\ :\ \varphi \cap \xi(\partial \Gamma) = \emptyset\}
\end{align*}
be the subspace of causal geodesics that avoid the limit set $\xi(\partial \Gamma)$ of $\rho(\Gamma)$.\\
Then, the action of $\rho(\Gamma)$ on $U$ is properly discontinuous and cocompact. The set $U$ contains the Riemannian symmetric space $\mathcal{T}^{2n}$ and $\rho(\Gamma) \backslash U$ is a compactification of $\rho(\Gamma) \backslash \mathcal{T}^{2n}$.
\end{theorem}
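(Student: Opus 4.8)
The plan is to transport everything into the Grassmannian model and then recognise the statement as the concrete incarnation of the tameness theorem of Guichard, Kassel and Wienhard \cite{guichard2015tameness}, whose hypotheses are exactly those of a $P_1$-Anosov representation into $O_0(2,n)$; the real work is to set up the dictionary and trace the dynamical input. Write $\bar X\subset Gr_2(\R^{2,n})$ for $\mathcal C$, set $\Lambda=\xi(\partial\Gamma)$, and recall that a $2$-plane $P\in\bar X$ contains no isotropic line if it is negative definite, exactly one (its radical) if it is negative degenerate, and a whole projective line of them if it is totally isotropic. Identifying a causal geodesic $\varphi$ with its $2$-plane $P$, the condition ``$\varphi\cap\Lambda=\emptyset$'' becomes ``$P$ contains no line of $\Lambda$''; thus $U=\bar X\setminus C(\Lambda)$, where $C(\Lambda)=\{P\in\bar X:\ P\ \text{contains a line of}\ \Lambda\}$. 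Since $\Lambda=\xi(\partial\Gamma)$ is compact and the incidence relation between $2$-planes and isotropic lines is closed, $C(\Lambda)$ is closed and $\rho(\Gamma)$-invariant, so $U$ is open and invariant; and $\mathcal T^{2n}\cong X\subset U$ because negative definite $2$-planes meet $Ein_{1,n-1}$ nowhere. This already disposes of the last, easy assertions once properness and cocompactness are known.

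For properness the mechanism is that $C(\Lambda)$ absorbs the limit set of the $\rho(\Gamma)$-action on $\bar X$. I would show: for every sequence of pairwise distinct $\gamma_n$ and every compact $K'\subset U$, the sets $\rho(\gamma_n)(K')$ eventually lie in an arbitrarily small neighbourhood of $C(\Lambda)$; this forces, for any compact $K\subset U$, the set $\{\gamma:\rho(\gamma)K\cap K\neq\emptyset\}$ to be finite. To prove the escape statement, extract a subsequence realising the convergence-group dynamics of $\gamma_n$ on $\partial\Gamma$, with attracting point $\eta_+$ and repelling point $\eta_-$, put $x_\pm=\xi(\eta_\pm)$ with isotropic lines $\ell_\pm$ (transversality of $\xi$ giving $\langle\ell_+,\ell_-\rangle\neq0$), and use the dynamics of $P_1$-divergent sequences on $Ein_{1,n-1}$, extended to $\bar X$: $\rho(\gamma_n)$ carries every $2$-plane not contained in the repelling hyperplane $\ell_-^{\perp}$ into $2$-planes accumulating on photons through $\ell_+\in\Lambda$, hence into $C(\Lambda)$. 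The delicate point — and the one I expect to be the main obstacle — is that a $2$-plane lying in $U$ may still meet the \emph{entire} lightlike cone $\mathcal L(x_-)$ without containing $x_-$, so this last statement does not visibly apply to all of $K'$; one must argue that even for the negative degenerate and totally isotropic $2$-planes of $K'$ the image $\rho(\gamma_n)P$ is forced into $C(\Lambda)$. Here I would use that $\Lambda$ is acausal and therefore meets every photon in at most one point, together with the \emph{uniform} Anosov estimate (linear growth of $\mu_1-\mu_2$) to dominate the coordinate along $\ell_+$. This is precisely where \cite{guichard2015tameness} carries out the analysis, and I would follow it.

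For cocompactness I would exhibit a compact $D\subset U$ with $\rho(\Gamma)\cdot D=U$. The guiding principle, again from the convex-cocompact/convergence picture, is that group elements expand uniformly near $C(\Lambda)$: given $P\in U$ whose lines approach some $\xi(\eta)\in\Lambda$, choose $\gamma_k\to\eta$ in $\Gamma$ and use the North--South dynamics of $\rho(\gamma_k^{-1})$ to drag $P$ back into a fixed region of $\bar X$ uniformly transverse to all of $\Lambda$; compactness of $\partial\Gamma$ and continuity of $\xi$ reduce this to finitely many such ``charts'' and produce $D$. (Equivalently, once properness is in hand, it is enough to rule out a sequence in $U$ all of whose $\rho(\Gamma)$-translates escape every compact subset of $U$, which the same expansion argument does.) Combining the two halves, $\rho(\Gamma)\backslash U$ is compact and Hausdorff; since $X$ is open and dense in $\bar X$, hence in $U$, and $\rho(\Gamma)$ is discrete in $O_0(2,n)$ and therefore acts properly discontinuously on $X\cong\mathcal T^{2n}$ (as recalled above), the compact space $\rho(\Gamma)\backslash U$ is a compactification of the locally symmetric space $\rho(\Gamma)\backslash\mathcal T^{2n}$, which is the final assertion.
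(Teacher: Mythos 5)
Your overall strategy is the same as the paper's: identify $\mathcal{C}$ with $\bar X\subset Gr_2(\R^{n+2})$ and $U$ with the complement of the set of causal planes meeting $\Lambda$, prove properness by the attracting/repelling dynamics of $P_1$-divergent sequences, and prove cocompactness by a uniform expansion property near the fibers over $\Lambda$. But both halves stop short of the actual argument at exactly the points where the work lies. For properness, the ``delicate point'' you flag is real but you resolve it by appeal to the wrong mechanism (acausality of $\Lambda$ plus uniform Anosov estimates) and then defer to \cite{guichard2015tameness}. The correct resolution is a pointwise linear-algebra fact about \emph{causal} planes: for $P\in\bar X$, every isotropic vector of $P^{\perp}$ already lies in $P$ (if $P$ is negative definite, $P^{\perp}$ is positive definite; if $P$ is degenerate, $P^{\perp}$ has the same radical as $P$; if $P$ is totally isotropic, $P^{\perp}/P$ is positive definite). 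Hence a causal plane contained in $\ell_-^{\perp}$ must contain $\ell_-$ and so already lies in $C(\Lambda)$; consequently every $\varphi\in U$ has a point off the lightlike cone of the repelling point, which is precisely the step the paper uses (via the dynamical-relation criterion of Frances) to show that any limit of translates $g_i\varphi_i$ must meet $\Lambda$. Without this observation your properness argument does not close.

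The larger gap is cocompactness, which you defer entirely to \cite{guichard2015tameness}. Since Theorem \ref{GKW0} \emph{is} that theorem rephrased, and the stated purpose of Section \ref{3} is to give an independent geometric proof, this is circular in context. The paper's proof has genuine quantitative content that your sketch does not reproduce: for each $p\in\Lambda$ and each $c>1$ it produces $g\in\rho(\Gamma)$ and a neighborhood $W_p$ of the fiber $F_p=\{P:\ p\in\pr(P)\}$ on which $g$ expands the distance $\delta(P,F_q)=d(q,\pr(P))$ by the factor $c$ (Proposition \ref{techn. prop}). This requires the Cartan decomposition $g_i=k_ia_i\ell_i$, the uniform gap estimate from \cite{Gueritaud2017Anosov} guaranteeing that $g_i^{-1}p$ stays a definite distance inside a ball around $\ell_i^{-1}p_0$, and the comparison (Fact \ref{techn. lemma 2}) between $d(q,\pr(P))$ and $d(q,\pr(P)\cap\bar B_{1-r})$ needed because $a_i$ only contracts on a fixed ball. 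One then feeds this into the Kapovich--Leeb--Porti criterion with $Z=Gr_2(\R^{n+2})$, $K=\Lambda$ and $F$ the planes meeting $\Lambda$, and finally restricts from $Gr_2(\R^{n+2})\setminus F$ to the closed invariant subset $\bar X\setminus F=U$ to get compactness of $\rho(\Gamma)\backslash U$. Your ``finitely many charts'' heuristic is the right picture, but none of these estimates is present, so the cocompactness half is a plan rather than a proof. The final assertions (that $\mathcal{T}^{2n}\cong X\subset U$ and that $\rho(\Gamma)\backslash U$ compactifies $\rho(\Gamma)\backslash\mathcal{T}^{2n}$) you handle correctly.
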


In \cite{guichard2015tameness}, this theorem is stated in a general setting and in an algebraic way while we state it here in the setting of Lorentzian geometry. Moreover, we give a geometrical proof of it in section \ref{3}. Our theorem \ref{main result} is a consequence of this result.

\subsection{Organization of the paper}
In Section \ref{1} we recall some basic definitions in Lorentzian geometry. In particular we give descriptions of anti-de Sitter and Einstein spaces. In Section \ref{2} we recall the notions of limit set and Anosov representations of hyperbolic groups into $O_0(2,n)$. In Section \ref{3} we introduce the space of causal geodesic and give a geometrical proof of Theorem \ref{GKW0}. We devote Section \ref{4} to prove our main theorem \ref{main result}. In particular, we show that Theorem \ref{main result} gives us a family of examples of spacetimes with black holes.

\section{Preliminaries} \label{1}

\subsection{The causal structure of spacetimes} \label{1.1}

For the convenience of the reader, we recall some basic definitions in Lorentzian geometry. In the whole paper, we denote by $(n,p,r)$ the signature of a quadratic form $q$ where $n,p,r$ are respectively the numbers of negative, positive and zero coefficients in the polar expression of the quadratic form $q$. When $q$ is non-degenerate, we just write $(n,p)$.\\

\paragraph{\textit{Spacetimes.}} A Lorentzian $(n+1)$-manifold is a smooth $(n+1)$-manifold $M$ equipped with a nondegenerate symmetric $2$-form $g$ of signature $(1,n)$ called \emph{Lorentzian metric}. A non-zero tangent vector $v$ is \emph{timelike} (resp. \emph{lightlike}, \emph{spacelike}) if $g(v,v)$ is negative (resp. null, positive). We say that $v$ is \emph{causal} if it is non-spacelike. In each tangent bundle, the cone of causal vectors has two connected components. The Lorentzian manifold $M$ is \emph{time-orientable} if it is possible to make a continuous choice, in each tangent bundle, of one of them. A causal tangent vector is said to be \emph{future} if it is in the chosen one and \emph{past} otherwise. More precisely, a time orientation is given by a timelike vector field $X$. A tangent vector $v$ in $T_pM$ is future-oriented if $g_p(v,X(p))<0$ and past-oriented if $g_p(v,X(p)) > 0$. Remark that up to a double cover, $M$ is always time-orientable.

\begin{definition}
A spacetime is a connected, oriented and time-oriented Lorentzian \mbox{manifold.} 
\end{definition}

The basic examples of spacetimes are \emph{Minkowski, de-Sitter} and \emph{anti-de Sitter spaces} which are the spacetimes of constant sectional curvature respectively equal to $0$, $1$, and $-1$. They are the Lorentzian analogues of the Riemannian manifolds of constant sectional curvature : the \emph{euclidean space, the sphere} and \emph{the hyperbolic space} respectively. In this paper, we will focus, in particular, on the \emph{anti-de Sitter space} (see Section \ref{1.2}).

A \emph{differential causal curve} (resp. \emph{timelike, lightlike, spacelike}) of a spacetime $M$ is a curve $\mathcal{C}^1$ on $M$ such that at every point, the tangent vector to the curve is \emph{causal} (resp \emph{timelike, lightlike, spacelike}). As for Riemannian manifold, it is possible to define the Levi-Civita connection on $M$ which is the unique torsion-free connection on the tangent bundle of $M$ preserving its Lorentzian metric. Also, a geodesic of $M$ is a curve such that parallel transport along the curve preserves tangent vectors to the curve. However, geodesics are not considered as a minimizing curves anymore since a Lorentzian metric does not provide a distance on the spacetime. Since the type of tangent vectors to a geodesic is the same along the curve, the type of a geodesic of a spacetime is always well-defined. A lightlike geodesic is called a \emph{photon}. A causal curve is said to be \emph{future-oriented} (resp. \emph{past-oriented}) if all tangent vectors to the curve are future-oriented (resp. \emph{past-oriented}). It is possible to generalize the definition of future (resp.past) causal curve to piecewise differential curves. An important notion in the study of spacetimes is \emph{the causality}. By \emph{causality} we refer to the general question on which points in a spacetime can be joined by causal curves; on the relativity point of view, which events can influence a given event. This motivates the following definitions.

The \emph{causal future} $J^+_U(A)$ (resp. \emph{chronological future} $I^+_U(A)$) of a subset $A$ of spacetime $M$ relatively to an open subset $U$ containing $A$ is the set of future-ends of piecewise-smooth \emph{causal} curves (resp. \emph{timelike}) starting from a point of $A$ contained in $U$. Similarly, we define the \emph{causal past} $J^-_U(A)$ (resp. \emph{chronological past} $I^-_U(A)$) of a subset $A$ of spacetime $M$ relatively to an open subset $U$ containing $A$ by remplacing future by \emph{past} in the definition. If $U = M$, we just write $J^{\pm}(A)$ (resp. $I^{\pm}(A)$) instead of $J^{\pm}_M(A)$ (resp. $I^{\pm}_M(A)$). A spacetime $M$ could contain subsets where no point is causally related to another. A subset $A$ of $M$ is \emph{achronal} (resp. \emph{acausal}) if no timelike (resp. causal) curve meets $A$ more than once.\\

\paragraph{\textit{Conformal spacetimes.}} A conformal Lorentzian manifold is a smooth manifold equipped, no longer with a single Lorentzian metric, but with a conformal class of Lorentzian metrics. Notice that conformal changes of metrics does not change the type of tangent vectors since in each tangent space, the metric is multiplied by a positive scalar. However, geodesics are not preserved by conformal changes of metrics except lightlike geodesics.

\begin{theorem}
Let $(M,g)$ a pseudo-Riemannian manifold. Then, lightlike geodesics are the same, up to parametrization, for all metrics conformally equivalent to $g$.
\end{theorem}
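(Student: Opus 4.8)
The plan is to exploit the explicit relation between the Levi-Civita connections of two conformally related metrics. Since any metric conformally equivalent to $g$ is of the form $\widetilde g = e^{2\phi}g$ for some smooth function $\phi$, it suffices to compare $g$ with such a $\widetilde g$. Writing $\nabla$ and $\widetilde\nabla$ for the associated Levi-Civita connections, the usual computation forcing torsion-freeness and metric compatibility gives, for all vector fields $X,Y$,
\[
\widetilde\nabla_X Y \;=\; \nabla_X Y + (X\phi)\,Y + (Y\phi)\,X - g(X,Y)\,\operatorname{grad}_g\phi,
\]
where $\operatorname{grad}_g\phi$ denotes the gradient of $\phi$ with respect to $g$.

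Now let $\gamma$ be a lightlike geodesic of $(M,g)$, so that $\nabla_{\dot\gamma}\dot\gamma = 0$ and $g(\dot\gamma,\dot\gamma) \equiv 0$ along $\gamma$. Specializing the identity above to $X=Y=\dot\gamma$, the term $g(\dot\gamma,\dot\gamma)\operatorname{grad}_g\phi$ vanishes precisely because $\gamma$ is $g$-lightlike, and we obtain
\[
\widetilde\nabla_{\dot\gamma}\dot\gamma \;=\; 2\,(\dot\gamma\phi)\,\dot\gamma \;=\; 2\,\frac{d(\phi\circ\gamma)}{dt}\,\dot\gamma .
\]
Hence $\widetilde\nabla_{\dot\gamma}\dot\gamma$ is pointwise proportional to $\dot\gamma$; that is, $\gamma$ is a $\widetilde g$-pregeodesic (a geodesic up to reparametrization). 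Moreover $\widetilde g(\dot\gamma,\dot\gamma) = e^{2\phi}\,g(\dot\gamma,\dot\gamma) \equiv 0$, so the image of $\gamma$ is also lightlike for $\widetilde g$.

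It remains to upgrade a pregeodesic to an affinely parametrized geodesic. If $\widetilde\nabla_{\dot\gamma}\dot\gamma = f\,\dot\gamma$ for a function $f$ along $\gamma$, then for a reparametrization $\sigma(s)=\gamma(t(s))$ one computes $\widetilde\nabla_{\sigma'}\sigma' = \big(t'' + (f\circ t)\,(t')^2\big)\,(\dot\gamma\circ t)$, which vanishes exactly when $t$ solves the ordinary differential equation $t'' + (f\circ t)(t')^2 = 0$; this equation is readily integrated by quadrature (dividing by $t'$ turns it into $(\ln t')' = -(f\circ t)\,t'$), and its solutions have $t'$ nowhere zero, so $\sigma$ is an affinely parametrized lightlike $\widetilde g$-geodesic with the same image as $\gamma$. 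Applying the same reasoning with $g$ and $\widetilde g$ interchanged (i.e.\ with the factor $e^{-2\phi}$) yields the reverse inclusion; hence the lightlike geodesics of $g$ and of $\widetilde g$ coincide as unparametrized curves, which is the assertion. The only slightly delicate point is this last reparametrization step — in particular, checking that the new parameter is a genuine diffeomorphism — but that is immediate from the ODE, since $t'$ cannot vanish. Note that nowhere did the Lorentzian signature intervene, only the vanishing of $g(\dot\gamma,\dot\gamma)$, so the statement holds for arbitrary pseudo-Riemannian $(M,g)$.
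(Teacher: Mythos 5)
Your proof is correct. Note that the paper does not actually prove this statement: it simply refers the reader to \cite{francesarticle}. Your argument is the standard, self-contained one --- the conformal transformation law for the Levi-Civita connection, the observation that the term $g(\dot\gamma,\dot\gamma)\operatorname{grad}_g\phi$ drops out precisely on lightlike curves so that a $g$-lightlike geodesic is a $\widetilde g$-pregeodesic, and the quadrature $t'=Ce^{-F(t)}$ (with $F'=f$) showing the reparametrizing function is a genuine diffeomorphism --- and the symmetry of the conformal relation gives the converse, so the argument is complete as written.
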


A proof of this theorem is given in \cite{francesarticle}. A conformal spacetime is a connected, oriented, time-oriented conformal Lorentzian manifold. An important example of conformal spacetime is the \emph{Einstein universe} which is the Lorentzian analogue of the conformal sphere in Riemannian geometry (see Section \ref{1.3}).\\

\paragraph{\textit{Globally hyperbolic spacetimes.}} An important causal property of spacetimes is the global hyperbolicity.

\begin{definition}
A spacetime $M$ is globally hyperbolic (abbrev. GH) if
\begin{enumerate}
\item $M$ is \emph{causal} which means that it contains no causal loop;
\item for every distinct points $p$ and $q$ in $M$, the intersection $J^+(p) \cap J^+(q)$ is compact. 
\end{enumerate}
\end{definition}

There is another equivalent definition of globally hyperbolic spacetimes which uses the notion of \emph{Cauchy hypersurface}.

\begin{definition}
Let $M$ be a spacetime. A \emph{Cauchy hypersurface} is an acausal hypersurface $S$ in $M$ that intersect every inextensible causal curve in $M$ in exactly one point.
\end{definition}

In \cite{Choquet-Bruhat}, the following statement is proven.

\begin{theorem}
A spacetime is globally hyperbolic if and only if it contains a Cauchy hypersurface.
\end{theorem}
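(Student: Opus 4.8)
\emph{Proof plan.} This is a classical theorem (see \cite{Choquet-Bruhat}); I sketch how I would organise the two implications.

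Suppose first that $M$ contains a Cauchy hypersurface $S$. To see that $M$ is causal, observe that a closed causal curve through a point could be run around indefinitely and thereby extended to an inextensible causal curve meeting $S$ more than once, contradicting the defining property of $S$; hence $M$ has no causal loop. For the compactness of the causal diamond $J^+(p)\cap J^-(q)$, I would combine two ingredients: closedness of the diamond, which follows from the limit curve lemma for causal curves, and the fact that $S$ being a Cauchy hypersurface forces $M$ to coincide with its Cauchy development $D(S)$. The standard confinement argument --- a sequence escaping to infinity inside the diamond would, via a limit curve pushed back to where it meets $S$, yield an inextensible causal curve avoiding $S$, which is impossible --- then gives compactness. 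I expect this direction to be routine with the material recalled in \cite{Choquet-Bruhat}.

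For the converse, assume $M$ is globally hyperbolic and carry out Geroch's construction of a Cauchy time function. Fix a finite Borel measure $\mu$ on $M$ equivalent to the Lorentzian volume measure with $\mu(M)=1$, and set $t^{\pm}(p)=\mu(I^{\pm}(p))$. The analytic core is that $t^{-}$ and $t^{+}$ are continuous: semicontinuity is immediate from the openness of $I^{\pm}$, while the absence of jumps is exactly where global hyperbolicity enters, since a loss of measure along a sequence $p_n\to p$ would produce a causal curve escaping every compact neighbourhood, contradicting compactness of causal diamonds. Along a future-directed causal curve $t^{-}$ strictly increases and $t^{+}$ strictly decreases; and along an \emph{inextensible} future causal curve $\gamma$ one has $\bigcap_s I^{-}(\gamma(s))=\emptyset$ --- otherwise a past-inextensible piece of $\gamma$ would be imprisoned in a compact causal diamond, contradicting strong causality --- so that $t^{-}(\gamma(s))\to 0$ as $s\to-\infty$, and symmetrically $t^{+}(\gamma(s))\to 0$ as $s\to+\infty$. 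Hence $\tau:=\log\bigl(t^{-}/t^{+}\bigr)$ is continuous and increases from $-\infty$ to $+\infty$ along every inextensible causal curve, so $S:=\tau^{-1}(0)$ is a closed achronal topological hypersurface met exactly once by each inextensible causal curve, i.e. a Cauchy hypersurface; a final application of the Bernal--S\'anchez smoothing theorem replaces it, if one wants a smooth spacelike (hence acausal) representative, matching the definition recalled above.

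The hard part is this second implication, and within it the continuity of the volume functions $t^{\pm}$ together with the non-imprisonment step: these are precisely the places where global hyperbolicity --- and, through it, strong causality --- is used essentially. Everything else is bookkeeping with chronological and causal futures and an appeal to the limit curve lemma.
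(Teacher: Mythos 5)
The paper does not prove this statement at all: it is quoted as a classical result with a citation to \cite{Choquet-Bruhat}, so there is no in-paper argument to compare against. Your sketch is the standard proof from the literature (Geroch's volume-function time function for the hard direction, plus the Bernal--S\'anchez smoothing to get an acausal, rather than merely achronal, Cauchy hypersurface as required by the definition recalled in Section 2.1), and it is organised correctly: the essential inputs --- continuity of $t^{\pm}$, strict monotonicity along causal curves, and the non-imprisonment argument showing $t^{\mp}\to 0$ at the ends of an inextensible causal curve --- are exactly where global hyperbolicity is used. Two small points you gloss over: in the easy direction, a closed causal curve might not meet $S$ at all, in which case the contradiction is with the requirement that every inextensible causal curve meets $S$ at least once (not ``more than once''), so both cases should be mentioned; and strict monotonicity of $t^{-}$ along a causal curve needs the spacetime to be (past-)distinguishing so that $I^{-}(p)\subsetneq I^{-}(q)$ on an open set of positive measure --- this follows from global hyperbolicity but deserves a word. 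Neither affects the correctness of the plan; as a proof it would simply be an expansion of the argument the paper delegates to \cite{Choquet-Bruhat}.
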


In a GH spacetime, the Cauchy hypersurfaces are homeomorphic one to the other. Consequently, if one of them is compact, all of them are compact.

\begin{definition}
A globally hyperbolic spacetime is \emph{spatially compact} (abbrev. CGH) if it contains a compact Cauchy hypersurface.
\end{definition}

The next definition introduces a notion of convexity from the point of view of causal structure of a spacetime.

\begin{definition}
A subset $U$ of a spacetime $M$ is \emph{causally convex} if every causal curve between two of its points is contained in $U$.
\end{definition}

\begin{proposition}\label{causally convex is GH}
A causally convex subset $U$ of a GH spacetime $M$ is GH.
\end{proposition}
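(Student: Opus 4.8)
The plan is to use the Cauchy-hypersurface characterization of global hyperbolicity together with the definitions of causal curve and achronality, and to show that the ambient Cauchy hypersurface of $M$ can be ``cut down'' to serve $U$. First I would take a Cauchy hypersurface $S$ of $M$, which exists by the theorem quoted above, and observe that to prove $U$ is GH it suffices to produce a Cauchy hypersurface of $U$. The natural candidate is $S \cap U$: I would check that it is an acausal hypersurface of $U$, which is immediate since acausality is inherited by subsets and $S \cap U$ is open in $S$ hence a hypersurface of the open submanifold $U$. The real content is to show that every inextensible (in $U$) causal curve $\gamma$ in $U$ meets $S \cap U$ exactly once.

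The key step is the following. Let $\gamma : I \to U$ be a causal curve that is inextensible as a curve in $U$. Viewed inside $M$, $\gamma$ extends to an inextensible causal curve $\tilde\gamma$ in $M$; since $S$ is a Cauchy hypersurface of $M$, $\tilde\gamma$ meets $S$ exactly once, at some point $p = \tilde\gamma(t_0)$. I would then argue that in fact $t_0 \in I$, i.e. $p \in \gamma(I)$: if not, then $\gamma$ itself — lying on one side of $S$ in the sense that it never reaches $S$ — could be extended toward $p$ through points of $M$, and causal convexity of $U$ forces the portion of $\tilde\gamma$ between $\gamma$ and $p$ to lie inside $U$ (it is a causal curve with one endpoint in $U$... here one must be slightly careful, since causal convexity as stated concerns curves \emph{between two points of $U$}). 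So the cleanest route is: suppose $\gamma$ does not meet $S$; then $\gamma(I)$ lies entirely in $I^+(S)$ or entirely in $I^-(S)$ (the two open regions $M \setminus S$ determines), say in $I^+(S)$. Because $\gamma$ is inextensible in $U$ but $U$ is a causally convex subset of $M$, any limit point of $\gamma$ in $M$ must lie outside $U$; pushing $\gamma$ back toward $S$ along $\tilde\gamma$ produces a causal curve segment from a point of $U$ (on $\gamma$) to a point $p'$ arbitrarily close to $S$, and I would use causal convexity to trap an initial sub-segment in $U$, contradicting inextensibility of $\gamma$ in $U$ (the curve can be prolonged within $U$). This yields that $\gamma$ meets $S$, hence meets $S \cap U$.

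Uniqueness of the intersection point is easy: if $\gamma$ met $S \cap U$ twice, then as a causal curve in $M$ it would meet $S$ twice, contradicting that $S$ is achronal (a Cauchy hypersurface is acausal). Putting these together, $S \cap U$ is an acausal hypersurface of $U$ met exactly once by every inextensible causal curve of $U$, so $S \cap U$ is a Cauchy hypersurface of $U$ and therefore $U$ is globally hyperbolic.

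The main obstacle I anticipate is the bookkeeping in the inextensibility argument — specifically, reconciling the ``curve between two points of $U$'' phrasing of causal convexity with the need to control a causal curve that a priori has only one endpoint in $U$ and whose other ``end'' is a limit point. The fix is to work with sub-segments: take any interior point $q = \gamma(s)$ and the point $\tilde\gamma(t_0 + \varepsilon)$ on the $U$-side; for $\varepsilon$ small this latter point lies in $U$ (it is a limit, from inside $\gamma$, along $\tilde\gamma$, of points of $U$, and $U$ is open), and then the causal curve from $q$ to $\tilde\gamma(t_0+\varepsilon)$ has both endpoints in $U$, so causal convexity applies and shows $\tilde\gamma$ re-enters and stays in $U$ on that stretch, contradicting maximality of the domain $I$ of $\gamma$. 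Once this local-to-global patching is handled carefully, the rest is routine.
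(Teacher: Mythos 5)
There is a genuine gap, and it is not the bookkeeping issue you flagged but the strategy itself: the intermediate claim that $S\cap U$ is a Cauchy hypersurface of $U$ is false in general. Take $M=\R^{1,1}$ with the standard Cauchy hypersurface $S=\{t=0\}$ and $U=I^+(p)$ for $p=(1,0)$. The set $U$ is causally convex (any causal curve between two of its points stays in $J^+(x)\subset I^+(p)$ for $x\in I^+(p)$), it is a nonempty globally hyperbolic spacetime, yet $S\cap U=\emptyset$. Concretely, the inextensible-in-$U$ causal curve $\gamma(t)=(t,0)$, $t\in(1,\infty)$, extends to $\tilde\gamma(t)=(t,0)$, $t\in\R$, which meets $S$ at $t_0=0\notin(1,\infty)$; so your key step fails. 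The proposed fix breaks at the assertion that $\tilde\gamma(t_0+\varepsilon)$ lies in $U$ for small $\varepsilon$: that point is \emph{not} a limit of points of $\gamma$ (the curve $\gamma$ stays at a positive distance from $S$), and being a limit of points of an open set would in any case only place it in the closure. Causal convexity genuinely cannot be applied to a segment one of whose endpoints is merely ``near $S$'': both endpoints must lie in $U$, and in the example no point of $\tilde\gamma$ below $t=1$ does.

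The paper avoids Cauchy hypersurfaces entirely and uses the other characterization of global hyperbolicity: causality plus compactness of causal diamonds. Causality of $U$ is inherited from $M$, and for $p,q\in U$ causal convexity gives $J^+_U(p)\cap J^-_U(q)=J^+(p)\cap J^-(q)\cap U=J^+(p)\cap J^-(q)$, since the entire ambient diamond is swept out by causal curves between points of $U$ and hence lies in $U$; this set is compact because $M$ is GH. That two-line argument is the intended proof. If you want to keep the Cauchy-hypersurface formulation, you would have to construct a Cauchy hypersurface of $U$ intrinsically (e.g.\ from a Cauchy time function), not by intersecting with one from $M$.
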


\begin{proof}
The spacetime $M$ is causal, so $U$ contains no causal loop. Let $p \in U$. Since $U$ is causally convex, $J^+_U(p) = J^+(p) \cap U$. Let $q \in J^+_U(p)$. Since $M$ is GH, the intersection $J^+(p) \cap J^-(p)$ is compact. Thus, $J^+_U(p) \cap J^+_U(p) =  J^+(p) \cap J^-(p) \cap U$ is a compact subset of $U$. 
\end{proof}

\paragraph{\textit{Maximality.}} We introduce here the notion of maximality for conformal spacetimes. Let $M$ and $N$ be two globally hyperbolic conformal spacetimes.

\begin{definition}
A conformal embedding $f : M \longrightarrow N$ is a \emph{Cauchy embedding} if there exists a Cauchy hypersurface $S$ of $M$ such that $f(S)$ is a Cauchy hypersurface of $N$.
\end{definition}

\begin{definition}
A globally hyperbolic conformal spacetime $M$ is \emph{$\mathcal{C}$-maximal} if every Cauchy-embedding of $M$ into another globally hyperbolic conformal (resp. conformally flat) spacetime is onto.
\end{definition}

\subsection{The anti-de Sitter space} \label{1.2}

Let $\R^{2,n}$ the vector space $\R^{n+2}$, with coordinates $(u,v,x_1,\dots,x_n)$, endowed with the nondegenerate quadratic form of signature $(2,n)$
\begin{align*}
q_{2,n}(u,v,x_1,\dots,x_n) &:= -u^2-v^2+x_1^2+\dots+x_n^2.
\end{align*}
We denote by $<.,.>$ the associated symmetric bilinear form. For any subset $A$ of $\R^{2,n}$, we denote by $A^{\perp}$ the orthogonal of $A$ for $<.,.>$.

\begin{definition}
The anti-de Sitter space $AdS_{1,n}$ is the hypersurface $\{x \in \R^{2,n}:\ q_{2,n}(x) = -1\}$ endowed with the Lorentzian metric obtained by restriction of $q_{2,n}$.
\end{definition}

The tangent space on each point $x$ of $AdS_{1,n}$ coincides with $x^{\perp}$. In the coordinates $(r,\theta, x_1, \ldots, x_n)$ with 
$$
\left\lbrace \begin{array}{ccc}
              u & = & r \cos \theta \\
              v & = & r \sin \theta \\
             \end{array}
\right.             
$$
one can easily see that $AdS_{1,n}$ is diffeomorphic to $\mathbb{S}^1 \times \R^n$ and thus is oriented. Besides, a time-orientation is given by the vector field $\frac{\partial}{\partial \theta}$.

Anti-de Sitter space is a spacetime with constant sectional curvature equal to $-1$. Observe the analogy with the definition of hyperbolic space. As for hyperbolic space, geodesics of $AdS_{1,n}$ are intersections with $2$-planes (see \cite{salveminithesis} for the proof). The signature of the quadratic form $q_{2,n}$ restricted to a given $2$-plane $\mathcal{P}$ determines the nature of the associated geodesic. If it is equal to
\begin{itemize}
\item $(2,0)$ then $\mathcal{P}$  gives a \emph{timelike geodesic};
\item $(1,0,1)$ then $\mathcal{P}$ gives a \emph{lightlike geodesic};
\item $(1,1)$ then $\mathcal{P}$ gives a \emph{spacelike geodesic}.
\end{itemize}

As for the hyperbolic space, anti-de Sitter space has different models, namely, the Klein model and the conformal model.\\

\paragraph{\textit{The Klein model of the anti de-Sitter space.}} Let $\mathbb{S}(\R^{2,n})$ be the quotient of $\R^{2,n}$ by positive homotheties. Remark that $\mathbb{S}(\R^{2,n})$ is a double covering of $\mathbb{P}(\R^{2,n})$. We denote by $\pi$ the projection of $\R^{2,n}$ onto $\mathbb{S}(\R^{2,n})$.\\

The \emph{Klein model} $\mathbb{ADS}_{1,n}$ of the anti-de Sitter space is the projection of $AdS_{1,n}$ to $\mathbb{S}(\R^{2,n})$. The restriction of the projection $\pi$ to $AdS_{1,n}$ is one-to-one. We equip $\mathbb{ADS}_{1,n}$ with the Lorentzian metric such that $\pi$ is a one-to-one isometry between $AdS_{1,n}$ and $\mathbb{ADS}_{1,n}$.

The topological boundary $\partial \mathbb{ADS}_{1,n}$ of $\mathbb{ADS}_{1,n}$ is the projection of the lightlike cone onto $\mathbb{S}(\R^{2,n})$. The continous extension of the isometry between $AdS_{1,n}$ and $\mathbb{ADS}_{1,n}$ is a canonical homeomorphism betwen $AdS_{1,n} \cup \partial AdS_{1,n}$ and $\mathbb{ADS}_{1,n} \cup \partial \mathbb{ADS}_{1,n}$.

In this model, geodesics are the projection of $2$-planes. Moreover, the discussion above around the link between the type of the $2$-plane and the type of the geodesic it determines, still holds.

The projection of $AdS_{1,n}$ in the projective space $\mathbb{P}(\R^{2,n})$ also gives a model of the anti-de Sitter space. We denote it by $\mathtt{AdS}_{1,n}$. In this model, geodesics are still the projection of $2$-planes in the projective space. The Klein model $\mathbb{ADS}_{1,n}$ is a double covering of $\mathtt{AdS}_{1,n}$.\\ 

\paragraph{\textit{The conformal model of the anti de-Sitter space.}} We denote by
\begin{itemize}
\item $\mathcal{H}^{n}$ the upper hemisphere of the $n$-dimensional sphere $\mathbb{S}^n$;
\item ${d\theta}^2$ the standard Riemannian metric on $\mathbb{S}^1$;
\item $ds_0^2$ the restriction to $\mathcal{H}^n$ of the standard Riemannian metric on $\mathbb{S}^n$. 
\end{itemize}

\begin{proposition}\label{conformal model AdS}
The anti-de Sitter space $AdS_{1,n}$ is conformally isometric to $\mathcal{H}^{n} \times \mathbb{S}^1$ endowed with the Lorentzian metric $(ds_0^2 - d\theta^2)$.
\end{proposition}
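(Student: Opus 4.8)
The plan is to exhibit an explicit diffeomorphism $\Phi : \mathcal{H}^n \times \mathbb{S}^1 \longrightarrow AdS_{1,n}$ and compute the pullback metric, showing it is a positive multiple of $ds_0^2 - d\theta^2$. First I would write $AdS_{1,n} = \{(u,v,x_1,\dots,x_n) : u^2+v^2 = 1 + |x|^2\}$, where $|x|^2 = x_1^2+\dots+x_n^2$, and factor out the $\mathbb{S}^1$-direction by setting $u = r\cos\theta$, $v = r\sin\theta$ with $r = \sqrt{1+|x|^2} > 0$. This already gives the diffeomorphism $AdS_{1,n} \cong \{(x,\theta) : x \in \R^n\} = \R^n \times \mathbb{S}^1$, and in these coordinates the restricted metric $q_{2,n} = -du^2-dv^2+\sum dx_i^2$ becomes $-dr^2 - r^2 d\theta^2 + \sum dx_i^2$. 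Since $r^2 = 1+|x|^2$, we have $r\,dr = \sum x_i\,dx_i$, so the metric is $\sum dx_i^2 - \frac{(\sum x_i dx_i)^2}{1+|x|^2} - (1+|x|^2)d\theta^2$, i.e. the spatial part is the standard metric on $\R^n$ pulled back from the upper hemisphere via inverse stereographic-type projection.

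Next I would identify $\R^n$ conformally with $\mathcal{H}^n$. Concretely, $\mathcal{H}^n \subset \mathbb{S}^n \subset \R^{n+1}$ with coordinates $(\omega_0,\omega_1,\dots,\omega_n)$, $\omega_0 > 0$; the central projection $x \mapsto \omega$ onto the hemisphere sends $x$ to the unit vector in the direction $(1,x_1,\dots,x_n)$, namely $\omega_0 = 1/r$, $\omega_i = x_i/r$ with $r = \sqrt{1+|x|^2}$. A direct computation of $ds_0^2 = \sum_{k=0}^n d\omega_k^2$ restricted to $\mathbb{S}^n$ under this parametrization gives $ds_0^2 = \frac{1}{1+|x|^2}\left(\sum dx_i^2 - \frac{(\sum x_i dx_i)^2}{1+|x|^2}\right) = \frac{1}{r^2}\cdot(\text{spatial part above})$. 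Hence the spatial part of the $AdS$ metric equals $r^2\, ds_0^2$. Combining, the $AdS$ metric in the coordinates $(\omega,\theta)$ is $r^2 ds_0^2 - r^2 d\theta^2 = r^2(ds_0^2 - d\theta^2)$, with $r^2 = \omega_0^{-2} > 0$ a smooth positive function on $\mathcal{H}^n \times \mathbb{S}^1$. This proves the conformal isometry.

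The main obstacle is bookkeeping: one must verify carefully that the two ``radial'' corrections — the $-\frac{(\sum x_i dx_i)^2}{1+|x|^2}$ term coming from $dr$ in the $AdS$ metric, and the analogous term appearing when restricting the round metric to the hemisphere — match up so that the spatial part is *exactly* conformal to $ds_0^2$ with the *same* conformal factor $r^2$ that multiplies $d\theta^2$; otherwise one would only get a conformal rescaling in the spatial directions, not a global conformal factor. The cleanest way to handle this is to do both computations through the common intermediate parametrization by $x \in \R^n$, as above, rather than comparing the two target metrics directly. An alternative, essentially equivalent route is to start from the identity $q_{2,n} = -dr^2 - r^2 d\theta^2 + \sum dx_i^2$ on $AdS_{1,n}$, substitute $dr$, factor out $r^2$, and recognize the bracketed expression $r^{-2}(\sum dx_i^2) - r^{-4}(\sum x_i dx_i)^2$ as the pullback of the round hemisphere metric; I would include a one-line verification of this last identification and otherwise relegate the computation to a remark, since it is routine once the diffeomorphism is fixed.
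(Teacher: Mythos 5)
Your computation is correct and complete: the gnomonic (central) projection $x \mapsto \frac{1}{\sqrt{1+|x|^2}}(1,x)$ does pull the round metric of $\mathcal{H}^n$ back to $\frac{1}{1+|x|^2}\bigl(\sum dx_i^2 - \frac{(\sum x_i dx_i)^2}{1+|x|^2}\bigr)$, which matches the spatial part of the induced $AdS$ metric up to the factor $r^2 = 1+|x|^2$, the same factor that multiplies $d\theta^2$; so the induced metric is $r^2(ds_0^2 - d\theta^2)$ as required. The paper does not prove this proposition itself but only cites Proposition 2.4 of \cite{barbot2015deformations}, so your self-contained verification is a welcome addition and follows the standard route one would expect that reference to take. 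The only point worth making explicit is that $\mathcal{H}^n$ must be taken to be the \emph{open} upper hemisphere, so that the central projection is a diffeomorphism from $\R^n$ onto $\mathcal{H}^n$ and $AdS_{1,n} \cong \R^n \times \mathbb{S}^1 \cong \mathcal{H}^n \times \mathbb{S}^1$.
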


A proof of this statement is given in \cite{barbot2015deformations}, Section $2$, Proposition $2.4$.

\subsection{The Einstein universe} \label{1.3}
We denote by $\pi$ the projection of $\R^{2,n}$ into $\mathbb{S}(\R^{2,n})$.
Let $C$ be the lightlike cone of $\R^{2,n}$. Let us consider the projection of $C$ in $\mathbb{S}(\R^{n+2})$. Since the metric $q_{2,n}$ is of signature $(2,n)$, the lightlike cone $C$ is connected and so do $\pi(C)$. Given two sections $\varphi, \varphi' : \pi(C) \longrightarrow C$, an easy computation shows that the two Lorentzian metrics on $\pi(C)$ obtained by the pull back of $q_{2,n}$ by $\varphi$ and $\varphi'$ are conformally equivalent. Therefore, $\pi(C)$ can be naturally endowed with a conformal class of Lorentzian metrics. The \emph{Einstein universe} is the space $\pi(C)$ endowed with this conformal class of Lorentzian metrics. We denote it by $Ein_{1,n-1}$. This is the \emph{Klein model} of the Einstein universe.

Now, let $\mathcal{S}$ be the Riemannian sphere of radius $\sqrt{2}$ of $\R^{2,n}$. The projection $\pi$ defines a one-to-one conformal isometry between $\mathcal{S} \cap C$, endowed with the restriction of the quadratic form $q_{2,n}$, and the Einstein universe. Besides, it is easy to see that $\mathcal{S} \cap C$ endowed with the restriction of the quadratic form $q_{2,n}$ is isometric to $\mathbb{S}^{n-1} \times \mathbb{S}^1$ endowed with the Lorentzian metric $(ds^2 - d\theta^2)$, where $ds^2$ and $d\theta^2$ are the standard Riemannian metrics on $\mathbb{S}^{n-1}$ and $\mathbb{S}^1$ respectively. Hence, the space $\mathbb{S}^{n-1} \times \mathbb{S}^1$ endowed with the conformal class $[ds^2 - d\theta^2]$ is conformally isometric to the Einstein universe. It is the \emph{conformal model} of the Einstein universe.

Remark that in the definition of the Einstein universe, we could consider the projection of the lightlike cone $C$ in the projective space $\mathbb{P}(\R^{2,n})$. It gives us another model of the Einstein universe  we denote by $\mathtt{Ein}_{1,n-1}$. Notice that $Ein_{1,n-1}$ is a double covering of $\mathtt{Ein}_{1,n-1}$.

According to Proposition \ref{conformal model AdS}, the anti-de Sitter space $AdS_{1,n}$ embeds conformally in the Einstein universe $Ein_{1,n}$. Moreover, its boundary $\partial AdS_{1,n}$ is the Einstein universe $Ein_{1,n-1}$. Actually, it follows from Proposition \ref{conformal model AdS} that $Ein_{1,n}$ is the union of two conformal copies of $AdS_{1,n}$ and of their conformal boundary $Ein_{1,n-1}$.\\

Each model of the Einstein universe gives simple descriptions of causal curves and lightlike geodesics (see \cite{Salvemini2013Maximal}, Section $2.2$). In the Klein model, lightlike geodesics are the projections of isotropic $2$-planes. Besides, the lightlike cone of a point $p$ is the intersection of $Ein_{1,n-1}$ with $p^{\perp}$. In the conformal model $\mathbb{S}^{n-1} \times \mathbb{S}^1$, lightlike geodesics are, up to parametrization, the curves $(x(t), e^{2i\pi t})$ where $x$ is a geodesic of the Riemannian sphere $\mathbb{S}^{n-1}$ parametrized by its arc length. Causal (timelike) curves are, up to parametrization, the curves $(x(t),e^{2i\pi t})$ where $x$ is a (stricly) $1$-Lipchitz map from $\R$ to the Riemannian sphere $\mathbb{S}^{n-1}$. 

Since the Einstein universe is compact, each point is causally related to any other point (see \cite{Salvemini2013Maximal}, Section $2.2$, Corollary $2$). Hence, the causal structure of Einstein space gives no information. However, the universal covering $\widetilde{Ein}_{1,n-1}$ has a rich causal structure, in particular, it is globally hyperbolic. The universal covering of $Ein_{1,n-1}$ is identified to $\mathbb{S}^{n-1} \times \R$ endowed with the conformal class $[ds^2 - dt^2]$ where $dt^2$ is the canonical metric over $\R$. Let $p : \widetilde{Ein}_{1,n-1} \longrightarrow Ein_{1,n-1}$ the universal covering map. Since it is a conformal map, the projection of any causal curve of $\widetilde{Ein}_{1,n-1}$ is a causal curve of $Ein_{1,n-1}$. Conversely, any causal curve of $Ein_{1,n-1}$ lifts to a causal curve of $\widetilde{Ein}_{1,n-1}$. It follows that causal (timelike) curves of $\widetilde{Ein}_{1,n-1}$ are, up to parametrization, the curves $(x(t), t)$ where $x$ is a (stricly) $1$-Lipchitz map from $\R$ to the Riemannian sphere $\mathbb{S}^{n-1}$. Moreover, lightlike geodesics are, up to parametrization, the curves $(x(t), t)$ where $x$ is a geodesic of the Riemannian sphere $\mathbb{S}^{n-1}$ parametrized by its arc length. Therefore, the future and the past of a point $(x_0,t_0)$ in $\widetilde{Ein}_{1,n-1}$ are given by
\begin{align*}
I^+ (x_0,t_0) &= \{(x,t) \in \mathbb{S}^{n-1} \times \mathbb{S}^1:\ d_0(x,x_0) < t - t_0\}\\
I^- (x_0,t_0) &= \{(x,t) \in \mathbb{S}^{n-1} \times \mathbb{S}^1:\ d_0(x,x_0) < t_0 - t\}
\end{align*}
where $d_0$ is the distance on the sphere. If we replace the strict inequalities by large inequalities in the sets above, we obtain the causal future and the causal past of $(x_0,t_0)$. Remark that for any point $(x_0,t_0)$ in $\widetilde{Ein}_{1,n-1}$, the subsets $J^+(x_0,t_0)$ and $J^-(x_0,t_0)$ are closed in $\widetilde{Ein}_{1,n-1}$. This doesn't hold if we replace the point $(x_0,t_0)$ by any subset of $\widetilde{Ein}_{1,n-1}$. However, it holds for compact subsets.

\begin{proposition}\label{future of a compact}
Let $K$ be a compact of $Ein_{1,n-1}$. Then, there are two continuous maps $f, g : \mathbb{S}^{n-1} \longrightarrow \R$ such that
\[J^+(K) = \{(x,t) \in \eu : \ f(x) \leq t\}\ \ \mathrm{and}\ \ J^-(K) = \{(x,t) \in \eu : t \leq g(x)\}.\]
\end{proposition}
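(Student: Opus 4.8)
The plan is to describe $J^+(K)$ and $J^-(K)$ explicitly, building on the pointwise formulas for $I^\pm$ and $J^\pm$ of a single point in $\eu \cong \mathbb{S}^{n-1}\times\R$ recalled just before the statement. Since $J^+(K)=\bigcup_{(x_0,t_0)\in K} J^+(x_0,t_0)$, and $J^+(x_0,t_0)=\{(x,t):\ d_0(x,x_0)\le t-t_0\}$, a point $(x,t)$ lies in $J^+(K)$ if and only if there is some $(x_0,t_0)\in K$ with $t\ge t_0 + d_0(x,x_0)$; equivalently $t\ge \inf_{(x_0,t_0)\in K}\bigl(t_0+d_0(x,x_0)\bigr)$. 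So the natural candidate is
\[
f(x) := \inf_{(x_0,t_0)\in K}\bigl(t_0 + d_0(x,x_0)\bigr),
\qquad
g(x) := \sup_{(x_0,t_0)\in K}\bigl(t_0 - d_0(x,x_0)\bigr).
\]
First I would fix the convention that $K$, a compact of $Ein_{1,n-1}$, is lifted to (or intersected with) a fundamental domain so that it is a compact subset of $\eu$; this is where one uses compactness of $K$ to guarantee the lift is bounded in the $\R$-direction. Then the whole argument reduces to two things: (i) $J^+(K)$ is exactly the epigraph $\{(x,t): f(x)\le t\}$, and symmetrically for $g$; (ii) $f$ and $g$ are continuous (indeed $1$-Lipschitz) real-valued functions on $\mathbb{S}^{n-1}$.

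For step (i): the inclusion $J^+(K)\subseteq\{f(x)\le t\}$ is immediate from the pointwise description above, since any causal curve from a point of $K$ to $(x,t)$ forces $t\ge t_0+d_0(x,x_0)$ for its starting point $(x_0,t_0)$, hence $t\ge f(x)$. For the reverse inclusion, suppose $f(x)\le t$. Because $K$ is compact and $(x_0,t_0)\mapsto t_0+d_0(x,x_0)$ is continuous, the infimum defining $f(x)$ is attained at some $(x_0,t_0)\in K$, so $t\ge t_0+d_0(x,x_0)$, i.e. $(x,t)\in J^+(x_0,t_0)\subseteq J^+(K)$. The symmetric statement for $J^-(K)$ and $g$ is identical with $t_0-d_0$ in place of $t_0+d_0$ and a supremum attained by compactness.

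For step (ii): finiteness of $f(x)$ and $g(x)$ follows from boundedness of the lift of $K$ (here the $t_0$-coordinates range over a compact interval and $d_0$ is bounded by $\mathrm{diam}(\mathbb{S}^{n-1})=\pi$). For continuity, the triangle inequality on the round sphere gives $|d_0(x,x_0)-d_0(x',x_0)|\le d_0(x,x')$ uniformly in $x_0$, so $f$ is a pointwise infimum of a family of $1$-Lipschitz functions with a uniform Lipschitz constant, hence itself $1$-Lipschitz; likewise for the supremum defining $g$. That $J^\pm(K)$ are closed is then a corollary, being epigraph/hypograph of continuous functions. I do not expect a genuine obstacle here: the only point needing a little care is the passage between $Ein_{1,n-1}$ and its universal cover $\eu$ — one must be careful that $J^+(K)$ is understood in $\eu$ (for $K$ a compact subset of $\eu$), since in $Ein_{1,n-1}$ itself every point is causally related to every other and the statement would be vacuous; this is presumably fixed by the ambient convention in Section~\ref{1.3}, and I would simply state it clearly at the start of the proof.
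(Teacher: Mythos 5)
Your proposal is correct and uses exactly the same candidate functions $f(x)=\inf_{(x_0,t_0)\in K}\{t_0+d_0(x,x_0)\}$ and $g(x)=\sup_{(x_0,t_0)\in K}\{t_0-d_0(x,x_0)\}$ as the paper, whose proof consists only of exhibiting these formulas; you merely fill in the routine verifications (attainment of the infimum by compactness, $1$-Lipschitz continuity, and the lift of $K$ to $\eu$) that the paper leaves implicit.
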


\begin{proof}
Take \[f(x) = \inf_{(x_0,t_0) \in K} \{d_0(x,x_0) + t_0\}\ \ \mathrm{et}\ \ g(x) = \sup_{(x_0,t_0) \in K} \{t_0 - d_0(x,x_0)\}.\]
\end{proof}

\begin{remark} \label{pi}
Two points $(x,t)$, $(x',t')$ in $\eu$ are necessarily causally related if $|t - t'| \geq \pi$.
\end{remark}

Let consider the map $\bar{\sigma} : Ein_{1,n-1} \longrightarrow Ein_{1,n-1}$ defined as the product of the two antipodal maps of $\mathbb{S}^{n-1}$ and $\mathbb{S}^1$. It lifts to a map $\sigma : \eu \longrightarrow \eu$ which associates to $(x,t)$ the point $(-x,t+\pi)$. This is clearly a conformal map.

\begin{definition}
Two points of the universal Eintein universe are \emph{conjugate} if one is the image by $\sigma$ of the other.
\end{definition}

\begin{remark} \label{conjugate points}
The inextensible lightlike geodesics of $\eu$ starting from a point $p$ of $\eu$ have common intersections at all the points $\sigma^k(p)$ for $k$ in $\mathbb{Z}$. Outside these points, they are pairwise disjoint. 
\end{remark}

\begin{remark}\label{fundamental group}
The fundamental group of $Ein_{1,n-1}$ is isomorphic to $\mathbb{Z}$ and is identified with the cyclic group generated by the map $\delta = \sigma^2$.\\
The fundamental group of $\mathtt{Ein}_{1,n-1}$ is also isomorphic to $\mathbb{Z}$ and is identified with the cyclic group generated by the map $\sigma$.\\
\end{remark}

It easily follows from the description of causal curves that $\eu$ is globally hyperbolic with Cauchy hypersurfaces isomorphic to $\mathbb{S}^{n-1}$.

\begin{definition}\label{affine domain}
For every $p_0 \in AdS_{1,n}$, we define the \emph{affine domain} $U(p_0)$ associated to $p_0$ as the intersection of $Ein_{1,n-1}$ with the projection in $\mathbb{S}(\R^{2,n})$ of the half space \mbox{$\{p \in \R^{2,n}:\ <p,p_0>\  < 0\}$}.
\end{definition}

\begin{remark}
If we take $p_0 = (1,0,\ldots,0)$, we observe that the affine domain $U(p_0)$ identifies with the open subset $\{(x,e^{2i\pi t}) \in \mathbb{S}^{n-1} \times \mathbb{S}^1:\ 0 < t < \pi\}$ of the conformal model of the Einstein universe.
\end{remark}

\begin{definition}\label{affine domain in the universal covering}
For every $t \in \R$, the subset $\mathbb{S}^{n-1} \times ]t,t+\pi[$ of $\eu$ is an \emph{affine domain} of $\eu$. 
\end{definition}

\begin{remark}
The universal covering $p$ of the Einstein universe is one-to-one on every affine domain and the image is an affine domain of $Ein_{1,n-1}$.
\end{remark}

\begin{proposition}\label{acausal set affine domain}
Any acausal subset $\Lambda$ of $\eu$ is contained in an affine domain of $\eu$.
\end{proposition}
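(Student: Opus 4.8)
The plan is to argue in the conformal model $\eu\cong\mathbb{S}^{n-1}\times\R$ with metric $[ds^2-dt^2]$, writing $d_0$ for the round distance on $\mathbb{S}^{n-1}$. From the description of causal curves in $\eu$ recalled above one has $J^{+}(x_0,t_0)=\{(x,t):d_0(x,x_0)\le t-t_0\}$ and $J^{-}(x_0,t_0)=\{(x,t):d_0(x,x_0)\le t_0-t\}$, so two distinct points are causally related precisely when $d_0(x,x')\le|t-t'|$. Since two distinct points of an acausal set cannot be joined by a causal curve, ``$\Lambda$ acausal'' means
\[ d_0(x,x')>|t-t'| \qquad\text{for all distinct } (x,t),(x',t')\in\Lambda . \]
As $\operatorname{diam}(\mathbb{S}^{n-1},d_0)=\pi$, this already forces $|t-t'|<\pi$ for every such pair; in particular $\Lambda$ is bounded in the $t$-direction, so $a:=\inf_{(x,t)\in\Lambda}t$ and $b:=\sup_{(x,t)\in\Lambda}t$ are finite with $b-a\le\pi$. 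The goal is to produce $s\in\R$ with $\Lambda\subseteq\mathbb{S}^{n-1}\times\,]s,s+\pi[$, i.e.\ an affine domain of $\eu$ (Definition \ref{affine domain in the universal covering}).

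First I would dispose of the case where neither $a$ nor $b$ is attained on $\Lambda$: then every point of $\Lambda$ has $t$-coordinate in $]a,b[$, and since $b-a\le\pi$ we already get $\Lambda\subseteq\mathbb{S}^{n-1}\times\,]a,a+\pi[$. It remains to treat the case where one of the two bounds, say $a$, is attained, by a point $(x_0,a)\in\Lambda$ — the case where $b$ is attained being symmetric via the time-reversing conformal involution $(x,t)\mapsto(x,-t)$, which preserves acausality and maps affine domains to affine domains. In this situation the plan is to prove the \emph{strict} inequality $b-a<\pi$; granting it, one chooses $\varepsilon\in\,]0,a+\pi-b[$ and obtains $\Lambda\subseteq\mathbb{S}^{n-1}\times[a,b]\subseteq\mathbb{S}^{n-1}\times\,]a-\varepsilon,\,a-\varepsilon+\pi[$, an affine domain, which finishes the proof.

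The inequality $b-a<\pi$ is the crux, and I would prove it by contradiction, assuming $b=a+\pi$. First, $b$ cannot be attained: a point $(x_1,b)\in\Lambda$ together with $(x_0,a)$ would be at $t$-distance $\pi$, hence causally related (Remark \ref{pi}), contradicting acausality. So there is a sequence $(x_\ell,t_\ell)\in\Lambda$ with $a<t_\ell\to a+\pi$. Acausality of the pairs $\{(x_0,a),(x_\ell,t_\ell)\}$ gives $d_0(x_0,x_\ell)>t_\ell-a\to\pi$, hence $d_0(x_0,x_\ell)\to\pi$, and therefore $x_\ell\to-x_0$, the antipode of $x_0$, by compactness of $\mathbb{S}^{n-1}$. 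For $\ell$ large one has $t_\ell>t_1$, so acausality of $\{(x_1,t_1),(x_\ell,t_\ell)\}$ gives $d_0(x_1,x_\ell)>t_\ell-t_1$; letting $\ell\to\infty$ and using continuity of $d_0$ together with the identity $d_0(y,-x_0)=\pi-d_0(y,x_0)$, the left side tends to $\pi-d_0(x_0,x_1)$ and the right side to $(a+\pi)-t_1$, whence
\[ \pi-d_0(x_0,x_1)\ \ge\ (a+\pi)-t_1,\qquad\text{i.e.}\qquad d_0(x_0,x_1)\le t_1-a . \]
But acausality of $\{(x_0,a),(x_1,t_1)\}$ forces $d_0(x_0,x_1)>t_1-a$ — a contradiction. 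The one delicate point, and the step I expect to be the real obstacle, is that this contradiction is reached only \emph{in the limit}: estimating $d_0(x_1,x_\ell)$ at a finite stage via the triangle inequality $d_0(x_1,-x_0)+d_0(-x_0,x_\ell)$ gives nothing, and one must exploit that $x_\ell\to-x_0$ exactly so that the slack $d_0(-x_0,x_\ell)$ vanishes and the estimate becomes sharp.
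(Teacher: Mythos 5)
Your proof is correct and follows the same basic strategy as the paper's: work in the conformal model, use acausality to bound the $t$-coordinates, and fit $\Lambda$ into a slab $\mathbb{S}^{n-1}\times\,]t_0,t_0+\pi[$. In fact your argument is more careful than the paper's one-line proof, which passes from $\sup p_2(\Lambda)-\inf p_2(\Lambda)\leq\pi$ directly to the conclusion without addressing the borderline case where the difference equals $\pi$ and one bound is attained; your limiting argument with the antipode $-x_0$, which uses the full acausal inequality $d_0(x,x')>|t-t'|$ rather than only the weaker consequence recorded in Remark \ref{pi}, is exactly what is needed to exclude that configuration.
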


\begin{proof}
Let us consider the projection $p_2 : \mathbb{S}^{n-1} \times \R \longrightarrow \R$. According to Remark \ref{pi}, for every distinct points $(x,t)$, $(x',t')$ in $\Lambda$, we have $|t - t'| < \pi$. Therefore, $p_2(\Lambda)$ is bounded; besides, $m = \inf p_2(\Lambda)$ and $M = \sup p_2(\Lambda)$ satisfy $M - m \leq \pi$. Thus, there exists $t_0 \in [m,m+\pi]$ such that $\Lambda \subset \mathbb{S}^{n-1} \times ]t_0, t_0+\pi[$. 
\end{proof}

In $\eu \simeq (\mathbb{S}^{n-1} \times \R ,[ds^2 - dt^2])$, every achronal (resp. acausal) subset $A$ is precisely the graph of a $1$-Lipschitz (resp. $1$-contracting) function $f: \Lambda_0 \to \R$ where $\Lambda_0$ is a subset of $\mathbb{S}^{n-1}$ endowed with its canonical metric $d_0$. The subset $A$ is closed if and only if $\Lambda_0$ is closed. In particular, the achronal (resp. acausal) embedded topological hypersurfaces are exactly the graphs of $1$-Lipschtiz (resp. $1$-contracting) functions $f : \mathbb{S}^{n-1} \to \R$: they are topological $(n-1)$-spheres.\\

Although there is no acausal and achronal subsets of $Ein_{1,n-1}$, we can ask the following question: 
\begin{center}
Can two distinct points of $Ein_{1,n}$ be lifted to points of $\eu$ which are not extremities of a causal curve?
\end{center}

The following proposition gives an answer to this question.

\begin{proposition}\label{P}
Two distinct points $p$ and $q$ of $Ein_{1,n-1}$ can be lifted to points $\tilde{p}$ and $\tilde{q}$ of $\eu$ respectively which are not extremities of a causal (resp. timelike) curve if and only if the sign of $<p,q>$ is \emph{negative} (resp. \emph{non-positive}).
\end{proposition}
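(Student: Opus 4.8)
The plan is to translate everything into the conformal coordinates on $\eu$. Recall from Section \ref{1.3} that $\eu \simeq (\mathbb{S}^{n-1} \times \R, [ds^2 - dt^2])$, that $Ein_{1,n-1}$ is conformally $\mathbb{S}(\mathcal{S} \cap C)$ with $\mathcal{S}\cap C \subset \R^{2,n}$, and that the universal covering $\eu \to Ein_{1,n-1}$ is induced by $(x,t) \mapsto (\cos t, \sin t, x)$, its deck group being generated by $\delta = \sigma^2 : (x,t) \mapsto (x, t + 2\pi)$ (Remark \ref{fundamental group}). First I would fix arbitrary lifts $\tilde p = (\omega_0, t_0)$ and $\tilde q = (\omega_1, t_1)$ of $p$ and $q$; the corresponding representatives $\hat p = (\cos t_0, \sin t_0, \omega_0)$ and $\hat q = (\cos t_1, \sin t_1, \omega_1)$ lie on $\mathcal{S} \cap C$, and using $\omega_0 \cdot \omega_1 = \cos d_0(\omega_0, \omega_1)$ for unit vectors one gets
\[<\hat p, \hat q>\ =\ \cos d_0(\omega_0, \omega_1)\ -\ \cos(t_0 - t_1).\]
Since representatives of points of $Ein_{1,n-1}$ are defined only up to a positive scalar, the sign of $<p,q>$ is well defined and equals the sign of $\cos d - \cos s$, where $d := d_0(\omega_0, \omega_1) \in [0,\pi]$ and $s := t_0 - t_1$; this is consistent, as changing the lifts replaces $s$ by an element of $s + 2\pi\mathbb{Z}$ and leaves $\cos s$ unchanged.

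Next I would use the causal structure of $\eu$ recalled in Section \ref{1.3}: from the explicit description of $I^{\pm}$ and $J^{\pm}$, two points $(\omega_0, t_0)$ and $(\omega_1, t_1)$ of $\eu$ are the extremities of a causal (resp. timelike) curve if and only if $|t_0 - t_1| \geq d_0(\omega_0, \omega_1)$ (resp. $> d_0(\omega_0, \omega_1)$); this is automatic once $|t_0 - t_1| \geq \pi$ by Remark \ref{pi}. As the lifts of $p$ and $q$ vary, $s = t_0 - t_1$ runs over $s + 2\pi\mathbb{Z}$ while $d$ is fixed; hence there exist lifts for which $\tilde p$ and $\tilde q$ are \emph{not} the extremities of a causal (resp. timelike) curve if and only if $s_{\min} < d$ (resp. $s_{\min} \leq d$), where $s_{\min} := \min_{k \in \mathbb{Z}} |s + 2\pi k| \in [0,\pi]$, the minimum being attained at the optimal lift.

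Finally I would compare $s_{\min}$ with $d$. Both belong to $[0,\pi]$, where $\cos$ is strictly decreasing, and $\cos s = \cos s_{\min}$; therefore $<p,q>$ is negative $\iff \cos d < \cos s_{\min} \iff d > s_{\min}$, and $<p,q>$ is non-positive $\iff \cos d \leq \cos s_{\min} \iff d \geq s_{\min}$. Together with the previous step, this yields both asserted equivalences. I do not anticipate any real obstacle: the only points needing care are checking that the identification $\eu \simeq \mathbb{S}^{n-1} \times \R$ is compatible with the embedding $\mathcal{S} \cap C \hookrightarrow \R^{2,n}$ and with the translation length $2\pi$ of $\delta$ (so that the lattice in the minimisation is $2\pi\mathbb{Z}$), and carefully tracking the strict versus non-strict inequalities that separate the causal and the timelike cases.
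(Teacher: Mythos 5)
Your argument is correct, but it takes a different route from the paper: the paper does not prove Proposition \ref{P} at all, it simply invokes Lemma $10.13$ of \cite{andersson2012}, whereas you give a self-contained computation in the conformal model. Your key identity $<\hat p,\hat q>\ = \cos d_0(\omega_0,\omega_1) - \cos(t_0-t_1)$ for representatives on $\mathcal{S}\cap C$ is right (the $(u,v)$-part contributes $-\cos(t_0-t_1)$ and the $\mathbb{S}^{n-1}$-part contributes $\cos d_0$), the sign of $<p,q>$ is indeed well defined in $\mathbb{S}(\R^{2,n})$, and the reduction of the causal condition to comparing $d \in [0,\pi]$ with $s_{\min} = \min_k|s+2\pi k| \in [0,\pi]$ via monotonicity of $\cos$ on $[0,\pi]$ is clean and correctly tracks the strict versus non-strict cases (including the boundary case $q = \bar\sigma(p)$, where $d = s_{\min} = \pi$ and $<p,q> = 0$). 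The two points you flag as needing care are exactly the right ones: the deck group of $\eu \to Ein_{1,n-1}$ is generated by $\delta = \sigma^2: (x,t)\mapsto(x,t+2\pi)$ (Remark \ref{fundamental group}), so the lattice in the minimisation really is $2\pi\mathbb{Z}$, and the causal characterisation $|t_0-t_1|\geq d_0(\omega_0,\omega_1)$ is literally the description of $J^\pm$ given in Section \ref{1.3} for the parametrization $(x(t),t)$ of lightlike geodesics of $\eu$. What your approach buys is independence from the external reference and an explicit formula for the sign of $<p,q>$ in conformal coordinates; what the citation buys the paper is brevity. Either is acceptable here.
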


This is a consequence of Lemma $10.13$ in \cite{andersson2012}. From Proposition \ref{P}, one can define acausal (resp. achronal) subsets of $Ein_{1,n-1}$ as the subsets which can be lifted to acausal (achronal) subsets of $\eu$.

\begin{definition}
A subset $\Lambda$ of $Ein_{1,n-1}$ is achronal (resp. acausal) if for every distinct points $x,y$ in $\Lambda$, the scalar product $<x,y>$ is non-positive (resp. negative). 
\end{definition}

\begin{remark}
A subset $\Lambda$ of $Ein_{1,n-1} = \partial AdS_{1,n}$ is acausal if and only if every distinct points $x,y$ in $Ein_{1,n-1}$ can be related with a spacelike geodesic of anti-de Sitter space.
\end{remark}

\subsection{Isometry groups}\label{1.4}

Let us denote by $O(2,n)$ the group of linear isometries of $\R^{2,n}$ for the quadratic form $q_{2,n} = -u^2 + v^2 + x_1^2 + \ldots + x_n^2$ and by $O_0(2,n)$ the identity component of $O(2,n)$.\\

\paragraph{\textit{Isometries of anti-de Sitter space.}} It is clear that $O(2,n)$ acts on the anti-de Sitter space $AdS_{1,n}$ by isometries. It turns out that every isometry of $AdS_{1,n}$ comes from an element of $O(2,n)$.

\begin{proposition}\label{isometry group of AdS}
The isometry group of $AdS_{1,n}$ is $O(2,n)$. Moreover, the group of orientation and time-orientation preserving isometries of $AdS_{1,n}$ is $O_0(2,n)$.
\end{proposition}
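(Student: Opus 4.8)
The plan is to reduce an arbitrary isometry to its $1$-jet at a single point, realize that $1$-jet by a linear map lying in $O(2,n)$, and conclude with the rigidity of isometries of a connected pseudo-Riemannian manifold; then, for the second statement, to pin down the effect of $O(2,n)$ on orientation and time-orientation by an explicit computation in the coordinates $(\theta,x_1,\dots,x_n)$.

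One inclusion is immediate: any $g\in O(2,n)$ preserves $q_{2,n}$, hence preserves the level set $AdS_{1,n}=\{q_{2,n}=-1\}$, and being linear and metric-preserving it restricts to an isometry of the induced Lorentzian metric; this action is faithful since $AdS_{1,n}$ spans $\R^{n+2}$. Conversely, let $f$ be an isometry of $AdS_{1,n}$, fix $p_0\in AdS_{1,n}$, and set $q:=f(p_0)$ and $L:=d_{p_0}f\colon T_{p_0}AdS_{1,n}\to T_qAdS_{1,n}$. Using $T_{p_0}AdS_{1,n}=p_0^{\perp}$, $T_qAdS_{1,n}=q^{\perp}$ and the orthogonal splittings $\R^{n+2}=\R p_0\oplus p_0^{\perp}=\R q\oplus q^{\perp}$, define a linear map $g$ of $\R^{n+2}$ by $g(p_0)=q$ and $g|_{p_0^{\perp}}=L$. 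Because $q_{2,n}(p_0)=q_{2,n}(q)=-1$, because $L$ carries the restriction of $q_{2,n}$ to $p_0^{\perp}$ isometrically onto its restriction to $q^{\perp}$, and because $<p_0,w>=0=<q,Lw>$ for $w\in p_0^{\perp}$, the map $g$ preserves $q_{2,n}$, i.e.\ $g\in O(2,n)$. Now $g|_{AdS_{1,n}}$ and $f$ are isometries of the connected manifold $AdS_{1,n}$ sharing the same value and differential at $p_0$, so they coincide: the set where two isometries of a connected pseudo-Riemannian manifold agree to first order is nonempty by hypothesis, closed by continuity, and open because isometries commute with $\exp$ on normal neighborhoods. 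Hence $f=g|_{AdS_{1,n}}$ and $\mathrm{Isom}(AdS_{1,n})=O(2,n)$.

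For the second statement, $AdS_{1,n}\cong\mathbb{S}^1\times\R^n$ is oriented and the timelike field $\partial/\partial\theta$ gives a time-orientation; since $AdS_{1,n}$ is connected, every isometry either preserves or reverses the orientation, and likewise the time-orientation, and these alternatives are locally constant in families. So ``preserve the orientation'' and ``preserve the time-orientation'' define locally constant homomorphisms $O(2,n)\to\mathbb{Z}/2\mathbb{Z}$, and their product $\Phi\colon O(2,n)\to\mathbb{Z}/2\mathbb{Z}\times\mathbb{Z}/2\mathbb{Z}$ factors through $\pi_0(O(2,n))$; in particular $O_0(2,n)\subseteq\ker\Phi$. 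To upgrade this to equality I would use that $O(2,n)$ has exactly four connected components (from the Cartan decomposition $O(2,n)\cong(O(2)\times O(n))\times\R^{2n}$ as a manifold, whence $\pi_0(O(2,n))\cong\pi_0(O(2))\times\pi_0(O(n))\cong\mathbb{Z}/2\mathbb{Z}\times\mathbb{Z}/2\mathbb{Z}$), and then check that $\Phi$ is onto. For the latter I would test the reflections $A\colon(u,v,x)\mapsto(u,-v,x)$, $B\colon(u,v,x_1,\dots,x_n)\mapsto(u,v,-x_1,x_2,\dots,x_n)$ and their product: in the coordinates $(\theta,x_1,\dots,x_n)$ with $u=\sqrt{1+|x|^2}\cos\theta$ and $v=\sqrt{1+|x|^2}\sin\theta$ one has $A(\theta,x)=(-\theta,x)$ and $B(\theta,x)=(\theta,-x_1,x_2,\dots,x_n)$, and computing the action on $d\theta\wedge dx_1\wedge\dots\wedge dx_n$ and on $\partial/\partial\theta$ gives $\Phi(A)=(1,1)$, $\Phi(B)=(1,0)$, $\Phi(AB)=(0,1)$. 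Thus $\Phi$ is surjective, so the induced map $\pi_0(O(2,n))\to\mathbb{Z}/2\mathbb{Z}\times\mathbb{Z}/2\mathbb{Z}$ is a surjection between four-element groups, hence an isomorphism, and therefore $\ker\Phi=O_0(2,n)$ — that is, $O_0(2,n)$ is exactly the group of orientation- and time-orientation-preserving isometries.

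The steps needing the most care are the rigidity lemma in the first part — in particular setting up the ``open'' half of the connectedness argument through normal neighborhoods and the commutation of isometries with the exponential map — and the bookkeeping in the second part: correctly tracking the sign of both $d\theta\wedge dx_1\wedge\dots\wedge dx_n$ and $\partial/\partial\theta$ under each reflection, and having the component count of $O(2,n)$ at hand. Everything else is routine linear algebra on the splittings $\R^{n+2}=\R p_0\oplus p_0^{\perp}$.
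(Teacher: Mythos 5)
Your proof is correct, and its first half is essentially the paper's argument in different packaging: the paper shows that $O(2,n)$ and $\mathrm{Isom}(AdS_{1,n})$ both act freely and transitively on the frame bundle, freeness coming from the rigidity lemma that an isometry fixing a point with identity differential is the identity; you instead extend the $1$-jet of $f$ at $p_0$ to a linear map of $\R^{n+2}$ via the splitting $\R^{n+2}=\R p_0\oplus p_0^{\perp}$ and invoke the same rigidity lemma (which you re-prove by the standard open--closed argument, whereas the paper cites it). The two are interchangeable; yours avoids mentioning the frame bundle at the cost of a small linear-algebra verification. Where you genuinely add value is the second half: the paper disposes of it in one sentence (``$O_0(2,n)$ is made of isometries which preserve the orientation of $\R^{2,n}$ and the orientation of its timelike planes, hence\dots''), which asserts but does not verify that the three non-identity components of $O(2,n)$ each fail to preserve orientation or time-orientation. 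Your argument --- that the orientation/time-orientation characters assemble into a locally constant $\Phi\colon O(2,n)\to\mathbb{Z}/2\mathbb{Z}\times\mathbb{Z}/2\mathbb{Z}$, that $\pi_0(O(2,n))$ has order four, and that the explicit reflections $A$, $B$, $AB$ realize all three nontrivial values (your sign computations in the $(\theta,x)$ coordinates check out) --- is the missing verification, and it is the cleanest way to conclude $\ker\Phi=O_0(2,n)$.
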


The proof of this proposition uses the following lemma.

\begin{lemma}[Proposition $2.1$, Section $2$, Chap. $1$ in \cite{salveminithesis}]\label{lemma isometry}
Let $M$ be a Lorentzian manifold. If $f : M \to M$ is an isometry which fixes a point $x \in M$ such that $d_xf$ is the identity on $T_xM$, then $f$ is the identity on $M$.
\end{lemma}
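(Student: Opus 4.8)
The plan is to run a standard connectedness argument, exploiting the fact that an isometry is affine for the Levi-Civita connection and therefore commutes with the exponential map. Define
\[
A = \{\, p \in M :\ f(p) = p \ \text{and}\ d_pf = \mathrm{id}_{T_pM} \,\}.
\]
By hypothesis $x \in A$, so $A \neq \emptyset$. Assuming $M$ connected --- which is the standing convention here and is in any case indispensable, since on a disconnected manifold one could only conclude that $f$ is the identity on the component of $x$ --- it suffices to prove that $A$ is both open and closed; then $A = M$, i.e. $f = \mathrm{id}_M$.

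The key ingredient is that $f$, being an isometry, preserves the Levi-Civita connection $\nabla$. Indeed, the pullback connection $f^*\nabla$ is torsion-free and compatible with $f^*g = g$, so by the uniqueness of the Levi-Civita connection (recalled in Section \ref{1.1}) we have $f^*\nabla = \nabla$; equivalently, $f$ carries geodesics to geodesics. Concretely, if $\gamma$ is the geodesic with $\gamma(0)=p$ and $\dot\gamma(0)=v$, then $f\circ\gamma$ is the geodesic with initial data $(f(p),\, d_pf(v))$, which yields the commutation relation
\[
f \circ \exp_p = \exp_{f(p)} \circ\, d_pf
\]
on the domain where $\exp_p$ is defined. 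I stress that this holds for any signature: the exponential map is built from geodesics of $\nabla$, and normal neighbourhoods exist regardless of the metric being Lorentzian, the only Riemannian feature lost (length minimization) playing no role here.

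To see that $A$ is open, fix $p \in A$ and choose a normal neighbourhood $U = \exp_p(V)$ on which $\exp_p$ is a diffeomorphism. For any $q = \exp_p(v) \in U$, the commutation relation together with $f(p)=p$ and $d_pf = \mathrm{id}$ gives
\[
f(q) = f(\exp_p(v)) = \exp_{f(p)}(d_pf(v)) = \exp_p(v) = q .
\]
Thus $f$ fixes $U$ pointwise, so $f|_U = \mathrm{id}_U$ and in particular $d_qf = \mathrm{id}_{T_qM}$ for every $q \in U$; hence $U \subseteq A$. For closedness, take a sequence $p_k \to p$ with $p_k \in A$: continuity of $f$ gives $f(p) = \lim f(p_k) = \lim p_k = p$, and continuity of the section $p \mapsto d_pf$ (smooth, since $f$ is smooth) forces $d_pf = \mathrm{id}_{T_pM}$, so $p \in A$. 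The step requiring genuine care is precisely the affine/exponential behaviour of isometries encoded in the commutation relation; once that is in hand, the openness step is immediate and the rest is formal. Since $A$ is non-empty, open and closed in the connected manifold $M$, we conclude $A = M$, that is, $f$ is the identity on $M$.
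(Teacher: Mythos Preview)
Your argument is correct and is the standard proof: the set $A$ of points where $f$ and $d f$ agree with the identity is non-empty, closed by continuity, and open via the commutation $f\circ\exp_p=\exp_{f(p)}\circ d_pf$ on a normal neighbourhood; connectedness of $M$ then forces $A=M$. The paper does not actually supply a proof of this lemma---it is quoted from \cite{salveminithesis}---so there is nothing to compare against beyond noting that your approach is exactly the classical one (valid in any signature, as you correctly emphasise). One small remark: you implicitly use that a Lorentzian isometry is automatically smooth (so that $p\mapsto d_pf$ is continuous); this is true but worth flagging if you want the argument to be fully self-contained.
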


\begin{proof}[Proof of Proposition \ref{isometry group of AdS}]
It is clear that $O(2,n)$ acts transitively on the frame bundle $\mathcal{F}(M)$ of $M$. Since $O(2,n) \subset Isom(AdS_{1,n})$, the action of $Isom(AdS_{1,n})$ on $\mathcal{F}(M)$ is also transitive. Moreover, by Lemma \ref{lemma isometry}, this action is free; therefore so is the action of $O(2,n)$. It follows that $Isom(AdS_{1,n}) = O(2,n)$. The identity component $O_0(2,n)$ is made of isometries which preserves the orientation of $\R^{2,n}$ and the orientation of its timelike plans. It follows that $O_0(2,n)$ is the group of orientation and time-orientation preserving isometries of $AdS_{1,n}$.
\end{proof}

\paragraph{\textit{Conformal isometries of Einstein space.}} In Riemannian geometry , \mbox{Liouville's} theorem states that a conformal mapping between two open subsets of the sphere $\mathbb{S}^n$ (for $n \geq 3$) is the restriction of a Möbius transformation. A consequence of this theorem is that the conformal isometry group of the sphere $\mathbb{S}^n$ is isomorphic to $O(1,n)$. Similarly, in Lorentzian geometry, there is an analogue of Liouville's theorem.

\begin{theorem}[Liouville]\label{Liouville}
A conformal mapping between two open subsets of the Einstein universe $Ein_{1,n-1}$ (for $n \geq 3$) is the restriction of an element of $O(2,n)$.
\end{theorem}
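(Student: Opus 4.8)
The plan is to deduce the global statement from three ingredients: a rigidity property for the action of $O(2,n)$ on $Ein_{1,n-1}$, a purely local version of the theorem obtained by passing to Minkowski charts, and a connectedness argument that glues the local pieces together; the only non-formal ingredient will be the local statement. \emph{Step 1 (rigidity).} I would first record that two elements $g,g'\in O(2,n)$ that agree on a nonempty open subset $W\subset Ein_{1,n-1}$ must coincide. Writing $C\subset\R^{2,n}$ for the lightlike cone and $\pi\colon C\to Ein_{1,n-1}$ for the projection, $\pi^{-1}(W)$ is a nonempty open subset of $C$, hence linearly spans $\R^{2,n}$ (a nonempty open subset of the quadric $\{q_{2,n}=0\}$, which is irreducible and full-dimensional since the signature is $(2,n)$, cannot be contained in a hyperplane). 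For $x\in\pi^{-1}(W)$, the vectors $g(x)$ and $g'(x)$ project to the same point of $\mathbb{S}(\R^{2,n})$, so $g(x)=\mu(x)g'(x)$ with $\mu(x)>0$; thus $h:=(g')^{-1}g$ preserves every line $\R x$ with $x\in\pi^{-1}(W)$. A linear map preserving each line in a spanning family of lines is a homothety $h=\mu\,\mathrm{Id}$, and $h\in O(2,n)$ together with $\mu>0$ forces $\mu=1$, i.e. $g=g'$. In particular $O(2,n)$ acts faithfully on $Ein_{1,n-1}$, so "the restriction of an element of $O(2,n)$" refers to a uniquely determined element.

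\emph{Step 2 (local Liouville).} The core claim is that every point $p\in U$ has a neighbourhood $W_p\subset U$ on which $f$ coincides with $g_p|_{W_p}$ for some $g_p\in O(2,n)$. Since $Ein_{1,n-1}$ is conformally flat, I can choose neighbourhoods of $p$ and of $f(p)$ together with conformal identifications with open subsets of Minkowski space $\R^{1,n-1}$ (for instance via affine domains, Definition~\ref{affine domain}), so that in these charts $f$ becomes a conformal diffeomorphism between domains of $\R^{1,n-1}$. At this point one invokes the classical flat Liouville theorem, valid in dimension $n\geq 3$: such a map is, near each point, the restriction of a composition of a Lorentzian similarity and at most one inversion — equivalently, of a transformation in the group generated by the Poincaré group, the dilations and the inversions of $\R^{1,n-1}$. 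Each of these elementary transformations extends to $Ein_{1,n-1}$ as the action of an explicit element of $O(2,n)$ (the Poincaré transformations and dilations sitting inside a parabolic subgroup, an inversion being a fixed involution), and the chart identifications are themselves restrictions of elements of $O(2,n)$; composing, $f|_{W_p}$ is the restriction of some $g_p\in O(2,n)$. I would either take the flat Liouville theorem as classical — it is proved exactly as in the Riemannian case (regularity being free here since a conformal mapping is a smooth diffeomorphism by definition), the hypothesis $n\geq 3$ entering through the second-order constraints imposed on the conformal factor $\omega$ by $f^\ast g_{\mathrm{Mink}}=e^{2\omega}g_{\mathrm{Mink}}$, which force the trace-free part of a suitably corrected Hessian of $\omega$ to vanish and hence $\omega$ to be either affine (similarity) or of inversion type — or else cite \cite{francesarticle}.

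\emph{Step 3 (gluing).} Finally I may assume $U$ connected, applying the result to each connected component otherwise. Whenever $W_p\cap W_q\neq\emptyset$, both $g_p$ and $g_q$ restrict to $f$ on that overlap, hence agree on a nonempty open set, so $g_p=g_q$ by Step~1. Therefore $p\mapsto g_p$ is locally constant on the connected set $U$, hence constant, equal to some $g\in O(2,n)$, and $f=g|_U$, as desired.

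The hard part will be Step~2, the flat Liouville theorem: this is the one place where the hypothesis $n\geq 3$ is genuinely used (for $n=2$ the conformal automorphism group of $Ein_{1,1}$ is infinite-dimensional), and it is the only step that does not reduce to linear algebra, the structure of the lightlike cone in $\R^{2,n}$, and a connectedness argument. Everything else is formal once that rigidity is in hand.
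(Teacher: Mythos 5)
The paper does not actually prove this theorem: it records it as known and refers to C.~Frances \cite{francesarticle}, Chap.~2, for a proof of a general pseudo-Riemannian version. Your proposal supplies the standard local-to-global skeleton (rigidity of $O(2,n)$ on open sets, reduction to the flat Liouville theorem in Minkowski charts, gluing by connectedness), but its entire non-formal content is concentrated in Step~2 --- the flat Lorentzian Liouville theorem in dimension $\geq 3$ --- which you yourself either cite from \cite{francesarticle} or only sketch at the level of ``the conformal factor satisfies an overdetermined second-order system.'' So relative to the paper you have made explicit the (genuinely useful, purely formal) reduction that the citation leaves implicit, while the analytic core is deferred to exactly the same source. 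That is a legitimate division of labour, and Steps~1 and~3 are correct and worth having on record, since the faithfulness of the $O(2,n)$-action on $Ein_{1,n-1}$ and the passage from a local to a global identification are used silently elsewhere in the paper.

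Three small repairs. First, in Step~1 the assertion ``a linear map preserving each line in a spanning family of lines is a homothety'' is false as stated (a diagonal matrix preserves the coordinate axes, which span); you need to use that your family of lines is an \emph{open} subset of the cone: the eigenvectors of $h$ form a finite union of linear subspaces, so by Baire one eigenspace contains a nonempty open subset of $C$, hence spans $\R^{2,n}$ by your own hyperplane argument, and $h$ is scalar. Second, the affine domains of Definition~\ref{affine domain} are not Minkowski charts; the conformal copy of $\R^{1,n-1}$ inside $Ein_{1,n-1}$ is the complement of the lightlike cone of a point (this is the embedding invoked in Section~\ref{1.5}), so the chart identifications in Step~2 should be taken of that form. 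Third, the conclusion that $f$ is the restriction of a \emph{single} element of $O(2,n)$ requires $U$ connected; for disconnected $U$ the statement only holds componentwise, which is the intended reading of the theorem.
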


A proof of a general version of this theorem stated in the setting of pseudo-Riemannian metrics is given by C. Frances in \cite{francesarticle} Chap. $2$. The orthogonal group $O(2,n)$ acts by conformal isometries on $Ein_{1,n-1}$ (see Lemma $2.6$, Section $2.3$, Chap. $2$ of \cite{salveminithesis}). An immediate consequence of this fact and Liouville's theorem is the following proposition. 

\begin{proposition}
The conformal isometry group of $Ein_{1,n-1}$ (for $n \geq 3$) is isomorphic to $O(2,n)$. Moreover, the group of orientation and time-orientation preserving conformal isometries of $Ein_{1,n-1}$ is isomorphic to $O_0(2,n)$.
\end{proposition}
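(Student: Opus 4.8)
The plan is to combine the two ingredients recalled just above: the conformal action of $O(2,n)$ on $Ein_{1,n-1}$, and the Lorentzian Liouville theorem (Theorem \ref{Liouville}). Write $\Phi : O(2,n) \longrightarrow \mathrm{Conf}(Ein_{1,n-1})$ for the homomorphism induced by the action. The first assertion amounts to showing that $\Phi$ is an isomorphism. Injectivity is the standard fact that the $O(2,n)$-action on $\pi(C)$ is faithful: an element of $O(2,n)$ fixing every null direction of $\R^{2,n}$ is $\pm\mathrm{Id}$ (for $n\geq 2$, one null direction is not enough rigidity: pairing a null vector $v$ with suitable further null vectors $w,u$ and comparing the induced scalars via $g$-invariance of $\langle\cdot,\cdot\rangle$ forces that scalar to square to $1$), and $-\mathrm{Id}$ is the nontrivial antipodal map on the quotient $\pi(C)$ by \emph{positive} homotheties, hence not in $\ker\Phi$. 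Surjectivity is then immediate: a conformal isometry $f$ of $Ein_{1,n-1}$ is, in particular, a conformal mapping of the open set $Ein_{1,n-1}$ into itself, so by Theorem \ref{Liouville} it coincides with the restriction of an element of $O(2,n)$.

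For the second assertion, one inclusion is clear: $Ein_{1,n-1}$ is connected, so $O_0(2,n)$ acts through conformal diffeomorphisms isotopic to the identity, which therefore preserve both the orientation and the time-orientation. For the converse I would argue exactly as for $\mathrm{Isom}(AdS_{1,n})$ in the proof of Proposition \ref{isometry group of AdS}, working in the conformal model $Ein_{1,n-1}\cong\mathbb{S}^{n-1}\times\mathbb{S}^1$ obtained from $\mathcal{S}\cap C$, where $\mathcal{S}$ is the sphere of radius $\sqrt 2$ in $\R^{2,n}$ and the time-orientation is carried by $\partial_\theta$ on the $\mathbb{S}^1$-factor. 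The group $O(2,n)$ has exactly four connected components, represented by $\mathrm{Id}$, $a=\mathrm{diag}(-1,1,\dots,1)$, $b=\mathrm{diag}(1,1,-1,1,\dots,1)$ and $ab$; in the model, $a$ acts as a reflection of the $\mathbb{S}^1$-factor and the identity on $\mathbb{S}^{n-1}$, and $b$ as the identity on $\mathbb{S}^1$ and a reflection of $\mathbb{S}^{n-1}$. Reading off the effect on orientation and on $\partial_\theta$: $a$ reverses both the orientation and the time-orientation, $b$ reverses the orientation and preserves the time-orientation, and $ab$ preserves the orientation and reverses the time-orientation. Hence the four components of $O(2,n)$ are sent bijectively onto the four possible sign pairs, so $g\in O(2,n)$ preserves both the orientation and the time-orientation of $Ein_{1,n-1}$ if and only if $g\in O_0(2,n)$.

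The only point requiring genuine care is this last bookkeeping, matching $\pi_0(O(2,n))$ with the pair of orientation and time-orientation signs — and even this is essentially a repetition, in the conformal setting, of the computation already made for anti-de Sitter space in Proposition \ref{isometry group of AdS}; the rest, as noted in the text, is an immediate consequence of Liouville's theorem. (One also uses the standard fact, implicit there, that $O(2,n)$ for $n\geq 2$ has exactly four connected components with $O_0(2,n)$ the identity one.)
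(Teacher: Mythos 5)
Your proof is correct and follows exactly the route the paper takes: the paper states this proposition without a written proof, presenting it as an immediate consequence of the conformal $O(2,n)$-action on $Ein_{1,n-1}$ together with the Lorentzian Liouville theorem (Theorem \ref{Liouville}), and your argument supplies precisely those details (faithfulness of the action for injectivity, Liouville for surjectivity, and the four-component bookkeeping for the orientation and time-orientation statement). There is no gap.
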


\paragraph{\textit{Conformal isometries of universal Einstein space.}} Let us denote by $\tilde{O}(2,n)$ the group of conformal transformations of $\eu$ and let $\tilde{O}_0(2,n)$ be the group of orientation and time-orientation preserving conformal transformations of $\eu$.\\

Every conformal transformation $f$ of $Ein_{1,n-1}$ lifts to a conformal transformation $\tilde{f}$ of $\eu$ such that $p \circ \tilde{f} = f \circ p$. Conversely, every conformal transformation of $\eu$ defines a conformal transformation of the quotient space $Ein_{1,n-1} = \eu / <\delta>$. Thus, there is a surjective morphism $j~ : \tilde{O}(2,n) \longrightarrow O(2,n)$ and its kernel is generated by $<\delta>$. Notice that the image of $\tilde{O}_0(2,n)$ by this morphism is $O_0(2,n)$. It must be clear to the reader that $\tilde{O}(2,n)$ (resp. $\tilde{O}_0(2,n)$) is not the universal covering of $\tilde{O}(2,n)$ (resp. $\tilde{O}_0(2,n)$).\\

Throughout this paper, we will only consider isometries and conformal isometries which preserves orientation and time-orientation, i.e. elements of $O_0(2,n)$ or $\tilde{O}_0(2,n)$.

\subsection{Conformally flat spacetimes} \label{1.5}

A spacetime $M$ is conformally flat if it is locally conformally isometric to the Minkowski space. It turns out that the Einstein universe is conformally flat. This comes from the fact that the flat Minkowski spacetime conformally embeds in the Einstein universe (see \cite{salveminithesis}, Chap. $2$, Section $2.7$). Since the action of the group $O(2,n)$ on $Ein_{1,n-1}$ is transitive, every point of the spacetime $Ein_{1,n-1}$ has a neighborhood conformally equivalent to $\R^{1,n-1}$. It follows that any spacetime locally modeled on $Ein_{1,n-1}$, i.e. equipped with a $(O_0(2,n), Ein_{1,n-1})$-structure, is conformally flat. By the Lorentzian version of Liouville theorem (see Theorem \ref{Liouville}), the inverse is true in dimension at least $3$. More precisely, any conformally flat spacetime of dimension equal or greater than $3$ is locally modeled on the Einstein universe. Remark that since the anti-de Sitter space conformally embeds in the Einstein spacetime, it is conformally flat.\\

\paragraph{\textit{Maximality.}} We defined in Section \ref{1.1} a notion of maximality for globally hyperbolic \emph{conformal} spacetimes. In fact, we can extend this definition to any category of spacetimes (see \cite{Salvemini2013Maximal}, Section $1$, Definition $2$), in particular for the category $\mathcal{C}_0$ of conformally flat spacetimes. 

\begin{definition}
A globally hyperbolic conformally flat spacetime $M$ is \emph{$\mathcal{C}_0$-maximal} if every Cauchy-embedding of $M$ into another globally hyperbolic conformally flat spacetime is onto.
\end{definition}

Since conformally flat spacetimes are conformal spacetimes, every $\mathcal{C}$-maximal spacetime is $\mathcal{C}_0$-maximal. Conversely, C. Rossi proves in \cite{salveminithesis} that every $\mathcal{C}_0$-maximal spacetime is $\mathcal{C}$-maximal.

\section{Anosov representations} \label{2}

In this section, we recall the notions of limit set and Anosov representations. Let $\mathcal{B} = \{e_1,\ldots, e_{n+2}\}$ be a basis of $\R^{2,n}$ such that for every $x \in R^{2,n}$ with coordinates $(x_1, \ldots, x_{n+2})$ in the basis $\mathcal{B}$, we have 
\[q_{2,n}(x) = x_1x_{n+2} + x_2x_{n+1} + x_3^2 + \ldots + x_{n+2}^2.\]

\subsection{Dynamic on the projective space} \label{2.1}

\paragraph{\textit{Cartan decomposition.}} Let $K = O(n+2) \cap O_0(2,n)$ be a maximal compact subgroup of $O_0(2,n)$ which is isomorphic to $SO(2) \times SO(n)$. Let $A^+$ be the Weyl chamber of $O_0(2,n)$ defined as follow

\[A^+ := \{ \left(
\begin{array}{ccccccc}
e^{\lambda} &  &  &  &  &  & \\
 & e^{\mu} &  &  &  & & \\
 &  & 1 & & & &  \\
 &  &  & \ddots & & &  \\
 &  &  &  & 1 &  & \\
 &  &  &  &  & e^{-\mu} &  \\
 &  &  &  &  &  & e^{-\lambda}
\end{array} \right)
;\ \lambda \geq \mu \geq 0\} \]

The group $O_0(2,n)$ admits the \emph{Cartan decomposition} $O_0(2,n) = K A^+ K$: any $g$ in $O_0(2,n)$ may be written
\[ g = k_g a_g l_g\]
for some $k,l$ in $K$ and a unique $a$ in $A^+$ called \emph{Cartan projection} of $g$ (see \cite{helgason1979differential}, Chap. IX, Thm $1.1$).\\

\paragraph{\textit{$P_1$-divergence.}} Let $\{g_i\}_i$ be a sequence of $O_0(2,n)$. For every $i \in \mathbb{N}$, we denote by $\lambda_i$ and $\mu_i$ the exponents appearing on the diagonal of the Cartan projection of $g_i$.

\begin{definition}
The sequence $\{g_i\}_i$ is \emph{$P_1$-divergent} if $\lim (\lambda_i - \mu_i) = + \infty$.  
\end{definition}

Let us describe the dynamic on the projective space $\mathbb{P}(\R^{2,n})$ under the action of $P_1$-divergent sequences of $O_0(2,n)$ . We denote by $[x]$ the equivalence class of a vector $x \in \R^{2,n}$ in $\mathbb{P}(\R^{2,n})$. For every $i \in \mathbb{N}$, let $a_i$ be the Cartan projection of $g_i$.

\begin{proposition}
The sequence $\{a_i\}_i$ converge uniformly on compact subsets of the complementary of $[e_{n+2}]^{\perp}$ towards the constant map $[e_1]$.
\end{proposition}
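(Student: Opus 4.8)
The plan is to pass to the affine chart of $\mathbb{P}(\R^{2,n})$ complementary to $[e_{n+2}]^{\perp}$ and to compute the action of the diagonal matrices $a_i$ there directly. With respect to the basis $\mathcal{B}$, the bilinear form associated to $q_{2,n}$ pairs $e_1$ with $e_{n+2}$ and with no other basis vector, so $[e_{n+2}]^{\perp} = \{[x] \in \mathbb{P}(\R^{2,n}) : \langle x, e_{n+2}\rangle = 0\}$ is the projective hyperplane $\{[x] : x_1 = 0\}$, and its complement is the affine chart consisting of the points $[1 : y_2 : \cdots : y_{n+2}]$ with $(y_2,\dots,y_{n+2}) \in \R^{n+1}$; in this chart $[e_1]$ is the origin $y = 0$. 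Since every element of $A^+$ has a nonzero $e_1$-coefficient, each $a_i$ maps this chart into itself, so the whole argument can be run inside it with, say, the Euclidean distance near the origin, and uniform convergence there is the same as uniform convergence in $\mathbb{P}(\R^{2,n})$.

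Next I would write $a_i = \mathrm{diag}(e^{\lambda_i}, e^{\mu_i}, 1, \dots, 1, e^{-\mu_i}, e^{-\lambda_i})$ (the middle block of $1$'s being absent if $n = 2$), apply it to $[1 : y_2 : \cdots : y_{n+2}]$, and renormalize the first coordinate to $1$; the computation gives
\[ a_i \cdot (y_2,\dots,y_{n+2}) = \bigl( e^{\mu_i - \lambda_i} y_2,\ e^{-\lambda_i} y_3,\ \dots,\ e^{-\lambda_i} y_n,\ e^{-(\mu_i + \lambda_i)} y_{n+1},\ e^{-2\lambda_i} y_{n+2} \bigr). \]
The heart of the matter is then arithmetic: $P_1$-divergence means $\lambda_i - \mu_i \to +\infty$, and the Weyl chamber conditions $\lambda_i \geq \mu_i \geq 0$ give both $\lambda_i \geq \lambda_i - \mu_i \to +\infty$ and the fact that $\mu_i - \lambda_i$ is the largest of the four exponents $\mu_i - \lambda_i,\ -\lambda_i,\ -(\mu_i + \lambda_i),\ -2\lambda_i$. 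Consequently every coordinate of $a_i \cdot y$ is bounded in absolute value by $e^{\mu_i - \lambda_i}\, \max_{2 \le j \le n+2} |y_j|$, and $e^{\mu_i - \lambda_i} \to 0$.

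Finally, for uniform convergence on a compact set $C$ in the chart I would use that $C$ is a bounded subset of $\R^{n+1}$, so there is $R > 0$ with $\max_j |y_j| \le R$ for all $y \in C$; hence $\sup_{y \in C} \| a_i \cdot y \| \le R\, e^{\mu_i - \lambda_i} \to 0$, which is exactly the asserted uniform convergence of $\{a_i\}_i$ to the constant map with value $[e_1]$. I do not expect a genuine obstacle here: this is simply the explicit diagonal model of the attracting/repelling dynamics of a $P_1$-divergent sequence, and the only points demanding a little care are the correct identification of $[e_{n+2}]^{\perp}$ with $\{x_1 = 0\}$ for the form written in the basis $\mathcal{B}$, and the observation that the single factor $e^{\mu_i - \lambda_i}$ dominates all the others — both immediate from $\lambda_i \geq \mu_i \geq 0$.
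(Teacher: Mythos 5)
Your proof is correct and follows essentially the same route as the paper's: both identify $[e_{n+2}]^{\perp}$ with the hyperplane $\{x_1=0\}$, use that a compact set disjoint from it has bounded affine coordinates, and bound the renormalized image of $a_i$ by the four exponential factors $e^{\mu_i-\lambda_i}, e^{-\lambda_i}, e^{-(\mu_i+\lambda_i)}, e^{-2\lambda_i}$, of which $e^{\mu_i-\lambda_i}\to 0$ dominates. The only cosmetic difference is that you work with the Euclidean distance in the affine chart while the paper bounds a chordal distance on $\mathbb{P}(\R^{2,n})$ directly.
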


\begin{proof}
We denote by $||.||$ the euclidean norm on $\R^{n+2}$. We consider on the projective space the distance defined for every lines $p$, $p'$ with respective direction vectors $\mathrm{x}$ and $\mathrm{x}'$ by
\[d(p,p') = \min\{||\mathrm{x}-\mathrm{x}'||, ||\mathrm{x}+\mathrm{x}'||\}.\]
We have $[e_{n+2}]^{\perp} = \{[x_1:\ldots:x_{n+2}],\ x_1 = 0\}$. Let $C$ be a compact of $\pr(\R^{2,n})$ disjoint from $[e_{n+2}]^{\perp}$. There exist $\eta > 0$ such that for every $[1:x_2:\ldots:x_{n+2}] \in C$, we have $|x_i| \leq \frac{1}{\eta}$ for $i = 2,\ldots, n+2$. It follows that for every $p = [1:x_2:\ldots:x_{n+2}] \in C$
\[d(a_i.p, [e_1]) = d(e^{-\lambda_i}a_i.p, [e_1]) \leq \frac{1}{\eta^2}(e^{2(\mu_i-\lambda_i)}+ne^{-2\lambda_i}+e^{-2(\mu_i+\lambda_i)}+e^{-4\lambda_i}).\]
Thus, $\lim \underset{p \in C} {\sup} d(a_i.p, [e_1]) = 0$.
\end{proof}

\begin{corollary}\label{dyn. pr}
There is a subsequence $\{g_{i_j}\}_j$ such that
\begin{itemize}
\item for every $j \in \mathbb{N}$, $g_{i_j} = k_{i_j} a_{i_j} l_{i_j}$ where $\{k_{i_j}\}$ and $\{l_{i_j}\}$ are sequences of $K$ converging on the projective space towards maps $k, l$ in $K$;
\item the sequence $\{g_{i_j}\}_j$ converge uniformly on compact subset of the complementary of the $q_{2,n}$-orthogonal of $p_- = l^{-1} [e_{n+2}]$ towards the constant map $p_+ = k [e_1]$.
\end{itemize}
\end{corollary}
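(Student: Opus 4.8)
The plan is to deduce the statement from the preceding proposition together with compactness of $K$. First I would extract a suitable subsequence: since $g_i = k_i a_i l_i$ by the Cartan decomposition, and $K \simeq SO(2) \times SO(n)$ is compact, the sequences $\{k_i\}_i$ and $\{l_i\}_i$ admit subsequences $\{k_{i_j}\}_j$ and $\{l_{i_j}\}_j$ converging in $K$ (for the usual topology, hence also for the action on $\mathbb{P}(\R^{2,n})$, since the action map $K \times \mathbb{P}(\R^{2,n}) \to \mathbb{P}(\R^{2,n})$ is continuous). Call the limits $k$ and $l$. Along this subsequence, the $P_1$-divergence hypothesis is inherited, so $\lim_j(\lambda_{i_j} - \mu_{i_j}) = +\infty$ and the proposition applies to $\{a_{i_j}\}_j$: it converges uniformly on compact subsets of $\mathbb{P}(\R^{2,n}) \setminus [e_{n+2}]^\perp$ to the constant map $[e_1]$.

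The second step is to transport this convergence through the pre- and post-composition by $l_{i_j}$ and $k_{i_j}$. Set $p_- = l^{-1}[e_{n+2}]$ and $p_+ = k[e_1]$; note $p_-^\perp = l^{-1}\bigl([e_{n+2}]^\perp\bigr)$ since $l \in O(2,n)$ preserves $q_{2,n}$. Let $C$ be a compact subset of $\mathbb{P}(\R^{2,n}) \setminus p_-^\perp$. The key point is that $l_{i_j}(C)$ stays uniformly away from $[e_{n+2}]^\perp$: because $l_{i_j} \to l$ uniformly (the $K$-action on the compact $\mathbb{P}(\R^{2,n})$ is uniformly continuous, so convergence in $K$ gives uniform convergence of the maps) and $l(C)$ is a compact subset of the open set $\mathbb{P}(\R^{2,n}) \setminus [e_{n+2}]^\perp$, for $j$ large the sets $l_{i_j}(C)$ are contained in a single compact neighbourhood $C'$ of $l(C)$ disjoint from $[e_{n+2}]^\perp$. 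Then
\[
d\bigl(g_{i_j} \cdot p,\, p_+\bigr) \;\leq\; d\bigl(k_{i_j}(a_{i_j} l_{i_j} \cdot p),\, k_{i_j}[e_1]\bigr) + d\bigl(k_{i_j}[e_1],\, k[e_1]\bigr)
\]
for every $p \in C$; the first term is controlled by the proposition applied on $C'$ together with the equicontinuity of the isometries $k_{i_j}$ on $\mathbb{P}(\R^{2,n})$, and the second term tends to $0$ uniformly. Taking the supremum over $p \in C$ and letting $j \to \infty$ gives the claimed uniform convergence to $p_+$.

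The main technical point to be careful about is the interplay between the two "topologies" on $K$: convergence of elements of $K$ as a Lie group versus uniform convergence of the induced transformations of $\mathbb{P}(\R^{2,n})$. Since $\mathbb{P}(\R^{2,n})$ is compact and the action $K \times \mathbb{P}(\R^{2,n}) \to \mathbb{P}(\R^{2,n})$ is continuous, these coincide, and the family $\{k_{i_j}\}_j$ is uniformly equicontinuous; this is what lets one push compact sets forward and pull uniform estimates back without losing control. Everything else is a routine triangle-inequality bookkeeping once the subsequence and the auxiliary compact set $C'$ are in place. I do not expect any serious obstacle; the content is entirely in the proposition, and the corollary is its "geometric repackaging" via the Cartan decomposition.
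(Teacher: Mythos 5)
Your argument is correct and is precisely the deduction the paper leaves implicit (the corollary is stated without proof as an immediate consequence of the preceding proposition): extract convergent subsequences in the compact group $K$, transport the compact set through $l_{i_j}$, apply the proposition, and post-compose with $k_{i_j}$. One small simplification: since $K \subset O(n+2)$ and the distance $d$ is defined via the Euclidean norm, the maps $k_{i_j}$ are genuine isometries of $(\mathbb{P}(\R^{2,n}),d)$, so the first term in your triangle inequality equals $d(a_{i_j} l_{i_j}\cdot p, [e_1])$ exactly and no equicontinuity argument is needed.
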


The points $p_+$ and $p_-$ are respectively called the \emph{attracting} and the \emph{repelling} points of $\{g_i\}_i$.\\

Since $e_1$ and $e_{n+2}$ are isotropic vectors of $\R^{2,n}$, the points $[e_1]$ and $[e_{n+2}]$ belong to $\mathtt{Ein}_{1,n-1}$. Moreover, since the elements $k, l$ lies in $K = O_0(2,n) \cap O(n+2)$, the points $p_+ = k [e_1]$ and $p_- = l^{-1} [e_{n+2}]$ still belong to $\mathtt{Ein}_{1,n-1}$. Recall that in the Einstein space, $p_-^{\perp}$ is the lightlike cone of $p_-$. Hence, Corollary \ref{dyn. pr} can be rephrased in the Einstein universe as follow.

\begin{corollary}\label{dyn. pr ein}
There is a subsequence $\{g_{i_j}\}_j$ and there are points $p_+$ and $p_-$ in $\mathtt{Ein}_{1,n-1}$ such that $\{g_{i_j}\}_j$ converge uniformly on compact subsets outside the lightlike cone of $p_-$ towards the constant map $p_+$.
\end{corollary}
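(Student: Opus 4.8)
The plan is to deduce this corollary directly from Corollary~\ref{dyn. pr} by translating the statement about $\mathbb{P}(\R^{2,n})$ into the Einstein universe $\mathtt{Ein}_{1,n-1}$. First I would recall that $\mathtt{Ein}_{1,n-1}$ sits inside $\mathbb{P}(\R^{2,n})$ as the projectivization of the $q_{2,n}$-isotropic cone, so convergence of a sequence of maps uniformly on compact subsets in $\mathbb{P}(\R^{2,n})$ restricts to convergence uniformly on compact subsets of $\mathtt{Ein}_{1,n-1}$, provided the limit point also lies in $\mathtt{Ein}_{1,n-1}$. Applying Corollary~\ref{dyn. pr} gives a subsequence $\{g_{i_j}\}_j$, the Cartan decompositions $g_{i_j} = k_{i_j} a_{i_j} l_{i_j}$ with $k_{i_j} \to k$, $l_{i_j} \to l$ in $K$, and the convergence of $\{g_{i_j}\}_j$ on compact subsets of $\mathbb{P}(\R^{2,n}) \setminus p_-^{\perp}$ to the constant map $p_+ = k[e_1]$, where $p_- = l^{-1}[e_{n+2}]$.

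The second step is to check that $p_+$ and $p_-$ actually belong to $\mathtt{Ein}_{1,n-1}$. Since $q_{2,n}(e_1) = 0$ and $q_{2,n}(e_{n+2}) = 0$ in the chosen basis $\mathcal{B}$ (the quadratic form there is $x_1 x_{n+2} + x_2 x_{n+1} + x_3^2 + \ldots + x_{n+2}^2$, so $e_1$ and $e_{n+2}$ are isotropic), the classes $[e_1]$ and $[e_{n+2}]$ lie on the projectivized isotropic cone, i.e. in $\mathtt{Ein}_{1,n-1}$. Because $k$ and $l$ lie in $K = O_0(2,n) \cap O(n+2) \subset O(2,n)$, they preserve $q_{2,n}$ and hence preserve the isotropic cone; therefore $p_+ = k[e_1]$ and $p_- = l^{-1}[e_{n+2}]$ also lie in $\mathtt{Ein}_{1,n-1}$. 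Finally, I would invoke the fact recalled just before the statement that in the Einstein universe the lightlike cone $\mathcal{L}(p_-)$ is exactly the intersection of $\mathtt{Ein}_{1,n-1}$ with $p_-^{\perp}$, so a compact subset of $\mathtt{Ein}_{1,n-1}$ disjoint from $\mathcal{L}(p_-)$ is a compact subset of $\mathbb{P}(\R^{2,n}) \setminus p_-^{\perp}$; the convergence on such sets is then immediate from Corollary~\ref{dyn. pr}.

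There is essentially no obstacle here: the corollary is a verbatim rephrasing of Corollary~\ref{dyn. pr} in the intrinsic language of $\mathtt{Ein}_{1,n-1}$, using only (i) the isotropy of $e_1, e_{n+2}$, (ii) $K \subset O(2,n)$, and (iii) the identification $\mathcal{L}(p_-) = \mathtt{Ein}_{1,n-1} \cap p_-^{\perp}$. The only point requiring a word of care is that restricting the domain to a subspace and restricting the target to a closed subset both preserve uniform convergence on compacta, which is elementary. Thus the proof is a short deduction rather than a new argument.
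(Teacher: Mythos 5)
Your proposal is correct and follows essentially the same route as the paper: the paper derives the corollary from Corollary \ref{dyn. pr} by exactly the three observations you list (isotropy of $e_1$ and $e_{n+2}$, invariance of the isotropic cone under $K \subset O(2,n)$, and the identification of the lightlike cone of $p_-$ with $p_-^{\perp} \cap \mathtt{Ein}_{1,n-1}$). No issues.
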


\paragraph{\textit{Limit set.}} Let $H$ be a discrete subgroup of $O_0(2,n)$.

\begin{definition}
The \emph{limit set} $\Lambda_H$ of $H$ is the set of the attracting and repelling points of $P_1$-divergent sequences of $H$.
\end{definition}

The limit set is well-studied in \cite{guivarc'h1990}. In particular, the following proposition is proven.

\begin{proposition}
The limit set $\Lambda_H$ of $H$ is a closed, $H$-invariant subset of $\mathtt{Ein}_{1,n-1}$.
\end{proposition}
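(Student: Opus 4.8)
The plan is to establish the three assertions in turn, using throughout the dynamical description of $P_1$-divergent sequences provided by Corollary \ref{dyn. pr ein}. The inclusion $\Lambda_H \subseteq \mathtt{Ein}_{1,n-1}$ is immediate from that corollary, since the attracting and repelling points of any $P_1$-divergent sequence of $O_0(2,n)$ already lie in $\mathtt{Ein}_{1,n-1}$.

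For $H$-invariance I first record two elementary facts about Cartan projections. If $\{g_i\}_i$ is $P_1$-divergent, then so is $\{g_i^{-1}\}_i$, with the same differences $\lambda_i - \mu_i$ (the diagonal entries of the Cartan projection of $g^{-1}$ are the inverses of those of $g$), and moreover the repelling point of $\{g_i\}_i$ is the attracting point of $\{g_i^{-1}\}_i$ (compare the decompositions $g_{i_j} = k_{i_j}a_{i_j}l_{i_j}$ and $g_{i_j}^{-1} = l_{i_j}^{-1}a_{i_j}^{-1}k_{i_j}^{-1}$ from Corollary \ref{dyn. pr}). Also, for any fixed $h \in O_0(2,n)$, the exponents $\lambda_i, \mu_i$ of $hg_i$ differ from those of $g_i$ by a quantity bounded independently of $i$, so $\{hg_i\}_i$ is $P_1$-divergent whenever $\{g_i\}_i$ is. Now fix $p \in \Lambda_H$; after passing to inverses if necessary, $p = p_+$ is the attracting point of a $P_1$-divergent sequence $\{g_i\}_i$ of $H$ with repelling point $p_-$, and along a subsequence $g_{i_j} \to p_+$ uniformly on compact subsets of $\mathtt{Ein}_{1,n-1} \setminus \mathcal{L}(p_-)$. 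Given $h \in H$, the sequence $\{hg_{i_j}\}_j$ lies in $H$ and is $P_1$-divergent; by Corollary \ref{dyn. pr ein} a further subsequence converges uniformly on $\mathtt{Ein}_{1,n-1} \setminus \mathcal{L}(q_-)$ to some constant $q_+$, which lies in $\Lambda_H$ by definition of the limit set. Since simultaneously $hg_{i_j}x \to h \cdot p_+$ for every $x \notin \mathcal{L}(p_-)$, evaluation at any point of the nonempty (open, dense) set $\mathtt{Ein}_{1,n-1} \setminus (\mathcal{L}(p_-) \cup \mathcal{L}(q_-))$ forces $q_+ = h \cdot p_+$, whence $h \cdot p \in \Lambda_H$. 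Applying this to both $h$ and $h^{-1}$ gives $h\Lambda_H = \Lambda_H$.

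For closedness, let $p_k \to p$ with $p_k \in \Lambda_H$; the limit lies in $\mathtt{Ein}_{1,n-1}$ because the latter is compact. Passing to inverses where needed, write $p_k$ as the attracting point of a $P_1$-divergent sequence $\{g^{(k)}_i\}_i$ of $H$ with repelling point $p_k^-$, so that $g^{(k)}_i \to p_k$ uniformly on the compact subsets of $\mathtt{Ein}_{1,n-1} \setminus \mathcal{L}(p_k^-)$, with exponents denoted $\lambda^{(k)}_i, \mu^{(k)}_i$. By compactness extract a subsequence in $k$ with $p_k^- \to q^-$. Fix a closed set $C$ with nonempty interior disjoint from $\mathcal{L}(q^-)$; since $\mathcal{L}(x) = \{y \in \mathtt{Ein}_{1,n-1} : <x,y> = 0\}$ depends continuously on $x$, one has $C \cap \mathcal{L}(p_k^-) = \emptyset$ for all large $k$. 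For each such $k$ choose $i(k)$ with $\lambda^{(k)}_{i(k)} - \mu^{(k)}_{i(k)} \geq k$ and $\sup_{x \in C} d(g^{(k)}_{i(k)}x, p_k) \leq 1/k$, and set $\gamma_k := g^{(k)}_{i(k)} \in H$. Then $\{\gamma_k\}_k$ is $P_1$-divergent, so by Corollary \ref{dyn. pr ein} a subsequence $\{\gamma_{k_j}\}_j$ converges uniformly on $\mathtt{Ein}_{1,n-1} \setminus \mathcal{L}(r_-)$ to a constant $q_+ \in \Lambda_H$. Choosing $x_1$ in the interior of $C$ and outside $\mathcal{L}(r_-)$, we obtain both $\gamma_{k_j}x_1 \to q_+$ and $d(\gamma_{k_j}x_1, p_{k_j}) \to 0$ with $p_{k_j} \to p$; hence $q_+ = p$, so $p \in \Lambda_H$.

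I expect the closedness step to be the main obstacle: the obstruction one must avoid, the lightcone $\mathcal{L}(p_k^-)$, moves with $k$, and the diagonal extraction succeeds only because these lightcones converge with their vertices, so that a single fixed compact set of nonempty interior eventually avoids all of them and can serve as a common test set; a careless choice of evaluation point would break the argument. The remaining ingredients — the behaviour of Cartan projections under inversion and under left translation, and the fact that the complement of finitely many lightcones is open and dense in $\mathtt{Ein}_{1,n-1}$ — are routine.
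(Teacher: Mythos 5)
Your proof is correct. Note that the paper does not actually prove this proposition itself: it is quoted from Guivarc'h's work \cite{guivarc'h1990}, so there is no in-text argument to compare against. Your argument is the standard self-contained one, and it relies only on the dynamical toolkit the paper sets up in Corollaries \ref{dyn. pr} and \ref{dyn. pr ein}: membership in $\mathtt{Ein}_{1,n-1}$ is read off from the form of $p_\pm$; invariance follows from evaluating two uniformly convergent sequences at a point off the union of the two relevant lightlike cones; and closedness is the diagonal extraction, where your care with the moving cones $\mathcal{L}(p_k^-)$ (fixing a compact test set $C$ with nonempty interior that eventually avoids all of them, which works because the cones converge with their vertices) is precisely the point a careless argument would miss. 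Two facts you invoke are standard but are not established anywhere in the paper and would need a citation or a short verification if this proof were inserted: that the Cartan exponents of $g^{-1}$ coincide with those of $g$, so that the repelling point of $\{g_i\}_i$ is the attracting point of $\{g_i^{-1}\}_i$ (this follows by conjugating $a_i^{-1}$ back into $A^+$ by the Weyl element of $K$ exchanging $[e_1]$ with $[e_{n+2}]$ and $[e_2]$ with $[e_{n+1}]$, which is what your parenthetical comparison of the two decompositions amounts to), and that left multiplication by a fixed $h$ perturbs the Cartan exponents by a bounded amount, so that $P_1$-divergence is preserved. With those two lemmas granted, every step checks out.
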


\subsection{Anosov-representations} \label{2.2}

Let $\Gamma$ be a hyperbolic group and $\partial \Gamma$ its Gromov boundary. We do not recall here the definition of hyperbolic group; we direct the reader to \cite{Gueritaud2017Anosov}, Section $2.1$ for the basics of Gromov hyperbolic groups' theory. Recall that $\Gamma$ is a convergence group for its action on $\partial \Gamma$: for any divergent sequence $\{\gamma_n\}_n$ of $\Gamma$, there exist a subsequence $\{\gamma_{n_k}\}_k$ and points $\xi_+$ and $\xi_-$ in $\partial \Gamma$ such that $\{\gamma_{n_k}\}_k$ converge uniformly on compact subsets of $\partial \Gamma \backslash \{\xi_-\}$ towards the constant map $\xi_+$ (see \cite{Kapovich2002Boundaries}). The following definition of Anosov representation is not the original one from \cite{labourie} but an equivalent one taken from \cite{guichard2015tameness} and rephrased in the setting of the Lorentzian geometry.

\begin{definition}
A representation $\rho : \G \longrightarrow O_0(2,n)$ is $P_1$-Anosov if
\begin{enumerate}
\item every sequence of pairwise distinct elements in $\rho(\G)$ is $P_1$-divergent;
\item there exist a continuous, $\rho$-equivariant boundary map $\xi : \partial \G \longrightarrow Ein_{1,n-1}$ that is
\begin{enumerate}
\item \emph{transverse}, meaning that every pair of distinct points $(\eta, \eta')$ in $\partial \G$ is sent to the pair $(\xi(\eta),\xi(\eta'))$ in $Ein_{1,n-1} \times Ein_{1,n-1}$ such that $\xi(\eta)$ is not contained in the lightlike cone of $\xi(\eta')$;
\item \emph{dynamics-preserving}, meaning that if $\eta$ is the attracting fixed point of some element $\gamma \in \G$ in $\partial \G$, then $\xi(\eta)$ is an attracting fixed point of $\rho(\gamma)$ in $Ein_{1,n-1}$. 
\end{enumerate}
\end{enumerate} 
\end{definition}

\begin{proposition}[\cite{Gueritaud2017Anosov}]
If a representation $\rho : \G \longrightarrow O_0(2,n)$ is $P_1$-Anosov with boundary map \mbox{$\xi : \partial \G \longrightarrow \mathtt{Ein}_{1,n-1}$}, then $\Lambda_{\rho(\G)} = \xi(\partial \G)$.
\end{proposition}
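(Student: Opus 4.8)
The statement splits into the two inclusions $\xi(\partial\G)\subseteq\Lambda_{\rho(\G)}$ and $\Lambda_{\rho(\G)}\subseteq\xi(\partial\G)$, and my plan is to prove each by playing the convergence dynamics of $\G$ on $\partial\G$ against the $P_1$-divergent dynamics on $\mathtt{Ein}_{1,n-1}$ recorded in Corollary~\ref{dyn. pr ein}. I will carry this out for $\G$ non-elementary, so that $\partial\G$ is infinite and perfect; the elementary cases, where $\G$ is finite or virtually cyclic and $\partial\G$ has at most two points, are checked directly and I omit them from this sketch.

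For $\xi(\partial\G)\subseteq\Lambda_{\rho(\G)}$: recall that the attracting fixed points $\gamma^{+}\in\partial\G$ of the infinite-order elements $\gamma\in\G$ form a dense subset of $\partial\G$. Fix such a $\gamma$. Since $\rho$ has finite kernel (an infinite-order element of $\ker\rho$ would make the dynamics-preserving condition impossible, and a torsion subgroup of a hyperbolic group is finite), $\rho(\gamma)$ has infinite order, so $\{\rho(\gamma)^{k}\}_{k\geq1}$ is a sequence of pairwise distinct elements of $\rho(\G)$ and hence $P_{1}$-divergent by condition (1) of the definition. By the dynamics-preserving property, $\xi(\gamma^{+})$ is the attracting fixed point of $\rho(\gamma)$ in $\mathtt{Ein}_{1,n-1}$, and by Corollary~\ref{dyn. pr ein} this is exactly the attracting point of the $P_{1}$-divergent sequence $\{\rho(\gamma)^{k}\}_{k}$; thus $\xi(\gamma^{+})\in\Lambda_{\rho(\G)}$. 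Since $\xi$ is continuous, $\Lambda_{\rho(\G)}$ is closed, and the points $\gamma^{+}$ are dense in $\partial\G$, taking closures gives $\xi(\partial\G)\subseteq\Lambda_{\rho(\G)}$.

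For $\Lambda_{\rho(\G)}\subseteq\xi(\partial\G)$: let $x\in\Lambda_{\rho(\G)}$. Then $x$ is the attracting or the repelling point of some $P_{1}$-divergent sequence of $\rho(\G)$; since the inverse of a $P_{1}$-divergent sequence is again $P_{1}$-divergent and interchanges attracting and repelling points, we may assume $x=p_{+}$ is an attracting point. So, after passing to a subsequence, there are pairwise distinct $\gamma_{n}\in\G$ with $\rho(\gamma_{n})\to p_{+}=x$ uniformly on compact subsets of $\mathtt{Ein}_{1,n-1}\setminus\mathcal{L}(p_{-})$ (Corollary~\ref{dyn. pr ein}, $p_{-}$ the repelling point) and, simultaneously, $\gamma_{n}\to a$ uniformly on compact subsets of $\partial\G\setminus\{b\}$ for some $a,b\in\partial\G$ (convergence group property). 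Now choose $c\in\partial\G\setminus\{b\}$ with $\xi(c)\notin\mathcal{L}(p_{-})$. On one side, $\rho$-equivariance gives $\rho(\gamma_{n})\cdot\xi(c)=\xi(\gamma_{n}\cdot c)$, and since $c\neq b$ we have $\gamma_{n}\cdot c\to a$, so by continuity $\rho(\gamma_{n})\cdot\xi(c)\to\xi(a)$. On the other side, $\xi(c)\notin\mathcal{L}(p_{-})$ forces $\rho(\gamma_{n})\cdot\xi(c)\to p_{+}=x$. Hence $x=\xi(a)\in\xi(\partial\G)$.

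The point that requires care---and that I expect to be the main obstacle---is the existence of the test point $c$, i.e. that $\xi(\partial\G)$ is not entirely contained in the lightlike cone $\mathcal{L}(p_{-})=\mathtt{Ein}_{1,n-1}\cap p_{-}^{\perp}$. If $p_{-}$ itself lies in $\xi(\partial\G)$, say $p_{-}=\xi(b')$, then transversality of $\xi$ already gives $\xi(c)\notin\mathcal{L}(p_{-})$ for every $c\neq b'$, and since $|\partial\G|\geq 3$ one takes $c\notin\{b,b'\}$. In general one argues by contradiction: if $\xi(\partial\G)\subseteq\mathcal{L}(p_{-})$ then, by $\rho(\G)$-invariance of $\xi(\partial\G)$, also $\xi(\partial\G)\subseteq\bigcap_{\gamma}\mathcal{L}(\rho(\gamma)p_{-})=\mathtt{Ein}_{1,n-1}\cap V^{\perp}$, where $V$ is the $\rho(\G)$-invariant subspace spanned by the orbit of $p_{-}$; if $V=\R^{2,n}$ this set is empty, a contradiction, and if $V$ is proper one is in the reducible case and concludes from the structure of the limit set of $\rho(\G)$. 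A cleaner route to the reverse inclusion, once the first one is known, is to invoke that $\Lambda_{\rho(\G)}$ is the smallest nonempty closed $\rho(\G)$-invariant subset of $\mathtt{Ein}_{1,n-1}$ (a minimality property of limit sets in the spirit of \cite{guivarc'h1990}, valid since $\rho(\G)$ is non-elementary): as $\xi(\partial\G)$ is a nonempty closed $\rho(\G)$-invariant subset contained in $\Lambda_{\rho(\G)}$, the two coincide. Everything else is routine bookkeeping of the two convergence phenomena together with continuity of $\xi$ and closedness of $\Lambda_{\rho(\G)}$.
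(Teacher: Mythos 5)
First, note that the paper offers no proof of this proposition: it is quoted from \cite{Gueritaud2017Anosov}, so there is no internal argument to measure yours against, and the statement is genuinely a cited theorem rather than a remark. On its own merits, your proof of the inclusion $\xi(\partial\G)\subseteq\Lambda_{\rho(\G)}$ is fine: density of attracting fixed points, the dynamics-preserving property, condition (1) of the definition, continuity of $\xi$ and closedness of $\Lambda_{\rho(\G)}$ combine as you say, and the one glossed step (that the attracting fixed point of the proximal element $\rho(\gamma)$ coincides with the attracting point of the sequence $\{\rho(\gamma)^{k}\}_{k}$ produced by Corollary \ref{dyn. pr ein}) is easily supplied, since the open basin of attraction must meet the complement of the repelling lightcone.

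The gap is in the reverse inclusion, exactly where you feared: the existence of the test point $c$ with $\xi(c)\notin\mathcal{L}(p_{-})$. Transversality does \emph{not} exclude $\xi(\partial\G)\subseteq\mathcal{L}(p_{-})$ when $p_{-}\notin\xi(\partial\G)$: for $n\geq3$ the degenerate section $\mathcal{L}(p_{-})=Ein_{1,n-1}\cap \pr(p_{-}^{\perp})$ contains infinitely many pairwise transverse points, because in the Lorentzian quotient $p_{-}^{\perp}/p_{-}\cong\R^{1,n-1}$ two distinct isotropic lines are never orthogonal; so an arbitrarily large pairwise-transverse compact set fits inside a single lightcone, and your main tool gives nothing. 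Your first fallback (``conclude from the structure of the limit set'' in the reducible case) is not an argument, and your second fallback is false as stated: $\Lambda_{\rho(\G)}$ is \emph{not} in general the smallest nonempty closed $\rho(\G)$-invariant subset of $\mathtt{Ein}_{1,n-1}$. The paper's own Fuchsian examples in Section \ref{4.3} ($\rho(\G)\subset O_0(1,p+1)\subset O_0(2,n)$ acting trivially on the orthogonal factor $\R^{1,q+1}$) fix pointwise the sphere $\mathbb{S}^{q}$, so singletons of $\mathbb{S}^{q}$ are closed invariant sets disjoint from the limit set $\mathbb{S}^{p}$; the minimality statements of \cite{guivarc'h1990} require (strong) irreducibility, which an Anosov representation need not satisfy. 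Playing the same game with the inverse sequence only reduces the bad case to $\xi(\partial\G)\subseteq\mathcal{L}(p_{+})\cap\mathcal{L}(p_{-})$, which is again not excluded by transversality. This compatibility between Cartan attractors of arbitrary $P_1$-divergent sequences in $\rho(\G)$ and the boundary map is precisely the nontrivial content of the result in \cite{Gueritaud2017Anosov}, and it cannot be extracted by formal manipulation of conditions (1) and (2) alone.
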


\section{Causal geodesic space and Anosov representations} \label{3}

In this section, we introduce the \emph{causal geodesic space} and study the dynamic on this space under actions of Anosov representations. In particular, we give a geometrical proof of Theorem \ref{GKW0}.

\subsection{Causal geodesic space} \label{4.1}

We define here the \emph{causal geodesic space}.\\

Let $M$ be a spacetime (oriented and time-oriented).

\begin{definition} \label{limit curve}
A curve $\varphi$ in $M$ is a \emph{limit curve} of a sequence $\{\varphi_i\}_i$ of curves in $M$ if for every point $p$ in $\varphi$, every neighborhood of $p$ meets all the curves $\varphi_i$ except a finite number.
\end{definition}

Let us consider the set $\mathcal{G}(M)$ consisting of geodesics of $M$. We equip it with the topology for which every convergent sequence is convergent in the sense of Definition \ref{limit curve}. In this Section, we denote by $\mathcal{P}(M)$ the subspace of lightlike geodesics of $M$. 

\begin{proposition} \label{photons Ein}
The space $\mathcal{P}(\mathtt{Ein_{1,n}})$ is a smooth manifold diffeomorphic to the unit tangent bundle $T^1\mathbb{S}^n$.
\end{proposition}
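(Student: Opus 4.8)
The plan is to produce an explicit diffeomorphism between $\mathcal{P}(\mathtt{Ein}_{1,n})$ and $T^1\mathbb{S}^n$ using the conformal model $\mathtt{Ein}_{1,n} \simeq (\mathbb{S}^n \times \mathbb{S}^1)/\langle \sigma \rangle$, together with the description of lightlike geodesics recalled in Section~\ref{1.3}. Recall that a lightlike geodesic of $\widetilde{Ein}_{1,n} \simeq \mathbb{S}^n \times \R$ is, up to parametrization, a curve of the form $t \mapsto (x(t), t)$ where $x$ is a unit-speed geodesic of the round sphere $\mathbb{S}^n$; such a geodesic of $\mathbb{S}^n$ is uniquely determined by an initial point and an initial unit tangent vector, i.e. by a point of $T^1\mathbb{S}^n$. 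So first I would set up the map $\widetilde{\Psi} : T^1\mathbb{S}^n \times \R \longrightarrow \mathcal{P}(\widetilde{Ein}_{1,n})$ sending $((x_0,v_0),s)$ to the lightlike geodesic through $(x_0,s) \in \mathbb{S}^n \times \R$ directed by $v_0$; this is clearly surjective, and its fibers are the orbits of the flow reparametrizing the geodesic, i.e. $\widetilde{\Psi}((x_0,v_0),s) = \widetilde{\Psi}((x_t, \dot x_t), s+t)$ for all $t$. Quotienting the source by this $\R$-action — which amounts to fixing the convention $s=0$, i.e. recording where the geodesic meets the Cauchy hypersurface $\mathbb{S}^n \times \{0\}$ and with which unit direction — gives a bijection $T^1\mathbb{S}^n \to \mathcal{P}(\widetilde{Ein}_{1,n})$.

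Next I would check that this bijection is a homeomorphism for the limit-curve topology of Definition~\ref{limit curve}. Convergence of a sequence of lightlike geodesics $(x_i(t),t)$ to $(x(t),t)$ in the limit-curve sense is equivalent to convergence of the initial data $(x_i(0),\dot x_i(0)) \to (x(0),\dot x(0))$ in $T^1\mathbb{S}^n$: one direction is continuity of the geodesic flow on $\mathbb{S}^n$ and smooth dependence on initial conditions, the other follows because if the geodesics accumulate near a point they must accumulate in position and (by looking at two nearby parameter values) in direction. To get the smooth structure and identify $\mathcal{P}(\mathtt{Ein}_{1,n})$ itself rather than its universal-cover version, I would use Remark~\ref{fundamental group}: the deck group of $\widetilde{Ein}_{1,n} \to \mathtt{Ein}_{1,n}$ is generated by $\sigma : (x,t)\mapsto(-x,t+\pi)$, whose action on initial data at $t=0$ is $(x_0,v_0)\mapsto(-x_0,-v_0)$ after renormalizing to the slice $t=0$ (pushing the base point from $\pi$ back to $0$ along the geodesic by a reparametrization of length $\pi$, which sends a great circle to its antipode and flips the tangent). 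Wait — I should be careful here: a lightlike geodesic of $\mathtt{Ein}_{1,n}$ lifts to a $\sigma$-invariant family of lightlike geodesics of $\widetilde{Ein}_{1,n}$, and the statement of Remark~\ref{conjugate points} tells us exactly how the various lifts fit together, so the quotient is taken by the induced involution on $T^1\mathbb{S}^n$, which is the composition of the two antipodal maps and is free; the quotient of $T^1\mathbb{S}^n$ by this free involution is again diffeomorphic to $T^1\mathbb{S}^n$ (indeed $v\mapsto -v$ on each fiber together with $x\mapsto -x$ is conjugate, via the sphere's antipodal symmetry, to the identity on the base twisted trivially). I would make this last identification explicit so the final answer is $T^1\mathbb{S}^n$ and not a twisted quotient.

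The main obstacle I expect is the topological/smooth matching in the second step: verifying that the limit-curve topology on $\mathcal{P}(\mathtt{Ein}_{1,n})$ really coincides with the manifold topology transported from $T^1\mathbb{S}^n$, and in particular that it is Hausdorff and second countable so that "homeomorphism of sets + continuity" upgrades to a genuine diffeomorphism. The cleanest route is probably to bypass the abstract limit-curve topology for proving smoothness: exhibit $\mathcal{P}(\mathtt{Ein}_{1,n})$ as the image of the smooth (injective, proper) map from $T^1\mathbb{S}^n$ built above, check that map is an immersion by a direct computation of its differential in the slice coordinates, and then argue the limit-curve topology is the final/initial topology making it continuous both ways. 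A secondary subtlety is orientation/time-orientation bookkeeping: a lightlike geodesic as a subset versus as a future-oriented curve — here I would either work with unoriented photons (so the fiber involution $v\mapsto -v$ at fixed base point is already built in) or keep track of the time-orientation throughout; I would state the convention explicitly at the outset to avoid a factor-of-two discrepancy in the fibers.
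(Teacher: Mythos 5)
Your overall strategy --- parametrizing lightlike geodesics by their initial data on a spherical Cauchy slice of the conformal model --- is essentially the paper's: the paper works in the double cover $Ein_{1,n}=\mathbb{S}^n\times\mathbb{S}^1$ rather than the universal cover, sends a photon $(x(t),e^{it})$ to $(x(0),x'(0))\in T^1\mathbb{S}^n$, and then simply \emph{defines} the smooth structure on $\mathcal{P}(Ein_{1,n})$ by declaring this bijection to be a diffeomorphism, so the homeomorphism-versus-diffeomorphism issue you worry about does not actually arise. The genuine problem is in your last step, the descent from the cover to $\mathtt{Ein}_{1,n}$. The induced action of $\sigma:(x,t)\mapsto(-x,t+\pi)$ on initial data at the slice $t=0$ is the \emph{identity}, not $(x_0,v_0)\mapsto(-x_0,-v_0)$: if $x$ is a unit-speed great circle with $x(0)=x_0$, $x'(0)=v_0$, then $\sigma$ carries $(x(t),t)$ to $(y(s),s)$ with $y(s)=-x(s-\pi)$, and since flowing a great circle by half its period is itself the antipodal map on positions and velocities, $y(0)=-x(-\pi)=x_0$ and $y'(0)=-x'(-\pi)=v_0$. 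The two sign flips cancel; every inextensible lightlike geodesic of $\widetilde{Ein}_{1,n}$ is $\sigma$-invariant (this is exactly the content of Remark \ref{conjugate points}), so $\mathcal{P}(\mathtt{Ein}_{1,n})\to\mathcal{P}(\widetilde{Ein}_{1,n})$ is already a bijection and no quotient of $T^1\mathbb{S}^n$ is needed.

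This matters because the patch you invoke --- that $T^1\mathbb{S}^n$ modulo the free involution $(x,v)\mapsto(-x,-v)$ is again diffeomorphic to $T^1\mathbb{S}^n$ --- is false. That involution is the differential of the antipodal map, so the quotient is $T^1(\mathbb{R}P^n)$: for $n\geq 3$ the bundle $T^1\mathbb{S}^n$ is simply connected, hence not homeomorphic to its quotient by any free involution, and for $n=2$ one gets $SO(3)/\mathbb{Z}_2\cong L(4,1)$, whose fundamental group is $\mathbb{Z}/4$, while $T^1\mathbb{S}^2\cong\mathbb{R}P^3$ has fundamental group $\mathbb{Z}/2$. The parenthetical ``conjugate to the identity'' cannot be right, since a free involution is never conjugate to the identity. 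So as written the argument does not close at the final step; with the corrected computation showing that $\sigma$ acts trivially on photons, it does close and then coincides with the paper's proof.
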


\begin{proof} Recall that $Ein_{1,n}$ is the double covering of $\mathtt{Ein_{1,n}}$ (see Section \ref{1.3}). Remark that $\mathcal{P}(\mathtt{Ein_{1,n}})$ is homeomorphic to $\mathcal{P}(Ein_{1,n})$. We prove that $\mathcal{P}(Ein_{1,n})$ is homeomorphic to $T^1\mathbb{S}^n$. Let $\varphi \in \mathcal{P}(Ein_{1,n})$. Up to parametrization, $\varphi$ is the curve $(x(t), e^{it})$. Let $f$ be the map which associates to every $\varphi(t) = (x(t), e^{it})$ the point $(x(0), x'(0)) \in T^1\mathbb{S}^n$. For every $(x, e^{it}) \in Ein_{1,n}$, the tangent space $T_{(x,e^{it})}Ein_{1,n}$ is isomorphic to $T_x\mathbb{S}^n \oplus T_{e^{it}}\mathbb{S}^1$. Let $u(t)$ be a unit vector field on $\mathbb{S}^1$. We define the map $g$ which associates to every $(x,v) \in T^1\mathbb{S}^n$ the unique lightlike geodesic going through $(x,1)$ and tangent to $(u(0)+v)$. Clearly, the maps $f$ and $g$ are continuous and $g = f^{-1}$. We equip $\mathcal{P}(Ein_{1,n})$ with the differential structure for which $f$ is a diffeomorphism.
\end{proof}

\begin{remark}
It follows from Proposition \ref{photons Ein} that $\mathcal{P}(\mathtt{Ein_{1,n}})$ is compact.
\end{remark}

Let $\overline{\mathtt{AdS}}_{1,n} := \mathtt{AdS}_{1,n} \cup \partial \mathtt{AdS}_{1,n}$.

\begin{definition}
The \emph{causal geodesic space} of $\overline{\mathtt{AdS}}_{1,n}$, denoted by $\mathcal{C}$, is the subspace of $\mathcal{G}(\bar{\mathtt{AdS}_{1,n}})$ consisting of timelike and lightlike geodesics of $\mathtt{AdS}_{1,n}$ and lightlike geodesics of $\mathtt{Ein}_{1,n-1} = \partial \mathtt{AdS}_{1,n}$. 
\end{definition}

Let us consider the closed subspace
\[\bar{X} = \{P \in Gr_{2}(\R^{n+2})\ :\ <x,x>\ \leq 0,\ \forall x \in P\}\]
in the Grassmannian space $Gr_{2}(\R^{n+2})$.

\begin{remark}
In this section, we consider on $Gr_{2}(\R^{n+2})$ the distance $\delta$ defined as follow. Let $d$ be the distance on the projective space $\pr(\R^{2,n})$ defined  for every lines $\ell, \ell'$ with respective direction vectors $\mathrm{x}, \mathrm{x'}$ as the angle between $\mathrm{x}$ and $\mathrm{x}'$ in $\R^{n+2}$.\\
We denote by $Hd$ the Hausdorff distance. For every $P, P' \in Gr_{2}(\R^{n+2})$, we set
\[\delta (P,P') := Hd(\pr(P), \pr(P')).\]
where $\pr : \R^{2,n} \backslash \{0\} \longrightarrow \pr(\R^{2,n})$ is the canonical projection. 
\end{remark}

\begin{proposition}
The space $\bar{X}$ and the causal geodesic space are homeomorphic. In particular, the topological boundary of $\bar{X}$ in $Gr_{2}(\R^{n+2})$ is a manifold homeomorphic to the space $\mathcal{P}(\mathtt{Ein}_{1,n})$ of lightlike geodesics of $\mathtt{Ein}_{1,n}$.
\end{proposition}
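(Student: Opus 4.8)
The plan is to construct an explicit homeomorphism between $\bar X$ and the causal geodesic space $\mathcal{C}$, and then analyze what happens on the topological boundary $\partial \bar X$. The key observation is that geodesics of $\overline{\mathtt{AdS}}_{1,n}$ (be they timelike/lightlike geodesics of $\mathtt{AdS}_{1,n}$ or lightlike geodesics of $\mathtt{Ein}_{1,n-1}$) are precisely the projections of $2$-planes $P \subset \R^{2,n}$ on which $q_{2,n}$ is negative semidefinite, as recalled in Sections \ref{1.2} and \ref{1.3}: negative-definite planes give timelike geodesics of $\mathtt{AdS}_{1,n}$, planes with a one-dimensional radical (signature $(1,0,1)$) give lightlike geodesics of $\mathtt{AdS}_{1,n}$, and totally isotropic planes give lightlike geodesics of $\mathtt{Ein}_{1,n-1}$. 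So set-theoretically the map $\Phi : \bar X \to \mathcal{C}$ sending $P$ to the geodesic $\pr(P) \cap \overline{\mathtt{AdS}}_{1,n}$ is a well-defined bijection; the content is continuity in both directions with respect to the metric $\delta$ on $Gr_2$ and the limit-curve topology on $\mathcal{C}$ from Definition \ref{limit curve}.

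First I would check that $\Phi$ is continuous: if $P_i \to P$ in $(Gr_2(\R^{n+2}), \delta)$, then $\pr(P_i) \to \pr(P)$ in the Hausdorff distance on the projective space, hence the geodesics $\pr(P_i) \cap \overline{\mathtt{AdS}}_{1,n}$ accumulate on $\pr(P) \cap \overline{\mathtt{AdS}}_{1,n}$, which is exactly the statement that $\Phi(P)$ is a limit curve of $\{\Phi(P_i)\}_i$. For the inverse, one uses that a causal geodesic in $\overline{\mathtt{AdS}}_{1,n}$ determines its spanning $2$-plane: two distinct points on the geodesic (or, in the degenerate boundary case, one point plus a tangent direction) recover $P$, and this recovery is continuous. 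Since $\bar X$ is compact (it is closed in the compact Grassmannian) and $\mathcal{C}$ is Hausdorff, a continuous bijection from $\bar X$ is automatically a homeomorphism; this lets me avoid the more delicate direct estimate for $\Phi^{-1}$. I expect the main technical nuisance to be handling the degenerate planes uniformly — showing that a sequence of negative-definite planes converging to an isotropic or totally isotropic plane has its associated timelike geodesics converging (as limit curves) to the corresponding lightlike or boundary geodesic — but this is a routine continuity argument once the coordinate description of geodesics is in hand.

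Next I would identify the topological boundary $\partial \bar X$ inside $Gr_2(\R^{n+2})$. Under $\Phi$, the interior of $\bar X$ is exactly $X$ (negative-definite planes, i.e.\ timelike geodesics of $\mathtt{AdS}_{1,n}$, an open condition), so $\partial \bar X = \bar X \setminus X$ consists precisely of the degenerate planes: those with signature $(1,0,1)$ and the totally isotropic ones. Via $\Phi$ these correspond to lightlike geodesics of $\mathtt{AdS}_{1,n}$ together with lightlike geodesics of $\mathtt{Ein}_{1,n-1}$. The claim is that this set is homeomorphic to $\mathcal{P}(\mathtt{Ein}_{1,n})$, the space of lightlike geodesics of the one-higher-dimensional Einstein universe. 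Here I use the description from Section \ref{1.3} that $\mathtt{Ein}_{1,n}$ is the union of two conformal copies of $\mathtt{AdS}_{1,n}$ glued along their common conformal boundary $\mathtt{Ein}_{1,n-1}$: a lightlike geodesic of $\mathtt{Ein}_{1,n}$ either lies entirely in the boundary $\mathtt{Ein}_{1,n-1}$ (these are the totally isotropic planes) or crosses transversally through one of the two $\mathtt{AdS}$ copies, and in that case its intersection with the closure $\overline{\mathtt{AdS}}_{1,n}$ is a lightlike geodesic of $\mathtt{AdS}_{1,n}$ (signature-$(1,0,1)$ plane). Conversely every lightlike geodesic of $\mathtt{AdS}_{1,n}$ extends uniquely to a lightlike geodesic of $\mathtt{Ein}_{1,n}$ (lightlike geodesics are conformally invariant by the theorem quoted in Section \ref{1.1}), and this extension map is a continuous bijection onto $\partial \bar X$.

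The hard part will be verifying that this identification of $\partial \bar X$ with $\mathcal{P}(\mathtt{Ein}_{1,n})$ is a homeomorphism at the locus where a lightlike geodesic of $\mathtt{Ein}_{1,n}$ becomes tangent to the boundary $\mathtt{Ein}_{1,n-1}$ — i.e.\ the transition between the two types of boundary planes — since there the "two points determine the plane" heuristic degenerates and one must argue with tangency data instead. I would handle this by noting that the doubling description of $\mathtt{Ein}_{1,n}$ realizes $\mathcal{P}(\mathtt{Ein}_{1,n})$ already as a manifold (by Proposition \ref{photons Ein} it is $T^1\mathbb{S}^n$), so it suffices to produce a continuous bijection between the two compact spaces $\partial \bar X$ and $\mathcal{P}(\mathtt{Ein}_{1,n})$ and invoke compactness once more; the tangency locus then takes care of itself. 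That $\partial \bar X$ is a topological manifold is then immediate since it is homeomorphic to $\mathcal{P}(\mathtt{Ein}_{1,n}) \cong T^1\mathbb{S}^n$, which is the second assertion.
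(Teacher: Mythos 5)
Your first half --- the homeomorphism $\bar X \cong \mathcal{C}$ via $P \mapsto \pr(P)\cap\overline{\mathtt{AdS}}_{1,n}$, continuity from the Hausdorff convergence of $\pr(P_i)$, and the compactness argument to invert --- is exactly the paper's argument and is fine, as is the identification of $\partial\bar X$ with the planes of signature $(1,0,1)$ together with the totally isotropic ones. The gap is in the second half, at the claim that every lightlike geodesic of $\mathtt{AdS}_{1,n}$ extends \emph{uniquely} to a lightlike geodesic of $\mathtt{Ein}_{1,n}$ and that the resulting correspondence is a bijection onto $\partial\bar X$. The complement of $\mathtt{Ein}_{1,n-1}$ in $\mathtt{Ein}_{1,n}$ is a copy of the \emph{hyperboloid} $AdS_{1,n}$, which double-covers the projective model $\mathtt{AdS}_{1,n}$ used to define $\bar X$ and $\mathcal{C}$; a lightlike geodesic of $\mathtt{AdS}_{1,n}$ is contractible, hence has two distinct lifts, and each lift closes up (through the same boundary point) into a distinct photon of $\mathtt{Ein}_{1,n}$. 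Concretely, let $P'=\mathrm{span}(v,w)\subset\R^{2,n}$ have signature $(1,0,1)$, with $q_{2,n}(v)=-1$, $q_{2,n}(w)=0$, $<v,w>=0$, and let $e$ denote the extra spacelike basis vector of $\R^{2,n+1}$. Then $P_{\pm}=\mathrm{span}(v\pm e,\ w)$ are two \emph{distinct} totally isotropic planes of $\R^{2,n+1}$, i.e.\ two distinct photons of $\mathtt{Ein}_{1,n}$, and both project under the forgetful map onto the same lightlike geodesic $\pr(P')$ of $\mathtt{AdS}_{1,n}$. So the natural map $\mathcal{P}(\mathtt{Ein}_{1,n})\to\partial\bar X$ is two-to-one over the non-totally-isotropic stratum and one-to-one only over the totally isotropic one; it is not injective, and "the" extension of a lightlike $\mathtt{AdS}$-geodesic is not well defined.

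This is not a repairable slip in the argument: the asserted homeomorphism cannot exist. Writing $\R^{2,n}=V_-\oplus V_+$ with $V_-$ the negative-definite plane, every $P\in\bar X$ is the graph of a linear map $L:V_-\to V_+$ with operator norm at most $1$, so in this (open) affine chart of $Gr_2(\R^{n+2})$ the set $\bar X$ is a compact convex body in $\R^{2n}$ and $\partial\bar X$ is homeomorphic to $\mathbb{S}^{2n-1}$; on the other hand $\mathcal{P}(\mathtt{Ein}_{1,n})\cong T^1\mathbb{S}^n$ by Proposition \ref{photons Ein}, which is not a sphere (for $n=2$ it is $\mathbb{RP}^3$). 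You should be aware that the paper's own proof contains the identical gap: it asserts without justification that the induced map $\bar p:\mathcal{P}(\mathtt{Ein}_{1,n})\to\mathcal{P}(\overline{\mathtt{AdS}}_{1,n})$ is "clearly one-to-one", so you have faithfully reproduced the intended argument, but the step is wrong in both versions. What is actually true, and what suffices for the later applications (transferring cocompactness to $\mathcal{P}(\Omega(\Lambda))$ and to $T^1S$), is that $\mathcal{P}(\mathtt{Ein}_{1,n})\to\partial\bar X$ is a continuous surjection which is at most two-to-one; if you want a genuine homeomorphism you must replace $\partial\bar X$ by its double cover branched over the totally isotropic locus, or work throughout with the hyperboloid model rather than the projective one.
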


\begin{proof}
Let $f : \bar{X} \longrightarrow \mathcal{C}$ be the mapping which associates to every $2$-plane $P$ in $\bar{X}$ the geodesic $\varphi$ defined as the intersection of $\pr(P)$ with $\overline{\mathtt{AdS}}_{1,n}$. By definition of geodesics of $\mathtt{AdS}_{1,n}$ and $\mathtt{Ein}_{1,n}$, the map $f$ is one-to-one. Let $\{P_i\}_i$ be a sequence of $\bar{X}$ converging to some $P \in \bar{X}$. We denote by $\{\varphi_i\}_i$ and $\varphi$ the images of $\{P_i\}_i$ and $P$ respectively. Let $p \in \varphi$ and $U$ an open neighborhood of $p$. There exists $\mathrm{x} \in P$ such that $p = \pr(\mathrm{x})$. Since $\lim \delta(P_i, P) = 0$, we have $\lim d(\pr(P_i), \pr(\mathrm{x})) = 0$. But $\pr(P_i)$ is compact in $Gr_{2}(\R^{n+2})$, thus there exists $\mathrm{x}_i \in P_i$ such that $d(\pr(P_i), \pr(\mathrm{x})) = d(\pr(\mathrm{x}_i),\pr(\mathrm{x}))$. Therefore, $\pr(\mathrm{x}_i) = p_i \in \varphi_i$ converges to $p$. It follows that $U$ meets all the curves $\varphi_i$ except a finite number. We deduce that the map $f$ is continous. Since $\bar{X}$ is compact, it follows that $f$ is a homeomorphism.\\

The topological boundary of $\bar{X}$ in $Gr_{2}(\R^{n+2})$ consists of:
\begin{enumerate}
\item isotropic $2$-planes, corresponding to lightlike geodesics of $\mathtt{AdS}_{1,n}$;
\item totally isotropic $2$-planes, corresponding to lightlike geodesics of $\mathtt{Ein}_{1,n-1}$.
\end{enumerate}
Thus, $\partial \bar{X}$ is homeomorphic to $\mathcal{P}(\overline{\mathtt{AdS}}_{1,n})$. Let us consider the continuous map $p : \mathtt{Ein}_{1,n} \longrightarrow \overline{\mathtt{AdS}}_{1,n}$ defined by
$p([u:v:x_1:\ldots:x_n:x_{n+1}]) = [u:v:x_1:\ldots:x_n]$. Notice that a point $[u:v:x_1:\ldots:x_n]$ in $\mathtt{AdS}_{1,n}$ (assume $-u^2-v^2+x_1^2+\ldots+x_n^2 = -1$) has two preimages $[u:v:x_1:\ldots:x_n:1]$ and $[u:v:x_1:\ldots:x_n:-1]$. Moreover, a point $[u:v:x_1:\ldots:x_n]$ in $\partial \mathtt{AdS}_{1,n}$ has one preimage $[u:v:x_1:\ldots:x_n:0]$. In fact, $p$ is a ramified covering of degree $2$. It induces a continous map $\bar{p}$ from $\mathcal{P}(\mathtt{Ein}_{1,n})$ to $\mathcal{P}(\overline{\mathtt{AdS}}_{1,n})$. The map $\bar{p}$ is clearly one-to-one and since $\mathcal{P}(\mathtt{Ein}_{1,n})$ is compact, $\bar{p}$ is a homeomorphism.
\end{proof}

\subsection{Dynamic on the causal geodesic space}

In this section, we study actions of $P_1$-Anosov representations on the causal geodesic space $\mathcal{C}$. In the previous section, we proved that $\mathcal{C}$ is homeomorphic to the closed subset $\bar{X}$ of $Gr_2(\R^{n+2})$. In \cite{guichard2015tameness}, it is proved that images of $P_1$-Anosov representations act properly discontinously and cocompactly on $\bar{X}$ from which we removed a "bad set" (see \cite{guichard2015tameness}, Theorem $4.1$). We rephrase here this result in terms of causal geodesics and give a geometrical proof of it.

\begin{theorem} \label{GKW}
Let $\Gamma$ be a hyperbolic group and $\rho : \Gamma \longrightarrow O_0(2,n)$ a $P_1$-Anosov representation with boundary map $\xi : \partial \Gamma \longrightarrow Ein_{1,n-1}$. Let
\begin{align*}
U &= \{\varphi \in \mathcal{C}\ :\ \varphi \cap \xi(\partial \Gamma) = \emptyset\}
\end{align*}
be the subspace consisting of causal geodesics avoiding the limit set $\xi(\partial \Gamma)$.\\
Then, the action of $\rho(\Gamma)$ on $U$ is properly discontinuous and cocompact.
\end{theorem}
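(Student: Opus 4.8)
The plan is to transfer the dynamical picture on $Ein_{1,n-1}$ (and $\mathtt{Ein}_{1,n-1}$) established via the Cartan decomposition and the contraction property of $P_1$-divergent sequences to the space $\bar X \cong \mathcal{C}$ of causal geodesics. The key translation is that a causal geodesic $\varphi$ is a $2$-plane $P \in \bar X$ (more precisely $\varphi = \pi(P) \cap \overline{\mathtt{AdS}}_{1,n}$), and the condition $\varphi \cap \xi(\partial\Gamma) = \emptyset$ becomes: the projective line $\pr(P)$ contains no point of the limit set $\Lambda = \xi(\partial\Gamma) \subset \mathtt{Ein}_{1,n-1}$. Since $\Lambda$ is negative (acausal), any $2$-plane meeting $\Lambda$ in two points would have signature $(1,1)$ and hence not lie in $\bar X$; so each $P \in U$ meets $\Lambda$ in at most one point, and $P \in U$ exactly when it meets $\Lambda$ in zero points. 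I will use this heavily.

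\textbf{Proper discontinuity.} First I would show that $U$ is an open $\rho(\Gamma)$-invariant subset of $\mathcal{C}$: invariance is immediate from $\rho$-equivariance of $\xi$, and openness follows because $\Lambda$ is compact and the incidence condition ``$\pr(P)$ meets $\Lambda$'' is closed in $\bar X$. Then, to prove properness, I would argue by contradiction: suppose $\{\gamma_k\}$ is a sequence of distinct elements of $\Gamma$ and $K \subset U$ a compact set with $\rho(\gamma_k) K \cap K \neq \emptyset$ for all $k$. Pick $P_k \in K$ with $\rho(\gamma_k) P_k \in K$. Since $\rho$ is $P_1$-Anosov, $\{\rho(\gamma_k)\}$ is $P_1$-divergent; pass to a subsequence with attracting point $p_+$ and repelling point $p_-$ in $\mathtt{Ein}_{1,n-1}$, which by the dynamics-preserving property lie in $\Lambda$ (they are limits of $\xi(\eta_k^\pm)$ for the attracting/repelling fixed points in $\partial\Gamma$). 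Using Corollary \ref{dyn. pr}, on $\pr(\R^{2,n})$ the maps $\rho(\gamma_k)$ converge uniformly on compacts away from $p_-^\perp$ to the constant $p_+$. Now I distinguish whether $\pr(P_k)$ stays away from $p_-^\perp$ or not. If (after a further subsequence) $P_k \to P_\infty \in K \subset U$ with $\pr(P_\infty) \cap p_-^\perp = \emptyset$, then $\rho(\gamma_k)\pr(P_k)$ collapses to the single point $p_+$, so its Hausdorff limit is $\{p_+\}$, which cannot be the projectivization of a $2$-plane — contradicting $\rho(\gamma_k) P_k \in K$ and $K$ compact in $\bar X$ (limits of $2$-planes are $2$-planes). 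If instead $\pr(P_\infty)$ meets $p_-^\perp$, then since $P_\infty \in \bar X$ and $p_- \in \Lambda$ is negative-type, $\pr(P_\infty) \cap p_-^\perp$ is forced (by the signature constraint, $p_-$ being isotropic and $\Lambda$ acausal) to be exactly the single point $p_- \in \Lambda$ — but then $P_\infty \notin U$, a contradiction. This dichotomy is the heart of the argument and I expect it to be the main obstacle: one must check carefully that a $2$-plane in $\bar X$ whose projectivization touches the lightcone $p_-^\perp$ of a limit point must touch it precisely at $p_-$, using acausality of $\Lambda$ and the signature bookkeeping for $2$-planes (the constraints forcing signature $(2,0)$, $(1,0,1)$ or $(1,1)$).

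\textbf{Cocompactness.} For cocompactness I would produce a compact fundamental domain, mimicking the convex-cocompact Kleinian picture. The Riemannian symmetric space $\mathcal{T}^{2n} \cong X$ sits inside $U$, and $\rho(\Gamma) \backslash \mathcal{T}^{2n}$ is in general noncompact; the added geodesics (lightlike ones, including photons of $\mathtt{Ein}_{1,n-1}$ avoiding $\Lambda$) compactify it. Concretely: suppose not, so there is a sequence $P_k \in U$ whose $\rho(\Gamma)$-orbits escape every compact subset of $\rho(\Gamma)\backslash U$. After translating by suitable $\rho(\gamma_k)$ and passing to a subsequence, $P_k \to P_\infty \in \bar X$ with $P_\infty \notin U$, i.e. $\pr(P_\infty)$ contains a point $z \in \Lambda$. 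Using the convergence-group dynamics of $\Gamma$ on $\partial\Gamma$ and the homeomorphism $\xi : \partial\Gamma \to \Lambda$, together with the contraction dynamics of $\rho(\Gamma)$ near $\Lambda$ from Corollary \ref{dyn. pr ein}, one shows that one can ``push'' $P_k$ back away from $\Lambda$ by a further element of $\rho(\Gamma)$: choose $\gamma \in \Gamma$ with repelling point far from the $\partial\Gamma$-preimage(s) of the direction of approach and attracting point also controlled, so that $\rho(\gamma^{-1}) P_k$ stays in a fixed compact subset of $U$. This contradicts the assumed escape. The precise choice of the correcting group element — exploiting that $\Gamma$ acts as a convergence group with dense-enough dynamics on $\partial\Gamma$, and that $\xi$ is an equivariant embedding — is the delicate point here; it is essentially the same mechanism as in Theorem \ref{GKW0}'s algebraic proof, recast so that one tracks the single point $\pr(P_\infty) \cap \Lambda$ rather than a flag. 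Once both halves are in place, $\rho(\Gamma) \backslash U$ is compact and Hausdorff, completing the proof. $\qed$
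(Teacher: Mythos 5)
Your general strategy (transfer the contraction dynamics of $P_1$-divergent sequences to the Grassmannian $\bar{X}\cong\mathcal{C}$) is the paper's strategy, but both halves have genuine gaps as written. For properness, your dichotomy is broken. A projective line and a projective hyperplane in $\pr(\R^{n+2})$ always intersect (since $\dim P+\dim p_-^{\perp}=n+3>n+2$), so the branch ``$\pr(P_\infty)\cap p_-^{\perp}=\emptyset$'' is vacuous; and even where that reasoning would apply, $\rho(\gamma_k)\pr(P_k)$ does not Hausdorff-converge to the singleton $\{p_+\}$ but to a projective line \emph{containing} $p_+$ (the Grassmannian is compact, so the limit is a genuine $2$-plane) -- the contradiction comes from $p_+\in\Lambda$ lying on that limit, not from degeneration. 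The other branch rests on a false claim: a $2$-plane of $\bar{X}$ whose projectivization meets $p_-^{\perp}$ need not meet it at $p_-$. For instance a negative-definite $2$-plane $P$ (a timelike geodesic, which can perfectly well lie in $U$) always meets the hyperplane $p_-^{\perp}$, in a timelike point of $\mathtt{AdS}_{1,n}$ distinct from $p_-$; similarly a photon can meet the lightcone $\mathcal{L}(p_-)$ at a point other than $p_-$. The correct (and simpler) route, which is the paper's: since $P_\infty\in U$ avoids $p_-$, one checks that $P_\infty\not\subset p_-^{\perp}$ (any $2$-plane of $\bar{X}$ contained in $v^{\perp}$, $p_-=\pr(v)$, must contain $v$, by a signature count in $v^{\perp}/\R v$), hence $\pr(P_\infty)$ has a point $p$ off the hyperplane; tracking nearby points $p_k\in\pr(P_k)$ gives $\rho(\gamma_k)p_k\to p_+$, so the limit of $\rho(\gamma_k)P_k$ contains $p_+\in\Lambda$ and cannot lie in $U$. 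No case distinction is needed.

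For cocompactness, the step you yourself flag as ``delicate'' -- choosing correcting elements so that $\rho(\gamma^{-1})P_k$ returns to a fixed compact subset of $U$ -- is the entire content of this half of the theorem, and it is not supplied. A single correcting $\gamma$ cannot serve for all $k$; one needs elements $\gamma_k$ together with a \emph{uniform quantitative} estimate ensuring the corrected sequence does not drift back toward the set $F$ of lines meeting $\Lambda$. The paper produces exactly this via the expansion criterion of Kapovich--Leeb--Porti (Criterion \ref{compactness criterion}): for each $p\in\Lambda$ and each $c>1$ it constructs, from the Cartan decomposition of a $P_1$-divergent sequence with attracting point $p$ and the uniform gap estimate of \cite{Gueritaud2017Anosov}, an element of $\rho(\Gamma)$ that $c$-expands distances to the fibers $F_q$ on a definite neighborhood of $F_p$ (Proposition \ref{techn. prop}), and then transfers cocompactness from $Gr_2(\R^{n+2})\setminus F$ to $\bar{X}\setminus F=U$ using closedness of $\bar{X}$. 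Without an expansion statement of this kind (or an equivalent quantitative mechanism), your contradiction argument does not close. As a minor point, Theorem \ref{GKW} does not assume $\Lambda$ negative; the fact that a $2$-plane of $\bar{X}$ meets $\Lambda$ in at most one point already follows from transversality of the boundary map, since two non-orthogonal isotropic lines span a plane of signature $(1,1)$ whatever the sign of their inner product.
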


Properness is proved in Section \ref{proper disc} and cocompactness in Section \ref{compactness}. In what follows, we denote by $\Lambda$ the limit set of $\rho(\G)$.

\subsubsection{Proper discontinuity} \label{proper disc}

Two points $\varphi$, $\varphi'$ in $\mathcal{C}$ are said to be \emph{dynamically related} if there exist a sequence $\{\varphi_i\}_i$ in $\mathcal{C}$ converging to $\varphi$ and a sequence $\{g_i\}_i$ in $\rho(\Gamma)$ going to infinity (i.e. leaving every finite subset of $\rho(\G)$) such that the sequence $\{g_i.\varphi_i\}_i$ converges to $\varphi'$. Recall the following general dynamical criterion of properness (see \cite{francesproperness}, Section $3.2$, Proposition $1$ for a proof). 

\begin{criterion}
A group $G$ acts properly discontinously on a Hausdorff topological space $Y$ if and only if no pairs of points of $Y$ are dynamically related.
\end{criterion}

Using this criterion, the proper discontinuity in theorem \ref{GKW} is a direct consequence of the following lemma. Recall that $U$ denote the set of causal geodesics which avoid the limit set $\Lambda$.

\begin{lemma}
Let $\{\varphi_i\}_i$ be a sequence of $U$ converging to some $\varphi$ in $U$ and let $\{g_i\}_i$ be a $P_1$-divergent sequence of $\rho(\G)$ such that $\{g_i.\varphi_i\}_i$ converges to some $\varphi'$ in $\mathcal{C}$. Then, $\varphi'$ meets the limit set.  
\end{lemma}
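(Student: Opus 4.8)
The plan is to use the dynamical description of $P_1$-divergent sequences given in Corollary \ref{dyn. pr ein} to control the limit of $\{g_i \cdot \varphi_i\}_i$. First, after passing to a subsequence, I would write $g_i = k_i a_i l_i$ in the Cartan decomposition and extract attracting and repelling points $p_+ , p_-$ in $\mathtt{Ein}_{1,n-1}$, so that $\{g_i\}_i$ converges uniformly on compact subsets of the complement of the lightlike cone $\mathcal{L}(p_-) = p_-^\perp$ to the constant map $p_+$. Since $\rho$ is $P_1$-Anosov with boundary map $\xi$ and $\{g_i\}_i$ is $P_1$-divergent, the dynamics-preserving and transversality properties force $p_+, p_- \in \xi(\partial\G) = \Lambda$; this is the standard fact that the attracting/repelling points of divergent sequences in the image of an Anosov representation lie in the limit set.

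Next I would analyze how $g_i$ acts on the $2$-plane $P_i = f^{-1}(\varphi_i)$ (using the homeomorphism $\bar X \cong \mathcal{C}$ from the previous section). The key dichotomy is whether the limiting $2$-plane $P = f^{-1}(\varphi)$ (equivalently the curve $\varphi$) meets the repelling cone: since $\varphi \in U$, the curve $\varphi$ is disjoint from $\Lambda$, and in particular $p_- \notin \varphi$. I would split into two cases according to the position of the projective line $\pr(P_i)$ relative to $p_-^\perp$. \textbf{Case 1: $\pr(P)$ is disjoint from $p_-^\perp$.} Then for $i$ large the whole circle $\pr(P_i)$ lies in a fixed compact subset of $\mathtt{Ein}_{1,n-1} \setminus \mathcal{L}(p_-)$, so $g_i$ contracts all of $\pr(P_i)$ toward $p_+$; hence $\{g_i \cdot \varphi_i\}_i$ degenerates to the single point $p_+ \in \Lambda$ (or, more carefully, $\varphi'$ is contained in $p_+^\perp$ or passes through $p_+$), and in any event $\varphi'$ meets $\Lambda$. \textbf{Case 2: $\pr(P)$ meets $p_-^\perp$.} Because $\varphi \in U$ does not contain $p_-$, the intersection point $\pr(P) \cap p_-^\perp$ is a point of $\varphi$ distinct from $p_-$; I would show that along the limit curve $\varphi'$ there must be a point lying in $p_+^\perp$ coming from this intersection (the part of $\pr(P_i)$ near the $p_-$-cone either collapses to $p_+$ or, being "transverse" to the cone, is pushed onto $p_+^\perp$), so that $\varphi'$ again touches $p_+ \in \Lambda$ or $p_+^\perp \cap \varphi'$ is a point whose causal geodesic closure forces a point of $\Lambda$. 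In both cases $\varphi' \cap \Lambda \neq \emptyset$, which is the conclusion.

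The main obstacle I anticipate is making Case 2 precise: when $\pr(P_i)$ straddles the repelling cone $\mathcal{L}(p_-)$, the maps $g_i$ are not equicontinuous on $P_i$, and one must show that the limit curve $\varphi'$ genuinely inherits a point of the form $p_+$ (not merely that "part of the curve escapes"). The cleanest way is to parametrize: since $\varphi_i, \varphi$ are causal geodesics, pick a point $q_i \in \varphi_i$ with $q_i \to (\text{a fixed point } q \in \varphi \setminus p_-^\perp)$ so that $g_i q_i \to p_+$; then use that $\varphi'$, being a limit curve in the sense of Definition \ref{limit curve}, must contain this accumulation point $p_+$, because every neighborhood of $p_+$ meets all but finitely many $g_i \cdot \varphi_i$. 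One must double-check that $\varphi' \in \mathcal{C}$ (closedness of $\mathcal{C} \cong \bar X$ under the limit, which holds since $\bar X$ is closed in the Grassmannian) and that the limiting $2$-plane indeed stays inside $\bar X$, i.e. remains causal — this is automatic since $\bar X$ is closed. Finally, one invokes that $p_+ \in \xi(\partial \G) = \Lambda$ to conclude $\varphi' \cap \Lambda \ni p_+$, contradicting $\varphi' \in U$ and thereby establishing, via the properness criterion, proper discontinuity of the action.
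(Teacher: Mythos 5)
Your proposal is correct and, in its final ``cleanest way'' formulation, is exactly the paper's proof: both pick a point $q\in\varphi$ off the lightlike cone of the repelling point $p_-\in\Lambda$, approximate it by $q_i\in\varphi_i$, use uniform convergence of $\{g_i\}_i$ off $\mathcal{L}(p_-)$ to get $g_i q_i\to p_+\in\Lambda$, and conclude that the limit curve $\varphi'$ contains $p_+$. The Case 1/Case 2 discussion is superfluous, since the single observation that $p_-\notin\varphi$ forces some point of $\varphi$ outside $p_-^{\perp}$ already suffices.
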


\begin{proof} 
Since $\{g_i\}_i$ is $P_1$-divergent, there exist an attracting point $p_+$ and a repelling point $p_-$ in $\Lambda \subset \mathtt{Ein}_{1,n-1}$. The curve $\varphi$ avoid the limit set, in particular $p_- \not \in \varphi$. It follows that there exists $p \in \varphi$ such that $p$ is not in the lightlike cone of $p_-$. Since $\{\varphi_i\}_i$ converges toward $\varphi$, there exists a sequence $\{p_i\}_i$ converging to $p$ such that every $p_i$ is a point of $\varphi_i$. Let $K$ be a compact neighborhood around $p$ in $\mathtt{AdS}_{1,n} \cup \partial \mathtt{AdS}_{1,n}$, disjoint from the lightlike cone of $p_-$. The compact $K$ contains all the points $p_i$ except a finite number. Therefore, the sequence $\{g_i.p_i\}_i$ converges to $p_+$. By the convergence of $\{g_i.\varphi_i\}_i$ to $\varphi'$, the attracting point $p_+$ belongs to $\varphi'$ and hence $\varphi' \not \in \mathcal{C}$.
\end{proof}

\subsubsection{Compactness} \label{compactness}

We use the following dynamical compactness criterion from \cite{kapovich2013dynamics}, inspired by Sullivan's dynamical characterization of convex cocompactness.

\begin{criterion}[\cite{kapovich2013dynamics}, Proposition $2.5$] \label{compactness criterion}
Let $G$ be a group acting by homeomorphisms on a compact metric space $(Z,d_Z)$ and on a compact set $K$. Let $F$ be a closed $G$-invariant subset of $Z$ fibering equivariantly over $K$, with fibers denoted by $F_p$, $p \in K$.\\
Suppose that for any $p \in K$, there exist $g \in G$ which locally expand distances from a point to a fiber. More precisely, there exist an open set $W_p \subset Z$ containing $F_p$ and a constant $c_p > 1$ such that
\begin{align} \label{condition criterion}
d_Z(g.z,g.F_q) \geq c_p.d_Z(z, F_q)
\end{align}
for every $z \in W_p$ and $q \in K$ with $F_q \subset W_p$.\\
Then, the action of $G$ on $Z \backslash F$ is cocompact.
\end{criterion}

By this criterion, the compactness in theorem \ref{GKW} is a consequence of the following proposition. Let us first introduce a notation. For every $p \in \pr(\R^{n+2})$, let
\[F_p := \{ P \in Gr_2(\R^{n+2});\ p \in \pr(P)\}\]
be the set consisting of projective lines going through $p$.

\begin{proposition} \label{techn. prop}
Let $\rho : \G \longrightarrow O_0(2,n)$ a $P_1$-Anosov representation with boundary map $\xi: \partial \G \longrightarrow \mathtt{Ein}_{1,n-1}$. We denote by $\Lambda$ the limit set $\xi(\partial \G)$.\\
Then, for every $p \in \Lambda$ and every $c > 1$, there exist $g \in \rho(\G)$ and an open subset $W_p$ of $Gr_2(\R^{n+2})$ containing $F_p$ such that
\[\delta(g.P,g.F_q) \geq c.\delta(P, F_q)\]
for every $P \in W_p$ and every $q \in \pr(\R^{n+2})$ with $F_q \subset W_p$.
\end{proposition}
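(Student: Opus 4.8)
The plan is to reduce the expansion estimate to the dynamics of $P_1$-divergent sequences on the projective space, which we already understand from Corollary~\ref{dyn. pr ein}. Fix $p \in \Lambda$. Since $\Lambda = \xi(\partial\Gamma)$ and $\rho$ is dynamics-preserving, $p$ is (for instance) the attracting fixed point of $\rho(\gamma)$ for some $\gamma \in \Gamma$ of infinite order whose attracting fixed point in $\partial\Gamma$ maps to $p$; more robustly, I can pick a sequence $g_i = \rho(\gamma_i) \to \infty$ that is $P_1$-divergent with attracting point $p_+ = p$ and some repelling point $p_-$. The geometric picture: the fiber $F_p$ sits over $p$, and $g$ should crush everything near the ``bad locus'' $p_-^{\perp}$ toward $p$ while expanding a large neighborhood of $F_p$. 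The key observation is that $F_p$ is, up to the homeomorphism $f$, exactly the set of causal geodesics through $p$; these are lightlike geodesics emanating from $p \in \partial\mathtt{AdS}$, i.e. isotropic $2$-planes containing the line $p$.

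First I would make the link between the Hausdorff distance $\delta$ on $Gr_2(\R^{n+2})$ and the projective distance $d$ precise: for $2$-planes $P, P'$, $\delta(P,P')$ controls and is controlled by the distances from each line in $\pr(P)$ to the set $\pr(P')$, and in particular $\delta(P, F_q)$ is comparable to $d(q, \pr(P))$ when $P$ is far from $q$, and more generally is governed by how close $P$ comes to passing through $q$. This lets me translate the desired inequality $\delta(g.P, g.F_q) \geq c\,\delta(P, F_q)$ into a statement about the derivative of the $g$-action on $\pr(\R^{n+2})$ near the line $p$: I want $g$ to expand distances by a factor $\geq c$ on a whole neighborhood of $F_p$ in the Grassmannian, uniformly, and the constant to beat any prescribed $c > 1$.

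Next I would exhibit the expanding element explicitly via the Cartan decomposition. Writing $g_i = k_i a_i l_i$ with $a_i = \mathrm{diag}(e^{\lambda_i}, e^{\mu_i}, 1, \dots, 1, e^{-\mu_i}, e^{-\lambda_i})$ and $\lambda_i - \mu_i \to \infty$, I use Corollary~\ref{dyn. pr ein} to get a subsequence with $k_i[e_1] \to p_+ = p$ and $l_i^{-1}[e_{n+2}] \to p_-$. Consider instead $g_i^{-1}$ (or reindex): near $p$, the element $g_i$ contracts, so the element I actually want is one whose \emph{attracting} point is $p$ and which expands a neighborhood of $p$. The differential of $g_i$ at $p$ has, in suitable coordinates, top eigenvalue ratio $\sim e^{2(\lambda_i - \mu_i)}$ (or $e^{2\lambda_i}$ against the $e^{-\lambda_i}$-directions), so for $i$ large the local expansion factor at $p$ exceeds any fixed $c$. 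The point that requires care is passing from ``expands at the single line $p$'' to ``expands on an open neighborhood $W_p$ of the whole fiber $F_p$, uniformly over all fibers $F_q \subset W_p$'': one fixes a small metric ball $W_p$ around $F_p$, notes that $F_p$ is compact, uses continuity of the $g_i$-action and of the differentials to get a uniform lower bound $c_i \to \infty$ of the expansion factor on $W_p$, and then chooses $i$ with $c_i > c$. The uniformity in $q$ comes because the estimate is really about the pointwise expansion of $g$ on the region $\pr(W_p) \subset \pr(\R^{n+2})$, and $\delta(P, F_q)$, $\delta(g.P, g.F_q)$ are both built from $d$-distances between points of $\pr(W_p)$, so a uniform lower bound on $\|d_x g\|^{-1}$-type quantities over $\pr(W_p)$ does the job.

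The main obstacle is the last uniformity step — ensuring the expansion is genuinely local and uniform, not just infinitesimal at $p$, and that it dominates the comparison constants relating $\delta$ to $d$. Concretely, $\delta(P, F_q)$ is a Hausdorff distance, so it mixes the expanding and contracting directions of $g$; I need $W_p$ chosen small enough that within $\pr(W_p)$ the dynamics of $g_i$ is uniformly expanding (this is possible precisely because $p \notin p_-^{\perp}$ for the relevant $i$, since $p = p_+$ and the limit set is negative hence $p_+ \neq p_-$ and $p_+ \notin \mathcal{L}(p_-)$, so a neighborhood of $F_p$ avoids the bad locus $F_{p_-^{\perp}}$ where $g_i$ fails to be expanding). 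Granting that, the estimate $d(g_i.x, g_i.y) \geq c\, d(x,y)$ holds uniformly for $x, y \in \pr(W_p)$ for $i$ large, and the desired inequality for $\delta$ follows by taking Hausdorff distances. I would then invoke Criterion~\ref{compactness criterion} with $Z = Gr_2(\R^{n+2})$, $F = $ the set of $2$-planes meeting $\Lambda$ in a line, $K = \Lambda$, fibers $F_p$, to conclude cocompactness of the action on $U = Z \setminus F$, completing the proof of Theorem~\ref{GKW}.
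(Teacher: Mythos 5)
Your overall strategy (Cartan decomposition, north--south dynamics of $P_1$-divergent sequences, reduction of $\delta$ to the projective distance $d$) is the same as the paper's, but two of your key steps fail as stated, and they are exactly the points where the paper's proof does something different. The uniformity claim ``fix a small metric ball $W_p$ around $F_p$ \dots\ get a uniform lower bound $c_i \to \infty$ of the expansion factor on $W_p$'' is false: for $a_i = \mathrm{diag}(e^{\lambda_i}, e^{\mu_i}, 1, \dots, 1, e^{-\mu_i}, e^{-\lambda_i})$, the set of points where $a_i^{-1}$ expands the angle metric by a factor $\geq c$ is a neighbourhood of $[e_1]$ that shrinks to the point $[e_1]$ as $\lambda_i - \mu_i \to \infty$, and on the boundary of any \emph{fixed} ball around $[e_1]$ the derivative of $a_i^{-1}$ tends to $0$, not to infinity. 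So there is no fixed neighbourhood of $F_p$ on which the $g_i^{-1}$ are eventually $c$-expanding; the neighbourhood must depend on $c$ (which the criterion allows). The paper circumvents this by proving the \emph{contraction} estimate for $g_i$ on a ball $W_i$ of uniform radius $1-\frac{\varepsilon}{2}$ around $p_i = \ell_i^{-1}[e_1]$ --- where the contraction of $a_i$ on $\bar{B}_{1-r}$ is uniform and elementary --- and then taking $W_p := g_i.W_i$. The inclusion $F_p \subset g_i.W_i$ is precisely where the Anosov hypothesis enters, via the estimate from \cite{Gueritaud2017Anosov} giving $d(p_i, g_i^{-1}.p) \leq 1-\varepsilon$ \emph{uniformly in $i$}; your substitute ($p_+ \notin \mathcal{L}(p_-)$, deduced from negativity of the limit set, which the proposition does not assume) concerns only the limits $p_\pm$ and gives no uniform-in-$i$ control on where $g_i^{-1}.p$ sits relative to the repelling hyperplane of $a_i$.

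The second gap is the sentence ``the desired inequality for $\delta$ follows by taking Hausdorff distances.'' By Fact \ref{techn. lemma 1}, $\delta(P,F_q) = d(q,\pr(P))$ is an infimum over the \emph{entire} projective line $\pr(P)$, which is never contained in a small ball; a pairwise expansion estimate valid on a neighbourhood of $p$ therefore does not lower-bound $\inf_{y\in\pr(P)} d(g.q, g.y)$, since the infimum may be approached at points $y$ where $g$ is strongly contracting. The paper deals with this via Fact \ref{techn. lemma 2} ($d(q,\pr(P)\cap\bar{B}_r) \leq M\, d(q,\pr(P))$), which is harmless in the contraction direction because replacing $\pr(P_i)$ by $\pr(P_i)\cap\bar{B}_{1-r}$ only increases the quantity one needs to bound from above --- another reason the argument is run as a contraction statement for $g_i$ and then inverted, rather than as a direct expansion statement for $g_i^{-1}$. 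Both ingredients are needed to turn your sketch into a proof.
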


The proof of this proposition uses the two following facts. Let us first introduce some notations.
Let $\{e_1, \ldots, e_{n+2}\}$ be a basis of $\R^{2,n}$ such that $e_1$ and $e_{n+2}$ are isotropic vectors of $\R^{2,n}$. We set $p_0 = \pr(e_1)$. We denote by $B_r$ and $\bar{B}_r$ respectively the open and closed balls of radius $r$ centered in $p_0$ of the projective space.\\

The following identity is easily established.

\begin{tech lemma} \label{techn. lemma 1}
For every $q \in \pr(\R^{n+2})$ and every $P \in Gr_2(\R^{n+2})$, we have
\[\delta(P, F_q) = d(q, \pr(P)).\]
\end{tech lemma}

\begin{tech lemma} \label{techn. lemma 2}
For every ball $B_{\varepsilon}$, there exist a ball $\bar{B}_r$ containing stricly $B_{\varepsilon}$ and a real number $M > 0$ such that 
\begin{align*}
d(q, \pr(P) \cap \bar{B}_r) \leq M.d(q,\pr(P))
\end{align*}
for every $q \in B_{\varepsilon}$ and every $P \in Gr_2(\R^{n+2})$ such that the projective line $\pr(P)$ meets $\bar{B}_r$.
\end{tech lemma}

\begin{proof}
Fix $r > \varepsilon$. Let $\mathcal{L}$ be the set of projective lines meeting $\bar{B}_r$. We prove that $\mathcal{L}$ is a compact subset of $Gr_2(\R^{n+2})$. Let $\{P_i\}_i$ be a sequence of $\mathcal{L}$ converging to some $P$ in $Gr_2(\R^{n+2})$. For every $i$, there exist $q_i \in \pr(P_i) \cap \bar{B}_r$. Up to extracting, $\{q_i\}_i$ converges to some $q$ in $\bar{B}_r$. We have 
\[d(q, \pr(P)) \leq d(q, q_i) + d(q_i, \pr(P)) \leq d(q, q_i) + \delta(P_i, P).\]
By the convergence of $\{q_i\}_i$ to $q$ and the convergence of $\{P_i\}_i$ to $P$, we deduce that $q \in \pr(P) \cap \bar{B}_r$, i.e. $P \in \mathcal{L}$. Therefore, $\mathcal{L}$ is a close subset of $Gr_2(\R^{n+2})$ and then compact.
Now, we define the mapping $f : \pr(\R^{n+2}) \times \mathcal{L} \longrightarrow \R$ by $f(q,P) = d(q, \pr(P) \cap \bar{B}_r)/d(q,\pr(P))$. Notice that if $q \in B_{\varepsilon}$ and $q$ is "near" from $\pr(P)$ then $d(q, \pr(P)) = d(q,\pr(P) \cap \bar{B}_r)$. It follows that $f$ is well-defined. Since $f$ is continuous on a compact set, $f$ is upper bounded, i.e. there exists $M > 0$ such that
\[d(q, \pr(P) \cap \bar{B}_r) \leq M.d(q,\pr(P))\]
for every $q \in \pr(\R^{n+2})$ and every $P \in \mathcal{L}$. 

\end{proof}

\begin{proof}[Proof of Proposition \ref{techn. prop}]
Let $p \in \Lambda$. By definition of the limit set, $p$ is an attracting point of some $P_1$-divergent sequence $\{g_i\}_i$ of $\rho(\Gamma)$. Every element $g_i$ admits a Cartan decomposition $g_i = k_i.a_i.\ell_i$ with $k_i$, $\ell_i \in SO(2) \times SO(n)$ and $a_i$ the Cartan projection of $g_i$ (see Section \ref{1.3}). According to Corollary \ref{dyn. pr} (Section \ref{1.3}), up to extracting, we can assume that $\{k_i\}_i$ and $\{\ell_i\}_i$ converge uniformly on the projective space to some $k, \ell \in SO(2)\times SO(n)$ such that $p = k.p_0$ with $p_0 = \pr(e_1)$.\\
By \cite{Gueritaud2017Anosov}, there exist $0 < \varepsilon < 1$ and $i_0 \in \mathbb{N}$ such that for every $i \geq i_0$
\[\limsup_{j \rightarrow +\infty} d(p_0, (a_i^{-1}k_i^{-1}k_{i+j}a_i).p_0) \leq 1 - \varepsilon.\]
Since $p_0$ is a fixed point of $a_i$ and since the metric $d$ is unvariant under $\ell_i^{-1} \in O(n+2)$, we deduce that for every $i \geq i_0$
\begin{align} \label{ineq conc}
d(p_i , g_i^{-1}.p) \leq 1 - \varepsilon
\end{align}
with $p_i = l_i^{-1}.p_0$.\\ 
Let $i \geq i_0$. We consider the open subset $W_i$ of $Gr_2(\R^{n+2})$ consisting of projective lines less than $(1 - \frac{\varepsilon}{2})$ apart from $p_i$ : 
\[W_i = \{P \in Gr_2(\R^{n+2}):\ d(p_i, \pr(P)) < 1 - \frac{\varepsilon}{2}\}.\]
\textit{Claim 1.} For every $q \in \pr(\R^{n+2})$, every projective line $P$ going through $q$ belongs to $W_i$ if and only if $q$ lies in the ball of radius $(1 - \frac{\varepsilon}{2})$ centered in $p_i$; i.e.
\begin{align}
F_q \subset W_i \Leftrightarrow d(p_i,q) < 1 - \frac{\varepsilon}{2}.
\end{align}

Indeed, assume $F_q \subset W_i$. In particular, the projective line going through $q$ such that $q$ realizes the distance to $p_i$ belongs to $W_i$, i.e. $d(p_i,q) < 1 - \frac{\varepsilon}{2}$. Conversely, for every $P \in F_q$, we have $d(p_i,\pr(P)) \leq d(p_i,q) < 1 - \frac{\varepsilon}{2}$, i.e. $P \in W_i$.\\ \\
\textit{Claim 2.} For every $c > 1$, there exists $i_c \in \mathbb{N}$ such that for every $i \geq i_c$
\begin{align}
\delta(P, F_q) \geq c.\delta(g_i.P, g_i.F_q)
\end{align}
for every $P \in W_i$ and every $q \in \pr(\R^{n+2})$ with $F_q \subset W_i$.\\

Indeed, fix $c > 1$. For $i \geq i_0$, consider $P  \in W_i$ and $q \in \pr(\R^{n+2})$ with $F_q \subset W_i$, i.e. $d(p_i,q) < 1 - \frac{\varepsilon}{2}$ (see Claim $1$). Let $q_i := \ell_i.q$ and $P_i := \ell_i.P$. Since the metric $d$ in invariant under $\ell_i$, $q_i$ belongs to $B_{1 - \frac{\varepsilon}{2}}$. According to Fact \ref{techn. lemma 2}, there exist $0< r < \frac{\varepsilon}{2}$ and $M > 0$ such that
\begin{align} \label{ineq}
d(q_i, \pr(P_i) \cap \bar{B}_{1-r}) \leq M. d(q_i, \pr(P_i)).
\end{align}
Since $\{a_i\}_i$ is $P_1$-divergent, there exists an integer $i_{r,\frac{1}{cM}}$ such that for every $j \geq i_{r,\frac{1}{cM}}$, the map ${a_j}|_{\bar{B}_{1-r}}$ is $\frac{1}{cM}$-contracting.\\
Let $i_c : = \max (i_0, i_{r,\frac{1}{cM}})$ and $i \geq i_c$. We have
\begin{align*}
d(a_i.q_i, a_i.\pr(P_i)) &\leq d(a_j.q_i, a_j.(\pr(P_i) \cap \bar{B}_{1-r})) \\
                         &\leq \frac{1}{cM} d(q_i, \pr(P_i) \cap \bar{B}_{1-r})).
\end{align*}
According to the inequality \ref{ineq}, we deduce
\begin{align} \label{main ineq}
d(a_i.q_i, a_i.\pr(P_i)) &\leq \frac{1}{c} d(q_i, \pr(P_i)).
\end{align}
Since $d$ is invariant under $\ell_i$, we have $d(q_i, \pr(P_i)) = d(q, \pr(P))$ and according to Fact \ref{techn. lemma 1}, $d(q, \pr(P)) = \delta (P, F_q)$. Thus
\begin{align} \label{equ 1}
d(q_i, \pr(P_i)) &= \delta (P, F_q).
\end{align}
Now, since $d$ is invariant under $k_i$, we have $d(a_i.q_i, a_i.\pr(P_i)) = d(g_i.q, g_i.\pr(P))$ and according to Fact \ref{techn. lemma 1}, $d(g_i.q, g_i.\pr(P)) = \delta (g_i.P, g_i.F_q)$. Thus
\begin{align} \label{equ 2}
d(a_i.q_i, a_i.\pr(P_i)) &= \delta (g_i.P, g_i.F_q).
\end{align}
It follows from the inequality \ref{main ineq} and the equalities \ref{equ 1} and \ref{equ 2}
\[\delta (P, F_q) \geq c.\delta (g_i.P, g_i.F_q).\]
This proves Claim $2$.\\

To conclude, fix $i \geq i_c$. By Claim $1$ and the inequality \ref{ineq conc}, we have $F_{g_i^{-1}.p} \subset W_i$, hence $F_p \subset g_i.W_i$. Then Proposition \ref{techn. prop} follows from Claim $2$ by setting $g = g_i$ and $W = g_i.W_i$.
\end{proof}

\begin{proof}[Proof of the compactness in Theorem \ref{GKW}]
Let $F$ be the set of projective lines which meet the limit set $\Lambda$. Let us test Criterion \ref{compactness criterion} for $G = \rho(\G)$, $Z = Gr_2(\R^{n+2})$, $F$ and $K = \Lambda$. According to Proposition \ref{techn. prop}, the condition \ref{condition criterion} of Criterion \ref{compactness criterion} is satisfied. It follows that the action of $\rho(\Gamma)$ on the set $Z \backslash F$ of projective lines avoiding the limit set is cocompact.\\

Recall that $U$ is the set of causal geodesics which avoid the limit set. Let $\{\varphi_i\}_i$ be a sequence of $U$ and $\{\bar{\varphi}_i\}_i$ its image in $\rho(\G) \backslash U$ by the canonical projection. Every $\varphi_i$ is defined by a $2$-plane $P_i$ in $\bar{X} \backslash F$. By the compactness of $\rho(\G) \backslash (Z \backslash F)$, there exist a subsequence $\{P_{i_j}\}_j$ and a sequence $\{g_j\}_j$ of $\rho(\G)$ such that $\{g_j.P_{i_j}\}_j$ converges to some $P$ in $Z \backslash F$. Since every $\{g_i\}_i$ is an isometry, $\{g_j.P_{i_j}\}_j$ is contained in $\bar{X}$ which is compact. Thus, $P \in \bar{X}$. It follows that $P$ define a causal geodesic $\varphi$ in $U$. Therefore, $\{\bar{\varphi_{i_j}}\}_j$ converges in $\rho(\G) \backslash U$. This proves the compactness of $\rho(\G) \backslash U$.
\end{proof}

\section{Conformally flat spacetimes and Anosov representations}\label{4}
In this section, we prove our main theorem.

\begin{theorem}\label{main theorem}
Any $P_1$-Anosov representation $\rho$ of a hyperbolic group $\Gamma$ into $O_0(2,n)$ ($n \geq 2$) with negative limit set $\Lambda$\footnote{It means that $\Lambda$ lifts to an acausal subset of $\eu$.} is the holonomy group of a CGHM conformally flat spacetime $M_{\rho}(\Gamma)$. Moreover, when $\Lambda$ is not a topological $(n-1)$-sphere, $\rho$ is the holonomy group of an \emph{$AdS$-spacetime with black hole} \footnote{See Section \ref{4.3}, $\S$ \ref{4.3.2}.} which embeds conformally in $M_{\rho}(\Gamma)$. 
\end{theorem}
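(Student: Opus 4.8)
The strategy is to build the spacetime $M_\rho(\Gamma)$ explicitly as a quotient of an open domain of $Ein_{1,n}$, and then verify the required properties (conformal flatness, global hyperbolicity, spatial compactness, maximality) one at a time, using Theorem \ref{GKW} for the delicate point (spatial compactness). First I would use the hypothesis that $\Lambda$ is negative: by the remarks following Proposition \ref{P}, $\Lambda$ lifts to an acausal compact subset $\widetilde\Lambda$ of $\eu$, and by Proposition \ref{acausal set affine domain} it lies in a single affine domain. Since $\Lambda = \xi(\partial\Gamma)$ is a topological $(n-2)$-sphere (the continuous injective image of $\partial\Gamma$; when it bounds, it is a genuine $(n-1)$-sphere), the acausal hypersurface generated by it — the graph of the $1$-Lipschitz function obtained as the "tent'' over $\Lambda$, i.e. $\min$ of the light cones from above and $\max$ from below, cf. Proposition \ref{future of a compact} — cuts out a globally hyperbolic domain. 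Concretely, set $\widetilde E(\widetilde\Lambda) := I^-(J^+(\widetilde\Lambda)) \cap I^+(J^-(\widetilde\Lambda))$ inside $\eu$ (the \emph{invisible domain} of $\widetilde\Lambda$), which is open, causally convex, $\widetilde\Gamma$-invariant, and by Proposition \ref{causally convex is GH} globally hyperbolic; pushing down to $Ein_{1,n}$ one gets an $O_0(2,n)$-type domain $\Omega \subset Ein_{1,n}$ on which $\rho(\Gamma)$ acts, and one checks the action is properly discontinuous. The quotient $M_\rho(\Gamma) := \rho(\Gamma)\backslash\Omega$ is then a conformally flat spacetime (it carries a $(O_0(2,n), Ein_{1,n})$-structure inherited from the ambient one) with holonomy $\rho$.

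Next I would establish global hyperbolicity of $M_\rho(\Gamma)$: the acausal topological hypersurface $\Sigma$ spanned by $\widetilde\Lambda$ (the graph of the Lipschitz tent function) is a Cauchy hypersurface of $\widetilde E(\widetilde\Lambda)$ — every inextensible causal curve in the invisible domain must cross it exactly once, which follows from the explicit Lipschitz-graph description of causal curves in $\eu$ and the fact that points of $\Sigma$ are, by construction, exactly those that see all of $\widetilde\Lambda$ to their causal past and future in a balanced way. Because $\Sigma$ is $\widetilde\Gamma$-invariant, it descends to a Cauchy hypersurface $S$ of $M_\rho(\Gamma)$; this gives global hyperbolicity. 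The point requiring the most care is \textbf{spatial compactness}, i.e. compactness of $S$. This is where I would invoke the key idea announced in the introduction: the space of lightlike geodesics of $M_\rho(\Gamma)$ is homeomorphic to the unit tangent bundle $T^1 S$ of the Cauchy hypersurface. A lightlike geodesic of $M_\rho(\Gamma)$ lifts to a lightlike geodesic of $\Omega$, hence of $Ein_{1,n}$, that avoids $\Lambda$ (a photon meeting $\Lambda$ would exit the invisible domain); conversely every photon of $Ein_{1,n}$ avoiding $\Lambda$ that also avoids the conjugate locus appropriately determines a photon in $\Omega$. So $\rho(\Gamma)\backslash\{\text{photons of }\Omega\} \cong T^1 S$ as topological spaces. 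Now photons of $Ein_{1,n}$ correspond (Proposition on $\mathcal P(\mathtt{Ein}_{1,n})$ and the homeomorphism $\bar X \cong \mathcal C$) to totally isotropic $2$-planes, i.e. a piece of the boundary $\partial\bar X$; and "avoiding $\Lambda$'' is precisely the condition $\varphi\cap\xi(\partial\Gamma)=\emptyset$ defining the set $U$ of Theorem \ref{GKW}. Hence $\rho(\Gamma)\backslash(\text{photons avoiding }\Lambda)$ is a closed subspace of the compact space $\rho(\Gamma)\backslash U$ (Theorem \ref{GKW} gives cocompactness, and the photon locus is the closed $\rho(\Gamma)$-invariant subset $\partial\bar X \cap U$), hence compact. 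Therefore $T^1 S$ is compact, so $S$ is compact, and $M_\rho(\Gamma)$ is CGH.

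For \textbf{maximality} I would appeal to the standard fact (as for $AdS$-regular domains and Mess's work, and as used by C. Rossi--Salvemini) that an invisible domain $E(\widetilde\Lambda)$ of a \emph{closed} acausal subset is the \emph{maximal} globally hyperbolic conformally flat extension of any globally hyperbolic neighborhood of $\Sigma$: any Cauchy embedding of $M_\rho(\Gamma)$ into a GH conformally flat spacetime $N$ lifts, via the developing map, to a $\rho$-equivariant conformal embedding into $Ein_{1,n}$ whose image must again avoid $\Lambda$ and be causally convex with the same Cauchy hypersurface, forcing it to land inside $\widetilde E(\widetilde\Lambda)$; hence the embedding is onto. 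This uses that the developing map of a GH conformally flat spacetime is injective onto a causally convex domain (a theorem of the type proven by C. Rossi in \cite{salveminithesis}), and that $\widetilde E(\widetilde\Lambda)$ is maximal among such domains with the prescribed limit set.

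Finally, for the \textbf{black hole} statement when $\Lambda$ is not an $(n-1)$-sphere: writing $Ein_{1,n} = \mathtt{AdS}_{1,n}^{(1)} \cup Ein_{1,n-1} \cup \mathtt{AdS}_{1,n}^{(2)}$, the domain $\Omega$ meets each copy $\mathtt{AdS}_{1,n}^{(i)}$ in an $AdS$-regular domain $E_i$ (the $AdS$-invisible domain of $\Lambda$), and $M_i = \rho(\Gamma)\backslash E_i$ is a strongly causal $AdS$-spacetime embedding in $M_\rho(\Gamma)$. When $\Lambda$ fails to be a round $(n-1)$-sphere, the acausal boundary of $E_i$ is not a single spacelike hypersurface hitting all photons, and one identifies inside $M_i$ a causally convex globally hyperbolic subregion $\langle\langle E_i\rangle\rangle$ (the domain of dependence of a Cauchy surface) whose complement in $M_i$ is nonempty and inescapable to the future/past — the event horizon being the boundary of $J^-$ of the "conformal infinity" — exactly matching the definition of a black hole in \cite{Baados1993GeometryOT}, \cite{Baados1992BlackHI}, \cite{Barbot2005CausalPO}. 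The detailed description of this region, and the verification that $\Lambda$ being a sphere is precisely the degenerate case with no horizon, is carried out in Section \ref{4.3}. The main obstacle throughout is the spatial compactness, and it is overcome exactly by transporting the question to the space of photons and applying the cocompactness half of Theorem \ref{GKW}.
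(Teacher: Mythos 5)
Your overall strategy is the same as the paper's: construct the invisible domain of the lifted limit set, quotient by $\rho(\Gamma)$, prove spatial compactness by identifying the space of photons of the quotient with $T^1S$ on one hand and with the quotient of the set of lightlike geodesics avoiding $\Lambda$ (i.e. $\partial\mathcal{C}\cap U$) on the other, and invoke the cocompactness half of Theorem \ref{GKW}; the developing-map argument for maximality and the decomposition into two $AdS$-regular domains for the black-hole statement also match the paper. The central idea is correctly identified and correctly deployed.

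Three points, however, need repair. First, your formula $\widetilde E(\widetilde\Lambda)=I^-(J^+(\widetilde\Lambda))\cap I^+(J^-(\widetilde\Lambda))$ does not define the invisible domain: in $\eeu\simeq\mathbb{S}^{n}\times\R$ one has $I^-(J^+(\widetilde\Lambda))=\eeu$ (any point lies in the past of a point of $J^+(\widetilde\Lambda)$ sufficiently far up its own timeline), so your intersection is the whole space. The correct definition is $\widetilde\Omega(\Lambda)=\eeu\setminus\bigl(J^+(\widetilde\Lambda)\cup J^-(\widetilde\Lambda)\bigr)$, and it must be taken in $\eeu=\widetilde{Ein}_{1,n}$, one dimension up, not in $\eu$: the invisible domain of $\Lambda$ inside $\eu$ is only the conformal boundary $\partial E(\Lambda)$ of the two $AdS$-regular domains, and in particular it is empty when $\Lambda$ is a topological $(n-1)$-sphere, so it cannot serve as the domain $\Omega$. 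Second, $\Lambda=\xi(\partial\Gamma)$ is not in general a topological $(n-2)$-sphere: $\partial\Gamma$ may be a Cantor set, a Menger curve, etc.; nothing in the construction requires this, so the claim should simply be dropped. Third, global hyperbolicity of the quotient does not follow from causal convexity of $\Omega(\Lambda)$ together with the existence of an invariant Cauchy hypersurface $\Sigma$ upstairs: for $\Sigma$ to descend to an \emph{acausal} hypersurface of $M_{\rho}(\Gamma)$ one must rule out a causal curve from $p$ to $g\cdot p$ for $g\neq 1$, i.e. prove that orbits are acausal. The paper does this using the north--south dynamics of $P_1$-divergent sequences: if $g\cdot p\in J^+(p)$ then $g^i\cdot p\to p_+\in\Lambda$ while remaining in the closed set $J^+(p)$, forcing $p_+\in J^+(p)$ and contradicting $p\in\Omega(\Lambda)$. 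Some argument of this kind is indispensable and is missing from your sketch; the same dynamics is also what gives compactness of causal diamonds in the quotient.
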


Let $\Gamma$ be a hyperbolic group and $\rho : \Gamma \longrightarrow O_0(2,n)$ be a $P_1$-Anosov representation with negative limit set $\Lambda \subset \mathtt{Ein}_{1,n-1}$. The proof of Theorem \ref{main theorem} involves \emph{the invisible domain} of $\Lambda$ in $\mathtt{Ein}_{1,n}$. In \cite{andersson2012},\cite{barbot2015deformations}, \cite{Merigot2007AnosovAR}, the authors define the invisible domain of an achronal subset of $Ein_{1,n-1}$ in $AdS_{1,n}$ also called \emph{$AdS$-regular domain}. By analogy, we define in Section \ref{4.1} the invisible domain of $\Lambda$ in $\mathtt{Ein}_{1,n}$ and describe its geometric and dynamical properties.

\subsection{The invisible domain of the limit set} \label{4.1}

Let $\tilde{\Lambda}$ be the acausal subset of $\eu$ which projects on $\Lambda$. Since $\eu \subset \eeu$, we can see $\tilde{\Lambda}$ as an acausal subset of $\eeu$. Let $\tilde{\pi} : \eeu \longrightarrow Ein_{1,n}$ be the universal covering of $Ein_{1,n}$ and $\pi_2 : Ein_{1,n} \longrightarrow \mathtt{Ein}_{1,n}$ the double covering of $\mathtt{Ein}_{1,n}$. Since $\tilde{\Lambda}$ is acausal in $\eeu$, it is contained in an affine domain (see Proposition \ref{acausal set affine domain}). According to Remark \ref{fundamental group}, it follows that $\pi_2 \circ \pi$ restricted to $\tilde{\Lambda}$ is one to one. Therefore, $\tilde{\Lambda}$, $\tilde{\pi}(\tilde{\Lambda})$ and $\Lambda$ are conformally equivalent. In particular, $\tilde{\Lambda}$ and $\tilde{\pi}(\tilde{\Lambda})$ are compact. From now on, we just write $\Lambda$ to denote one of these three copies.

The group $O(2,n)$ is embedded in $O(2,n+1)$ as follow:
\[A \in O(2,n) \mapsto \begin{pmatrix}
A & 0 \\ 
0 & 1
\end{pmatrix} \in O(2,n+1).\]
We still denote by $\rho(\Gamma)$ its image by this embedding in $O(2,n+1)$. According to Section \ref{1.4}, there exists an onto morphism $j : \tilde{O}(2,n+1) \longrightarrow O(2,n+1)$ with kernel generated by $\delta$ (see Section \ref{1.3}). Let us consider the subgroup $G$ of elements of $j^{-1}(\rho(\Gamma))$ that preserves $\Lambda$. The restriction of $j$ to $G$ is one-to-one. Indeed, any element in the kernel of $j$ that preserves $\Lambda$ is necessarily equal to the identity since $\Lambda$ is contained in an affine domain. Therefore, the groups $G$ and $\rho(\Gamma)$ are isomorphic.\\

\begin{definition}
The \emph{invisible domain of $\Lambda$ in $\eeu$} is the region $\tilde{\Omega}(\Lambda)$ in $\eeu$ consisting on points which are not causally related to any point in $\Lambda$ :
\[\tilde{\Omega}(\Lambda) := \eeu \backslash (J^+(\Lambda) \cup J^-(\Lambda)).\]
\end{definition}

According to Proposition \ref{future of a compact}, $\tilde{\Omega}(\Lambda)$ is an open subset of $\eeu$. Besides, since $\Lambda$ is $\rho(\Gamma)$-invariant, $\tilde{\Omega}(\Lambda)$ is also $\rho(\Gamma)$-invariant.\\

\paragraph{\textit{Description in the conformal model.}} \label{4.1.1} Recall that $\Lambda \subset \eeu$ is the graph of a $1$-contracting function $f: \Lambda_0 \to \R$ where $\Lambda_0$ is a closed subset of the sphere $\mathbb{S}^{n}$ equipped with the distance $d_0$ induced by the round metric. It follows from the description of the future and the past of a point in $\eeu$ that each point $(x,t) \in \tilde{\Omega}(\Lambda)$ satisfies the inequality $d_0(x,x_0) > |t - f(x_0)|$ for every $x_0 \in \Lambda_0$. Hence
\begin{align*} 
\sup_{x_0 \in \Lambda_0} \{f(x_0) - d_0(x,x_0)\} \leq t \leq \inf_{x_0 \in \Lambda_0} \{f(x_0) + d_0(x,x_0).\}
\end{align*}
Since $\Lambda_0$ is compact, these two last inequalities are strict. Let $f^+, f^-: \mathbb{S}^n \to \R$ be the functions defined by
\begin{align*}
f^+(x) &= \inf_{x_0 \in \Lambda_0} \{f(x_0) + d_0(x,x_0)\}\\
f^-(x) &= \sup_{x_0 \in \Lambda_0} \{f(x_0) - d_0(x,x_0)\}.
\end{align*}
It is clear that any point $(x,t) \in \eeu$ such that $f^-(x) < t < f^+(x)$ lies in $\tilde{\Omega}(\Lambda)$.
Therefore, 
\begin{align*}
\tilde{\Omega}(\Lambda) &= \{(x,t) \in \mathbb{S}^n \times \R: f^-(x) < t < f^+(x)\}.
\end{align*}
It is easy to check that $f^+$ and $f^-$ are $1$-Lipschitz extensions of $f$.

Recall that $\Lambda \subset \eu \subset \eeu$. It follows that $\Lambda_0$ is contained in an equator $\mathbb{S}^{n-1} \subset \mathbb{S}^n$ which split the sphere $\mathbb{S}^n$ into two hemispheres $\mathcal{H}_1$ and $\mathcal{H}_2$. The restrictions $f^\pm_i$ of $f^\pm$ to $\mathcal{H}_i$ define two conformally isometric $AdS$-regular domains
\begin{align*}
E_i(\Lambda) &= \{(x,t) \in \mathcal{H}_i \times \R:\ f^-_i(x) < t < f^+_i(x)\}
\end{align*}
as described in \cite{andersson2012},\cite{barbot2015deformations} and \cite{Merigot2007AnosovAR}.
The restrictions $g^\pm$ of $f^\pm$ to $\mathbb{S}^{n-1}$ define the invisible domain of $\Lambda$ in $\eu$. This last one can be thought as the conformal boundary of $E_i(\Lambda)$ \footnote{It is actually in the sense of \cite{Barbot2005CausalPO} (see Section $9$).}, so we denote it by $\partial E(\Lambda)$. Finally, $\tilde{\Omega}(\Lambda)$ is the disjoint union of the conformally equivalent $AdS$-regular domains $E_i(\Lambda)$ and of their conformal boundary $\partial E(\Lambda)$.

\begin{remark}
Since $\Lambda$ is acausal, the regular domains $E_i(\Lambda)$ are non-empty (see Lemma 3.6 in \cite{Merigot2007AnosovAR}). Thus, $\tilde{\Omega}(\Lambda)$ is non-empty. 
\end{remark}

\begin{remark}
The invisible domain $\tilde{\Omega}(\Lambda)$ is causally convex. Indeed, let $p$ and $q$ be two points in $\tilde{\Omega}(\Lambda)$ related by a future causal curve $c$. If there exist a point $r$ in $c$ and a point $\lambda$ in $\Lambda$ such that $r \in J^+(\Lambda)$, then $q \in J^+(\lambda)$. Contradiction. By Proposition \ref{causally convex is GH}, it follows that $\tilde{\Omega}(\Lambda)$ is globally hyperbolic; in particular, $E_i(\Lambda)$ are strongly causal. Similarly, when $\partial E(\Lambda)$ is non-empty, it is globally hyperbolic.
\end{remark}

\paragraph{\textit{Description in the Klein model.}}\label{4.1.2}

Let us denote by $\Omega(\Lambda)$ the projection of $\tilde{\Omega}(\Lambda)$ in $Ein_{1,n}$. Let $\pi : \R^{2,n+1} \backslash \{0\} \longrightarrow \mathbb{S}(\R^{2,n+1})$ be the radial projection. In this paragraph, we see $\Lambda$ as a subset of $Ein_{1,n}$. Let $\Lambda_0$ be the preimage of $\Lambda$ by $\pi$. The convex hull $\mathrm{Conv}(\Lambda_0)$ of $\Lambda_0$ is a convex cone of $\R^{2,n+1}$. Let us consider its dual
\[\mathrm{Conv}(\Lambda_0)^* : = \{u \in \R^{2,n+1}:\ \forall v \in \mathrm{Conv}(\Lambda_0):\ <u,v> <0\}.\]

\begin{lemma}\label{Omega}
The domain $\Omega(\Lambda)$ is the intersection of $\pi(\mathrm{Conv}(\Lambda_0)^*)$ with $Ein_{1,n}$.
\end{lemma}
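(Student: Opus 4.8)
The plan is to prove the two inclusions $\Omega(\Lambda) \subseteq \pi(\mathrm{Conv}(\Lambda_0)^*) \cap Ein_{1,n}$ and $\pi(\mathrm{Conv}(\Lambda_0)^*) \cap Ein_{1,n} \subseteq \Omega(\Lambda)$ separately, translating in each case the causal condition defining $\tilde\Omega(\Lambda)$ into the sign of a scalar product, via Proposition \ref{P}. Recall that $\Omega(\Lambda) = \tilde\pi(\tilde\Omega(\Lambda))$ where $\tilde\Omega(\Lambda) = \eeu \setminus (J^+(\Lambda) \cup J^-(\Lambda))$, and that by Proposition \ref{P} two distinct points $p,q$ of $Ein_{1,n}$ lift to causally unrelated points of $\eeu$ precisely when $<p,q>\ < 0$. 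So I would first record the clean reformulation: a point $p \in Ein_{1,n}$, seen via $\pi$, lies in $\Omega(\Lambda)$ if and only if $<p,\lambda>\ < 0$ for every $\lambda \in \Lambda$, where we choose for $p$ and each $\lambda$ lifts to $\R^{2,n+1}$ in a consistent way (this is where the acausality of $\Lambda$ and the fact that $\Lambda$ and $\tilde\Lambda$ sit in a single affine domain, established just above, are used to make "consistent choice of sign of representatives" meaningful — otherwise $<p,q>$ is only defined up to sign).

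First I would treat the easy inclusion. If $[u] \in \pi(\mathrm{Conv}(\Lambda_0)^*) \cap Ein_{1,n}$, then $<u,v>\ < 0$ for every $v \in \mathrm{Conv}(\Lambda_0)$, in particular for every $v \in \Lambda_0$, i.e. for the chosen lifts of the points of $\Lambda$. Hence $[u]$ is causally unrelated to every point of $\Lambda$ by Proposition \ref{P}, so $[u] \in \Omega(\Lambda)$. For the reverse inclusion, suppose $[u] \in \Omega(\Lambda)$, so $<u,\lambda>\ < 0$ for every $\lambda \in \Lambda_0$ (again with the consistent sign convention). A general element $v$ of $\mathrm{Conv}(\Lambda_0)$ is a finite convex combination (or a limit of such) $v = \sum_i t_i \lambda_i$ with $t_i \geq 0$, $\sum_i t_i = 1$, $\lambda_i \in \Lambda_0$; then $<u,v>\ = \sum_i t_i <u,\lambda_i>\ < 0$ because each term is negative and at least one $t_i$ is positive. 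Thus $u \in \mathrm{Conv}(\Lambda_0)^*$ and $[u] \in \pi(\mathrm{Conv}(\Lambda_0)^*) \cap Ein_{1,n}$. A minor point: $\mathrm{Conv}(\Lambda_0)$ as defined is the convex hull of the cone over $\Lambda$, so its elements are nonnegative combinations of isotropic vectors representing points of $\Lambda$; I should be slightly careful that the strictness $<u,v>\ < 0$ survives — it does, since any $v \neq 0$ in the hull is a nonnegative combination with not all coefficients zero.

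The main obstacle, and the step deserving the most care, is the sign/representative bookkeeping. On $Ein_{1,n}$ (the Klein model, a quotient of the sphere $\mathbb{S}(\R^{2,n+1})$), a point determines a ray, not a vector, so $<p,q>$ has a well-defined sign only after coherently orienting the representatives; the inequality defining $\mathrm{Conv}(\Lambda_0)^*$ implicitly fixes such a choice on $\Lambda_0$. I would argue that because $\Lambda$ is acausal it lies in a single affine domain $U(p_0)$ (Proposition \ref{acausal set affine domain} transported to $Ein_{1,n}$ via the coverings $\tilde\pi$, $\pi_2$ as in the paragraph preceding Definition \ref{affine domain}), and on an affine domain one has a canonical "positive" choice of representative, namely the one with $<\cdot,p_0>\ < 0$; with this choice the scalar products $<\lambda,\lambda'>$ for $\lambda,\lambda' \in \Lambda$ are genuinely negative (not merely negative up to sign), matching the hypothesis that $\Lambda$ is negative, and the same affine chart pins down the representative of any $[u] \in \Omega(\Lambda)$ since $\Omega(\Lambda)$ itself avoids $J^\pm(\Lambda)$ and hence lies in the relevant affine region. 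Once this convention is fixed consistently for $\Lambda_0$ and for $u$, Proposition \ref{P} applies verbatim and both inclusions above go through. I would state this normalization explicitly at the start of the proof so that the two short inclusion arguments are unambiguous.
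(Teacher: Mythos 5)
Your argument is correct and follows essentially the same route as the paper, whose entire proof is the one-line remark that $\Omega(\Lambda) = \pi(\{u \in \R^{2,n+1} : \ \forall \lambda \in \Lambda_0\ <u,\lambda>\ <0\}) \cap Ein_{1,n}$; you simply make explicit the two ingredients the paper leaves implicit, namely the causal reformulation via Proposition \ref{P} and the passage from $\Lambda_0$ to $\mathrm{Conv}(\Lambda_0)$ by bilinearity. The one place where you overcomplicate things is the sign bookkeeping: here $Ein_{1,n}$ sits in $\mathbb{S}(\R^{2,n+1})$, the quotient by \emph{positive} homotheties only, so the sign of $<p,q>$ is already well-defined for two points (rays) without choosing an affine normalization -- though your affine-domain discussion is still the right way to see that a single lift of $p$ works simultaneously against the fixed lift $\tilde{\Lambda}$.
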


\begin{proof}
Remark that 
\[\Omega(\Lambda) = \pi(\{u \in \R^{2,n+1} :\ \forall \lambda \in \Lambda_0\ <u,\lambda>\ <0\}) \cap Ein_{1,n}.\]
\end{proof}

\begin{remark}\label{convex regular domains}
The regular domains $E_i(\Lambda)$ are the intersection of $\pi(\mathrm{Conv}(\Lambda_0)^*)$ with a copy of $\mathbb{ADS}_{1,n}$. A nice corollary of this is that $E_i(\Lambda)$ are geodesically convex, i.e. any geodesic joining two points in $E_i(\Lambda)$ is contained is $E_i(\Lambda)$. In particular, the regular domains are connected.
\end{remark}

\begin{remark}\label{equivalence}
It follows from Lemma \ref{Omega} that $\Omega(\Lambda)$ is contained in an affine domain of $Ein_{1,n}$. Indeed, any two distinct points $\lambda$, $\lambda'$ in $\Lambda_0$ define the affine domain $U(\frac{1}{2}(\lambda + \lambda'))$ (see Definition~\ref{affine domain}) which contains $\Omega(\Lambda)$.
\end{remark}

\begin{lemma}\label{one-to-one}
The universal covering $\tilde{\pi}$ restricted to $\tilde{\Omega}(\Lambda)$ is one-to-one.
\end{lemma}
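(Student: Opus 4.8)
The plan is to combine the explicit form of the deck transformations of $\tilde\pi$ with the defining inequalities cutting out $\tilde\Omega(\Lambda)$ in the conformal model. First I would recall that, exactly as for $\eu$ in Remark~\ref{fundamental group}, the group $\pi_1(Ein_{1,n})$ is infinite cyclic and the deck transformation group of the universal covering $\tilde\pi : \eeu \simeq \mathbb{S}^n \times \R \longrightarrow Ein_{1,n}$ is generated by $\delta = \sigma^2$, which acts by $(x,t) \mapsto (x,t+2\pi)$. Consequently, for $p,q \in \eeu$ one has $\tilde\pi(p) = \tilde\pi(q)$ if and only if $q = \delta^k(p)$ for some $k \in \mathbb{Z}$; writing $p = (x,t)$, this means that $p$ and $q$ share the same $\mathbb{S}^n$-coordinate and $q = (x,t+2k\pi)$.

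Next I would suppose that $p = (x,t)$ and $q = (x,t+2k\pi)$ both lie in $\tilde\Omega(\Lambda)$ with $k \neq 0$; after exchanging $p$ and $q$ if necessary, I may assume $k \geq 1$. Choosing any point $x_0 \in \Lambda_0$ (the limit set being non-empty), I would use the description of $\tilde\Omega(\Lambda)$ in the conformal model given above: every point $(y,s) \in \tilde\Omega(\Lambda)$ satisfies $d_0(y,x_0) > |s - f(x_0)|$ for all $x_0 \in \Lambda_0$. Since $x$ and $x_0$ lie in the round sphere $\mathbb{S}^n$, we have $d_0(x,x_0) \leq \pi$; applying the inequality to $p$ yields $f(x_0) < t + \pi$, while applying it to $q$ yields $f(x_0) > t + 2k\pi - \pi \geq t + \pi$, a contradiction. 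Hence $k = 0$, so $p = q$, and $\tilde\pi$ is injective on $\tilde\Omega(\Lambda)$.

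I do not expect any genuine obstacle here: the only delicate point is the bookkeeping of normalizations, namely that a generator of the deck group translates the $\R$-factor of $\eeu$ by one full period $2\pi$ whereas the sphere factor has diameter $\pi$ (both consistent with Remark~\ref{pi} and Remark~\ref{fundamental group}); once this is pinned down, injectivity follows at once from the inequalities defining the invisible domain. As an equivalent route, one may observe that for each $x \in \mathbb{S}^n$ the fiber of $\tilde\Omega(\Lambda)$ over $x$ is the open interval $]f^-(x),f^+(x)[$ and that $f^+(x) - f^-(x) \leq 2\,d_0(x,\Lambda_0) \leq 2\pi$, so no fiber can contain two distinct points differing by an element of $2\pi\mathbb{Z}$.
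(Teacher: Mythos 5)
Your proof is correct, but it follows a genuinely different route from the paper's. The paper argues causally and in the Klein model: writing $q=\delta^k(p)$ and assuming $q\in J^+(p)$ with $k\neq 0$, it observes that any future lightlike geodesic from $p$ to $q$ passes through the conjugate point $\sigma(p)$ (Remark \ref{conjugate points}); causal convexity of $\tilde{\Omega}(\Lambda)$ then forces $\sigma(p)\in\tilde{\Omega}(\Lambda)$, so $\Omega(\Lambda)$ would contain the two antipodal points $\tilde{\pi}(p)$ and $\tilde{\pi}(\sigma(p))$, contradicting the containment of $\Omega(\Lambda)$ in an affine domain (Remark \ref{equivalence}, which rests on the convex-cone description of Lemma \ref{Omega}). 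You instead compute directly in the conformal model: since the deck group is generated by $\delta:(x,t)\mapsto(x,t+2\pi)$ and every $(y,s)\in\tilde{\Omega}(\Lambda)$ satisfies $|s-f(x_0)|<d_0(y,x_0)\leq\pi$ for each $x_0\in\Lambda_0$, two points of $\tilde{\Omega}(\Lambda)$ in the same fiber would force $f(x_0)<t+\pi$ and $f(x_0)>t+2k\pi-\pi\geq t+\pi$ simultaneously -- a clean contradiction, equivalently packaged in your observation that each fiber $]f^-(x),f^+(x)[$ has length at most $2d_0(x,\Lambda_0)\leq 2\pi$. Your argument is more elementary and self-contained: it needs only the defining inequalities of the invisible domain and the diameter bound on $\mathbb{S}^n$, and in particular does not invoke causal convexity or the affine-domain/convex-cone machinery of Lemma \ref{Omega}. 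The paper's argument is coordinate-free and reuses structural facts (causal convexity, affine domains) that are needed elsewhere anyway, which is why the author can dispatch the lemma in a few lines. Both are complete; just make sure, as you do, to state explicitly that the deck group of $\tilde{\pi}:\eeu\to Ein_{1,n}$ is generated by $\delta=\sigma^2$, i.e.\ the analogue of Remark \ref{fundamental group} one dimension up, which holds because $\mathbb{S}^n$ is simply connected for $n\geq 2$.
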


\begin{proof}
Let $p$ and $q$ be two points in $\tilde{\Omega}(\Lambda)$ such that $\tilde{\pi}(p) = \tilde{\pi}(q)$. There exists $k \in \mathbb{Z}$ such that $q = \delta^k(p)$. Without loss of generality, assume $q \in J^+(p)$. If $k$ is nonzero, there is a future lightlike geodesic joining $p$ and $q$ and containing $\sigma(p)$. Since $\tilde{\Omega}(\Lambda)$ is causally convex, $\sigma(p)$ belongs to $\tilde{\Omega}(\Lambda)$ and thus $\Omega(\Lambda)$ contains the two opposite points $\tilde{\pi}(p)$ and $\tilde{\pi}(\sigma(p))$. This contradicts the fact that $\Omega(\Lambda)$ is contained in an affine domain.
\end{proof}

\begin{remark}
It follows from Remark \ref{equivalence} and Lemma \ref{one-to-one} that the invisible domain $\tilde{\Omega}(\Lambda)$ is contained in an affine domain of $\eeu$ (see Definition~\ref{affine domain in the universal covering}).
\end{remark}

According to Remark \ref{equivalence} and Lemma \ref{one-to-one}, the domains $\tilde{\Omega}(\Lambda)$, $\Omega(\Lambda)$ and $\pi_2(\Omega(\Lambda))$ are conformally equivalent. In what follows, we denote by $\Omega(\Lambda)$ the invisible domain of the limit set $\Lambda$ whether it is seen in $\eeu$, $Ein_{1,n}$ or $\mathtt{Ein}_{1,n}$.\\

\paragraph{\textit{Dynamical properties.}}\label{4.1.3}

In this section, we study the action of $\rho(\Gamma)$ on $\Omega(\Lambda)$.

\begin{proposition}
The group $\rho(\Gamma)$ acts freely and properly on the invisible domain $\Omega(\Lambda)$ in $\mathtt{Ein}_{1,n}$.
\end{proposition}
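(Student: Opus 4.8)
The plan is to deduce the proposition from Theorem \ref{GKW} (the properness and cocompactness of the $\rho(\Gamma)$-action on the set $U$ of causal geodesics avoiding the limit set), by realizing points of $\Omega(\Lambda)$ as causal geodesics. Recall from Section \ref{1.3} that $\mathtt{Ein}_{1,n}$ is the union of two copies of $\mathtt{AdS}_{1,n}$ glued along their common conformal boundary $\mathtt{Ein}_{1,n-1}$, and that by Remark \ref{convex regular domains} the invisible domain meets each copy in an $AdS$-regular domain $E_i(\Lambda)$, while by Lemma \ref{Omega} and Remark \ref{equivalence} the whole of $\Omega(\Lambda)$ sits in a single affine domain, hence identifies (via $\pi$) with an open subset of the dual cone $\mathrm{Conv}(\Lambda_0)^*$. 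First I would handle freeness: if $g \in \rho(\Gamma)$ fixes a point $p \in \Omega(\Lambda)$, then, since $\Gamma$ is hyperbolic and the representation is $P_1$-Anosov, $g$ is either trivial or $P_1$-divergent-generating (it has an attracting/repelling pair in $\Lambda$); a nontrivial such $g$ acting on the affine domain containing $\Omega(\Lambda)$ has its fixed locus confined near $\mathcal{L}(p_-) \cup \{p_+\}$, and $p_+, p_- \in \Lambda$ are causally related to every point they could ``trap'', contradicting $p \in \Omega(\Lambda)$ which is by definition non-causally-related to all of $\Lambda$. Alternatively, freeness follows because a nontrivial element of a torsion-free hyperbolic group acting on $\Omega(\Lambda) \subset \mathtt{AdS}$-regular domains, which are geodesically convex (Remark \ref{convex regular domains}) and strongly causal, cannot have a fixed point inside such a domain.

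The main step is properness. The plan is to define a continuous, $\rho(\Gamma)$-equivariant, injective map $\Phi : \Omega(\Lambda) \to U$ sending a point $p$ to a suitable causal geodesic through $p$ that avoids $\Lambda$, and then to invoke properness on $U$ (Theorem \ref{GKW}) together with the fact that a closed embedding onto a $G$-invariant subset pulls back proper actions to proper actions. Concretely: for $p$ in one of the regular domains $E_i(\Lambda) \subset \mathtt{AdS}_{1,n}$, a timelike geodesic of $\mathtt{AdS}_{1,n}$ through $p$ is a point of $\mathcal{C}$; one must choose it so that (i) the choice is continuous in $p$, (ii) it is $\rho(\Gamma)$-equivariant, and (iii) the resulting geodesic avoids $\Lambda$. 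For (iii) one uses that $E_i(\Lambda)$ is the intersection of $\pi(\mathrm{Conv}(\Lambda_0)^*)$ with a copy of $\mathbb{ADS}_{1,n}$ and is geodesically convex: a point $p \in E_i(\Lambda)$ corresponds to a negative $2$-plane $P \subset \R^{2,n+1}$, and a natural equivariant choice is the plane spanned by $p$ (as a vector in the cone over $E_i$) and a canonically attached vector — but since there is no canonical second vector equivariantly, the cleaner route is to observe that in the Riemannian symmetric space $\mathcal{T}^{2n} \subset U$ there is for each $p$ a whole sphere's worth of timelike geodesics through $p$, and to use that $\rho(\Gamma)$ already acts properly on $\mathcal{T}^{2n}$ (it is a discrete subgroup of $O_0(2,n)$ acting on its symmetric space). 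Indeed the slick argument is: $\rho(\Gamma)$ is a discrete subgroup of $O_0(2,n+1)$ — wait, of $O_0(2,n)$ embedded in $O_0(2,n+1)$ — hence acts properly on the symmetric space $\mathcal{T}$ of $O_0(2,n+1)$; and $\Omega(\Lambda)$ admits an equivariant continuous map to $\mathcal{T}$ (e.g.\ barycenter of a fiber, or the geodesic normal direction), whence properness descends.

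The hard part will be producing the equivariant map from $\Omega(\Lambda)$ into $U$ (or into the symmetric space) with the avoidance property, and checking it is proper — i.e.\ pulls back compacts to compacts, equivalently is a closed embedding onto its image. I expect the cleanest proof in the paper's framework is: (a) note $\Omega(\Lambda) \subset \mathtt{Ein}_{1,n}$ lies in an affine domain, so it is simply connected and identifies with $\pi(\mathrm{Conv}(\Lambda_0)^*) \cap \mathtt{Ein}_{1,n}$; (b) for $p \in E_i(\Lambda)$, associate the $2$-plane $P_p := \langle \widehat{p}, \widehat{p}^{\,\flat}\rangle$ where $\widehat{p}$ is a lift of $p$ and $\widehat{p}^{\,\flat}$ its "time partner" in the copy of $\mathbb{ADS}$, giving a timelike geodesic; equivariance holds because $\rho(\Gamma) \subset O_0(2,n)$ preserves the splitting into the two $\mathtt{AdS}$-copies; (c) this geodesic stays in $E_i(\Lambda)$ by geodesic convexity (Remark \ref{convex regular domains}), hence avoids $\Lambda$, so $P_p \in U$; (d) $p \mapsto P_p$ is a continuous injection $\Omega(\Lambda) \hookrightarrow U$, proper because both spaces are locally compact, second countable and the map is a topological embedding onto a closed (because $\Omega(\Lambda)$ is closed in the affine domain, which maps properly) subset; (e) conclude by Theorem \ref{GKW} that $\rho(\Gamma)$ acts properly on $\Omega(\Lambda)$, and by the fixed-point discussion above that the action is free. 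I would flag step (d), the closedness/properness of the embedding, as the point requiring genuine care, since $\Omega(\Lambda)$ is not compact and one must rule out causal geodesics escaping to the boundary of $U$ while their base points stay in a compact of $\Omega(\Lambda)$ — this is where the structure of $E_i(\Lambda)$ as a dual cone intersection, and the strong causality/global hyperbolicity established in the preceding remarks, does the work.
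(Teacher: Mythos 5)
Your overall strategy --- transport properness from the action on the space $U$ of causal geodesics avoiding $\Lambda$ (Theorem \ref{GKW}) along a $\rho(\Gamma)$-equivariant continuous map $\Phi:\Omega(\Lambda)\to U$ --- is sound in outline (and, incidentally, easier than you make it: for a \emph{continuous} equivariant $\Phi$ one has $\{g: gK\cap K\neq\emptyset\}\subset\{g: g\Phi(K)\cap\Phi(K)\neq\emptyset\}$ for every compact $K$, so no properness or closedness of $\Phi$ is needed; your worry in step (d) is misplaced). The genuine gap is that the map $\Phi$ is never constructed, and the candidates you sketch cannot work. The ``time partner'' $\widehat{p}^{\,\flat}$ is left undefined, and the justification ``equivariance holds because $\rho(\Gamma)$ preserves the splitting into the two $AdS$-copies'' is a non sequitur: preserving the splitting does not produce a second vector. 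In fact there is a representation-theoretic obstruction to any construction of the kind you describe: an $O_0(2,n)$-equivariant assignment of a causal geodesic (or of a point of the symmetric space $\mathcal{T}^{2n}$) to a point of $AdS_{1,n}$ would give a $2$-plane in $\overline{X}$ (resp.\ a point of $O_0(2,n)/SO(2)\times SO(n)$) fixed by the point-stabilizer $O_0(1,n)$; since the standard representation of $O_0(1,n)$ on $\R^{1,n}$ is irreducible (and $O_0(1,n)$ is not contained in any compact subgroup), no such fixed $2$-plane or fixed point exists. So any equivariant map must use $\Lambda$ itself in an essential way, and building a merely $\rho(\Gamma)$-equivariant continuous choice without already knowing the action is proper is essentially circular. (Also, the set of timelike geodesics through a fixed point of $AdS_{1,n}$ is a copy of $\hy^{n}$, so there is no ``barycenter of the fiber''; and your appeal to geodesic convexity in step (c) is a red herring --- a timelike geodesic of $AdS_{1,n}$ avoids $\partial AdS_{1,n}\supset\Lambda$ automatically, that is not where the difficulty lies.)

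The paper's proof is much more direct and does not pass through Theorem \ref{GKW} at all: it applies the dynamical criterion for properness (no dynamically related points) directly to $\Omega(\Lambda)\subset\mathtt{Ein}_{1,n}$. If $p_i\to p\in\Omega(\Lambda)$ and $g_i\to\infty$ in $\rho(\Gamma)$, then after extraction $\{g_i\}$ is $P_1$-divergent with attracting and repelling points $p_+,p_-\in\Lambda$; since $p\in\Omega(\Lambda)$ is not causally related to any point of $\Lambda$, it lies off the lightlike cone $\mathcal{L}(p_-)$, so by Corollary \ref{dyn. pr ein} $g_i.p_i\to p_+\in\Lambda$, which is not in $\Omega(\Lambda)$. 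Freeness is the same computation applied to the iterates $\{g^i\}$ of a putative stabilizing element: $g^i.p=p$ would force $p=p_+\in\Lambda$. Your freeness sketch gestures at this (``fixed locus confined near $\mathcal{L}(p_-)\cup\{p_+\}$'') but is too vague, and your alternative via torsion-freeness is both unjustified and unavailable ($\Gamma$ is only assumed hyperbolic). You should replace the transport-of-properness scheme by this direct north--south dynamics argument.
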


\begin{proof}
First, let us prove that the action is free. Assume there exist $p$ in $\Omega(\Lambda)$ and a nontrivial element $g$ in $\rho(\Gamma)$ such that $g.p = p$. Up to extracting, we can assume that $\{g^i\}_i$ is a sequence of pairwise distinct elements of $\rho(\Gamma)$. Since the representation $\rho$ is $P_1$-divergent, it follows that $\{g^i\}_i$ is $P_1$-divergent. Let $p_+$ and $p_-$ be respectively the attracting and the repelling points in $\mathtt{Ein}_{1,n}$ of $\{g^i\}_i$ (see Proposition \ref{dyn. pr ein}). Since $p$ does not belong to the lightlike cone of $p_-$, we have $\lim g^i.p = p_+$. But, for every $n \in \mathbb{N}$, we have $g^i.p = p$. Therefore, $p = p_+ \in \Lambda$, contradiction.\\

Now, we prove that the action is proper by proving that there is no points dynamically related in $\Omega(\Lambda)$. Let $\{p_i\}_i$ be a sequence of elements of $\Omega(\Lambda)$ converging to a point $p \in \Omega(\Lambda)$ and let $\{g_i\}_i$ be a sequence of $\rho(\Gamma)$ going to infinity (i.e. leaving every compact subset of $\rho(\Gamma)$) such that $\{g_i.p_i\}_i$ converges to a point $q$ in $\mathtt{Ein}_{1,n}$. Up to extracting, we can assume that $\{g_i\}_i$ is a sequence of pairwise distinct elements and thus $P_1$-divergent. Let $p_+$ and $p_-$ be respectively the attracting and the repelling points in $\mathtt{Ein}_{1,n}$ of $\{g_i\}_i$. Since $p$ does not belong to the lightlike cone $\mathcal{L}(p_-)$, the complementary of $\mathcal{L}(p_-)$ in $\mathtt{Ein}_{1,n}$ contains $\{p_i\}_i$ except maybe a finite number of elements. It follows that $\{g_i.p_i\}$ converge to $p_+$. Thus, $q = p_+ \not \in \Omega(\Lambda)$.
\end{proof}

\begin{corollary}
The quotient space $M_{\rho}(\Gamma) = \rho(\Gamma) \backslash \Omega(\Lambda)$ is a conformally flat spacetime.
\end{corollary}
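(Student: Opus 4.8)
The plan is to verify the defining properties of a conformally flat spacetime for the quotient, using the conformal geometry of the Einstein universe together with the dynamical facts already established. By the preceding proposition the action of $\rho(\Gamma)$ on $\Omega(\Lambda)$ is free and properly discontinuous; hence the quotient projection $\Omega(\Lambda) \longrightarrow M_{\rho}(\Gamma)$ is a covering map, and $M_{\rho}(\Gamma)$ inherits a unique structure of Hausdorff smooth $(n+1)$-manifold for which this projection is a local diffeomorphism.

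Next I would transport the conformal structure through this covering. The domain $\Omega(\Lambda)$ is an open subset of (one of the conformally equivalent copies of) the Einstein universe $Ein_{1,n}$, so it carries a conformal class of Lorentzian metrics. Since $\Omega(\Lambda)$ is contained in an affine domain (Remark \ref{equivalence} and Lemma \ref{one-to-one}), it is oriented and time-orientable. The group $\rho(\Gamma)$, regarded inside $O_0(2,n) \subset O_0(2,n+1)$, acts on $Ein_{1,n}$ by conformal isometries preserving orientation and time-orientation (Section \ref{1.4}), so it preserves the conformal class, the orientation and the time-orientation of $\Omega(\Lambda)$; all three therefore descend to $M_{\rho}(\Gamma)$. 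Connectedness follows from the description $\tilde{\Omega}(\Lambda) = \{(x,t) \in \mathbb{S}^n \times \R : f^-(x) < t < f^+(x)\}$ in the conformal model, which exhibits $\Omega(\Lambda)$ as a slab over the connected base $\mathbb{S}^n$, hence connected, so that $M_{\rho}(\Gamma)$ is connected too. Thus $M_{\rho}(\Gamma)$ is a connected, oriented, time-oriented conformal Lorentzian manifold, i.e. a conformal spacetime.

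Finally, conformal flatness. The Einstein universe $Ein_{1,n}$ is conformally flat (Section \ref{1.5}): every point has a neighborhood conformally isometric to an open subset of Minkowski space $\R^{1,n}$. This is a local property, so the open subset $\Omega(\Lambda)$ is conformally flat, and since the covering projection is a local conformal isometry, $M_{\rho}(\Gamma)$ is conformally flat as well. No step here is a real obstacle; the only point that needs a little care is the existence of a $\rho(\Gamma)$-invariant time-orientation on $\Omega(\Lambda)$, which is exactly what the containment of $\Omega(\Lambda)$ in an affine domain provides, since such a domain lies in a single time-orientable chart of $\eeu$ and $\rho(\Gamma) \subset O_0(2,n)$ acts by time-orientation preserving transformations.
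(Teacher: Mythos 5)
Your overall strategy is exactly the one the paper leaves implicit (it states this corollary without proof): a free, properly discontinuous action by conformal, orientation- and time-orientation-preserving transformations on an open subset of $\mathtt{Ein}_{1,n}$ yields a quotient manifold carrying an induced conformal Lorentzian structure, and conformal flatness is a local property inherited from the Einstein universe through the covering projection. All of that part of your argument is correct and complete.

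There is, however, one genuinely false step: the connectedness claim. The set $\tilde{\Omega}(\Lambda) = \{(x,t) \in \mathbb{S}^n \times \R : f^-(x) < t < f^+(x)\}$ is \emph{not} a slab over all of $\mathbb{S}^n$: the fibre over $x$ is empty whenever $f^-(x) = f^+(x)$, which happens at least over $\Lambda_0$, so the projection to $\mathbb{S}^n$ is not onto and connectedness of the base gives you nothing. In fact the paper shows that $\Omega(\Lambda)$ is the union of the two $AdS$-regular domains $E_1(\Lambda)$, $E_2(\Lambda)$ and of $\partial E(\Lambda)$, and that when $\Lambda$ is a topological $(n-1)$-sphere one has $\partial E(\Lambda) = \emptyset$, so that $\Omega(\Lambda) = E_1(\Lambda) \sqcup E_2(\Lambda)$ is disconnected (see the Fuchsian example, where $\Omega(\Lambda)$ is explicitly a disjoint union of two diamonds). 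Since $\rho(\Gamma)$ sits in $O_0(2,n) \subset O_0(2,n+1)$ fixing the last coordinate, it preserves each hemisphere $\mathcal{H}_i$ and hence each $E_i(\Lambda)$, so the quotient has two components in that case. Your argument for connectedness therefore fails, and the conclusion itself only holds component by component (or for $\Omega(\Lambda)$ when $\partial E(\Lambda) \neq \emptyset$, in which case $\partial E(\Lambda)$ glues the two regular domains together). To repair the step you should either restrict to a connected component, or separate the two cases according to whether $\Lambda$ is a topological $(n-1)$-sphere; everything else in your proof goes through unchanged.
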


\subsection{ CGHM conformally flat spacetime} \label{4.2}

In this section, we prove that the spacetime $M_{\rho}(\Gamma) = \rho(\Gamma) \backslash \Omega(\Lambda)$ is CGHM.\\

\paragraph{\textit{Global hyperbolicity.}} The following lemma states that $M_{\rho}(\Gamma)$ is causal.

\begin{lemma}\label{lemma GH}
The orbit under the action of $\rho(\Gamma)$ of any point in the invisible domain $\Omega(\Lambda)$ in $\mathtt{Ein}_{1,n}$ is an acausal subset of $\Omega(\Lambda)$.
\end{lemma}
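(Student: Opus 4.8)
The plan is to argue by contradiction: suppose there exist two distinct points $p$ and $q$ in the orbit $\rho(\Gamma).x$ of some $x \in \Omega(\Lambda)$ that are causally related, say $q \in J^+(p)$, with $q = g.p$ for some $g \in \rho(\Gamma)$, $g \neq 1$. First I would reduce to a statement about the universal cover: lift $p$ and $q$ to points $\tilde p, \tilde q$ in $\tilde\Omega(\Lambda) \subset \eeu$ using Lemma \ref{one-to-one}, which says $\tilde\pi$ restricted to $\tilde\Omega(\Lambda)$ is one-to-one, and lift $g$ to the unique element $\tilde g$ of the group $G \cong \rho(\Gamma)$ sitting in $j^{-1}(\rho(\Gamma))$ and preserving $\Lambda$. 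Since $\tilde\Omega(\Lambda)$ is causally convex (Remark after Lemma \ref{Omega}) and $q \in J^+(p)$, the causal curve from $\tilde p$ to $\tilde g.\tilde p$ stays inside $\tilde\Omega(\Lambda)$; in particular $\tilde g.\tilde p \in J^+(\tilde p)$ in $\eeu$.

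Next I would iterate. Passing to a subsequence we may assume $\{g^i\}_i$ is a sequence of pairwise distinct elements, hence $P_1$-divergent since $\rho$ is Anosov; let $p_\pm$ be its attracting and repelling points in $\mathtt{Ein}_{1,n}$, which lie in $\Lambda$. Because $x \in \Omega(\Lambda)$, it is not on the lightlike cone $\mathcal{L}(p_-)$, so by Corollary \ref{dyn. pr ein} we get $g^i.x \to p_+$. On the other hand, causal convexity and transitivity of $J^+$ give $g^i.p \in J^+(p)$ for all $i$ (using that $J^+$ is closed under composition of causal curves, and that each $g^i.p = g^{i-1}.(g.p) \in g^{i-1}.J^+(p) \subset J^+(g^{i-1}.p) \subset \cdots \subset J^+(p)$, all of this happening inside the causally convex $\tilde\Omega(\Lambda)$). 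Since $J^+(\tilde p)$ is closed in $\eeu$ for a single point $\tilde p$ — this is the remark following Proposition \ref{future of a compact} — the limit $p_+$ lies in $J^+(\tilde p)$ (suitably projected). But $p_+ \in \Lambda$, and no point of $\Omega(\Lambda)$ is causally related to a point of $\Lambda$ by the very definition of the invisible domain; this is the contradiction. The same argument with $J^-$ handles the case $q \in J^-(p)$.

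The step I expect to be the main obstacle is the bookkeeping that keeps everything inside the causally convex domain $\tilde\Omega(\Lambda)$ so that one can genuinely concatenate causal relations: one must be careful that $g^i.\tilde p \in J^+(\tilde p)$ \emph{as computed in $\eeu$} (not merely in some larger space where $J^+$ might fail to be closed), which is exactly what causal convexity buys, and that the limit point $p_+$ — which a priori lives in $\mathtt{Ein}_{1,n}$, i.e.\ on the conformal boundary rather than in $\tilde\Omega(\Lambda)$ — can still be compared with $J^+(\tilde p)$ via the closedness statement of Proposition \ref{future of a compact} after taking a suitable lift. Once the causal chain $g^i.\tilde p \in J^+(\tilde p)$ and the convergence $g^i.x \to p_+ \in \Lambda$ are both established, the contradiction with $\Omega(\Lambda) \cap (J^+(\Lambda)\cup J^-(\Lambda)) = \emptyset$ is immediate, and freeness of the action (already proved) rules out the degenerate case $p = q$, so the orbit is acausal.
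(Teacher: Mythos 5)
Your proposal is correct and follows essentially the same route as the paper: iterate the causal relation to get $g^i.p \in J^+(p)$ for all $i$, use $P_1$-divergence to obtain $g^i.p \to p_+ \in \Lambda$, invoke closedness of $J^+(p)$ in $\eeu$ to conclude $p_+ \in J^+(p)$, and contradict the definition of the invisible domain. The extra bookkeeping you add about lifting to $\eeu$ and causal convexity is exactly what the paper leaves implicit (aside from a harmless notational slip between $x$ and $p$ when applying the convergence), so there is no substantive difference.
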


\begin{proof}
Assume that there exist $p$ in $\Omega(\Lambda)$ and a non-trivial element $g$ in $\rho(\Gamma)$ such that $p$ and $g.p$ are causally related; for instance $g.p \in J^+(p)$. Up to extracting, the sequence $\{g^i\}_i$ is $P_1$-divergent. Let $p_+$ and $p_-$ be respectively the attracting and the repelling points of $\{g^i\}_i$. Since $p$ does not belong to the lightlike cone of $p_-$, we have $\lim g^i.p = p_+$. But, for every $i \in \mathbb{N}$, the point $g^i.p$ lies in the future of $p$ which is a close subset of $\eeu$. Thus, $p_+ \in J^+(p)$, contradiction.
\end{proof}

\begin{proposition}
The spacetime $M_{\rho}(\Gamma) = \rho(\Gamma) \backslash \Omega(\Lambda)$ is globally hyperbolic.
\end{proposition}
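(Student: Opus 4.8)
The plan is to check directly the two defining properties of global hyperbolicity for $M_{\rho}(\Gamma)$, transferring everything to the covering $\Omega(\Lambda)$ via the quotient map $\pi : \Omega(\Lambda) \to M_{\rho}(\Gamma)$, which is a local conformal diffeomorphism (hence preserves the causal type of curves). I will use repeatedly that $\Omega(\Lambda)$ is causally convex in $\eeu$ — so it is globally hyperbolic by Proposition~\ref{causally convex is GH}, and $J^{\pm}_{\Omega(\Lambda)}(x) = J^{\pm}_{\eeu}(x)\cap\Omega(\Lambda)$ for $x\in\Omega(\Lambda)$ — that $\Omega(\Lambda)$ sits inside an affine domain $\mathbb{S}^{n}\times ]a,a+\pi[$ of $\eeu$, so $\overline{\Omega(\Lambda)}$ is compact in $\eeu$, and that $\rho(\Gamma)$ acts freely and properly on $\Omega(\Lambda)$ with acausal orbits (Lemma~\ref{lemma GH}). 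Causality of $M_{\rho}(\Gamma)$ is then immediate: a closed causal curve of $M_{\rho}(\Gamma)$ lifts to a causal curve of $\Omega(\Lambda)$ joining a point $p$ to a point $g\cdot p$ of its orbit; for $g=\mathrm{id}$ this would be a causal loop in the causal spacetime $\Omega(\Lambda)\subset\eeu$, and for $g\neq\mathrm{id}$ it would contradict the acausality of $\rho(\Gamma)\cdot p$.

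It remains to prove that the causal diamonds of $M_{\rho}(\Gamma)$ are compact. Fix $\bar p,\bar q\in M_{\rho}(\Gamma)$ with lifts $p,q\in\Omega(\Lambda)$. Lifting causal curves from $\bar p$ to $\bar q$ starting at $p$, and using $\pi^{-1}(\bar q)=\rho(\Gamma)\cdot q$, one obtains
\[
J^{+}(\bar p)\cap J^{-}(\bar q)=\pi(K),\qquad K:=J^{+}_{\Omega(\Lambda)}(p)\cap\textstyle\bigcup_{g\in\rho(\Gamma)}J^{-}_{\Omega(\Lambda)}(g\cdot q).
\]
By causal convexity, $K=J^{+}_{\eeu}(p)\cap\big(\bigcup_{g}J^{-}_{\eeu}(g\cdot q)\big)\cap\Omega(\Lambda)\subset\overline{\Omega(\Lambda)}$, so $K$ is relatively compact in $\eeu$, and the point is to show that its closure avoids $\partial\Omega(\Lambda)$. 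Since $J^{\pm}(\Lambda)$ are closed (Proposition~\ref{future of a compact}) and $\Omega(\Lambda)=\eeu\setminus(J^{+}(\Lambda)\cup J^{-}(\Lambda))$, we have $\partial\Omega(\Lambda)\subset J^{+}(\Lambda)\cup J^{-}(\Lambda)$. Suppose $r\in\partial\Omega(\Lambda)$ is a limit of points of $K$; say $\lambda\leq r$ for some $\lambda\in\Lambda$. As $J^{+}_{\eeu}(p)$ is closed, $p\leq r$; and as $\bigcup_{g}J^{-}_{\eeu}(g\cdot q)\subset J^{-}_{\eeu}(\overline{\rho(\Gamma)\cdot q})$ with the latter closed and $\overline{\rho(\Gamma)\cdot q}$ compact (it lies in $\overline{\Omega(\Lambda)}$), we get $r\leq w$ for some $w\in\overline{\rho(\Gamma)\cdot q}$. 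Now, just as in the proof that the $\rho(\Gamma)$-action on $\Omega(\Lambda)$ is proper — $q$ lies in no lightlike cone of a point of $\Lambda$, since it is not causally related to $\Lambda$ — the convergence dynamics of $P_{1}$-divergent sequences (Corollary~\ref{dyn. pr ein}) shows that every accumulation point of $\rho(\Gamma)\cdot q$ lies in $\Lambda$; hence $w\in\rho(\Gamma)\cdot q$ or $w\in\Lambda$. If $w\in\rho(\Gamma)\cdot q\subset\Omega(\Lambda)$, then $\lambda\leq r\leq w$ puts $w\in J^{+}(\Lambda)$, impossible; if $w\in\Lambda$, then $\lambda\leq r\leq w$ with $\lambda,w\in\Lambda$ forces $\lambda=w$ and $r=\lambda\in\Lambda$ by acausality of $\Lambda$ and causality of $\eeu$, whence $p\leq r=\lambda$ puts $p\in J^{-}(\Lambda)$, again impossible. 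Thus $\overline K\cap\partial\Omega(\Lambda)=\emptyset$; a parallel argument shows $\bigcup_{g}J^{-}_{\Omega(\Lambda)}(g\cdot q)$ is closed in $\Omega(\Lambda)$, so $K$ is closed in $\Omega(\Lambda)$, hence $K=\overline K$ is a compact subset of $\Omega(\Lambda)$ and $J^{+}(\bar p)\cap J^{-}(\bar q)=\pi(K)$ is compact. With the first paragraph, $M_{\rho}(\Gamma)$ is globally hyperbolic; equivalently, it carries a Cauchy hypersurface.

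The delicate step is the boundary-avoidance $\overline K\cap\partial\Omega(\Lambda)=\emptyset$ (and the analogous closedness of $\bigcup_{g}J^{-}_{\Omega(\Lambda)}(g\cdot q)$): it forces one to use all four structural facts simultaneously — causal convexity of $\Omega(\Lambda)$, to reduce causal relations to relations in $\eeu$; the closedness in $\eeu$ of the causal past and future of compact sets; the acausality of the limit set $\Lambda$; and the dynamical fact that orbit accumulation points of $\rho(\Gamma)$ fall into $\Lambda$. Everything else — the lift computation of the diamond, the reduction through causal convexity, and the final compactness — is routine once this estimate is in place.
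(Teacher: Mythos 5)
Your proof is correct and follows essentially the same route as the paper: causality comes from the acausality of orbits (Lemma \ref{lemma GH}), and compactness of diamonds comes from lifting to $\Omega(\Lambda)$ and exploiting the fact that, by $P_1$-divergence, the orbit $\rho(\Gamma)\cdot q$ accumulates only on $\Lambda$, which is incompatible with staying in the closed set $J^+(p)$ for $p\in\Omega(\Lambda)$. The only difference is bookkeeping: the paper packages this dynamical fact as ``only finitely many of the sets $D_g = J^+(p)\cap J^-(g\cdot q)$ are non-empty'' and concludes from a finite union of compacts, whereas you establish directly that the closure of $K$ avoids $\partial\Omega(\Lambda)$ and that $K$ is closed in $\Omega(\Lambda)$ --- a slightly longer but equally valid (and arguably more carefully justified) way to reach the same compactness.
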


\begin{proof}
It immediately follows from Lemma \ref{lemma GH} that there is no causal loop in $M(\Gamma)$.

Let $\mathsf{p}$, $\mathsf{q}$ be two points in $M_{\rho}(\Gamma)$ such that the intersection $\mathsf{D} := J^+(\mathsf{p}) \cap J^-(\mathsf{q})$ is non-empty. For every $g \in \rho(\Gamma)$, we denote by $D_g$ the subset $J^+(p) \cap J^-(g.q)$ of $\Omega(\Lambda)$ where $p$ and $q$ are representatives of $\mathsf{p}$ and $\mathsf{q}$. It is easy to see that $\mathsf{D}$ is the projection in $M_{\rho}(\Gamma)$ of $\underset{g \in \rho(\Gamma)}{\bigcup} D_g \subset \Omega(\Lambda)$. It turns out that there is only a finite number of $g$ in $\rho(\Gamma)$ for which $D_g$ is non-empty. Indeed, assume that there is an infinite sequence $\{g_i\}_i$ of pairwise distinct elements of $\rho(\Gamma)$ such that for every $i \in \mathbb{N}$, the subset $D_{g_i}$ in non-empty. Since the representation $\rho$ is $P_1$-divergent, the sequence $\{g_i\}_i$ is $P_1$-divergent. Let $p_+$ and $p_-$ be respectively the attracting and the repelling points of $\{g_i\}_i$ in $\Lambda$. Since $q$ does not belong to the lightlike cone of $p_-$, we have $\lim g_i.q = p_+$. But, for every $i$, the point $g_i$ lies in the future of $p$ which is a close subset of $\eeu$. Thus, $p_+ \in J^+(p)$, contradiction.
\end{proof}

\paragraph{\textit{Spatial cocompactness.}} Recall that the \emph{space of causal geodesics} $\mathcal{C}$ is the space containing timelike and lightlike geodesics of $AdS_{1,n}$ and lightlike geodesics of $Ein_{1,n-1}$.
Let $U$ be the subspace of $\mathcal{C}$ consisting of causal geodesics which avoid the limit set. We denote by $\mathcal{P}(\Omega(\Lambda))$ the space of lightlike geodesics of $\Omega(\Lambda)$.

\begin{lemma} \label{lemma C1}
Each causal geodesic $\varphi$ which avoid the limit set meets the invisible domain $\Omega(\Lambda)$. Besides, the intersection $\varphi \cap \Omega(\Lambda)$ is connected.
\end{lemma}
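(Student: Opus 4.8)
The plan is to work entirely in the conformal model of $\eeu \simeq (\mathbb{S}^n \times \mathbb{R}, [ds^2 - dt^2])$, where $\tilde{\Omega}(\Lambda) = \{(x,t) : f^-(x) < t < f^+(x)\}$ is sandwiched between the two $1$-Lipschitz functions $f^\pm$ extending the $1$-contracting defining function $f$ of $\Lambda$. Recall that a causal geodesic $\varphi$ is, up to the identifications of Section~\ref{4.1}, the trace on $\overline{\mathtt{AdS}}_{1,n}$ of a projective $2$-plane $P \in \bar{X}$; lifting to $\eeu$, it is a lightlike geodesic of $\eeu$, i.e.\ (up to parametrization) a curve $t \mapsto (x(t), t)$ with $x$ a unit-speed geodesic of $\mathbb{S}^n$, or a timelike geodesic of one of the two $AdS$-copies, which is a strictly $1$-contracting graph $t \mapsto (x(t),t)$ over an interval of length $<\pi$. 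First I would reduce to the lightlike case: a timelike geodesic is contained in one $AdS$-regular domain $E_i(\Lambda)$ only if it avoids $\partial E(\Lambda) \subset \Lambda$, and for those the statement that it meets $E_i(\Lambda)$ in a connected set is exactly the known geodesic convexity of $AdS$-regular domains recorded in Remark~\ref{convex regular domains}; otherwise the timelike geodesic, being inextensible in $\eeu$, has endpoints on the conformal boundary and the same argument as below applies. So the heart of the matter is a lightlike geodesic $\varphi = \{(x(s),s) : s \in \mathbb{R}\}$ with $\varphi \cap \Lambda = \emptyset$.

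For nonemptiness: fix any $x_0 \in \Lambda_0$ and consider the function $h(s) := s - f(x_0)$ compared with $\pm d_0(x(s), x_0)$. Since $x(s)$ is a unit-speed sphere geodesic, $s \mapsto d_0(x(s),x_0)$ is $1$-Lipschitz, equals some value in $[0,\pi]$, and — crucially — as $s$ ranges over an interval of length $\pi$ the point $x(s)$ sweeps from a point to its antipode, so $d_0(x(s),x_0)$ achieves both a minimum (possibly $0$) well below $\pi/2$-ish and values close to $\pi$ in a controlled way; meanwhile $h(s) \to \pm\infty$. Concretely, $(x(s),s) \in \tilde{\Omega}(\Lambda)$ iff $f^-(x(s)) < s < f^+(x(s))$, i.e.\ $|s - f(x_0)| < d_0(x(s),x_0)$ for all $x_0 \in \Lambda_0$ — but by compactness of $\Lambda_0$ and the fact that $f$ is $1$-contracting, it suffices to find $s$ with $s - f^-(x(s)) > 0$ and $f^+(x(s)) - s > 0$ simultaneously. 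I would argue this by the intermediate value theorem: the continuous function $s \mapsto f^+(x(s)) - s$ is strictly decreasing (its derivative is $\le 1 - 1 = 0$, and strictly negative where the $1$-Lipschitz $f^+$ has slope $<1$ along the geodesic — which must happen somewhere since $\varphi$ is inextensible and eventually leaves any affine domain, hence eventually enters $J^+(\Lambda)$), tends to $+\infty$ as $s \to -\infty$ and to $-\infty$ as $s \to +\infty$; similarly $s \mapsto s - f^-(x(s))$ is strictly increasing from $-\infty$ to $+\infty$. Hence each has a unique zero, say $s_+$ and $s_-$ respectively, and on the open interval between $s_-$ and $s_+$ both are positive — provided $s_- < s_+$. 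That last inequality is where $\varphi \cap \Lambda = \emptyset$ is used: if $s_- \ge s_+$ then at $s = s_+$ we would have $f^-(x(s_+)) \ge s_+ = f^+(x(s_+)) \ge f^-(x(s_+))$, forcing $f^-(x(s_+)) = f^+(x(s_+)) = s_+$, which means $(x(s_+), s_+)$ is a point where the upper and lower envelopes coincide — and by the characterization of $f^\pm$ via infima/suprema of $f(x_0) \pm d_0(\cdot, x_0)$, such a coincidence point of a $1$-contracting function's Lipschitz envelopes lies on the graph of $f$ itself, i.e.\ in $\Lambda$, contradicting $\varphi \cap \Lambda = \emptyset$.

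Connectedness then comes for free from the strict monotonicity: $\varphi \cap \tilde{\Omega}(\Lambda) = \{(x(s),s) : s_- < s < s_+\}$ is the image of an open interval, hence connected, and this descends to $\Omega(\Lambda)$ since $\tilde\pi$ is one-to-one on $\tilde\Omega(\Lambda)$ by Lemma~\ref{one-to-one}. The main obstacle I anticipate is the rigorous justification of the boundary-value/monotonicity claim, specifically showing $s \mapsto f^+(x(s)) - s$ is \emph{strictly} decreasing rather than merely non-increasing, and that it genuinely reaches $-\infty$: this requires knowing that a sphere geodesic cannot stay within distance $< \pi$ of all of $\Lambda_0$ for all large $s$ — equivalently that the inextensible lightlike geodesic $\varphi$ of $\eeu$ must eventually be causally related to points of the compact set $\Lambda$, which follows from Remark~\ref{pi} (any two points with time-separation $\ge \pi$ are causally related) applied to $\varphi$ and a fixed point of $\Lambda$. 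I would also need to double-check the timelike case does not hide a subtlety when the timelike geodesic is the conformal boundary geodesic of the two $AdS$-domains sitting inside $\eu$; there the same envelope argument with $g^\pm = f^\pm|_{\mathbb{S}^{n-1}}$ works verbatim.
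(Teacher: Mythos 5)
Your argument is correct in substance but takes a genuinely different, more computational route than the paper. The paper's proof is soft: if $\varphi$ missed $\Omega(\Lambda)$ it would be covered by the two sets $\varphi\cap J^+(\Lambda)$ and $\varphi\cap J^-(\Lambda)$, which are closed and (since $\Lambda$ is acausal and $\varphi$ avoids it, so that $J^+(\Lambda)\cap J^-(\Lambda)=\Lambda$ is disjoint from $\varphi$) disjoint; by connectedness $\varphi$ would then lie entirely in, say, $J^+(\Lambda)$, which is impossible because by Proposition \ref{future of a compact} the time projection of $J^+(\Lambda)$ is bounded below while $\varphi$ is inextensible. Connectedness of $\varphi\cap\Omega(\Lambda)$ is then immediate from causal convexity of $\Omega(\Lambda)$. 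Your intermediate-value argument with the envelopes $f^{\pm}$ makes the same facts explicit (it in effect reproves Proposition \ref{future of a compact} and causal convexity along the curve), and your key observation that $f^+(x)=f^-(x)$ forces $(x,f^+(x))\in\Lambda$, via strict $1$-contraction of $f$, is correct and plays the role of the paper's acausality step. Two points of your write-up need repair, though neither is fatal. First, the functions $u(s)=f^+(x(s))-s$ and $v(s)=s-f^-(x(s))$ are only non-increasing, resp.\ non-decreasing, not strictly so (e.g.\ when the infimum defining $f^+$ is attained at a point $x_0$ through which the great circle $x(\cdot)$ passes, $u$ is constant on an interval); but strictness is not needed: monotonicity already makes $\{u>0\}$ and $\{v>0\}$ rays, the limits at $\pm\infty$ follow simply from boundedness of $f^\pm$ on the compact sphere (no appeal to Remark \ref{pi} required), and if the two rays were disjoint one would find $s^*$ with $f^+(x(s^*))\le s^*\le f^-(x(s^*))$, hence a point of $\Lambda$ on $\varphi$. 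Second, your reduction of the timelike case is garbled: $\partial E(\Lambda)$ is not contained in $\Lambda$, and geodesic convexity of $E_i(\Lambda)$ gives connectedness but says nothing about non-emptiness. The detour is unnecessary anyway: every causal geodesic of $\mathcal{C}$ lifts to an inextensible curve $(x(s),s)$, $s\in\R$, with $x$ a $1$-Lipschitz map to $\mathbb{S}^n$, so the envelope argument applies verbatim in all three cases. What your approach buys is an explicit parametrization of $\varphi\cap\Omega(\Lambda)$ as an interval $(s_-,s_+)$, which is more information than the paper extracts; what it costs is the bookkeeping above, which the paper's connectedness argument avoids entirely.
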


\begin{proof}
Assume $\varphi$ is contained in $J^{+}(\Lambda) \cup J^-(\Lambda)$. One can write $\varphi$ as the union of the two closed subsets $(\varphi \cap J^{+}(\Lambda))$ and $(\varphi \cap J^{-}(\Lambda))$. Since $\Lambda$ is acausal, $J^{+}(\Lambda) \cap J^{-}(\Lambda) = \Lambda$. Thus, the intersection $(\varphi \cap J^{+}(\Lambda)) \cap (\varphi \cap J^{-}(\Lambda))$ is empty. Since $\varphi$ is connected, it follows that either $\varphi \cap J^{+}(\Lambda)$ or $\varphi \cap J^{-}(\Lambda)$ is empty. Assume, for instance that $\varphi \cap J^{-}(\Lambda)$ is empty, i.e. $\varphi$ is contained in $J^+(\Lambda)$. According to Proposition \ref{future of a compact}, the projection of $J^+(\Lambda) \subset \mathbb{S}^n \times \R$ onto $\R$ is bounded below and then $J^{+}(\Lambda)$ cannot contain the inextendible curve $\varphi$, contradiction. The fact that $\varphi \cap \Omega(\Lambda)$ is connected comes immediately from the causal convexity of $\Omega(\Lambda)$.
\end{proof} 

\begin{corollary}\label{corollary}
There exists a canonical homeomorphism between $\mathcal{P}(\Omega(\Lambda))$ and $\partial \mathcal{C} \cap U$.
\end{corollary}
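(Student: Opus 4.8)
\textbf{Proof plan for Corollary \ref{corollary}.} The plan is to exhibit the homeomorphism as a composite of three elementary identifications, the substantive input being Lemma \ref{lemma C1} and the results of Section \ref{3}.

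First I would transfer the problem to $\mathtt{Ein}_{1,n}$. By the homeomorphism $\partial\bar{X}\cong\mathcal{P}(\overline{\mathtt{AdS}}_{1,n})$ and the homeomorphism $\bar{p}:\mathcal{P}(\mathtt{Ein}_{1,n})\to\mathcal{P}(\overline{\mathtt{AdS}}_{1,n})$ established in Section \ref{3}, the boundary $\partial\mathcal{C}=\partial\bar{X}$ is canonically homeomorphic to $\mathcal{P}(\mathtt{Ein}_{1,n})$. I would then check that under this identification the subset $\partial\mathcal{C}\cap U$ of causal geodesics avoiding $\Lambda$ corresponds exactly to $\{\psi\in\mathcal{P}(\mathtt{Ein}_{1,n}):\psi\cap\Lambda=\emptyset\}$. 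The only point to verify is that ``avoiding $\Lambda$'' is preserved under the ramified double cover $p:\mathtt{Ein}_{1,n}\to\overline{\mathtt{AdS}}_{1,n}$ of Section \ref{3}: since $\Lambda$ lies in the ramification locus $\mathtt{Ein}_{1,n-1}=\partial\mathtt{AdS}_{1,n}$, where $p$ is injective, one has $p^{-1}(\Lambda)=\Lambda$, so $p(\psi)\cap\Lambda=\emptyset$ if and only if $\psi\cap\Lambda=\emptyset$.

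Second I would show that the set of photons of $\mathtt{Ein}_{1,n}$ avoiding $\Lambda$ coincides with the set of photons that meet $\Omega(\Lambda)$. The inclusion $\{\psi:\psi\cap\Omega(\Lambda)\neq\emptyset\}\subseteq\{\psi:\psi\cap\Lambda=\emptyset\}$ is immediate: a point of $\psi\cap\Omega(\Lambda)$ and a point of $\psi\cap\Lambda$ would be distinct (as $\Lambda\cap\Omega(\Lambda)=\emptyset$, because $\Lambda\subset J^+(\Lambda)$) points on a common photon, hence causally related, contradicting $\Omega(\Lambda)=\eeu\setminus(J^+(\Lambda)\cup J^-(\Lambda))$. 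The reverse inclusion is exactly Lemma \ref{lemma C1}: lifting $\psi$ to an inextensible lightlike geodesic of $\eeu$, Proposition \ref{future of a compact} shows that $J^+(\Lambda)$ is bounded below and $J^-(\Lambda)$ bounded above in the time coordinate along $\psi$, so $\psi$ cannot be contained in $J^+(\Lambda)\cup J^-(\Lambda)$, and moreover $\psi\cap\Omega(\Lambda)$ is connected by causal convexity of $\Omega(\Lambda)$.

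Third I would prove that the restriction map $\psi\mapsto\psi\cap\Omega(\Lambda)$, from $\{\psi\in\mathcal{P}(\mathtt{Ein}_{1,n}):\psi\cap\Omega(\Lambda)\neq\emptyset\}$ to $\mathcal{P}(\Omega(\Lambda))$, is a homeomorphism. It is well defined and lands in $\mathcal{P}(\Omega(\Lambda))$ because $\Omega(\Lambda)$ is open and causally convex, so $\psi\cap\Omega(\Lambda)$ is a connected inextensible lightlike geodesic of $\Omega(\Lambda)$; it is injective because two photons of $\mathtt{Ein}_{1,n}$ sharing a nonempty open subarc coincide; it is surjective because any photon of $\Omega(\Lambda)$ extends uniquely (lightlike geodesics being conformally invariant) to a photon of $\mathtt{Ein}_{1,n}$, which then avoids $\Lambda$ by the previous paragraph. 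Continuity both ways follows from the definition of the topology by limit curves (Definition \ref{limit curve}) and the openness of $\Omega(\Lambda)$: if $\psi_i\to\psi$ and $p\in\psi\cap\Omega(\Lambda)$, any neighbourhood $W\subset\Omega(\Lambda)$ of $p$ meets all but finitely many $\psi_i$, hence all but finitely many $\psi_i\cap\Omega(\Lambda)$, so $\psi\cap\Omega(\Lambda)$ is a limit curve of $\{\psi_i\cap\Omega(\Lambda)\}$; applying the same argument to the extension map gives continuity of the inverse. Composing the three identifications gives the announced canonical homeomorphism. I expect the main obstacle to be bookkeeping rather than depth: keeping the passage through the ramified cover $p$ straight (so that ``avoids $\Lambda$'' transfers, which works precisely because $\Lambda$ lies in the ramification locus) and handling continuity in the limit-curve topology, which is a priori neither Hausdorff nor metrizable, so that recognizing the candidate as a limit curve genuinely suffices to conclude convergence in these spaces.
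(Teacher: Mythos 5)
Your proposal is correct and follows essentially the same route as the paper: the paper's proof defines the extension map $i:\mathcal{P}(\Omega(\Lambda))\to\mathcal{P}(\eeu)$, identifies its image with $\partial\mathcal{C}\cap U$ via Lemma \ref{lemma C1}, and gets injectivity from the causal convexity of $\Omega(\Lambda)$ — exactly the two inputs you use, with your restriction map being the inverse of the paper's $i$. Your extra bookkeeping (the identification $\partial\mathcal{C}\cong\mathcal{P}(\mathtt{Ein}_{1,n})$ through the ramified cover, and the continuity check in the limit-curve topology) only makes explicit what the paper leaves implicit.
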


\begin{proof}
By definition, a lightlike geodesic of $\Omega(\Lambda)$ is a connected component of the intersection of a lightlike geodesic of $\eeu$ with $\Omega(\Lambda)$. Let $i : \mathcal{P}(\Omega(\Lambda)) \longrightarrow \mathcal{P}(\eeu)$ be the map which associates to a lightlike geodesic of $\Omega(\Lambda)$ the lightlike geodesic of $\eeu$ that it comes from. The map $i$ is clearly continous and open. The image of $i$ is the space of lightlike geodesics of $\eeu$ that meet $\Omega(\Lambda)$; according to Lemma \ref{lemma C1}, it is equal to $\partial \mathcal{C} \cap U$. Moreover, the pre-images by $i$ of a lightlike geodesic $\varphi$ in $\partial \mathcal{C} \cap U$ is the set consisting of connected components of $\varphi \cap \Omega(\Lambda)$. Since $\Omega(\Lambda)$ is causally convexe, the intersection $\varphi \cap \Omega(\Lambda)$ is connected and thus $i^{-1}(\varphi)$ is reduced to one element. Therefore, $i$ is a homeomophism between $\mathcal{P}(\Omega(\Lambda))$ and $\partial \mathcal{C} \cap U$. 
\end{proof}

It follows from Theorem \ref{GKW} that the action of $\rho(\Gamma)$ on $\partial \mathcal{C} \cap U$ is properly discontinuous. One can easily see that the homeomorphism $i$ defined in the proof of Corollary \ref{corollary} above induces a homeomorphism between $\rho(\Gamma) \backslash \mathcal{P}(\Omega(\Lambda))$ and $\rho(\Gamma) \backslash (\partial \mathcal{C} \cap U)$.

\begin{proposition}\label{compact}
The quotient space $\rho(\Gamma) \backslash \mathcal{P}(\Omega(\Lambda))$ is compact.
\end{proposition}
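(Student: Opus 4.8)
The plan is to transport the statement, via Corollary~\ref{corollary}, to the quotient of a closed invariant subspace of $U$ and then apply the cocompactness half of Theorem~\ref{GKW}. First I would check that the homeomorphism $i$ of Corollary~\ref{corollary} between $\mathcal{P}(\Omega(\Lambda))$ and $\partial\mathcal{C}\cap U$ is $\rho(\Gamma)$-equivariant: it sends a lightlike geodesic of $\Omega(\Lambda)$ to the lightlike geodesic of $\eeu$ of which it is the (unique, by causal convexity) connected component lying in $\Omega(\Lambda)$, and both operations commute with the action of $\rho(\Gamma)$. Hence $i$ descends to a homeomorphism $\rho(\Gamma)\backslash\mathcal{P}(\Omega(\Lambda))\cong\rho(\Gamma)\backslash(\partial\mathcal{C}\cap U)$, as already noted right after Corollary~\ref{corollary}, so it suffices to prove that the right-hand side is compact.

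Next I would observe that $\partial\mathcal{C}\cap U$ is a closed and $\rho(\Gamma)$-invariant subset of $U$. Under the identification $\mathcal{C}\cong\bar X\subset Gr_2(\R^{n+2})$, the topological boundary $\partial\bar X$ is closed, and since $\rho(\Gamma)\subset O_0(2,n)$ preserves the quadratic form it preserves $\bar X$ and hence $\partial\bar X$; meanwhile $U$ is invariant because the limit set $\xi(\partial\Gamma)$ is. By Theorem~\ref{GKW} the action of $\rho(\Gamma)$ on $U$ is properly discontinuous and cocompact, so there is a compact $C\subset U$ with $\rho(\Gamma)\cdot C=U$. Then $C':=C\cap\partial\mathcal{C}\cap U$ is compact, being closed in $C$, and $\rho(\Gamma)\cdot C'=\partial\mathcal{C}\cap U$: given $x\in\partial\mathcal{C}\cap U$, choose $g\in\rho(\Gamma)$ with $g^{-1}x\in C$; invariance of $\partial\mathcal{C}\cap U$ forces $g^{-1}x\in C'$, so $x\in gC'$. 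Therefore the quotient map carries the compact set $C'$ onto all of $\rho(\Gamma)\backslash(\partial\mathcal{C}\cap U)$, which is thus compact; equivalently, $\rho(\Gamma)\backslash(\partial\mathcal{C}\cap U)$ is a closed subspace of the compact space $\rho(\Gamma)\backslash U$ and the conclusion follows.

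I do not anticipate a genuine obstacle here: the substance has already been carried out in Theorem~\ref{GKW} and Corollary~\ref{corollary}. The only points requiring a little care are the $\rho(\Gamma)$-equivariance of $i$, so that the two quotients genuinely coincide, and the elementary fact that cocompactness is inherited by closed invariant subspaces.
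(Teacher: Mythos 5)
Your proof is correct and follows essentially the same route as the paper: both reduce to showing that $\rho(\Gamma)\backslash(\partial\mathcal{C}\cap U)$ is compact via Corollary~\ref{corollary}, and both deduce this from the cocompactness in Theorem~\ref{GKW} together with the fact that $\partial\mathcal{C}\cap U$ is a closed $\rho(\Gamma)$-invariant subset of $U$ (the paper phrases this with a sequence extraction, you with a compact set whose translates cover, but these are the same observation).
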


\begin{proof}
Let $\{\mathrm{x}_i\}_i$ be a sequence of $\rho(\Gamma) \backslash(\partial \mathcal{C} \cap U) \subset \rho(\Gamma) \backslash U$. According to Theorem \ref{GKW}, the space $\rho(\Gamma) \backslash U$ is compact. Therefore, we can assume, up to extracting, that $\{\mathrm{x}_i\}_i$ converge to $\mathrm{x} \in \rho(\Gamma) \backslash U$. Let $\{x_i\}_i$ be a sequence in $\partial \mathcal{C} \cap U$ and $x$ a point in $U$ such that for every $i \in \mathbb{N}$, the point $x_i$ is representative of $\mathrm{x}_i$ and $x$ is representative of $\mathrm{x}$. There exists a sequence $\{g_i\}_i$ in $\rho(\Gamma)$ such that $\{g_i.x_i\}_i$ converges to $x$. Since $\rho(\Gamma)$ acts on $U$ by isometries, $\{g_i.x_i\}_n$ is a sequence of $\partial \mathcal{C} \cap U$. But $\partial \mathcal{C}$ is a close subset of~$\mathcal{C}$. Hence, $x \in \partial \mathcal{C} \cap U$.
\end{proof}

\begin{lemma}\label{homeo}
There exists a canonical homeomorphism between the spaces $\rho(\Gamma) \backslash \mathcal{P}(\Omega(\Lambda))$ and $\mathcal{P}(\rho(\Gamma) \backslash \Omega(\Lambda))$.
\end{lemma}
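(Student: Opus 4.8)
The plan is to construct the homeomorphism directly from the covering maps involved and check it descends to the quotient. Recall that $\Omega(\Lambda) \subset \mathtt{Ein}_{1,n}$ is the invisible domain, and by Lemma \ref{one-to-one} together with Remark \ref{equivalence} it lifts homeomorphically to $\tilde\Omega(\Lambda) \subset \eeu$ inside an affine domain. The group $\rho(\Gamma)$ acts freely and properly discontinuously on $\Omega(\Lambda)$, so the quotient $M_\rho(\Gamma) = \rho(\Gamma)\backslash\Omega(\Lambda)$ is a genuine conformally flat spacetime and the projection $q : \Omega(\Lambda) \to M_\rho(\Gamma)$ is a covering map. A lightlike geodesic of $M_\rho(\Gamma)$, being a $1$-dimensional totally geodesic lightlike submanifold, lifts via $q$ to a disjoint union of lightlike geodesics of $\Omega(\Lambda)$; conversely $q$ sends each lightlike geodesic of $\Omega(\Lambda)$ to a lightlike geodesic of $M_\rho(\Gamma)$ (lightlike geodesics are preserved by local conformal isometries, by the theorem of \cite{francesarticle} recalled in Section~\ref{1.1}). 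This defines a natural map $\bar q_* : \rho(\Gamma)\backslash\mathcal{P}(\Omega(\Lambda)) \to \mathcal{P}(M_\rho(\Gamma))$, and the content of the lemma is that it is a homeomorphism.

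First I would verify that $q$ induces a well-defined surjection $q_* : \mathcal{P}(\Omega(\Lambda)) \to \mathcal{P}(M_\rho(\Gamma))$: each $\varphi \in \mathcal{P}(\Omega(\Lambda))$ projects to a connected immersed lightlike geodesic in $M_\rho(\Gamma)$, and since $q$ is a covering, any lightlike geodesic of $M_\rho(\Gamma)$ lifts locally and, by analytic continuation of geodesics, globally to a lightlike geodesic of $\Omega(\Lambda)$, giving surjectivity. Next I would check that two lifts $\varphi, \varphi'$ of the same geodesic of $M_\rho(\Gamma)$ differ by an element of $\rho(\Gamma)$: this is the standard deck-transformation argument, using that $\Omega(\Lambda)$ is connected (indeed each lift is a connected component of the preimage, and $\rho(\Gamma)$ acts transitively on components of the preimage of a connected set under a regular covering). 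Hence $\bar q_*$ is a well-defined bijection. Continuity of $q_*$ in both directions follows from the definition of the topology on geodesic spaces via limit curves (Definition \ref{limit curve}): if a sequence $\varphi_i \to \varphi$ in $\mathcal{P}(\Omega(\Lambda))$ then locally around any point the curves converge, and $q$ being a local homeomorphism transports this convergence downstairs; conversely, given a convergent sequence in $\mathcal{P}(M_\rho(\Gamma))$, one lifts a convergent subsequence using local sections of $q$ and the fact that lightlike geodesics are determined by a point and a lightlike direction.

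To finish, I would argue $\bar q_*$ is a homeomorphism. Since $q_*$ is continuous, open (as $q$ is a local homeomorphism and the geodesic-space topologies are compatible with it), and $\rho(\Gamma)$-invariant in the sense that $q_*(g\cdot\varphi) = q_*(\varphi)$, it factors through a continuous open bijection $\bar q_* : \rho(\Gamma)\backslash\mathcal{P}(\Omega(\Lambda)) \to \mathcal{P}(M_\rho(\Gamma))$, which is therefore a homeomorphism. The main obstacle I anticipate is not any single hard estimate but rather being careful about two subtleties: first, that the topology on $\mathcal{P}(\Omega(\Lambda))$ defined via limit curves is genuinely the quotient-compatible one, so that $\mathcal{P}(\rho(\Gamma)\backslash\Omega(\Lambda))$ really carries the quotient topology of $\mathcal{P}(\Omega(\Lambda))$ — this requires knowing that a lightlike geodesic in $M_\rho(\Gamma)$ and all its lifts have "the same" local shape, which follows because $q$ is a local conformal isometry; and second, ensuring that a lift of an inextensible lightlike geodesic of $M_\rho(\Gamma)$ is again inextensible in $\Omega(\Lambda)$ (and vice versa), so that the spaces $\mathcal{P}$ match as sets — this uses that $q$ is a covering map, hence has the path-lifting property for the geodesic curves. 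Once these compatibilities are in place, the homeomorphism is automatic from general covering-space theory.
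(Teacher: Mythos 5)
Your construction of the map is the same as the paper's: the canonical projection $\Omega(\Lambda) \to \rho(\Gamma)\backslash\Omega(\Lambda)$ induces a continuous surjection on spaces of lightlike geodesics, which descends to a continuous bijection on the quotient. Where you diverge is in proving that the inverse is continuous. The paper dispatches this in one line: $\rho(\Gamma)\backslash\mathcal{P}(\Omega(\Lambda))$ is compact by Proposition \ref{compact} (itself a consequence of the cocompactness in Theorem \ref{GKW}), so a continuous bijection onto a Hausdorff space is automatically a homeomorphism. You instead argue that the induced map is \emph{open}, via covering-space theory: lifting convergent sequences of geodesics through local sections of the covering $q$, using that a lightlike geodesic is determined by a point and a lightlike direction. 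Your route is more self-contained --- it does not invoke the cocompactness theorem at all --- but it is also where essentially all of the work is hidden: the topology on $\mathcal{P}(\rho(\Gamma)\backslash\Omega(\Lambda))$ is the limit-curve topology of Definition \ref{limit curve}, which a priori controls only pointwise accumulation and not convergence of directions, so the ``local sections lift convergence'' step needs an argument in the spirit of Proposition \ref{unit tangent bundle} identifying geodesics with points of a unit tangent bundle before it is genuinely automatic. You flag exactly this subtlety yourself, which is the right instinct. In short: your argument buys independence from Proposition \ref{compact} at the cost of a more delicate point-set verification; the paper's compactness shortcut is cheaper given what has already been established, and note that the compactness of $\rho(\Gamma)\backslash\mathcal{P}(\Omega(\Lambda))$ is needed two lines later anyway to conclude spatial compactness, so nothing is saved by avoiding it.
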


\begin{proof}
The canonical projection of $\Omega(\Lambda)$ onto $\rho(\Gamma) \backslash \Omega(\Lambda)$ induces a continuous surjective map from $\mathcal{P}(\Omega(\Lambda))$ to $\mathcal{P}(\rho(\Gamma) \backslash \Omega(\Lambda))$. This map descends to the quotient into a one-to-one continuous map $f$ from $\rho(\Gamma) \backslash \mathcal{P}(\Omega(\Lambda))$ to $\mathcal{P}(\rho(\Gamma) \backslash \Omega(\Lambda))$. Since $\rho(\Gamma) \backslash \mathcal{P}(\Omega(\Lambda))$ is compact (see Proposition \ref{compact}), $f$ is a homeomorphism.
\end{proof}

\begin{proposition}\label{unit tangent bundle}
Let $M$ be a GH spacetime and $S$ a Cauchy hypersurface of $M$. Let $\mathcal{P}(M)$ be the space of lightlike geodesics of $M$. There exists a canonical homeomorphism between $\mathcal{P}(M)$ and the unit tangent bundle $T^1S$ of $S$.
\end{proposition}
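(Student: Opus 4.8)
The plan is to realize $\mathcal{P}(M)$ as a global cross-section over $S$ of the lightlike geodesic flow, and then to identify that cross-section with $T^{1}S$ by projecting null directions orthogonally onto $S$. We may assume $S$ is smooth and spacelike; let $h$ be the Riemannian metric induced on $S$ by $g$ and let $\nu$ be the future-pointing $g$-unit normal field along $S$, so that $T_pM = T_pS \oplus \R\nu_p$ for every $p \in S$ and each $v \in T_pM$ decomposes uniquely as $v = v^{\top} + a(v)\,\nu_p$ with $v^{\top} \in T_pS$.

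First I would define $\Phi : \mathcal{P}(M) \to T^{1}S$. An inextensible lightlike geodesic $\varphi$ (a photon) is in particular an inextensible causal curve, so by the defining property of a Cauchy hypersurface it meets $S$ in exactly one point $p_\varphi$; orienting $\varphi$ by the time orientation of $M$, its future tangent direction at $p_\varphi$ is a ray $\R_{>0}\cdot v$, and since $v$ is lightlike while $\nu_{p_\varphi}$ is timelike we have $v^{\top} \neq 0$, so one may set $\Phi(\varphi) := (p_\varphi,\, v^{\top}/\|v^{\top}\|_h)$, a point of $T^{1}S$ independent of the scaling of $v$. Conversely, for $(p,u) \in T^{1}S$ the vector $u + \nu_p$ is future-pointing and lightlike, and up to a positive scalar it is the unique future null vector whose tangential part points along $u$; let $\Psi(p,u)$ be the inextensible lightlike geodesic through $p$ directed by $u + \nu_p$. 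That $\Phi$ and $\Psi$ are mutually inverse is a one-line verification: for a future null $v = v^{\top} + a(v)\nu_p$ the null condition reads $\|v^{\top}\|_h^{2} - a(v)^{2} = 0$, hence $a(v) = \|v^{\top}\|_h$ and $v$ is a positive multiple of $v^{\top}/\|v^{\top}\|_h + \nu_p$, giving $\Psi(\Phi(\varphi)) = \varphi$; and $\Psi(p,u)$ crosses $S$ only at $p$, with future direction $u + \nu_p$ whose normalized tangential part is $u$, giving $\Phi(\Psi(p,u)) = (p,u)$.

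It remains to show that $\Phi$ and $\Psi$ are continuous for the limit-curve topology of Definition \ref{limit curve}, and this is where I expect the real work to lie. Continuity of $\Psi$ follows from smooth dependence of geodesics on their initial data $(p, u+\nu_p)$, together with the elementary fact that $C^{1}$-convergence of geodesics on compact parameter intervals implies convergence in the sense of Definition \ref{limit curve}. Continuity of $\Phi$ is the delicate step, and it is exactly where global hyperbolicity is genuinely used: invoking the splitting theorem (Geroch; Bernal--S\'anchez), write $M \cong S'\times\R$ with each slice a smooth spacelike Cauchy hypersurface, so that every inextensible causal curve is a graph $t \mapsto (x(t),t)$ over all of $\R$ meeting $S'\times\{0\}$ transversally at its unique crossing point. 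The standard limit-curve argument in globally hyperbolic spacetimes then shows that $\varphi_i \to \varphi$ in $\mathcal{P}(M)$ keeps the crossing points in a fixed compact set and forces $p_{\varphi_i} \to p_\varphi$; and since the curves are lightlike geodesics, compactness of the set of future null directions at a point (the intersection of a Lorentzian light cone with an affine hyperplane is an ellipsoid) together with uniqueness of geodesics from their initial data upgrades $C^{0}$-convergence to convergence of the initial directions, whence $\Phi(\varphi_i) \to \Phi(\varphi)$. Thus $\Phi$ is a continuous bijection with continuous inverse $\Psi$, i.e. a homeomorphism, canonical once $M$, $S$ and the time orientation are fixed. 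The main obstacle is therefore precisely the continuity of $\Phi$ --- the continuous dependence of the intersection point of a photon with $S$, and of its $1$-jet there --- which is the point at which the hypothesis that $S$ is a Cauchy hypersurface, rather than an arbitrary spacelike hypersurface, is indispensable.
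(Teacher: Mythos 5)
Your proof is correct and follows essentially the same route as the paper: both use the Cauchy property to intersect each photon with $S$ in a unique point, split the future null tangent vector there into a tangential part plus a unit normal (respectively, the $\R$-factor of the splitting $M\cong S\times\R$), and invert by taking the lightlike geodesic directed by $u+\nu_p$. The only difference is one of detail: the paper simply asserts continuity of both maps, whereas you supply the limit-curve and continuous-dependence-on-initial-data arguments that justify it.
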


\begin{proof}
Recall that $M$ is homeomorphic to $S \times \R$ (see \cite{Choquet-Bruhat}). We fix a copy $S \times \{0\}$ of $S$ in $M$. Notice that for every $p=(x,t) \in M$, $T_pM = T_xS \oplus \R$. Let $u$ be a unit vector field on $\R$. Let $\varphi \in \mathcal{P}(M)$. The lightlike geodesic $\varphi$ of $M$ meets $S \times \{0\}$ in a unique point $p= (x,0)$. Let $w_{\varphi}$ be the tangent vector to $\varphi$ at $(x,0)$ such that $w_{\varphi} = v_{\varphi} + u(0)$ with $v_{\varphi} \in T_xS$. We define the map $f$ from $\mathcal{P}(M)$ to $T^1S$ which associates to $\varphi$ the point $(x,v_{\varphi}) \in T^1S$. Conversely, we define the map $g$ from $T^1S$ to $\mathcal{P}(M)$ which associates to $(x,v)$ the unique lightlike geodesic starting at $p = (x,0)$ with tangent vector $w = v + u(0)$. Clearly, $f$ and $g$ are continuous and $g$ is the inverse of $f$.
\end{proof}

\begin{proposition}
The conformally flat GH spacetime $M_{\rho}(\Gamma) = \rho(\Gamma) \backslash \Omega(\Lambda)$ is Cauchy compact. 
\end{proposition}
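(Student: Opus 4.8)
The plan is to deduce spatial compactness directly from the identification, already established in this section, of the space of photons of $M_{\rho}(\Gamma)$ with the unit tangent bundle of one of its Cauchy hypersurfaces, together with the compactness of that photon space.

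First I would use that $M_{\rho}(\Gamma)$ is globally hyperbolic (proved just above), so it admits a Cauchy hypersurface $S$, which is an $n$-dimensional manifold. By Proposition \ref{unit tangent bundle}, there is a canonical homeomorphism $\mathcal{P}(M_{\rho}(\Gamma)) \cong T^1 S$ between the space of lightlike geodesics of $M_{\rho}(\Gamma)$ and the unit tangent bundle of $S$; in particular the bundle projection $T^1 S \longrightarrow S$ is a locally trivial fibration with compact fiber $\mathbb{S}^{n-1}$, hence a continuous proper surjection.

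Next I would invoke Lemma \ref{homeo}, which gives a canonical homeomorphism $\mathcal{P}(M_{\rho}(\Gamma)) = \mathcal{P}(\rho(\Gamma) \backslash \Omega(\Lambda)) \cong \rho(\Gamma) \backslash \mathcal{P}(\Omega(\Lambda))$, and Proposition \ref{compact}, which states that $\rho(\Gamma) \backslash \mathcal{P}(\Omega(\Lambda))$ is compact. Composing these homeomorphisms with the one of the previous step shows that $T^1 S$ is compact.

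Finally, $S$ is the image of the compact space $T^1 S$ under the continuous surjection $T^1 S \longrightarrow S$, so $S$ is compact. Therefore $M_{\rho}(\Gamma)$ is a globally hyperbolic spacetime possessing a compact Cauchy hypersurface, i.e. it is CGH, which completes the proof. I do not expect a genuine obstacle at this stage: the real work — that $\Omega(\Lambda)$ is nonempty and $M_{\rho}(\Gamma)$ is globally hyperbolic, and that the photon space of the quotient is compact — has been carried out in the earlier results and in Theorem \ref{GKW}, and here one only assembles the chain of homeomorphisms and reads off compactness of the base from compactness of the total space of a sphere bundle.
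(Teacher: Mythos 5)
Your proposal is correct and follows exactly the paper's own argument: global hyperbolicity gives a Cauchy hypersurface $S$, the chain of homeomorphisms from Proposition \ref{unit tangent bundle}, Lemma \ref{homeo} and Proposition \ref{compact} shows $T^1S$ is compact, and compactness of $S$ follows. The only difference is that you spell out explicitly that $S$ is the continuous surjective image of $T^1S$, which the paper leaves implicit.
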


\begin{proof}
Let $S$ be a Cauchy hypersurface of $M_{\rho}(\Gamma)$. According to Proposition \ref{unit tangent bundle}, the spaces $\mathcal{P}(M(\Gamma))$ and $T^1S$ are homeomorphic. It follows from Proposition \ref{compact} and Lemma~\ref{homeo} that $T^1S$ is compact and thus $S$ is compact.
\end{proof}

\paragraph{\textit{Maximality.}} Let $\tilde{M}_{\rho}(\Gamma)$ be a universal covering of $M_{\rho}(\Gamma)$ and $D : \tilde{M}_{\rho}(\Gamma) \longrightarrow \Omega(\Lambda)$ be a developing map. Clearly, the holonomy group of $M_{\rho}(\Gamma)$ is $\rho(\Gamma)$.

\begin{proposition}
The CGH conformally flat spacetime $M_{\rho}(\Gamma) = \rho(\Gamma) \backslash \Omega(\Lambda)$ is maximal.
\end{proposition}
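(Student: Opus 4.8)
The plan is to prove $\mathcal{C}_0$-maximality; $\mathcal{C}$-maximality then follows from the equivalence recalled in Section~\ref{1.5}. So let $f : M_{\rho}(\Gamma) \longrightarrow N$ be a Cauchy embedding into a globally hyperbolic conformally flat spacetime $N$, and fix a compact Cauchy hypersurface $S$ of $M_{\rho}(\Gamma)$; the aim is to show that $f$ is onto. First I would pass to universal coverings: $f$ lifts to an embedding $\tilde f : \tilde M_{\rho}(\Gamma) \longrightarrow \tilde N$, and since $f(S)$ is a Cauchy hypersurface of $N$ and $N$ is homeomorphic to it times $\R$, one has $\pi_1(N) \cong \pi_1(S) \cong \Gamma$. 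As $\dim N = n+1 \geq 3$, the Lorentzian Liouville theorem (Theorem~\ref{Liouville}) makes $N$ locally modeled on $\eeu$, so $N$ has a developing map $D_N : \tilde N \longrightarrow \eeu$, which I would normalize so that $D_N \circ \tilde f = D$; then $D_N(\tilde f(\tilde M_{\rho}(\Gamma))) = \Omega(\Lambda)$, the holonomy of $N$ is $\rho(\Gamma)$, and the image $\tilde S := D_N(\tilde f(\tilde S_M))$ of a lift $\tilde S_M$ of $S$ is an embedded acausal topological hypersurface of $\eeu$ contained in $\Omega(\Lambda)$. Because $N$ is globally hyperbolic and conformally flat and its Cauchy hypersurface develops onto the embedded acausal hypersurface $\tilde S$, the developing map $D_N$ is injective — this is the conformally flat analogue of Mess's theorem, for which I would cite \cite{salveminithesis}. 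Hence $D_N$ identifies $\tilde N$ with an open domain $\tilde\Omega' \subseteq \eeu$ containing $\Omega(\Lambda)$, inside which $\tilde S$ is an acausal Cauchy hypersurface.

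The core of the argument is then to show $\tilde\Omega' = \Omega(\Lambda)$, and the key observation is that $\tilde\Omega'$ must be disjoint from $J^+(\Lambda) \cup J^-(\Lambda)$ (causal future/past computed in $\eeu$), which by the very definition of the invisible domain forces $\tilde\Omega' \subseteq \Omega(\Lambda)$. I would first note $\Lambda \cap \tilde\Omega' = \emptyset$: if some $\lambda \in \Lambda$ equalled $D_N(p)$, choosing $\gamma_0 \in \rho(\Gamma)$ with attracting fixed point $\lambda$ gives $D_N(\gamma_0^{k}\cdot p) = \gamma_0^{k}\cdot\lambda = \lambda$ for all $k$, hence $\gamma_0^{k}\cdot p = p$ by injectivity, contradicting the freeness of the action of $\rho(\Gamma)$ on $\tilde N$. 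Now suppose for contradiction that some $x \in \tilde\Omega'$ lies in $J^+(\lambda)$ for some $\lambda \in \Lambda$ (the past case is symmetric), along a future causal curve $c$ from $\lambda$ to $x$ in $\eeu$. Lifting $c$ backwards through the local homeomorphism $D_N$, starting from the point of $\tilde N$ over $x$, yields a maximal past-directed causal curve $\tilde c$ of $\tilde N$: it cannot acquire a past endpoint in $\tilde N$, since that endpoint would lie over a point of $c$ (contradicting maximality of the lift) and over $\lambda$ there is no point of $\tilde N$ by the previous remark; so $\tilde c$ is past-inextensible in $\tilde N$. Global hyperbolicity of $\tilde N$ then forces $\tilde c$ to meet its Cauchy hypersurface, so $c = D_N(\tilde c)$ meets $\tilde S$; but this intersection point, lying on a future causal curve issued from $\lambda$ and distinct from $\lambda$ (which is not on $\tilde S \subseteq \Omega(\Lambda)$), belongs to $J^+(\Lambda)$, whereas $\tilde S \subseteq \Omega(\Lambda)$ is disjoint from $J^+(\Lambda) \cup J^-(\Lambda)$ — a contradiction.

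It follows that $\tilde\Omega' \subseteq \Omega(\Lambda)$, hence $\tilde\Omega' = \Omega(\Lambda) = D_N(\tilde f(\tilde M_{\rho}(\Gamma)))$; since $D_N$ is injective this gives $\tilde f(\tilde M_{\rho}(\Gamma)) = \tilde N$, and passing to the quotients by $\rho(\Gamma) \cong \Gamma$ we get $f(M_{\rho}(\Gamma)) = N$. Thus $M_{\rho}(\Gamma)$ is $\mathcal{C}_0$-maximal, hence $\mathcal{C}$-maximal. The hard part, and the step that genuinely uses the hypotheses, is the injectivity of the developing map $D_N$ together with the bookkeeping $\pi_1(N) \cong \Gamma$ and the normalization of $D_N$; once $\tilde N$ is realized as a domain of $\eeu$, the rest of the argument — purely causal, via lifting a causal curve and invoking global hyperbolicity of $\tilde N$ — is soft. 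An alternative to this last step would be to check directly, from the graph descriptions of Section~\ref{4.1.1}, that $\Omega(\Lambda)$ is causally convex in $\eeu$ and coincides with the Cauchy development in $\eeu$ of $\tilde S$; but the route above avoids that computation.
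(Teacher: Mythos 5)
Your causal argument in the second paragraph is sound, and it is essentially the same mechanism the paper exploits: a causal curve issued from a point of $\Lambda$ stays in $J^+(\Lambda)\cup J^-(\Lambda)$, hence can never meet a Cauchy hypersurface contained in $\Omega(\Lambda)$, contradicting global hyperbolicity. The problem is the framework you set up in the first paragraph in order to run it. The step ``the developing map $D_N$ is injective --- this is the conformally flat analogue of Mess's theorem'' is not available, and is in fact false in the generality you need. The invisible domain $\Omega(\Lambda)=\{(x,t):f^-(x)<t<f^+(x)\}$ deformation retracts onto $\{x\in\mathbb{S}^{n}:\ f^-(x)<f^+(x)\}=\mathbb{S}^{n}\setminus\Lambda_0$, which need not be simply connected (take $\Lambda$ a pair of points, or a circle $\Lambda_0\subset\mathbb{S}^{n-1}\subset\mathbb{S}^{n}$ with $n\geq 3$). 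Consequently $D=D_N\circ\tilde f$, being the universal covering $\tilde{M}_{\rho}(\Gamma)\to\Omega(\Lambda)$ followed by the inclusion into $\eeu$, is already non-injective on $\tilde f(\tilde{M}_{\rho}(\Gamma))\subseteq\tilde N$; a fortiori $D_N$ cannot be injective, and $\tilde N$ cannot be identified with a domain $\tilde\Omega'\subseteq\eeu$ containing $\Omega(\Lambda)$. (The universal cover of such an $\Omega(\Lambda)$ is itself a simply connected globally hyperbolic conformally flat spacetime with non-injective developing map, so no citation of \cite{salveminithesis} can rescue the statement in the form you use it.) This also undermines the bookkeeping $\pi_1(N)\cong\Gamma$ and, more importantly, the final step: without injectivity, the inclusion $D_N(\tilde N)\subseteq\Omega(\Lambda)=D_N(\tilde f(\tilde{M}_{\rho}(\Gamma)))$ does not imply that $\tilde f$ is onto.

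The paper avoids this entirely by arguing locally: assuming $f$ is not onto, it picks a point $y$ on the boundary of the image, a small neighbourhood $U$ of $y$ on which the developing map of $N$ \emph{is} injective, and applies your causal contradiction to the slightly enlarged domain $\Omega(\Lambda)\cup D'(U)$ and a point $x\in\partial\Omega(\Lambda)\cap D'(U)$, which lies on a lightlike geodesic through a point of $\Lambda$ and hence on an inextensible causal curve missing the Cauchy hypersurface $S$. Your second paragraph can be salvaged along exactly these lines, since the curve-lifting argument only uses that $D_N$ is a local homeomorphism. Note also a secondary, repairable gap in your proof that $\Lambda\cap D_N(\tilde N)=\emptyset$: not every point of $\Lambda=\xi(\partial\Gamma)$ is an attracting fixed point of an element of $\rho(\Gamma)$ --- only a dense subset is when $\Gamma$ is non-elementary; it is cleaner to join a hypothetical preimage of $\lambda$ to the Cauchy hypersurface of $\tilde N$ by an inextensible causal curve and project by $D_N$, which lands you back in the same contradiction. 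As written, however, the proposal rests on a false global injectivity statement and does not go through.
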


\begin{proof}
A globally hyperbolic spacetime is maximal if and only if its universal covering is maximal (see \cite{salveminithesis}, Chap.$7$, Corollary $2.7$). We prove that $\tilde{M}_{\rho}(\Gamma)$ is maximal. Let $N$ be a CGH  conformally flat spacetime and $f : \tilde{M}_{\rho}(\Gamma) \longrightarrow N$ a Cauchy embedding. Let $D' : N \longrightarrow \eeu$ be a developing map such that $D' \circ f = D$. Assume $f$ is not onto. Let $y \in \partial f(\Omega(\Lambda))$ and let $U$ be an open neighborhood of $y$ in $N$ such that the restriction of $D'$ to $U$ is one-to-one. Let $S$ be a Cauchy hypersurface of $\Omega(\Lambda)$. The union $\Omega' : = \Omega(\Lambda) \cup D'(U)$ is a globally hyperbolic spacetime and $S$ is a Cauchy hypersurface of $\Omega'$. Let $x \in \partial \Omega(\Lambda) \cap D'(U)$. Thus, there exists $\lambda$ in $\Lambda$ and a lightlike geodesic $\varphi$ joining $x$ and $\lambda$. It follows that $\varphi$ does not meet $S \subset \Omega(\Lambda)$, contradiction. 
\end{proof}

\subsection{AdS-spacetimes and black holes}\label{4.3}

In this section, we prove that the spacetime $M_{\rho}(\Gamma)= \rho(\Gamma) \backslash \Omega(\Lambda)$ is the union of two conformally equivalent $AdS$-spacetimes with black hole (see $\S \ref{4.3.2}$) except when the acausal limit set $\Lambda$ is a topological $(n-1)$-sphere. Recall that the invisible domain $\Omega(\Lambda)$ is the disjoint union of two $AdS$-regular domains $E_i(\Lambda)$ and of their conformal boundary $\partial E(\Lambda)$ (see $\S$ \ref{4.1.1}). The proof is based on the description of the regular domains $E_i(\Lambda)$. The case where $\Lambda$ is a topological $(n-1)$-sphere is already studied in \cite{andersson2012},\cite{barbot2015deformations} and \cite{Merigot2007AnosovAR}. In paragraph \ref{4.3.1}, we recall briefly the description in this particular case before dealing with the case where $\Lambda$ is not a topological $(n-1)$-sphere in paragraph \ref{4.3.2}.\\

\paragraph{\textit{Globally hyperbolic AdS-spacetimes.}} \label{4.3.1}

Assume $\Lambda$ is a topological $(n-1)$-sphere.

\begin{proposition}[Proposition $4.5$ and Theorem $4.3$ in \cite{Merigot2007AnosovAR}]\label{GH AdS regular domain}
The $AdS$-regular domains $E_i(\Lambda)$ are globally hyperbolic.
\end{proposition}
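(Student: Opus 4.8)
The plan is to show that each $AdS$-regular domain $E_i(\Lambda)$ admits a Cauchy hypersurface, or equivalently, that it satisfies the two defining conditions of global hyperbolicity: causality and compactness of the causal diamonds $J^+(p)\cap J^-(q)$. Since $\Lambda$ is assumed to be a topological $(n-1)$-sphere, it is the graph of a $1$-contracting function $g:\mathbb{S}^{n-1}\to\R$, and by the discussion in $\S\ref{4.1.1}$ the domain $E_i(\Lambda)$ is the region $\{(x,t)\in\mathcal{H}_i\times\R:\ g^-_i(x)<t<g^+_i(x)\}$ squeezed between the graphs of the two $1$-Lipschitz extensions $g^\pm_i$ of $g$ to the hemisphere $\mathcal{H}_i$. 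The key point is that these functions agree on the boundary equator $\mathbb{S}^{n-1}=\partial\mathcal{H}_i$, so $E_i(\Lambda)$ is a bounded ``lens'' between two achronal topological hypersurfaces.

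First I would establish causality: $E_i(\Lambda)$ is causally convex inside $\eeu$ (it is one of the two pieces of $\tilde{\Omega}(\Lambda)$, which we already noted is causally convex), hence its causality follows from Proposition \ref{causally convex is GH} together with the fact that $\eeu$ is itself causal (indeed globally hyperbolic). Next, for the compactness of causal diamonds, I would argue directly in the conformal model $\mathcal{H}_i\times\R$. Given $p=(x_p,t_p)$ and $q=(x_q,t_q)$ in $E_i(\Lambda)$ with $q\in J^+(p)$, any point $(x,t)\in J^+(p)\cap J^-(q)$ satisfies $d_0(x,x_p)\le t-t_p$ and $d_0(x,x_q)\le t_q-t$, so $(x,t)$ ranges in a compact subset of $\mathcal{H}_i\times\R$; the intersection with the closed condition $g^-_i\le t\le g^+_i$ is then a closed subset of a compact set. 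The one thing that needs care is that the causal diamond, computed \emph{intrinsically} in $E_i(\Lambda)$, coincides with the one computed in $\eeu$ and intersected with $E_i(\Lambda)$ — but this is exactly causal convexity again, so $J^+_{E_i}(p)\cap J^-_{E_i}(q)=J^+_{\eeu}(p)\cap J^-_{\eeu}(q)\cap E_i(\Lambda)$, a closed subset of the compact diamond in $\eeu$; closedness of the latter intersected with $E_i(\Lambda)$ need not be compact a priori, so here I would use that the diamond cannot approach $\partial E_i(\Lambda)$: a limit point on the boundary would lie on a lightlike geodesic reaching $\Lambda$, but such a geodesic leaves the future of $p$ or the past of $q$, contradiction.

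Alternatively, and perhaps more cleanly, I would exhibit an explicit Cauchy hypersurface. Since $g^-_i<g^+_i$ strictly on the open hemisphere $\mathcal{H}_i$ and $g^-_i=g^+_i=g$ on the boundary equator, the ``mid-level'' function $h_i:=\tfrac12(g^-_i+g^+_i)$ need not be achronal, but one can instead take a smooth time function obtained by interpolating: the function $\tau(x,t):=\big(t-g^-_i(x)\big)/\big(g^+_i(x)-g^-_i(x)\big)$ is well-defined on $E_i(\Lambda)$, strictly increasing along every future causal curve (because $t$ strictly increases while the denominator is constant in $t$), and its level set $\{\tau=\tfrac12\}$ is then an acausal hypersurface; one checks it meets every inextensible causal curve exactly once using that such a curve must exit through the top graph $t=g^+_i$ in the future and the bottom graph $t=g^-_i$ in the past (this is where achronality of the two bounding graphs, together with the Lipschitz bound relating $t$-displacement to $d_0$-displacement along causal curves, is used).

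The main obstacle I anticipate is the verification that an inextensible causal curve in $E_i(\Lambda)$ genuinely ``exhausts'' the $t$-interval $(g^-_i(x(\cdot)),g^+_i(x(\cdot)))$ in the sense needed — i.e., that it cannot accumulate in the interior without crossing the chosen level set, and that it actually reaches the two boundary graphs in its future and past limits rather than spiraling. This requires combining the $1$-Lipschitz control of $g^\pm_i$ with the $1$-Lipschitz nature of causal curves in the conformal model (so that $t$ and the spherical position move in lockstep), and invoking causal convexity to rule out the curve leaving and re-entering. Once that geometric control is in place, the rest is the routine checking that $\{\tau=\tfrac12\}$ is a hypersurface met exactly once. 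I would therefore present the argument via the explicit time function $\tau$, relegating the accumulation/exhaustion lemma to the heart of the proof.
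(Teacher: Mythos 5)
The paper does not actually prove this statement --- it is imported verbatim from \cite{Merigot2007AnosovAR} --- so there is no internal argument to compare against. Your first route is correct, and in fact it collapses to a one-liner using tools already in the paper: when $\Lambda$ is a topological $(n-1)$-sphere, Proposition \ref{boundary AdS regular domain} gives $\partial E(\Lambda)=\emptyset$, so $\tilde{\Omega}(\Lambda)$ is the disjoint union of the two open sets $E_1(\Lambda)$ and $E_2(\Lambda)$; a causal curve joining two points of $E_i(\Lambda)$ stays in $\tilde{\Omega}(\Lambda)$ by the causal convexity recorded in Section \ref{4.1}, hence in $E_i(\Lambda)$ by connectedness, so $E_i(\Lambda)$ is causally convex in the globally hyperbolic $\eeu$ and Proposition \ref{causally convex is GH} gives the conclusion at once (every point of $J^+(p)\cap J^-(q)$ lies on a causal curve from $p$ to $q$, hence in $E_i(\Lambda)$, so the diamond is the compact diamond of $\eeu$ itself). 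Your hands-on verification that the diamond cannot accumulate on $\partial E_i(\Lambda)$ is thus redundant, although the idea behind it (boundary points of $E_i(\Lambda)$ lie in $J^+(\Lambda)\cup J^-(\Lambda)$, which would force $p$ or $q$ out of the invisible domain) is sound.

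The route you say you would actually present --- the explicit time function $\tau$ --- contains a genuine gap. First, the level set $\{\tau=\tfrac12\}$ is precisely $\{t=\tfrac12(g^-_i(x)+g^+_i(x))\}$, i.e.\ the graph of the mid-level function that you conceded one sentence earlier might fail to be acausal; passing to $\tau$ does not change which hypersurface this is. Second, the justification that $\tau$ is strictly increasing along future causal curves ``because the denominator is constant in $t$'' is not valid: along a causal curve $(x(t),t)$ the spatial coordinate moves, so the denominator $g^+_i(x(t))-g^-_i(x(t))$ varies. Setting $u=t-g^-_i$, $v=g^+_i-t$ and letting $\alpha,\beta\in[-1,1]$ denote the derivatives of $g^-_i(x(t))$ and $g^+_i(x(t))$ along the curve, one computes that the derivative of $\tau$ equals $\bigl(u(1-\beta)+v(1-\alpha)\bigr)/(u+v)^2$, which is only nonnegative and vanishes exactly when both bounding graphs saturate their Lipschitz bound in the direction of motion. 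So the level sets of $\tau$ are achronal (graphs of $1$-Lipschitz functions) but their acausality --- required by the paper's definition of a Cauchy hypersurface --- does not follow from what you wrote; this is precisely where the real work of a direct construction lies. Stick with the causal-convexity argument.
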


\begin{proposition}[Corollary 3.7 in \cite{Merigot2007AnosovAR}]\label{boundary AdS regular domain}
The conformal boundary $\partial E(\Lambda)$ of $E_i(\Lambda)$ is empty. Besides, the intersection of $\eu$ and the closure of $E_i(\Lambda)$ in $\eeu$ is reduced to $\Lambda$.
\end{proposition}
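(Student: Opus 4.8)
The plan is to read off both assertions from the explicit description of $E_i(\Lambda)$, $\partial E(\Lambda)$ and $\tilde{\Omega}(\Lambda)$ in the conformal model, using that here $\Lambda$ is carried by a $1$-contracting function defined on the \emph{whole} equator. First I would note that, since $\Lambda$ is acausal and homeomorphic to $\mathbb{S}^{n-1}$, it is the graph of a $1$-contracting map $f : \Lambda_0 \to \R$ with $\Lambda_0$ a closed subset of the equator $\mathbb{S}^{n-1}\subset\mathbb{S}^n$; being a closed subset of $\mathbb{S}^{n-1}$ homeomorphic to $\mathbb{S}^{n-1}$, invariance of domain forces $\Lambda_0 = \mathbb{S}^{n-1}$ (equivalently, $\Lambda$ is one of the acausal embedded topological hypersurfaces recalled above, whose defining function is already a $1$-contracting map $\mathbb{S}^{n-1}\to\R$).

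Next I would record the elementary behaviour of the inf/sup-convolution extensions on $\Lambda_0$ itself: for $z\in\Lambda_0$, plugging $x_0=z$ into $f^+(z)=\inf_{x_0\in\Lambda_0}\{f(x_0)+d_0(z,x_0)\}$ and $f^-(z)=\sup_{x_0\in\Lambda_0}\{f(x_0)-d_0(z,x_0)\}$ gives $f^+(z)\le f(z)\le f^-(z)$, while the $1$-Lipschitz bound $|f(x_0)-f(z)|\le d_0(z,x_0)$ gives the reverse inequalities; hence $f^-=f=f^+$ on $\mathbb{S}^{n-1}$. The emptiness of $\partial E(\Lambda)$ is then immediate, since $\partial E(\Lambda)=\{(x,t)\in\mathbb{S}^{n-1}\times\R : f^-(x)<t<f^+(x)\}$ and the two functions coincide on $\mathbb{S}^{n-1}$.

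For the second assertion I would prove that $f^-<f^+$ strictly on each open hemisphere $\mathcal{H}_i$. Using $f^+(x)-f^-(x)=\inf_{x_0,y_0\in\mathbb{S}^{n-1}}\bigl(f(x_0)-f(y_0)+d_0(x,x_0)+d_0(x,y_0)\bigr)$, every summand is $\ge -d_0(x_0,y_0)+d_0(x,x_0)+d_0(x,y_0)\ge 0$ by the triangle inequality, and equality forces $x_0=y_0$ (because $f$ is \emph{strictly} $1$-contracting) and then $x=x_0$, impossible for $x\in\mathcal{H}_i$ and $x_0$ on the equator; compactness of $\mathbb{S}^{n-1}$ makes the infimum attained, hence strictly positive. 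Thus $E_i(\Lambda)\supset\{x\}\times(f^-(x),f^+(x))$ is a genuine open slab over every $x\in\mathcal{H}_i$; its closure in $\eeu\simeq\mathbb{S}^n\times\R$ sits inside $\{(x,t):x\in\overline{\mathcal{H}_i},\ f^-(x)\le t\le f^+(x)\}$ by continuity of $f^\pm$, and intersecting with $\eu\simeq\mathbb{S}^{n-1}\times\R$ --- i.e. restricting to $x$ on the equator $\overline{\mathcal{H}_i}\cap\mathbb{S}^n=\mathbb{S}^{n-1}$, where $f^-=f=f^+$ --- yields $\{(x,f(x)):x\in\mathbb{S}^{n-1}\}=\Lambda$. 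The reverse inclusion $\Lambda\subset\overline{E_i(\Lambda)}$ follows by approaching any $x_0\in\mathbb{S}^{n-1}$ by a sequence $x_k\in\mathcal{H}_i$ and taking $t_k=\tfrac12\bigl(f^-(x_k)+f^+(x_k)\bigr)$, so that $(x_k,t_k)\in E_i(\Lambda)\to(x_0,f(x_0))$.

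The one genuinely delicate step is the strict inequality $f^+>f^-$ on $\mathcal{H}_i$, i.e. passing from the positivity of each individual term of the infimum to the positivity of the infimum itself; this is precisely where acausality of $\Lambda$ (strictness of the $1$-contraction) together with compactness of $\Lambda_0=\mathbb{S}^{n-1}$ is essential, and it is also why the hypothesis that $\Lambda$ be a topological $(n-1)$-sphere --- forcing $\Lambda_0$ to be the full, compact equator --- cannot be removed. Everything else is a routine unwinding of the defining formulas for $E_i(\Lambda)$ and $\partial E(\Lambda)$ and the continuity of $f^\pm$.
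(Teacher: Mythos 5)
Your argument is correct, but it is worth noting that the paper does not prove this proposition at all: it is imported wholesale as Corollary~3.7 of \cite{Merigot2007AnosovAR}, where the statement is established in the Klein model via the description of the regular domain as (the projectivization of) the dual convex cone $\{u :\ \langle u,\lambda\rangle<0\ \forall\lambda\in\Lambda\}$. You instead give a self-contained verification in the conformal model $\mathbb{S}^n\times\R$, and every step checks out: invariance of domain does force $\Lambda_0=\mathbb{S}^{n-1}$ (equivalently, you could invoke the paper's own remark that acausal embedded hypersurfaces of $\eu$ are exactly graphs of $1$-contracting functions on the whole of $\mathbb{S}^{n-1}$); the identities $f^\pm=f$ on the equator follow from the $1$-Lipschitz bound exactly as you say, killing $\partial E(\Lambda)$; and your rewriting of $f^+-f^-$ as an infimum over $\mathbb{S}^{n-1}\times\mathbb{S}^{n-1}$ of terms that are strictly positive off the equator, combined with compactness, correctly yields $f^-<f^+$ on each open hemisphere, which is what makes the approximating sequence $(x_k,t_k)$ land in $E_i(\Lambda)$ and gives $\Lambda\subset\overline{E_i(\Lambda)}$. (Implicitly you also use that the equator is totally geodesic, so the restriction of the $\mathbb{S}^n$-distance agrees with the intrinsic $\mathbb{S}^{n-1}$-distance used to define $g^\pm$; this is immediate but worth a word.) The trade-off is the usual one: the citation keeps the paper short, while your computation makes explicit exactly where acausality (strictness of the contraction) and the sphere hypothesis ($\Lambda_0$ equal to the full compact equator) enter --- and indeed your same formula for $f^+-f^-$ shows why $\partial E(\Lambda)$ is nonempty when $\Lambda_0\neq\mathbb{S}^{n-1}$, which is the dichotomy driving Section~\ref{4.3}.
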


It follows from propositions \ref{GH AdS regular domain} and \ref{boundary AdS regular domain} that $\Omega(\Lambda)$ is the disjoint union of the two conformally equivalent globally hyperbolic regular $AdS$-domains $E_1(\Lambda)$ and $E_2(\Lambda)$.

\begin{proposition}[Theorem $4.3$ and Proposition $4.4$ in \cite{Merigot2007AnosovAR} ]
The quotients $M^i_{\rho}(\Gamma) = \rho(\Gamma) \backslash E_i(\Lambda)$ are globally hyperbolic $AdS$-spacetimes.
\end{proposition}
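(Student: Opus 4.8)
The plan is to carry the free, properly discontinuous action of $\rho(\Gamma)$ from $\Omega(\Lambda)$ down to each regular domain $E_i(\Lambda)$ and then to repeat, almost word for word, the proof that $M_\rho(\Gamma)$ is globally hyperbolic (Section \ref{4.2}). We are here in the case where the acausal limit set $\Lambda$ is a topological $(n-1)$-sphere, so Propositions \ref{GH AdS regular domain} and \ref{boundary AdS regular domain} apply: the $E_i(\Lambda)$ are globally hyperbolic, $\partial E(\Lambda)$ is empty, and $\Omega(\Lambda) = E_1(\Lambda) \sqcup E_2(\Lambda)$.

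First I would observe that $M^i_\rho(\Gamma)$ is an $AdS$-spacetime. Since $\rho(\Gamma) \subset O(2,n)$ and the embedded $O(2,n)$ is the stabilizer in $O(2,n+1)$ of a spacelike direction, the group $\rho(\Gamma)$ preserves the hypersurface $Ein_{1,n-1}$ splitting $Ein_{1,n}$ and preserves each of its two sides, that is each of the two conformal copies of $\mathbb{ADS}_{1,n}$, hence preserves $E_i(\Lambda) = \Omega(\Lambda) \cap \mathbb{ADS}_{1,n}$ (Remark \ref{convex regular domains}). So the action of $\rho(\Gamma)$ on $E_i(\Lambda)$ is the restriction of its free, properly discontinuous action on $\Omega(\Lambda)$, and is again free and properly discontinuous. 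By Proposition \ref{boundary AdS regular domain}, $E_i(\Lambda)$ is a connected (Remark \ref{convex regular domains}) open subset of the anti-de Sitter manifold itself — it does not meet the conformal boundary — on which $\rho(\Gamma)$ acts by orientation and time-orientation preserving isometries (Proposition \ref{isometry group of AdS}). Hence $M^i_\rho(\Gamma)$ inherits a Lorentzian metric of constant curvature $-1$ together with an orientation and a time-orientation, i.e. an $(O_0(2,n), AdS_{1,n})$-structure: it is an $AdS$-spacetime.

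Global hyperbolicity is then established as for $M_\rho(\Gamma)$, with $E_i(\Lambda)$ in place of $\Omega(\Lambda)$. For causality, Lemma \ref{lemma GH} says the $\rho(\Gamma)$-orbit of any point of $\Omega(\Lambda)$ is acausal, and a causal curve of $E_i(\Lambda)$ is in particular a causal curve of $\Omega(\Lambda)$, so orbits stay acausal in $E_i(\Lambda)$ and $M^i_\rho(\Gamma)$ has no causal loop. For compactness of the causal diamonds, fix $\mathsf p, \mathsf q \in M^i_\rho(\Gamma)$ with lifts $p, q \in E_i(\Lambda)$; then $J^+(\mathsf p)\cap J^-(\mathsf q)$ is the image in $M^i_\rho(\Gamma)$ of $\bigcup_{g\in\rho(\Gamma)}\big(J^+_{E_i(\Lambda)}(p)\cap J^-_{E_i(\Lambda)}(g\cdot q)\big)$. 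Each term is compact because $E_i(\Lambda)$ is globally hyperbolic (Proposition \ref{GH AdS regular domain}), and only finitely many are non-empty: otherwise one could extract a $P_1$-divergent sequence $\{g_j\}_j$ of witnesses (Section \ref{2.1}) with attracting and repelling points $p_\pm \in \Lambda$; since $g_j\cdot q \in J^+_{E_i(\Lambda)}(p) \subset J^+_{\eeu}(p)$ (using Lemma \ref{one-to-one} to see $E_i(\Lambda)$ in $\eeu$) and $q$ lies in $\Omega(\Lambda)$, hence off the lightlike cone of $p_-$, one gets $g_j\cdot q \to p_+$, and, the causal future of a point being closed in $\eeu$, $p_+ \in J^+_{\eeu}(p)$, i.e. $p \in J^-(\Lambda)$ — contradicting $p \in \Omega(\Lambda)$. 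So the diamond is compact and $M^i_\rho(\Gamma)$ is globally hyperbolic.

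I do not expect a genuine obstacle: the geometric content sits entirely in Propositions \ref{GH AdS regular domain}--\ref{boundary AdS regular domain} and Lemma \ref{lemma GH}, and what is left is bookkeeping. The one point to watch is the compatibility of causal relations along $E_i(\Lambda) \subset \Omega(\Lambda) \subset \eeu$: causal curves, causal futures and causal pasts computed inside $E_i(\Lambda)$ must agree with those computed in $\eeu$ and intersected with $E_i(\Lambda)$ — this relies on the causal convexity of $\Omega(\Lambda)$ in $\eeu$ (Section \ref{4.1}), on Lemma \ref{one-to-one}, and on the closedness of the causal future of a point in $\eeu$. As an alternative to the diamond argument, once causality is known one may produce a Cauchy hypersurface directly: the cosmological time of the regular domain $E_i(\Lambda)$ is intrinsic, hence $\rho(\Gamma)$-invariant (cf. \cite{barbot2015deformations}), so each of its regular level sets is a $\rho(\Gamma)$-invariant Cauchy hypersurface of $E_i(\Lambda)$ which descends to an acausal hypersurface of $M^i_\rho(\Gamma)$ meeting every inextensible causal curve exactly once (such a curve lifting to an inextensible causal curve of $E_i(\Lambda)$), hence a Cauchy hypersurface of $M^i_\rho(\Gamma)$.
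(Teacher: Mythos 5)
Your proof is correct, but it is worth saying that the paper does not actually prove this statement: it imports it wholesale from \cite{Merigot2007AnosovAR} (Theorem $4.3$ and Proposition $4.4$ there, which run through the cosmological time function of the regular domain --- essentially your closing remark). What you do instead is re-derive the result from the paper's own ingredients: you restrict the free, properly discontinuous action from $\Omega(\Lambda)$ to the invariant component $E_i(\Lambda)$ (correctly noting that the embedded $O(2,n)$ fixes the splitting spacelike direction, so the two copies of $\mathbb{ADS}_{1,n}$ are not swapped), you get causality of the quotient from Lemma \ref{lemma GH}, and you get compactness of diamonds by the same decomposition $\bigcup_g D_g$ with the same $P_1$-divergence finiteness argument used for $M_\rho(\Gamma)$ in Section \ref{4.2}, the only external input being the global hyperbolicity of $E_i(\Lambda)$ itself (Proposition \ref{GH AdS regular domain}). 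This buys a self-contained argument uniform with the proof for $M_\rho(\Gamma)$; the citation route buys brevity and, in \cite{Merigot2007AnosovAR}, an explicit Cauchy hypersurface (a cosmological-time level set) rather than the abstract diamond criterion. One small simplification available to you: since $\partial E(\Lambda)=\emptyset$ here, $E_i(\Lambda)$ is a connected component of the causally convex set $\Omega(\Lambda)$, hence itself causally convex in $\eeu$, so its global hyperbolicity also follows from Proposition \ref{causally convex is GH} without invoking the reference.
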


The spacetimes $M^i_{\rho}(\Gamma)$ embeds conformally in $M_{\rho}(\Gamma)$. Therefore, $M_{\rho}(\Gamma)$ is the disjoint union of conformal copies of two conformally equivalent globally hyperbolic $AdS$-spacetimes.\\

Let us give an exemple. Consider a cocompact torsion-free lattice $\Gamma$ in $O_0(1,n)$ and $\rho: \Gamma \to O_0(2,n)$ a Fuchsian representation i.e. the composition of the natural embedding $O_0(1,n) \subset O_0(2,n)$ and a faithful and dicrete representation of $\Gamma$ into $O_0(1,n)$. The limit set $\Lambda$ is a conformal $(n-1)$-sphere in $\eu$. Up to conformal isometry, we can assume that $\Lambda = \mathbb{S}^{n-1} \times \{0\}$. Let $x_i \in \mathcal{H}^n_i$ be the poles of $\mathbb{S}^n$.

\begin{proposition}
The invisible domain $\Omega(\Lambda)$ is the disjoint union of two diamonds $\Delta_i = I^-(p_i^+) \cap I^+(p_i^-)$ ($i=1,2$) with $p_i^\pm = (x_i,\pm \frac{\pi}{2})$.
\end{proposition}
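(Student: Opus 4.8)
The plan is to describe $\Omega(\Lambda)$ explicitly in the conformal model $\mathbb{S}^n \times \R$ of $\eeu$, using the formulas for the extensions $f^+$ and $f^-$ of the graph function $f$ given in $\S\ref{4.1.1}$. Here the limit set is $\Lambda = \mathbb{S}^{n-1} \times \{0\}$, so the defining function $f : \Lambda_0 \to \R$ is identically $0$ on the equator $\Lambda_0 = \mathbb{S}^{n-1}$. Therefore $f^+(x) = \inf_{x_0 \in \mathbb{S}^{n-1}} d_0(x,x_0)$ and $f^-(x) = -\inf_{x_0 \in \mathbb{S}^{n-1}} d_0(x,x_0)$, i.e. $f^\pm(x) = \pm\, d_0(x,\mathbb{S}^{n-1})$, the signed distance to the equator. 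First I would compute $d_0(x,\mathbb{S}^{n-1})$ for a point $x$ in the hemisphere $\mathcal{H}_i$: it equals the angular distance from $x$ to the pole $x_i$ subtracted from $\pi/2$, i.e. $d_0(x,\mathbb{S}^{n-1}) = \frac{\pi}{2} - d_0(x,x_i)$. Hence on $\mathcal{H}_i$ we get $f^+_i(x) = \frac{\pi}{2} - d_0(x,x_i)$ and $f^-_i(x) = -\frac{\pi}{2} + d_0(x,x_i)$, and
\[
E_i(\Lambda) = \{(x,t) \in \mathcal{H}_i \times \R :\ d_0(x,x_i) - \tfrac{\pi}{2} < t < \tfrac{\pi}{2} - d_0(x,x_i)\}.
\]

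Next I would identify this set with the diamond $\Delta_i = I^+(p_i^-) \cap I^-(p_i^+)$ where $p_i^\pm = (x_i, \pm\frac{\pi}{2})$. Using the explicit description of chronological future and past in $\eeu$ recalled after Proposition~\ref{future of a compact} (namely $I^+(x_0,t_0) = \{(x,t):\ d_0(x,x_0) < t - t_0\}$ and similarly for $I^-$), one has
\[
I^+(p_i^-) = \{(x,t):\ d_0(x,x_i) < t + \tfrac{\pi}{2}\}, \qquad I^-(p_i^+) = \{(x,t):\ d_0(x,x_i) < \tfrac{\pi}{2} - t\},
\]
whose intersection is exactly $\{(x,t):\ d_0(x,x_i) - \frac{\pi}{2} < t < \frac{\pi}{2} - d_0(x,x_i)\}$. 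Note that this intersection automatically forces $d_0(x,x_i) < \frac{\pi}{2}$, i.e. $x \in \mathcal{H}_i$, so the constraint $x \in \mathcal{H}_i$ is not an extra restriction but a consequence; this matches the formula for $E_i(\Lambda)$ above. Thus $E_i(\Lambda) = \Delta_i$.

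It then remains to assemble: by $\S\ref{4.1.1}$, $\Omega(\Lambda)$ is the disjoint union of $E_1(\Lambda)$, $E_2(\Lambda)$ and their common conformal boundary $\partial E(\Lambda)$, which is the invisible domain of $\Lambda$ inside $\eu = \mathbb{S}^{n-1}\times\R$; but since $\Lambda = \mathbb{S}^{n-1}\times\{0\}$ is itself a Cauchy hypersurface of $\eu$, every point of $\eu \setminus \Lambda$ is causally related to some point of $\Lambda$, so $\partial E(\Lambda) = \emptyset$ (this is also Proposition~\ref{boundary AdS regular domain}). Hence $\Omega(\Lambda) = \Delta_1 \sqcup \Delta_2$, which is the claim. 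I would double-check that the two diamonds are genuinely disjoint: a point of $\Delta_i$ has $d_0(x,x_i) < \frac{\pi}{2}$, hence lies strictly in the open hemisphere $\mathcal{H}_i$, and the open hemispheres are disjoint.

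The computation is essentially routine spherical trigonometry, so there is no serious obstacle; the one point deserving care is the identity $d_0(x,\mathbb{S}^{n-1}) = \frac{\pi}{2} - d_0(x,x_i)$ for $x \in \mathcal{H}_i$, which should be justified by noting that a minimizing geodesic from $x$ to the equator runs along the great circle through $x$ and the pole $x_i$ and meets the equator orthogonally, so the three angular lengths $d_0(x,x_i)$, $d_0(x,\mathbb{S}^{n-1})$ and $\frac{\pi}{2}$ add up correctly. A secondary point is to make sure the conformal normalization placing $\Lambda = \mathbb{S}^{n-1}\times\{0\}$ is legitimate, which follows because the Fuchsian limit set is an acausal round sphere and $O_0(2,n)$ acts transitively enough on such spheres (as already invoked in the paragraph preceding the statement).
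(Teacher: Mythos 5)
Your proposal is correct and follows essentially the same route as the paper: both compute $f^{\pm}(x)=\pm d_0(x,\mathbb{S}^{n-1})$, use the identity $d_0(x,\mathbb{S}^{n-1})=\tfrac{\pi}{2}-d_0(x,x_i)$ on each hemisphere, and match the resulting inequalities with the explicit description of $I^{\pm}(p_i^{\mp})$ in $\eeu$. The only cosmetic difference is that the paper deduces $\Omega(\Lambda)\cap\eu=\emptyset$ directly from the formula $|t|<d_0(x,\mathbb{S}^{n-1})$, whereas you argue via the Cauchy-hypersurface property of $\Lambda$ in $\eu$; both are valid.
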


\begin{proof}
The limit set is the graph of the null function $f$ on $\mathbb{S}^{n-1}$. Then, $F^\pm = d_0(.,\mathbb{S}^{n-1})$ and $\Omega(\Lambda) = \{(x,t) \in \mathbb{S}^n \times \R:\ |t| < d_0(x,\mathbb{S}^{n-1})\}$. It is clear that $\Omega(\Lambda)$ does not meet $\eu$. It follows that $\Omega(\Lambda)$ is the disjoint union of $\Omega(\Lambda) \cap (\mathcal{H}_i^n \times \R)$ $(i = 1,2)$. The distance from a point $x \in \mathcal{H}_i^n$ to $\mathbb{S}^{n-1}$ is equal to $d_0(x_i,\mathbb{S}^{n-1}) - d_0(x,x_i) = \pi/2 - d_0(x,x_i)$. Therefore, $\Omega(\Lambda) \cap (\mathcal{H}_i^n \times \R)$ is the set of points which satisfy $d(x,x_i) < t+\pi/2$ and $d(x,x_i) < \pi/2 - t$. Hence
\begin{align*}
\Omega(\Lambda) \cap (\mathcal{H}_i^n \times \R)&= (I^+(p_i^-) \cap I^-(p_i^+))
\end{align*}
with $p_i^\pm = (x_i,\pm \frac{\pi}{2})$.
\end{proof}

\begin{remark}
The diamonds $\Delta_i$ are diffeomorphic to $\R^{1,n}$.
\end{remark}

\paragraph{\textit{AdS-spacetimes with black holes.}} \label{4.3.2} Assume $\Lambda$ is not a topological $(n-1)$-sphere. We prove that the $AdS$-spacetimes $M^i_{\rho}(\Gamma) = \rho(\Gamma) \backslash E_i(\Lambda)$ contain a region which can be interpreted as a black hole. Before that, let us briefly recall some elements on black holes.\\

In 1992, the theoretical physicists M. Ba{\~n}ados, C. Teitelboim and J. Zanelli discovered that the standard Einstein-Maxwell equations in $2+1$ spacetime dimension, with negative cosmological constant admit a black hole solution (see \cite{Baados1992BlackHI}). This solution is named, after the authors, \emph{BTZ-black hole}. This came as a surprise since when the cosmological constant is zero, a vacuum solution of the Einstein-Maxwell equations in $2+1$ spacetime dimension is necessarily flat \footnote{In dimension $2+1$, the Einstein equation is remarkably simplified: the solutions have all constant sectional curvature with the same sign than the cosmological constant has.} and it has been shown that no black hole solutions with event horizons exist (see \cite{Baados1992BlackHI}). But, thanks to the negative cosmological constant, BTZ-black holes presents similar properties with the $3+1$ dimensional Schwarzschild and Kerr black hole solutions, which model real world black holes. For a quick insight into Schwarzschild and Kerr black holes, we direct the reader to the introduction of \cite{Barbot2005CausalPO}. A discussion around what could be a relevant mathematical definition of BTZ-black holes is also presented. Since the cosmological constant is negative, BTZ-black holes are $2+1$ dimensional $AdS$-spacetimes. Moreover, according to \cite{Barbot2005CausalPO}, the relevant category of spacetimes likely to be BTZ-black holes is the category of \emph{strongly causal} spacetimes. The following definition given in \cite{Barbot2005CausalPO}, involves the notion of \emph{conformal boundary} of $AdS$-spacetimes which is developed in Section $9$ of \cite{Barbot2005CausalPO}. 

\begin{definition}\label{BTZ-black hole}
A BTZ-black hole is a $2+1$ dimensional strongly causal $AdS$-spacetime $M$ such that
\begin{itemize}
\item $M$ admits a non-empty strongly causal conformal boundary $O$;\\
\item the \emph{past of $O$}, i.e. the region of $M$ made of initial points of future-oriented causal curves ending in $O$, is not the entire $M$.
\end{itemize} 
\end{definition}

The region $O$ is interpreted as the region where the observers take place. Every connected component $B_i$ of the interior of the complement in $M$ of the past of $O$ is a region invisible from $O$: no future causal curve, in particular no photon, can escape from it. In a more physical langage, no light and no information can escape from it. Hence, $B_i$ is interpreted as a black hole. In what follows, we observe a similar phenomenon in the regular domains $E_i(\Lambda)$ and in the $AdS$ spacetimes $M^i_{\rho}(\Gamma) = \rho(\Gamma) \backslash E_i(\Lambda)$, that we interpret by analogy as a black hole. In dimension $2+1$, the $AdS$-spacetimes $M^i_{\rho}(\Gamma)$ correspond exactly to BTZ-black holes.\\

In all what follows, $\Lambda$ is not a topological $(n-1)$-sphere. We describe the regular domains $E_i(\Lambda)$. Recall that there are two maps $g^\pm: \mathbb{S}^{n-1} \subset \mathbb{S}^n \to \R$ such that $\partial E(\Lambda) = \{(x,t);\ g^-(x) < t < g^+(x)\}$ (see $\S \ref{4.1.1}$) and that we denote by $\mathcal{H}_1, \mathcal{H}_2$ the hemispheres of $\mathbb{S}^n$ bounded by $\mathbb{S}^{n-1}$. Let $\Lambda^\pm$ be the graphs of $g^\pm$. They are achronal $(n-1)$-spheres of $\eu$ which contain $\Lambda$. It follows that the $AdS$-regular domains $E_i(\Lambda^\pm)$ in $\mathcal{H}_i^n \times \R$ are maximal globally hyperbolic open domains of $E_i(\Lambda)$ (see \cite{Merigot2007AnosovAR}). This fact justifies the following definition used in \cite{Barbot2005CausalPO}.

\begin{definition}
The regular domain $E_i(\Lambda^+)$ (resp. $E_i(\Lambda^-)$) is the \emph{future (resp. past) globally hyperbolic convex core of $E_i(\Lambda)$}.
\end{definition}

Here is another definition, introduced in \cite{Barbot2005CausalPO}, useful in the description of the $AdS$-regular domains $E_i(\Lambda)$.

\begin{definition}
The boundary of $E_i(\Lambda^+)$ (resp. $E_i(\Lambda^-)$) in $E_i(\Lambda)$ is called the \emph{future (resp. past) horizon}.
\end{definition}

Now, we can state our description of $E_i(\Lambda)$ which generalizes, to the higher dimension, the description in dimension $2+1$ given in \cite{Barbot2005CausalPO}, Section $3.1$.

\begin{proposition}\label{black hole in AdS-domain}
The $AdS$-regular domain $E_i(\Lambda)$ is the disjoint union of the future globally hyperbolic convex core $E_i(\Lambda^+)$, the past of $\partial E(\Lambda)$ in $E_i(\Lambda)$ and the future horizon.
\end{proposition}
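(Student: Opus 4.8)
The plan is to carry everything out in the conformal model $\eeu\simeq(\mathbb{S}^n\times\R,[ds^2-dt^2])$, where by \S\ref{4.1.1} we have $E_i(\Lambda)=\{(x,t)\in\mathcal{H}_i\times\R:\ f^-_i(x)<t<f^+_i(x)\}$ and $g^{\pm}=f^{\pm}|_{\mathbb{S}^{n-1}}$, and to reduce the whole statement to a single continuous $1$-Lipschitz function on $\mathcal{H}_i$, namely $h:=(g^+)^-|_{\mathcal{H}_i}$, the lower $1$-Lipschitz extension of $g^+$ from the equator $\mathbb{S}^{n-1}$ to $\mathbb{S}^n$. Concretely I want to show: (i) $E_i(\Lambda^+)=E_i(\Lambda)\cap\{t>h(x)\}$; (ii) the future horizon is $E_i(\Lambda)\cap\{t=h(x)\}$; (iii) the past of $\partial E(\Lambda)$ in $E_i(\Lambda)$ is $E_i(\Lambda)\cap\{t<h(x)\}$. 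Since $E_i(\Lambda)$ is partitioned according to the sign of $t-h(x)$, these three identities give the claimed disjoint union.

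For (i) and (ii) the point is to describe $E_i(\Lambda^+)$ explicitly. The set $\Lambda^+$ is the graph over $\mathbb{S}^{n-1}$ of the $1$-Lipschitz function $g^+$, so, exactly as in \S\ref{4.1.1}, its invisible domain is cut out by the two $1$-Lipschitz extensions of $g^+$ to $\mathbb{S}^n$. The key identity, which is a one-line triangle-inequality computation using $\Lambda_0\subset\mathbb{S}^{n-1}$ and $g^+=f$ on $\Lambda_0$, is that the \emph{upper} extension of $g^+$ is again $f^+$; together with the automatic inequalities $f^-\le(g^+)^-\le f^+$ this gives $E_i(\Lambda^+)=\{(x,t)\in\mathcal{H}_i\times\R:\ h(x)<t<f^+_i(x)\}$, hence (i), and since $E_i(\Lambda^+)$ is then an open subset of $E_i(\Lambda)$ with the same upper boundary, its boundary in $E_i(\Lambda)$ is the graph of $h$ over $\{f^-_i<h<f^+_i\}$, which is (ii).

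The substantial step is (iii). The inclusion ``$\subseteq$'' is easy: a future causal curve of $\eeu$ is the graph of a $1$-Lipschitz path over the time coordinate, so if $(x,t)\in E_i(\Lambda)$ is joined in $E_i(\Lambda)$ to a point $(x_0,s)\in\partial E(\Lambda)$ --- with $x_0\in\mathbb{S}^{n-1}$ and $s<g^+(x_0)$ --- then $d_0(x,x_0)\le s-t$, whence $t<g^+(x_0)-d_0(x,x_0)\le h(x)$. For ``$\supseteq$'', given $p=(x,t)\in E_i(\Lambda)$ with $t<h(x)$, I would pick $y^*\in\mathbb{S}^{n-1}$ realizing the supremum defining $h(x)$ and exhibit a future lightlike geodesic from $p$ reaching $\partial E(\Lambda)$: the one projecting to the minimizing great-circle arc $\gamma$ from $x$ to $y^*$, i.e. $u\mapsto(\gamma(u),u)$. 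Two verifications are needed. First, $\partial E(\Lambda)$ must be nonempty over $y^*$, i.e. $g^-(y^*)<g^+(y^*)$; this is exactly where the strict inequality $t>f^-_i(x)$ is used, since $g^-(y^*)=g^+(y^*)$ would force $h(x)\le f^-_i(x)$. Second, the lightlike geodesic must stay in $E_i(\Lambda)$ until it hits $\partial E(\Lambda)$ at $(y^*,\,t+d_0(x,y^*))$: this follows from the $1$-Lipschitz property of $f^{\pm}$, which yields $f^-_i(\gamma(u))<u<f^+_i(\gamma(u))$ along the arc and $g^-(y^*)<t+d_0(x,y^*)<g^+(y^*)$ at the endpoint, together with the elementary fact that the minimizing arc from a point of the open hemisphere $\mathcal{H}_i$ to a point $y^*$ of its bounding equator stays in $\overline{\mathcal{H}_i}$ and meets $\mathbb{S}^{n-1}$ only at $y^*$.

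I expect (iii), and specifically its inclusion ``$\supseteq$'', to be the main obstacle: one has to produce a causal curve that both remains inside $E_i(\Lambda)$ and lands on the part of the conformal boundary that is actually nonempty, and it is precisely there that the acausality and compactness of $\Lambda$ and the Lipschitz estimates enter. The hypothesis that $\Lambda$ is not a topological $(n-1)$-sphere is used only to guarantee that the horizon and the past of $\partial E(\Lambda)$ are nonempty --- equivalently that $\partial E(\Lambda)\neq\emptyset$ --- and is not otherwise needed for the three identities above.
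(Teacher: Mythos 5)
Your argument is correct, but it follows a genuinely different route from the paper's. The paper proves Proposition \ref{black hole in AdS-domain} in the Klein model: starting from $E(\Lambda)=\{x\in\mathbb{ADS}_{1,n}:\ <x,y>\,<0\ \ \forall y\in\Lambda\}$ it gets a dichotomy --- either $<x,y>\,<0$ for every $y\in\Lambda^+$, i.e. $x\in E(\Lambda^+)$, or $<x,y>\,\geq 0$ for some $y\in\Lambda^+$, i.e. $x$ lies in the closure of the past of $\Lambda^+\setminus\Lambda$ --- and then concludes by Lemma \ref{B2}, which (via the description of $\partial E(\Lambda)$ in Lemma \ref{B1} and an openness argument at a boundary point of $\partial E(\Lambda)$) identifies the past of $\Lambda^+\setminus\Lambda$ in $E_i(\Lambda)$ with the past of $\partial E(\Lambda)$. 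You instead stay entirely in the conformal model and encode the whole partition in the single $1$-Lipschitz function $h=(g^+)^-$: your identity $(g^+)^+=f^+$ is correct (one inequality by restricting the infimum to $\Lambda_0$, the other by the triangle inequality), the horizon is indeed the graph of $h$ over $\{f^-_i<h<f^+_i\}$, and your lightlike geodesic over the minimizing arc to a maximizer $y^*$ of $y\mapsto g^+(y)-d_0(x,y)$ does remain in $E_i(\Lambda)$ and terminate in $\partial E(\Lambda)$, by exactly the Lipschitz estimates you list; this explicit construction is your substitute for Lemma \ref{B2}. What your route buys is a concrete graph description of the future horizon and of the three regions, which the paper never writes down; what the paper's route buys is brevity and the convex-duality picture of $E(\Lambda^+)$, at the price of leaving implicit that the closure of the past of $\Lambda^+\setminus\Lambda$ inside $E_i(\Lambda)$ is precisely the past of $\partial E(\Lambda)$ together with the future horizon. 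Your closing remark is also accurate: the hypothesis that $\Lambda$ is not a topological $(n-1)$-sphere only guarantees $\partial E(\Lambda)\neq\emptyset$, i.e. that the decomposition is non-trivial.
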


It is easy to see that the globally hyperbolic domain $\partial E(\Lambda)$ is the conformal boundary of $E_i(\Lambda)$ in the sense of Definition $9.3$ in \cite{Barbot2005CausalPO}. The globally hyperbolic convex core $E_i(\Lambda^+)$ is the region of $E_i(\Lambda)$ invisible from $\partial E(\Lambda)$: no future causal curve starting from a point of $E_i(\Lambda^+)$ reach a point in $\partial E(\Lambda)$. In a more "physical" language, photons can't escape from $E_i(\Lambda^+)$ which is consequently not visible to the obervers in $\partial E(\Lambda)$. The past of $\partial E(\Lambda)$ in $E_i(\Lambda)$ is the region visible from $\partial E(\Lambda)$. In the convention of \cite{Baados1992BlackHI} and \cite{Baados1993GeometryOT}, $E_i(\Lambda^+)$ and the past of $\partial E(\Lambda)$ are respectively called \emph{the intermediate} and \emph{the outer region}. These two regions are separated by \emph{the future horizon}.\\

To prove Proposition \ref{black hole in AdS-domain}, we need the following lemmas.

\begin{lemma}\label{B1}
The conformal boundary $\partial E(\Lambda)$ is the intersection of the past of $\Lambda^+ \backslash \Lambda$ and the future of $\Lambda^- \backslash \Lambda$ in $\eu$.
\end{lemma}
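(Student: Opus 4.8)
The plan is to work entirely in the conformal model $\eu \simeq (\mathbb{S}^{n-1}\times\R,[ds^2-dt^2])$, where the relevant sets are described by explicit inequalities involving the spherical distance $d_0$. Recall that $\Lambda$ is the graph of a $1$-contracting function $f:\Lambda_0\to\R$ over a closed set $\Lambda_0\subset\mathbb{S}^{n-1}$, that $\Lambda^\pm$ are the graphs of $g^\pm$, the restrictions to $\mathbb{S}^{n-1}$ of the Lipschitz functions $f^\pm(x)=\inf_{x_0\in\Lambda_0}\{f(x_0)+d_0(x,x_0)\}$ and $f^-(x)=\sup_{x_0\in\Lambda_0}\{f(x_0)-d_0(x,x_0)\}$, and that $\partial E(\Lambda)=\{(x,t):\ g^-(x)<t<g^+(x)\}$. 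First I would record the two elementary facts I will use repeatedly: $g^+\geq g^-$ with equality exactly on $\Lambda_0$ (this is where $\Lambda^+$ and $\Lambda^-$ meet, namely along $\Lambda$, since $f$ is $1$-contracting so the inf and sup agree only at points of $\Lambda_0$, and there they equal $f$); and the "sub/super-solution" inequalities $g^+(x)\leq g^+(x_0)+d_0(x,x_0)$ and $g^-(x)\geq g^-(x_0)-d_0(x,x_0)$ for all $x,x_0$, which say precisely that the graph $\Lambda^+$ is achronal and that any point strictly below $\Lambda^+$ but in its "shadow" is in its causal past.

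The core of the argument is a double inclusion, done pointwise. Fix $(x,t)$. For the inclusion $\partial E(\Lambda)\subset I^-(\Lambda^+\setminus\Lambda)\cap I^+(\Lambda^-\setminus\Lambda)$: if $g^-(x)<t<g^+(x)$, then in particular $x\notin\Lambda_0$ (else $g^-(x)=g^+(x)$), and I must produce a point of $\Lambda^+\setminus\Lambda$ strictly to the future of $(x,t)$. Using the definition of $f^+$ as an infimum over the \emph{compact} set $\Lambda_0$, the infimum is attained at some $x_0\in\Lambda_0$, so $g^+(x)=f(x_0)+d_0(x,x_0)$. The point $(x_0,f(x_0))\in\Lambda\subset\Lambda^+$ satisfies $d_0(x,x_0)=g^+(x)-f(x_0)>t-f(x_0)$, hence it lies in $I^+(x,t)$; but it is in $\Lambda$, not $\Lambda^+\setminus\Lambda$. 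The fix is to move a little: since $x\neq x_0$, pick $x_1$ on a minimizing geodesic from $x_0$ toward $x$, very close to $x_0$; then $g^+(x_1)\leq f(x_0)+d_0(x_1,x_0)$ and $d_0(x,x_1)=d_0(x,x_0)-d_0(x_0,x_1)$, so $(x_1,g^+(x_1))$ is still strictly to the future of $(x,t)$ for $x_1$ close enough to $x_0$, and for generic such $x_1$ one has $g^+(x_1)>g^-(x_1)$, i.e. $(x_1,g^+(x_1))\in\Lambda^+\setminus\Lambda$ — one checks $x_1\notin\Lambda_0$ is achievable because $\Lambda_0$, being a closed proper subset of a geodesic segment's worth of points, cannot contain a whole subsegment unless it is everything, and the sphere case is ruled out by hypothesis. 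The symmetric argument with the supremum gives a point of $\Lambda^-\setminus\Lambda$ strictly to the past. Conversely, if $(x,t)\in I^-(\Lambda^+\setminus\Lambda)\cap I^+(\Lambda^-\setminus\Lambda)$, there are $(x_1,g^+(x_1))$ and $(x_2,g^-(x_2))$ with $(x_1,g^+(x_1))\notin\Lambda$, $(x_2,g^-(x_2))\notin\Lambda$, and $d_0(x,x_1)<g^+(x_1)-t$, $d_0(x,x_2)<t-g^-(x_2)$; the sub/super-solution inequalities give $g^+(x)\leq g^+(x_1)+d_0(x,x_1)$... that goes the wrong way, so instead I use $g^+(x)\geq g^+(x_1)-d_0(x,x_1)>t$ (valid since $g^+$ is $1$-Lipschitz) and likewise $g^-(x)\leq g^-(x_2)+d_0(x,x_2)<t$, so $g^-(x)<t<g^+(x)$, i.e. $(x,t)\in\partial E(\Lambda)$.

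I expect the main obstacle to be the first inclusion, specifically the "perturbation" step: producing a point strictly in $\Lambda^+\setminus\Lambda$ (and not merely in $\Lambda$) to the future of a given point of $\partial E(\Lambda)$. The clean way to handle it is to note that the hypothesis that $\Lambda$ is \emph{not} a topological $(n-1)$-sphere means $\Lambda_0\subsetneq\mathbb{S}^{n-1}$ (since a closed graph over all of $\mathbb{S}^{n-1}$ is an $(n-1)$-sphere), so $\Lambda^+\setminus\Lambda\neq\emptyset$ and in fact $\{x:\ g^+(x)>g^-(x)\}$ is a nonempty open subset of $\mathbb{S}^{n-1}$ whose closure contains $\Lambda_0$; along a minimizing geodesic emanating from an attaining point $x_0\in\Lambda_0$ toward $x\notin\Lambda_0$, all but possibly a closed subset of points lie in this open set, and continuity of $g^+$ lets me choose one arbitrarily close to $x_0$ so that the future-relation $d_0(x,x_1)<g^+(x_1)-t$ persists. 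Once this is set up, the rest is the routine manipulation of the Lipschitz inequalities sketched above. A final remark I would add is that the lemma also identifies $\partial E(\Lambda)$ as $I^-(\Lambda^+)\cap I^+(\Lambda^-)$ with the conjugate-type points $\Lambda$ removed, which is the form in which it feeds into the proof of Proposition \ref{black hole in AdS-domain}.
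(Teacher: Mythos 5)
Your converse inclusion is correct and is essentially the paper's argument: from $d_0(x,x_1) < g^+(x_1) - t$ and the $1$-Lipschitz property of $g^+$ you get $t < g^+(x)$, and symmetrically $g^-(x) < t$. The problem is in the forward inclusion, where your argument rests on a false step. For $(x,t)$ with $g^-(x) < t < g^+(x)$ and $x_0 \in \Lambda_0$ an attaining point of the infimum defining $g^+(x)$, you assert that $(x_0, f(x_0)) \in I^+((x,t))$ because $d_0(x,x_0) > t - f(x_0)$. But membership in $I^+((x,t))$ requires $d_0(x,x_0) < f(x_0) - t$, which is a different inequality --- and it actually fails here: since $g^-(x) \geq f(x_0) - d_0(x,x_0)$ and $d_0(x,x_0) = g^+(x) - f(x_0)$, the hypothesis $t > g^-(x)$ gives $t + g^+(x) > 2 f(x_0)$, i.e.\ $d_0(x,x_0) > f(x_0) - t$. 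One checks in the same way that $d_0(x,x_0) > t - f(x_0)$, so the attaining point $(x_0, f(x_0))$ is in fact \emph{spacelike}-related to $(x,t)$, not in its chronological future. Consequently the perturbation step collapses: $(x_0,f(x_0))$ lies in the open complement of $J^+((x,t))$, so no point $(x_1, g^+(x_1))$ with $x_1$ ``close enough to $x_0$'' can lie in $I^+((x,t))$. Concretely, along the minimizing geodesic from $x_0$ to $x$ one has $g^+(x_1) = f(x_0) + d_0(x_0,x_1)$, and the future relation $d_0(x,x_1) < g^+(x_1) - t$ holds only for $d_0(x_0,x_1) > \frac{1}{2}\left(g^+(x) + t - 2f(x_0)\right) > 0$, i.e.\ for $x_1$ near $x$, not near $x_0$.

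The fix is much simpler than anything you attempt, and it is what the paper does: take the point vertically above, $p^+ = (x, g^+(x))$. Since $g^-(x) < g^+(x)$ forces $x \notin \Lambda_0$, this point lies in $\Lambda^+ \setminus \Lambda$, and $d_0(x,x) = 0 < g^+(x) - t$ shows $p^+ \in I^+((x,t))$; symmetrically $(x, g^-(x)) \in \Lambda^- \setminus \Lambda$ lies in $I^-((x,t))$. No compactness of $\Lambda_0$, no attainment of the infimum, and no perturbation is needed; the ``main obstacle'' you identify is an artifact of choosing the wrong witness point.
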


\begin{proof}
It is easy to see that a point $(x,t) \in \partial E(\Lambda)$ belongs to $I^-(p^+) \cap I^+(p^-)$ with $p^\pm =~(x,f^\pm(x)) \in \Lambda^\pm \backslash \Lambda$. Conversely, let $(x,t)\in I^-(p_0) \cap I^+(p_1)$ with $p_0 = (x_0,f^+(x_0))$ and $p_1 = (x_1,f^-(x_1))$ where $x_0,x_1 \in \mathbb{S}^{n-1} \backslash \Lambda_0$. Thus, $d_0(x,x_0) < f^+(x_0) - t$ and $d_0(x,x_1) < t - f^-(x_1)$. Since $f^+$ is $1$-contracting, $f^+(x_0) - f^+(x) < d_0(x,x_0) < f^+(x_0) -~t$. Hence $t < f^+(x)$. Similarly, using the fact that $f^-$ is $1$-contracting, we obtain $f^-(x) < t$. It follows that $f^-(x) < t < f^+(x)$ i.e. $(x,t) \in \partial E(\Lambda)$.
\end{proof}

\begin{lemma}\label{B2}
The past of $\partial E(\Lambda)$ in $E_i(\Lambda)$ coincide with the past of $\Lambda^+ \backslash \Lambda$ in $E_i(\Lambda)$.
\end{lemma}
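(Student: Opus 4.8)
The plan is to reduce Lemma \ref{B2} to an equality of \emph{chronological} pasts computed in the conformal model $\eu\simeq(\mathbb{S}^{n-1}\times\R,[ds^2-dt^2])$, $\eeu\simeq(\mathbb{S}^n\times\R,[ds^2-dt^2])$. I first recall, from $\S$\,\ref{4.1.1} and $\S$\,\ref{4.3.2}, what the relevant objects look like: $g^\pm=f^\pm|_{\mathbb{S}^{n-1}}$ are $1$-Lipschitz; $\partial E(\Lambda)=\{(x,t)\in\mathbb{S}^{n-1}\times\R:\ g^-(x)<t<g^+(x)\}$ is open, with fibre over $x\in\mathbb{S}^{n-1}$ a \emph{non-empty} open interval precisely when $x\notin\Lambda_0$ (this is where the acausality of $\Lambda$, i.e.\ $f$ being $1$-contracting, enters, since $g^-(x)=g^+(x)$ exactly on $\Lambda_0$); and $\Lambda^+\setminus\Lambda=\{(y,g^+(y)):\ y\in\mathbb{S}^{n-1}\setminus\Lambda_0\}$ is the ``upper rim'' of $\partial E(\Lambda)$. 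Throughout, ``past'' is the chronological past $I^-$ — this is essential, since on the future horizon the statement would fail for the causal past. I claim in fact that $I^-(\partial E(\Lambda))=I^-(\Lambda^+\setminus\Lambda)$ as subsets of $\eu$, and then Lemma \ref{B2} follows by intersecting with $E_i(\Lambda)$.

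For the inclusion $I^-(\partial E(\Lambda))\subseteq I^-(\Lambda^+\setminus\Lambda)$: given $r=(y,s)\in\partial E(\Lambda)$ one has $y\in\mathbb{S}^{n-1}\setminus\Lambda_0$ and $s<g^+(y)$, so the point $p:=(y,g^+(y))$ lies in $\Lambda^+\setminus\Lambda$, and $r\in I^-(p)$; transitivity of the chronological relation gives $I^-(r)\subseteq I^-(p)\subseteq I^-(\Lambda^+\setminus\Lambda)$. (Alternatively this inclusion is immediate from Lemma \ref{B1}, which already exhibits $\partial E(\Lambda)$ as a chronological diamond whose upper vertices sweep out $\Lambda^+\setminus\Lambda$.)

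For the reverse inclusion, take $q=(x_0,t_0)\in I^-\big((y,g^+(y))\big)$ with $y\in\mathbb{S}^{n-1}\setminus\Lambda_0$; in the conformal model this reads $t_0+d_0(x_0,y)<g^+(y)$. Since $y\notin\Lambda_0$ one also has $g^-(y)<g^+(y)$, so the open interval
\[\big(\max\{g^-(y),\ t_0+d_0(x_0,y)\},\ g^+(y)\big)\]
is non-empty; choosing $s$ in it, the point $r:=(y,s)$ belongs to $\partial E(\Lambda)$ and satisfies $t_0+d_0(x_0,y)<s$, i.e.\ $q\in I^-(r)\subseteq I^-(\partial E(\Lambda))$. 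This proves the claimed equality, and restricting it to $E_i(\Lambda)$ gives the lemma.

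The computation is elementary; the one point to handle carefully is exactly the non-emptiness of the displayed interval, which uses simultaneously the strict inequality built into $I^-$ and the inequality $g^-(y)<g^+(y)$ for $y\notin\Lambda_0$ — so the statement genuinely pivots on taking the chronological (not causal) past of an acausal $\Lambda$. Should one instead read ``past in $E_i(\Lambda)$'' as the past taken relative to $E_i(\Lambda)\cup\partial E(\Lambda)$, the same two constructions still apply: in the first, the concatenation $q\to r\to p$ can be kept inside $E_i(\Lambda)\cup(\Lambda^+\setminus\Lambda)$, and in the second the timelike segment realizing $q\in I^-(r)$ stays in $E_i(\Lambda)$ up to its endpoint $r$, by the geodesic convexity of $E_i(\Lambda)$ recorded in Remark \ref{convex regular domains}.
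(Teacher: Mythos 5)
Your proof is correct and follows essentially the same route as the paper: the inclusion $I^-(\partial E(\Lambda))\subseteq I^-(\Lambda^+\setminus\Lambda)$ via Lemma \ref{B1} and transitivity, and the reverse inclusion by producing a point of $\partial E(\Lambda)$ chronologically between $q$ and $(y,g^+(y))$. The paper phrases that second step more softly --- $I^+(q)$ is an open set containing $(y,g^+(y))$, which lies in the closure of $\partial E(\Lambda)$, hence $I^+(q)$ meets $\partial E(\Lambda)$ --- while you instantiate the same idea explicitly in the conformal model by choosing $s$ in the non-empty interval $\bigl(\max\{g^-(y),\,t_0+d_0(x_0,y)\},\,g^+(y)\bigr)$.
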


\begin{proof}
By Lemma \ref{B1}, the past of $\partial E(\Lambda)$ is contained in the past of $\Lambda^+ \backslash \Lambda$. Conversely, let $p \in I^-(p_0)$ with $p_0 \in \Lambda^+ \backslash \Lambda$. Then, $I^+(p)$ is an open neighborhood of $p_0$ in $\eeu$. Notice that $p_0$ belongs to the boundary of $\partial E(\Lambda)$. Therefore, $I^+(p)$ meets $\partial E(\Lambda)$. The lemma follows.
\end{proof}

\begin{proof}[Proof of Proposition \ref{black hole in AdS-domain}]
In the Klein model, we have the following description of $E_i(\Lambda)$
\[E(\Lambda) = \{x \in \mathbb{ADS}_{1,n}:\ <x,y> < 0\ \forall y \in \Lambda\}.\]
Let $x \in E(\Lambda)$. Either $<x,y> < 0$ for any $y \in \Lambda^+$ or $<x,y> \geq 0$ for some $y \in \Lambda^+$. In other words, $x$ belongs either to $E(\Lambda^+)$ or to the closure of the past of $\Lambda^+\backslash \Lambda$. The proposition follows from Lemma \ref{B2}.
\end{proof}

\begin{remark}
Similarly, $E_i(\Lambda)$ is the disjoint union of the past globally hyperbolic convex core, the future of $\partial E(\Lambda)$ in $E_i(\Lambda)$ and the past horizon. The past globally hyperbolic convex core $E(\Lambda^-)$ is the region of $E_i(\Lambda)$ which cannot be entered from the outside: no past causal curve can escape from it.
\end{remark}

Now, consider the quotient spacetimes
\[M^i_{\rho}(\Gamma) = \rho(\Gamma) \backslash E_i(\Lambda),\  B^i_{\rho}(\Gamma) = \rho(\Gamma) \backslash E_i(\Lambda^+),\ O_{\rho}(\Gamma) = \rho(\Gamma) \backslash \partial E(\Lambda).\]

The conformal embeddings $E_i(\Lambda) \subset \Omega(\Lambda)$ induces conformal embeddings from the $AdS$-spacetimes $M^i_{\rho}(\Gamma)$ into $M_{\rho}(\Gamma)$. Since $M_{\rho}(\Gamma)$ is globally hyperbolic, $M^i_{\rho}(\Gamma)$ are strongly causal. 

\begin{proposition}[Theorem $4.3$ and Proposition $4.4$ in \cite{Merigot2007AnosovAR}]
The $AdS$-spacetimes $B^i_{\rho}(\Gamma)$ are globally hyperbolic.
\end{proposition}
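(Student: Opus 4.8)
The plan is to realise $B^i_{\rho}(\Gamma)$ as a causally convex subset of the globally hyperbolic spacetime $M_{\rho}(\Gamma)=\rho(\Gamma)\backslash\Omega(\Lambda)$ and then to quote Proposition \ref{causally convex is GH}. First I would record that $\rho(\Gamma)$ preserves $E_i(\Lambda^+)$: the group $\rho(\Gamma)$ acts on $\Omega(\Lambda)$ preserving each regular domain $E_i(\Lambda)$ and the conformal boundary $\partial E(\Lambda)$, and by Proposition \ref{black hole in AdS-domain} the future globally hyperbolic convex core $E_i(\Lambda^+)$ is the complement in $E_i(\Lambda)$ of the past of $\partial E(\Lambda)$ together with the future horizon, i.e. a causally characterised subset; hence it is $\rho(\Gamma)$-invariant (this is important because the elements of $\rho(\Gamma)$ act conformally, not isometrically, on the round model, so only a causal characterisation is manifestly respected). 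Consequently $\rho(\Gamma)$ acts freely and properly discontinuously on $E_i(\Lambda^+)$ — this is the restriction of the free properly discontinuous action on $\Omega(\Lambda)$ obtained in Section \ref{4.1.3} — and $B^i_{\rho}(\Gamma)=\rho(\Gamma)\backslash E_i(\Lambda^+)$ is a manifold carrying the $AdS$ metric inherited from the Klein copy $\mathbb{ADS}_{1,n}$, hence an $AdS$-spacetime.

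The heart of the argument is to show that $E_i(\Lambda^+)$ is causally convex in $\Omega(\Lambda)$. Since $\Lambda\subset\Lambda^+$ one has $\tilde{\Omega}(\Lambda^+)\subset\tilde{\Omega}(\Lambda)=\Omega(\Lambda)$, and $\tilde{\Omega}(\Lambda^+)$ is causally convex in $\eeu$ by exactly the argument used for $\tilde{\Omega}(\Lambda)$: a future causal curve joining two of its points cannot meet $J^{+}(\Lambda^+)$. Now $\Lambda^+$ is an achronal topological $(n-1)$-sphere, so by Proposition \ref{boundary AdS regular domain} (applied to $\Lambda^+$ in place of $\Lambda$) its conformal boundary is empty; therefore $\tilde{\Omega}(\Lambda^+)=E_1(\Lambda^+)\sqcup E_2(\Lambda^+)$, and since each $E_i(\Lambda^+)$ is connected (Remark \ref{convex regular domains}) these are precisely the connected components of $\tilde{\Omega}(\Lambda^+)$, in particular open and closed in it. A causal curve of $\Omega(\Lambda)$ is a causal curve of $\eeu$; if its endpoints lie in $E_i(\Lambda^+)$ it stays in $\tilde{\Omega}(\Lambda^+)$ by causal convexity, and being connected it stays in the single clopen piece $E_i(\Lambda^+)$. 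This yields causal convexity of $E_i(\Lambda^+)$ in $\Omega(\Lambda)$.

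Finally I would pass to the quotient. Given a causal curve $\bar c$ in $M_{\rho}(\Gamma)$ with endpoints in $B^i_{\rho}(\Gamma)$, lift it through the covering $\Omega(\Lambda)\to M_{\rho}(\Gamma)$ starting at a preimage of its initial endpoint lying in $E_i(\Lambda^+)$; the lift is causal, and its terminal endpoint is a preimage of the terminal endpoint of $\bar c$, hence lies in $E_i(\Lambda^+)$ by $\rho(\Gamma)$-invariance; by the previous step the whole lift stays in $E_i(\Lambda^+)$, so $\bar c$ stays in $B^i_{\rho}(\Gamma)$. Thus $B^i_{\rho}(\Gamma)$ is causally convex in $M_{\rho}(\Gamma)$, which is globally hyperbolic by Section \ref{4.2}, and Proposition \ref{causally convex is GH} gives the conclusion. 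I expect the main obstacle to be the first step — arguing cleanly that $E_i(\Lambda^+)$ is $\rho(\Gamma)$-invariant and that it is a clopen component of $\tilde{\Omega}(\Lambda^+)$ with empty conformal boundary — since one must invoke Proposition \ref{boundary AdS regular domain} for the sphere $\Lambda^+$ rather than for $\Lambda$. An alternative to the last two paragraphs would be to check global hyperbolicity of $B^i_{\rho}(\Gamma)$ directly, exactly as for $M_{\rho}(\Gamma)$: causality because the $\rho(\Gamma)$-orbit of a point of $E_i(\Lambda^+)$ is acausal (the $P_1$-divergence and attracting-point argument of Lemma \ref{lemma GH}), and compactness of causal diamonds because only finitely many group elements contribute to a given diamond, again by the attracting/repelling point dynamics and the closedness of causal futures of points in $\eeu$; but the causally convex subdomain route is shorter.
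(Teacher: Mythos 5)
Your argument is correct, but note that the paper does not prove this proposition at all: it is imported wholesale from Theorem $4.3$ and Proposition $4.4$ of \cite{Merigot2007AnosovAR} (indeed the paper already quotes from that reference, a few lines earlier, that the $E_i(\Lambda^\pm)$ are maximal globally hyperbolic domains of $E_i(\Lambda)$). Your route --- realising $E_i(\Lambda^+)$ as a $\rho(\Gamma)$-invariant, causally convex, clopen component of $\tilde{\Omega}(\Lambda^+)$, descending causal convexity to $B^i_{\rho}(\Gamma)\subset M_{\rho}(\Gamma)$ by lifting causal curves through the covering $\Omega(\Lambda)\to M_{\rho}(\Gamma)$, and then applying Proposition \ref{causally convex is GH} --- is a legitimate self-contained alternative that uses only what is already established in the paper (global hyperbolicity of $M_{\rho}(\Gamma)$ from Section \ref{4.2} and the decomposition of Proposition \ref{black hole in AdS-domain}); what it buys is independence from the external reference, at the cost of one extra verification. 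That verification is precisely the point you flag: Proposition \ref{boundary AdS regular domain} is stated for the \emph{acausal} sphere $\Lambda$, whereas $\Lambda^+$ is only achronal (it contains lightlike segments, as the join description at the end of the paper makes explicit). The fact you need is nevertheless immediate from the explicit description of invisible domains in $\S$\ref{4.1.1}: since $\Lambda^+$ is a graph over the entire equator $\mathbb{S}^{n-1}$, the two $1$-Lipschitz envelopes built from $\Lambda^+$ coincide on $\mathbb{S}^{n-1}$, so $\tilde{\Omega}(\Lambda^+)$ does not meet $\eu$ and splits as $E_1(\Lambda^+)\sqcup E_2(\Lambda^+)$, each piece being connected because it is the trace on a copy of $\mathbb{ADS}_{1,n}$ of the projectivised dual convex cone, exactly as in Remark \ref{convex regular domains}. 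With that one line supplied, your proof is complete.
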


\begin{remark}
According to remark \ref{convex regular domains}, the regular domains $E_i(\Lambda)$ and $E_i(\Lambda^+)$ are connected. Therefore, the $AdS$-spacetimes $M^i_{\rho}(\Gamma)$ and $B^i_{\rho}(\Gamma)$ are also connected.
\end{remark}

\begin{proposition}
The conformally flat spacetime $O_{\rho}(\Gamma)$ is CGHM.
\end{proposition}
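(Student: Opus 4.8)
The plan is to reproduce, for the $n$-dimensional conformally flat spacetime $O_\rho(\Gamma)=\rho(\Gamma)\backslash \partial E(\Lambda)$, the three-step scheme already used in Section~\ref{4.2} for $M_\rho(\Gamma)$, the only new feature being that $\partial E(\Lambda)$ is now a domain of the \emph{lower-dimensional} universal Einstein space $\eu$, namely the invisible domain of $\tilde{\Lambda}$ in $\eu$: in the conformal model $\eu\simeq\mathbb{S}^{n-1}\times\R$ it is $\{(x,t):\ g^-(x)<t<g^+(x)\}$ (see $\S\ref{4.1.1}$), which lies over $\mathbb{S}^{n-1}\setminus\Lambda_0$ and is causally convex in $\eu$ by the argument already used for $\tilde{\Omega}(\Lambda)$. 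We may assume $\Lambda$ is not a topological $(n-1)$-sphere, since otherwise $\partial E(\Lambda)=\emptyset$ by Proposition~\ref{boundary AdS regular domain} and there is nothing to prove. First I would record that $\partial E(\Lambda)$ is a globally hyperbolic conformally flat spacetime: globally hyperbolic because it is a causally convex subset of the globally hyperbolic $\eu$ (Proposition~\ref{causally convex is GH}), and conformally flat because $\eu$ is.

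\textbf{Global hyperbolicity of $O_\rho(\Gamma)$.} The action of $\rho(\Gamma)$ on $\partial E(\Lambda)\subset\Omega(\Lambda)$ is free and properly discontinuous, being the restriction of the free properly discontinuous action on $\Omega(\Lambda)$ established in $\S\ref{4.1.3}$. Its orbits are acausal in $\partial E(\Lambda)$: by the argument of Lemma~\ref{lemma GH} they are acausal in $\eeu$ (if $g.p\in J^+(p)$ for $p\in\partial E(\Lambda)$ and $1\neq g\in\rho(\Gamma)$, extract a $P_1$-divergent sequence $\{g^i\}_i$ with attracting and repelling points $p_\pm\in\Lambda$; since $p\notin\mathcal{L}(p_-)$ one gets $g^i.p\to p_+\in J^+(p)$, contradicting the invisibility of $\partial E(\Lambda)$ from $\Lambda$), and causal curves of $\partial E(\Lambda)$ are causal curves of $\eeu$. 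Hence $O_\rho(\Gamma)$ is causal, and the compactness of the causal diamonds $J^+(\mathsf{p})\cap J^-(\mathsf{q})$ follows exactly as in the proof that $M_\rho(\Gamma)$ is globally hyperbolic: if infinitely many translates $J^+(p)\cap J^-(g_i.q)$ were non-empty one would again obtain $g_i.q\to p_+\in\Lambda$ with $g_i.q\in J^+(p)$, contradicting invisibility.

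\textbf{Spatial compactness.} This is the heart of the argument, and here once more the space of photons is the right object. Applying Lemma~\ref{lemma C1} in $\eu$ (its proof only uses that $J^\pm(\tilde{\Lambda})$ is bounded in the time coordinate and that $\tilde{\Lambda}$ is acausal), together with the causal convexity of $\partial E(\Lambda)$, a lightlike geodesic of $\eu$ meets $\partial E(\Lambda)$ in a connected (possibly empty) set, and does so if and only if it avoids $\tilde{\Lambda}$. Reasoning as in Corollary~\ref{corollary} this gives a $\rho(\Gamma)$-equivariant homeomorphism between $\mathcal{P}(\partial E(\Lambda))$ and the space of photons of $\eu$ avoiding $\tilde{\Lambda}$. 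The latter is in turn $\rho(\Gamma)$-equivariantly homeomorphic to the space of photons of $\mathtt{Ein}_{1,n-1}$ avoiding $\Lambda$: the covering $\mathcal{P}(\eu)\to\mathcal{P}(\mathtt{Ein}_{1,n-1})$ is a bijection because $\sigma$ sends each photon of $\eu$ onto itself, and a photon of $\eu$ avoids $\tilde{\Lambda}$ exactly when it avoids the whole fibre over $\Lambda$ (the underlying great circle of $\mathbb{S}^{n-1}$ has period $2\pi$ and $\tilde{\Lambda}$ sits in an affine domain). In the model $\mathcal{C}\simeq\bar{X}$ this set of photons is precisely the set of totally isotropic $2$-planes of $\bar{X}$ disjoint from the cone over $\Lambda$, which is a \emph{closed}, $\rho(\Gamma)$-invariant subspace of the set $U$ of Theorem~\ref{GKW}. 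Hence $\rho(\Gamma)$ acts on $\mathcal{P}(\partial E(\Lambda))$ properly discontinuously and cocompactly, so $\rho(\Gamma)\backslash\mathcal{P}(\partial E(\Lambda))$ is compact. Passing to the quotient as in Lemma~\ref{homeo}, $\mathcal{P}(O_\rho(\Gamma))$ is compact, and by Proposition~\ref{unit tangent bundle} it is homeomorphic to the unit tangent bundle $T^1S$ of a Cauchy hypersurface $S$ of $O_\rho(\Gamma)$; therefore $S$ is compact.

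\textbf{Maximality.} As in Section~\ref{4.2}, a globally hyperbolic spacetime is maximal if and only if its universal cover is, so it suffices to argue upstairs. Given a Cauchy embedding $f$ of the universal cover into a CGH conformally flat spacetime $N$ and developing maps $D'$ into $\eu$ with $D'\circ f=D$, assume $f$ is not onto. A point $y\in\partial f(\partial E(\Lambda))$ produces, through an evaluation chart $U$ on which $D'$ is injective, a point $x\in\partial\bigl(\partial E(\Lambda)\bigr)\cap D'(U)$; since $x$ lies on the graph of $g^+$ or of $g^-$, there is $\lambda\in\Lambda$ and a lightlike geodesic $\varphi$ of $\eu$ from $\lambda$ to $x$, and $\varphi$ misses the Cauchy hypersurface $S\subset\partial E(\Lambda)$ because $\partial E(\Lambda)$ is invisible from $\Lambda$ -- contradicting that $S$ is a Cauchy hypersurface of $\partial E(\Lambda)\cup D'(U)$. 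Thus $f$ is onto, $O_\rho(\Gamma)$ is maximal, and altogether it is CGHM.

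The step I expect to be the main obstacle is the spatial-compactness one, more precisely the identification of $\mathcal{P}(\partial E(\Lambda))$ with a \emph{closed}, $\rho(\Gamma)$-invariant subspace of the set $U$ of Theorem~\ref{GKW}: one must check carefully that "avoiding $\tilde{\Lambda}$ in $\eu$" corresponds to "avoiding $\Lambda$ in $\mathtt{Ein}_{1,n-1}$" and that the relevant subset (photons of the conformal boundary, rather than of the $AdS$-copies, which is the locus of totally isotropic $2$-planes) is genuinely closed in $\bar X$, so that cocompactness is inherited from Theorem~\ref{GKW}. A minor additional point, to be settled from the description of $\partial E(\Lambda)$ in $\S\ref{4.1.1}$, is that $\partial E(\Lambda)$ -- hence $O_\rho(\Gamma)$ -- is connected.
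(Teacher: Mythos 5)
Your proposal is correct and follows exactly the route the paper intends: its own proof of this proposition is the single sentence that the argument of Section~\ref{4.2} still applies to the non-empty invisible domain $\partial E(\Lambda)$ of the limit set in $\eu$, and your three steps (global hyperbolicity, spatial compactness via the photon space realized as a closed $\rho(\Gamma)$-invariant subset of the set $U$ of Theorem~\ref{GKW}, maximality via the developing map) are precisely that adaptation carried out in detail. The points you flag as delicate — that $\sigma$ preserves each photon of $\eu$, and that the totally isotropic $2$-planes avoiding $\Lambda$ form a closed subset of $U$ — are settled exactly as in the paper's Corollary~\ref{corollary} and Proposition~\ref{compact}.
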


\begin{proof}
The proof presented in Section \ref{4.2} still holds for the non-empty invisible domain $\partial E(\Lambda)$ of the limit set in $\eu$.
\end{proof}

The spacetimes $M^i_{\rho}(\Gamma)$, $B^i_{\rho}(\Gamma)$ and $O_{\rho}(\Gamma)$ embeds conformally in $M_{\rho}(\Gamma)$. We keep the same notation to denote their image in $M_{\rho}(\Gamma)$.\\

\begin{proposition}\label{black hole}
The globally hyperbolic $AdS$-spacetime $B^i_{\rho}(\Gamma)$ is the region of $M^i_{\rho}(\Gamma)$ invisible from $O_{\rho}(\Gamma)$, i.e. no future causal curve starting from a point of $B^i_{\rho}(\Gamma)$ can reach $O_{\rho}(\Gamma)$. 
\end{proposition}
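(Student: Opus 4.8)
The plan is to lift the configuration to the covering $\Omega(\Lambda)\to M_\rho(\Gamma)$ (here $\rho(\Gamma)$ acts freely and properly discontinuously on $\Omega(\Lambda)$, and $M^i_\rho(\Gamma)$, $B^i_\rho(\Gamma)$, $O_\rho(\Gamma)$ sit inside $M_\rho(\Gamma)$ as the projections of $E_i(\Lambda)$, $E_i(\Lambda^+)$, $\partial E(\Lambda)$) and then to invoke Proposition~\ref{black hole in AdS-domain}. Suppose, for contradiction, that some future causal curve $\mathsf c$ in $M_\rho(\Gamma)$ goes from a point of $B^i_\rho(\Gamma)$ to a point $\mathsf q\in O_\rho(\Gamma)$. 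I would lift $\mathsf c$ to a future causal curve $c\colon[0,1]\to\Omega(\Lambda)$ (a lift exists and is causal and future-oriented because the covering is a time-orientation preserving local conformal isometry). Its endpoints lie in the $\rho(\Gamma)$-invariant sets $E_i(\Lambda^+)$ and $\partial E(\Lambda)$ respectively, so $c(0)=\tilde p\in E_i(\Lambda^+)$ and $c(1)=\tilde q\in\partial E(\Lambda)$.

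The key step is to localize $c$ inside $E_i(\Lambda)$. Writing points of $\eeu$ as $(x,t)\in\mathbb S^n\times\R$, we have $\Omega(\Lambda)=E_1(\Lambda)\sqcup E_2(\Lambda)\sqcup\partial E(\Lambda)$, where $E_1(\Lambda),E_2(\Lambda)$ lie over the two open hemispheres $\mathcal H_1,\mathcal H_2$ and $\partial E(\Lambda)$ lies over the separating equator $\mathbb S^{n-1}$. I would set $s=\inf\{t\in[0,1]:c(t)\in\partial E(\Lambda)\}$; this set is closed and contains $1$, so $s$ is attained, and $s>0$ because $\tilde p$ lies over $\mathcal H_i$. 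On $[0,s)$ the spherical component of $c$ is a connected subset of $\mathbb S^n\setminus\mathbb S^{n-1}=\mathcal H_1\sqcup\mathcal H_2$ meeting $\mathcal H_i$, hence is contained in $\mathcal H_i$, so $c\big([0,s)\big)\subset\Omega(\Lambda)\cap(\mathcal H_i\times\R)=E_i(\Lambda)$, while $c(s)\in\partial E(\Lambda)$. Thus $c|_{[0,s]}$ is a future causal curve whose interior lies in $E_i(\Lambda)$, starting at $\tilde p\in E_i(\Lambda^+)$ and ending in $\partial E(\Lambda)$.

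This contradicts Proposition~\ref{black hole in AdS-domain}: that proposition writes $E_i(\Lambda)$ as the disjoint union of $E_i(\Lambda^+)$, the past of $\partial E(\Lambda)$ in $E_i(\Lambda)$, and the future horizon, so in particular $E_i(\Lambda^+)$ is disjoint from the past of $\partial E(\Lambda)$; but $c|_{[0,s]}$ exhibits $\tilde p$ as the initial point of a future-oriented causal curve ending in $\partial E(\Lambda)$, i.e.\ as a point of the past of $\partial E(\Lambda)$ in the sense of Definition~\ref{BTZ-black hole}, which contradicts $\tilde p\in E_i(\Lambda^+)$. Projecting this contradiction back down to $M_\rho(\Gamma)$ proves that no future causal curve from $B^i_\rho(\Gamma)$ reaches $O_\rho(\Gamma)$. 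I expect the only genuine bookkeeping to be the localization step of the second paragraph — guaranteeing via the hemisphere separation that the initial arc of the lifted curve, up to its first meeting with $\partial E(\Lambda)$, stays in $E_i(\Lambda)$ — together with checking that the ``past'' in Proposition~\ref{black hole in AdS-domain} is exactly the causal past used in Definition~\ref{BTZ-black hole}; everything else is an unwinding of the definitions of $B^i_\rho(\Gamma)$, $O_\rho(\Gamma)$ and the covering $\Omega(\Lambda)\to M_\rho(\Gamma)$.
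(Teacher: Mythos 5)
Your argument is correct and takes the same route as the paper, whose entire proof is the single sentence that the statement is an immediate consequence of Proposition~\ref{black hole in AdS-domain}. Your lifting of the causal curve to $\Omega(\Lambda)$ and the hemisphere-separation step localizing its initial arc inside $E_i(\Lambda)$ are precisely the bookkeeping the paper leaves implicit, and they check out.
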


\begin{proof}
This is an immediate consequence of Proposition \ref{black hole in AdS-domain}.
\end{proof}

Since $\partial E(\Lambda)$ is the conformal boundary of $E_i(\Lambda)$, it easily follows that the GHCM spacetime $O_{\rho}(\Lambda)$ is the conformal boundary of $M^i_{\rho}(\Lambda)$. According to Proposition \ref{black hole} and Definition \ref{BTZ-black hole}, in dimension $2+1$, the strongly causal $AdS$-spacetimes $M^i_{\rho}(\Gamma)$ are BTZ-black holes. More precisely, the globally hyperbolic region $B^i_{\rho}(\Lambda)$ is interpreted as a black hole in $M^i_{\rho}(\Lambda)$. In higher dimension, we still interpret the region $B^i_{\rho}(\Gamma)$ as a black hole in $M^i_{\rho}(\Gamma)$. In this sense, $M_{\rho}(\Gamma)$ is the union of two conformally equivalent $AdS$-spacetimes with black holes.\\

Let us give some examples. Let $\Gamma$ be a uniform lattice in $O_0(1,p+1)$ ($0 \leq p \leq n-2$) and $\rho: \Gamma \to O_0(2,n)$ be the composition of the natural inclusions $\Gamma \subset O_0(1,p+1)$ and $O_0(1,p+1) \subset O_0(2,n)$ where in the latter inclusion, $O_0(1,p+1)$ is the stabilizer of a totally geodesic spacelike subspace $\hy^{p+1} = \R^{1,p+1} \cap AdS_{1,n}$ in $AdS_{1,n}$. The limit set is the conformal sphere $\mathbb{S}^p = \partial \hy^{p+1}$. The group $\rho(\Gamma)$ preserves the orthogonal of $\R^{1,p+1}$ in $\R^{2,n}$ which is isometric to $\R^{1,q+1}$ with $q = n-p-2$. Let $\mathbb{S}^q$ be the conformal sphere of $\R^{1,q+1}$. In what follows, we describe the intermediate region $E_i(\Lambda^+)$. This description has great analogies with that of \emph{split AdS spacetimes} introduced in \cite{barbot2015deformations}.

\begin{proposition}
The topological $(n-1)$-sphere $\Lambda^+$ is the join of $\mathbb{S}^p$ and $\mathbb{S}^q$ i.e. the union of $\mathbb{S}^p$, $\mathbb{S}^q$ and the lightlike geodesics joining a point of $\mathbb{S}^p$ to a point of $\mathbb{S}^q$.
\end{proposition}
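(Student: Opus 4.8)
The plan is to reduce the statement to the case $p=0$ (a Fuchsian representation into a copy of $O_0(1,n)$ in $O_0(2,n)$), where the limit set $\Lambda$ is the round $(n-1)$-sphere, and then understand the extension $\Lambda^+$ of the genuine limit set $\mathbb{S}^p = \partial \mathbb{H}^{p+1}$ directly from the formula $f^+(x) = \inf_{x_0 \in \Lambda_0}\{f(x_0) + d_0(x,x_0)\}$ (see $\S\ref{4.1.1}$). First I would set up coordinates so that $\R^{2,n} = \R^{1,p+1} \oplus \R^{1,q+1}$ orthogonally, with $q = n-p-2$, the first factor carrying signature $(1,p+1)$ and the second $(1,q+1)$, so that the total signature is $(2,n)$. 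In the conformal model $\eu \simeq (\mathbb{S}^{n-1}\times\R, [ds^2 - dt^2])$, the limit set $\Lambda = \mathbb{S}^p$ sits inside the equator $\mathbb{S}^{n-1} \subset \mathbb{S}^n$ as a totally geodesic subsphere: concretely $\Lambda_0 = \mathbb{S}^p \times \{q\text{-pole}\}$ inside $\mathbb{S}^{n-1}$, and since the representation is (conjugate into) $O_0(1,p+1)$ which fixes the orthogonal $\R^{1,q+1}$ pointwise in the relevant sense, $\Lambda$ is the graph of the zero function $f \equiv 0$ on $\Lambda_0$. Thus $f^+(x) = d_0(x,\Lambda_0)$ and $\Lambda^+ = \{(x, d_0(x,\mathbb{S}^p)) : x \in \mathbb{S}^{n-1}\}$.

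Next I would identify $\mathbb{S}^{n-1}$ as the metric join $\mathbb{S}^p * \mathbb{S}^q$: every point of $\mathbb{S}^{n-1}\setminus(\mathbb{S}^p \cup \mathbb{S}^q)$ lies on a unique minimizing geodesic arc of length $\pi/2$ from $\mathbb{S}^p$ to $\mathbb{S}^q$, parametrized by arc length $s \in (0,\pi/2)$, and for such a point $d_0(x,\mathbb{S}^p) = s$. Hence on this open dense part $\Lambda^+$ is exactly the set $\{(x,s)\}$ where $s$ is the join-parameter; on $\mathbb{S}^p$ itself $s=0$ so $\Lambda^+$ contains $\mathbb{S}^p \times \{0\} = \Lambda$, and on $\mathbb{S}^q$ we get $s = \pi/2$, so $\Lambda^+$ contains the copy of $\mathbb{S}^q$ sitting at height $\pi/2$. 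It remains to check that for a fixed pair $(y,z) \in \mathbb{S}^p \times \mathbb{S}^q$, the arc $t \mapsto (\gamma_{y,z}(t), t)$, $t \in [0,\pi/2]$, where $\gamma_{y,z}$ is the unit-speed geodesic of $\mathbb{S}^{n-1}$ from $y$ to $z$, is a lightlike geodesic of $\eu$: this is immediate from the description in Section~\ref{1.3} (lightlike geodesics of $\eu$ are the curves $(x(t),t)$ with $x$ a unit-speed geodesic of the sphere). So $\Lambda^+$ is precisely the union of $\mathbb{S}^p \times \{0\}$, $\mathbb{S}^q \times \{\pi/2\}$, and these lightlike segments joining them — i.e. the lightlike join of $\mathbb{S}^p$ and $\mathbb{S}^q$ — which is the claim.

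Finally I would verify consistency: $\Lambda^+$ should be an achronal topological $(n-1)$-sphere containing $\Lambda$ (as asserted in the paragraph preceding Proposition~\ref{black hole in AdS-domain}). Topologically, $\mathbb{S}^p * \mathbb{S}^q \cong \mathbb{S}^{p+q+1} = \mathbb{S}^{n-1}$ and the map $x \mapsto (x, d_0(x,\mathbb{S}^p))$ is a continuous section over $\mathbb{S}^{n-1}$ of $\eu$, so $\Lambda^+$ is an embedded $(n-1)$-sphere; achronality is the statement that $f^+ = d_0(\cdot,\Lambda_0)$ is $1$-Lipschitz, which it is as a distance function, with the graph being genuinely achronal because the join structure shows the Lipschitz constant $1$ is attained only along the lightlike segments and never creates a timelike chord. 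The one point needing a little care — and the main (mild) obstacle — is checking that the inclusion $O_0(1,p+1) \subset O_0(2,n)$ really does place $\Lambda_0$ as a totally geodesic round $p$-sphere sitting in the equator $\mathbb{S}^{n-1}$ with constant height, i.e. that in suitable conformal coordinates $f \equiv 0$; this is where one must carefully match the Klein-model description $\Lambda = \partial\mathbb{H}^{p+1}$ with $\mathbb{H}^{p+1} = \R^{1,p+1} \cap AdS_{1,n}$ against the conformal-model coordinates of $\S\ref{4.1.1}$, using that $\rho(\Gamma)$ fixes the spacelike $\R^{q}$-directions and acts on $\R^{1,p+1}$ as a lattice. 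Once that normalization is pinned down, the join description follows purely from the distance-function computation above.
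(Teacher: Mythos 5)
Your proof is correct and follows essentially the same route as the paper's: normalize so that $\Lambda = \mathbb{S}^p \times \{0\}$, observe that $f^+ = d_0(\cdot,\mathbb{S}^p)$, and read off the join structure of $\Lambda^+$ from this distance function on the round sphere. The only cosmetic difference is that the paper identifies the copy of $\mathbb{S}^q$ at height $\pi/2$ as the common intersection of the lightcones of the points of $\Lambda$ via a Klein-model computation, whereas you obtain it from the metric join decomposition $\mathbb{S}^{n-1} = \mathbb{S}^p * \mathbb{S}^q$; both arguments treat the initial conformal normalization ($f \equiv 0$) at the same level of detail.
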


\begin{proof}
Up to a conformal isometry, we can assume that $\Lambda \subset \mathbb{S}^{n-1} \times \{0\}$. Then, the function $f^+$ is the distance to $\mathbb{S}^p$: $f^+(x) = d_0(x,\mathbb{S}^p)$ for $x \in \mathbb{S}^{n-1}$. It is easy to see that any point in $\Lambda^+$ lies in the future lightcone of some point in $\Lambda$. Indeed, for any $x \in \mathbb{S}^{n-1}$ there exists $x_0 \in \mathbb{S}^p$ such that $d_0(x,\mathbb{S}^p) = d_0(x,x_0)$. In other words, the point $(x,d_0(x,\mathbb{S}^p)) \in \Lambda^+$ belongs to the future lightcone of $(x_0,0) \in \Lambda$. Notice that the lightcones of points in $\Lambda$ intersect at $\mathbb{S}^q$. Indeed, in the Klein model, a point $[v]$ of $Ein_{1,n-1}$ in this intersection satisfies $<v,v_0> = 0$ for any lightlike vector $v_0 \in \R^{1,p+1}$. Since $\R^{1,p+1}$ admits a basis composed of lightlike vectors, it follows that $v$ is orthogonal to $\R^{1,p+1}$. Thus, $[v] \in \mathbb{S}^q$. In the conformal model, $\mathbb{S}^q$ is contained in $\mathbb{S}^{n-1} \times \{\pi/2\}$. It is clear that $\mathbb{S}^q$ is contained in $\Lambda^+$. Besides, if $(x,t)$ belongs to a lightlike geodesic joining a point $(x_0,0)$ in $\Lambda$ to a point $(x_1,\pi/2)$ in $\mathbb{S}^q$ then $x$ belongs to the geodesic of $\mathbb{S}^{n-1}$, parametrized by its lenght arc, which realises the distance of $x_1$ to $\mathbb{S}^p$. Thus, $t = d_0(x,x_0) = d_0(x,\mathbb{S}^p)$. In other words, $(x,t) \in \Lambda^+$.
\end{proof}

\begin{proposition}[Lemma $4.27$ in \cite{barbot2015deformations}]
The invisible domain $E(\Lambda^+)$ is the interior of the convex hull in $AdS_{1,n}$ of $\mathbb{S}^p \cup \mathbb{S}^q$.
\end{proposition}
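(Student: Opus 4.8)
The plan is to transport everything to the linear model $\R^{2,n}$ and turn the statement into an elementary computation with convex cones. Write $\R^{2,n}=V\oplus W$ for the orthogonal decomposition with $V=\R^{1,p+1}$ (so that $\mathbb{S}^p=\partial\hy^{p+1}$ is the projectivization of the future null cone of $q_{2,n}|_V$) and $W=V^{\perp}=\R^{1,q+1}$, $q=n-p-2$ (so that $\mathbb{S}^q$ is the projectivization of the future null cone of $q_{2,n}|_W$). Denote by $\widehat{\mathbb{S}^p}\subset V$ and $\widehat{\mathbb{S}^q}\subset W$ these null-cone nappes, and by $K_V=\mathrm{Conv}(\widehat{\mathbb{S}^p})$, $K_W=\mathrm{Conv}(\widehat{\mathbb{S}^q})$ the corresponding solid future cones (a sharp Lorentzian cone being the convex hull of its null boundary). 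I would also recall two inputs already at hand: from the proof of Proposition \ref{black hole in AdS-domain}, the Klein-model identity $E(\Lambda^+)=\{x\in\mathbb{ADS}_{1,n}\ :\ <x,y>\,<0\ \ \forall y\in\Lambda^+\}$; and from the previous proposition, that $\Lambda^+$ is the join of $\mathbb{S}^p$ and $\mathbb{S}^q$.

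First I would compute the preimage cone $\widehat{\Lambda^+}$ of $\Lambda^+$. A lightlike geodesic of $\mathtt{Ein}_{1,n-1}$ joining $a\in\mathbb{S}^p$ to $b\in\mathbb{S}^q$ is the projection of a totally isotropic $2$-plane; the plane spanned by null lifts $v\in V$ of $a$ and $w\in W$ of $b$ is totally isotropic precisely because $q_{2,n}(v)=q_{2,n}(w)=0$ and $<v,w>=0$ (as $V\perp W$), and the arc of it joining $a$ to $b$ is $\pi(\{sv+tw:\ s,t\geq 0\})$. Running over all such $a,b$ we get $\widehat{\Lambda^+}=(\widehat{\mathbb{S}^p}+\widehat{\mathbb{S}^q})\setminus\{0\}$. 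Since this contains $\widehat{\mathbb{S}^p}$ and $\widehat{\mathbb{S}^q}$ and is contained in the convex set $K_V+K_W$, it follows that $\mathrm{Conv}(\widehat{\Lambda^+})=\mathrm{Conv}(\widehat{\mathbb{S}^p}\cup\widehat{\mathbb{S}^q})=K_V+K_W$. In particular the projective convex hull in $\overline{\mathtt{AdS}}_{1,n}$ of $\mathbb{S}^p\cup\mathbb{S}^q$ (well defined because this union sits in an affine domain of $\mathtt{Ein}_{1,n-1}$) is $\pi(K_V+K_W)$, and, by linearity of $<.,.>$, $E(\Lambda^+)=(K_V+K_W)^{*}\cap\mathbb{ADS}_{1,n}$, where $C^{*}:=\{x:\ <x,z>\,<0\ \ \forall z\in C\setminus\{0\}\}$.

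The heart of the argument is then the identity $(K_V+K_W)^{*}=\mathrm{int}(K_V+K_W)$. I would first note $(K_V+K_W)^{*}=K_V^{*}\cap K_W^{*}$: the inclusion $\subseteq$ comes from testing against $w=0$ and $v=0$, the reverse because for $(v,w)\neq(0,0)$ the quantity $<x,v>+<x,w>$ is a sum of two non-positive numbers, one strictly negative. Since $<x,\cdot>$ restricted to $V$ sees only the $V$-component $x_V$, and a solid future Lorentzian cone is self-dual for the convention $<\cdot,\cdot>\,<0$ — one checks directly in $\R^{1,k}$ that the open dual of $\{t\geq|\mathbf{x}|\}$ is $\{t>|\mathbf{x}|\}$ — we get $x\in K_V^{*}\iff x_V\in\mathrm{int}(K_V)$ and likewise for $W$, hence $(K_V+K_W)^{*}=\mathrm{int}(K_V)+\mathrm{int}(K_W)$. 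Because the splitting $z=z_V+z_W$ is unique and both summands of an element of $K_V+K_W$ lie in $K_V$, $K_W$ respectively, this Minkowski sum of open cones is exactly $\mathrm{int}(K_V+K_W)$. Every $z$ there has $q_{2,n}(z)=q_{2,n}(z_V)+q_{2,n}(z_W)<0$, so $\pi(z)\in\mathbb{ADS}_{1,n}$; projecting, $E(\Lambda^+)=\pi(\mathrm{int}(K_V+K_W))$ is the interior in $\mathbb{ADS}_{1,n}$ of the convex hull of $\mathbb{S}^p\cup\mathbb{S}^q$, as claimed.

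I expect no genuine obstacle — the result appears already in \cite{barbot2015deformations} — only the bookkeeping of the third paragraph (strict versus non-strict inequalities through the duality, and $\mathrm{int}(K_V+K_W)=\mathrm{int}(K_V)+\mathrm{int}(K_W)$) requires care. As a safety net, one inclusion is soft: $\overline{E(\Lambda^+)}$ is a geodesically convex (Remark \ref{convex regular domains}) subset of $\overline{\mathtt{AdS}}_{1,n}$ whose trace on the conformal boundary contains $\Lambda^+\supseteq\mathbb{S}^p\cup\mathbb{S}^q$, so it contains $\mathrm{Conv}(\mathbb{S}^p\cup\mathbb{S}^q)$ and hence $\mathrm{int}\,\mathrm{Conv}(\mathbb{S}^p\cup\mathbb{S}^q)\subseteq E(\Lambda^+)$; only the reverse inclusion really needs the cone computation.
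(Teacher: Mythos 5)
Your argument is correct, but note that the paper itself gives no proof of this statement: it is imported wholesale as Lemma $4.27$ of \cite{barbot2015deformations}. So there is no internal proof to compare against; what you have written is a self-contained verification built out of the paper's own Klein-model toolkit (Lemma \ref{Omega}, Remark \ref{convex regular domains}, the description $E(\Lambda^+)=\{x\in\mathbb{ADS}_{1,n}:\ <x,y>\,<0\ \forall y\in\Lambda^+\}$ used in the proof of Proposition \ref{black hole in AdS-domain}, and the join description of $\Lambda^+$), which is a reasonable thing to supply and arguably makes the section more self-contained than the citation does. The chain of reductions is sound: lifting the join to $(\widehat{\mathbb{S}^p}+\widehat{\mathbb{S}^q})\setminus\{0\}$ is legitimate because the span of null lifts $v\in V$, $w\in W$ is totally isotropic; $\mathrm{Conv}(K_V\cup K_W)=K_V+K_W$ for convex cones containing $0$; the splitting of the dual as $K_V^*\cap K_W^*$ and the self-duality of a solid Lorentzian cone both check out, including the strict-versus-non-strict bookkeeping (the open dual of the closed nappe cone is the open cone, with the degenerate cases $\mathbf{u}=0$ and $v_0=0$ handled); and $K_V+K_W$ being the preimage of $K_V\times K_W$ under the linear isomorphism $\R^{2,n}\to V\times W$ gives $\mathrm{int}(K_V+K_W)=\mathrm{int}(K_V)+\mathrm{int}(K_W)$ at once. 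Two small points deserve one explicit sentence each rather than being left implicit: first, the choice of nappes — the full null cones of $V$ and $W$ project to two antipodal copies of $\mathbb{S}^p$ and of $\mathbb{S}^q$ in $\mathbb{S}(\R^{2,n})$, and you must fix the future nappe of $W$ corresponding to the copy of $\mathbb{S}^q$ at time $+\pi/2$, otherwise the same computation produces $E(\Lambda^-)$; second, the Klein-model dual description is stated in the paper for the acausal set $\Lambda$, and you are applying it to the merely achronal sphere $\Lambda^+$ — this is what the paper itself does in proving Proposition \ref{black hole in AdS-domain}, but it rests on the achronal version of Proposition \ref{P} (non-positive versus negative products), so it is worth flagging. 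Your closing "safety net" inclusion via geodesic convexity of the regular domain is a nice independent check of the easy direction.
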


\begin{remark}
Cauchy hypersurfaces of $E(\Lambda^+)$ are homeomorphic to $\hy^{p+1} \times \hy^{q+1}$.
\end{remark}

\bibliography{Biblio}
\bibliographystyle{plain}

\end{document}